\pgfplotsset{/pgf/number format/use comma,compat=newest}
\definecolor{labelkey}{rgb}{0,0,1}
\numberwithin{equation}{section}
\numberwithin{equation}{section}
\theoremstyle{definition}
\newtheorem{definition}[equation]{Definition}
\theoremstyle{definition}
\newtheorem{remark}[equation]{Remark}
\theoremstyle{definition}
\theoremstyle{definition}
\theoremstyle{lemma}
\theoremstyle{lemma}
\newtheorem{lemma}[equation]{Lemma}
\theoremstyle{theorem}
\newtheorem*{lemma*}{Lemma}
\theoremstyle{theorem}
\newtheorem{theorem}[equation]{Theorem}
\theoremstyle{proposition}
\newtheorem{proposition}[equation]{Proposition}
\theoremstyle{corollary}
\newtheorem{corollary}[equation]{Corollary}
\theoremstyle{corollary}
\theoremstyle{definition}
\newtheorem{example}[equation]{Example}
\theoremstyle{example}
\theoremstyle{definition}
\newtheorem{assump}[equation]{Assumption}
\theoremstyle{definition}
\theoremstyle{theorem}
\theoremstyle{definition}
\DeclareMathOperator*{\spa}{span}
\DeclareMathOperator*{\supp}{supp}
\newcommand{\ccdot}{\,\cdot\,}
\newcommand{\bk}{\mathbf{k}}
\newcommand{\bj}{\mathbf{j}}
\newcommand{\R}{\mathbf{R}}
\newcommand{\PP}{\mathbf{P}}
\newcommand{\E}{\mathbf{E}}
\renewcommand{\P}{\mathbf{P}}
\newcommand{\C}{\mathbf{C}}
\newcommand{\N}{\mathbf{N}}
\newcommand{\Z}{\mathbf{Z}}
\newcommand{\tth}{\tilde{\theta}}
\newcommand{\tom}{\tilde{\Vort}}
\newcommand{\NS}{\Phi}
\newcommand{\TT}{\mathbf{T}}
\newcommand{\ZZ}{\Z}
\newcommand{\RR}{\mathbf{R}}
\renewcommand{\aa}{\mathbf{a}}
\renewcommand{\aa}{\hat{\mathbf{a}}}
\newcommand{\aaa}{\mathbf{A}}
\newcommand{\Vort}{\xi}
\newcommand{\VortS}{\bar{\xi}}
\newcommand{\bfU}{\mathbf{u}}
\newcommand{\bfw}{\mathbf{w}}
\newcommand{\bfg}{\mathbf{g}}
\newcommand{\bfh}{\mathbf{h}}
\newcommand{\bfe}{\mathbf{e}}
\newcommand{\bfr}{\mathbf{r}}
\newcommand{\bfs}{\mathbf{s}}
\newcommand{\pd}{\partial}
\newcommand{\Th}{\theta}
\newcommand{\Zb}{\mathcal{Z}}
\newcommand{\SGf}{\Phi}
\newcommand{\bff}{\mathbf{f}}
\newcommand{\Bvl}{\bar{\Vort}_\lambda}
\newcommand{\Btl}{\bar{\Th}_\lambda}
\newcommand{\rhV}{\rho^\Vort}
\newcommand{\rhT}{\rho^\Th}
\newcommand{\gmV}{\gamma^\Vort}
\newcommand{\gmT}{\gamma^\Th}
\newcommand{\SRD}{\Phi}
\newcommand{\indFn}[1]{1 \! \! 1_{#1}}
\newcommand{\eps}{\varepsilon}
\newcommand{\bfV}{\mathbf{v}}
\newcommand{\death}{\raisebox{-0.15em}{\includegraphics[width=3.3mm]{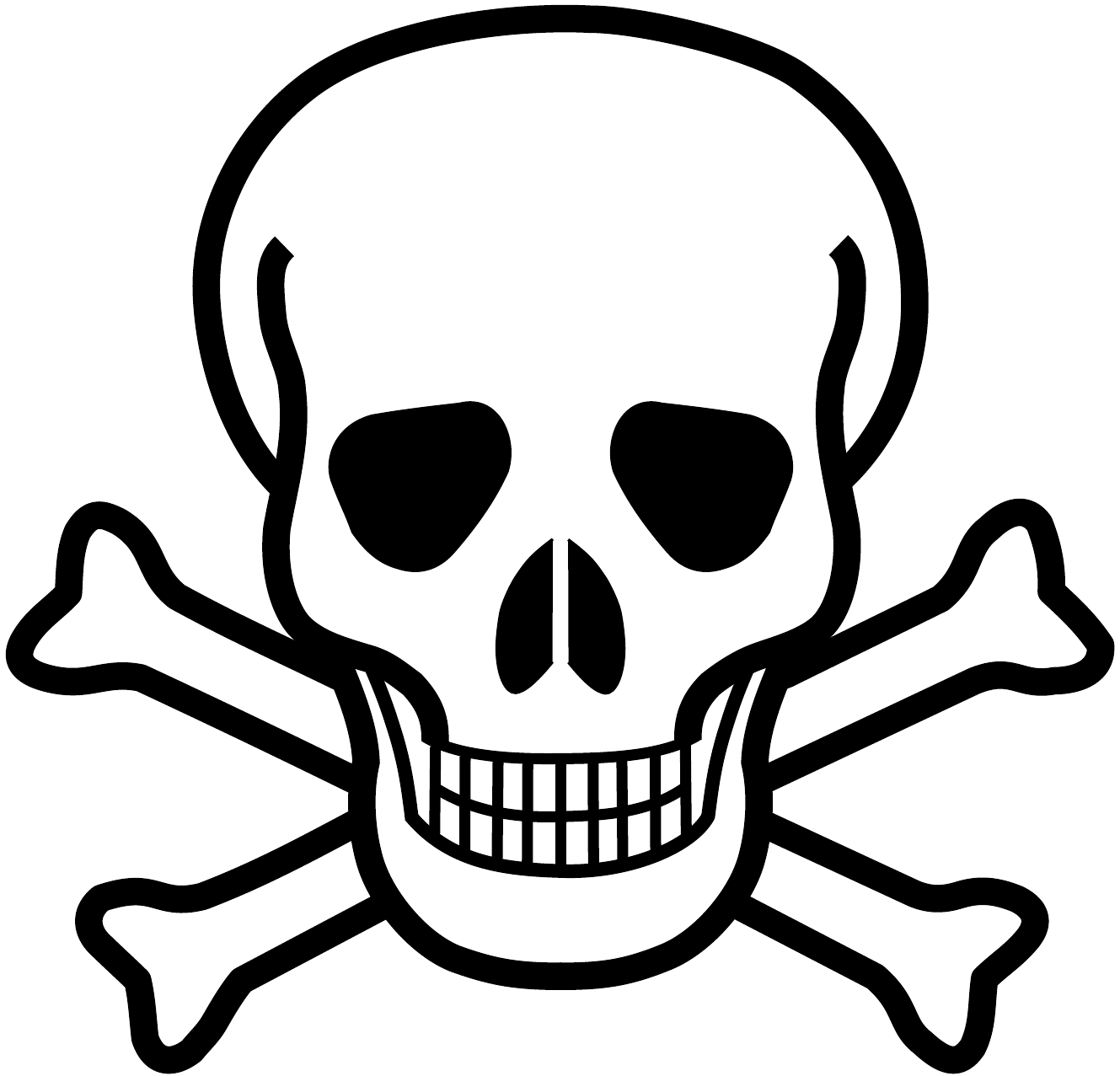}}}
\newcommand{\one}{\mathbf{1}}
\title[Scaling and Saturation in Infinite Dimensions]{Scaling and Saturation in Infinite-Dimensional Control Problems with Applications to Stochastic Partial Differential Equations}
\author[Nathan E. Glatt-Holtz, David P. Herzog, Jonathan C. Mattingly]{Nathan E. Glatt-Holtz, David P. Herzog, Jonathan C. Mattingly\\
  \scriptsize{emails: negh@tulane.edu, dherzog@iastate.edu, jonm@math.duke.edu}}
\begin{document}

\begingroup
\def\uppercasenonmath#1{} 
\let\MakeUppercase\relax 
\maketitle
\endgroup

\begin{abstract}
We establish the dual notions of scaling and saturation from geometric control
theory in an infinite-dimensional setting.  This generalization is applied 
to the low-mode control problem in a number of concrete nonlinear 
partial differential equations.  We also develop applications
concerning associated classes of stochastic partial differential equations (SPDEs).
In particular, we study the support properties of probability laws
corresponding to these SPDEs as well as provide applications concerning
the ergodic and mixing properties of invariant measures for these 
stochastic systems.

\vspace{.05in}
  \noindent 
  {\it \bf Keywords:}
  	Geometric Control Theory, Stochastic Partial Differential Equations (SPDEs), Degenerate
        Stochastic Forcing \& Hypoellipticity, Malliavin Calculus,
        Fluid Turbulence

\vspace{.05in}
  \noindent
  {\it \bf MSC 2010 Classifications:}  35Q35, 35R60, 60H15, 60H07, 76F70  
\end{abstract}

\setcounter{tocdepth}{1}
\tableofcontents

\newpage

\section{Introduction}
\label{sec:intro}

The main goal of this paper is to develop a flexible framework for establishing controllability in important classes of nonlinear partial differential equations.  Our primary motivation for doing this stems from our interest in obtaining support properties of the Markov process solving the associated stochastic partial differential equation (SPDE) that results when the control terms are replaced by independent Brownian motions.  Such support properties constitute a fundamental part in proving unique ergodicity and mixing of the stochastic dynamics.  An additional, but more refined goal of this work is to give practical criteria for the existence and positivity of the density of the law of the projected SPDE solution onto any finite-dimensional subspace.

General support theorems, or equivalently topological irreducibility results, for
SPDEs were initially restricted to settings where the noise is
sufficiently non-degenerate, allowing one to instantaneously counteract any effect of the
drift terms (see \cite{ZabczykDaPrato1992, ZabczykDaPrato1996, flandoli1995} and the references
therein).  While such roughly forced SPDEs arise naturally in the equations
describing the statistics of fluctuations in various scaling limits,
they do not cover many interesting examples.  Here we are largely motivated by applications to statistical hydrodynamics and phase field equations. In these settings, the stochastic forcing is
typically localized at a certain scale and one is interested in how
the dynamics propagates the energy to other scales.  Consequently, we focus on the case when randomness enters the equation externally on a few, select directions in the phase space.

In this degenerate setting, many of the initial approaches to solving the control problem were only sufficient to imply irreducibility and not global controllability.  Furthermore, the methods used were rather ad hoc~\cite{HM06, HM09}.  On the other hand, systematic results close to the setting of this paper were given previously in~\cite{AgrachevSarychev2005, 
AgrachevSarychev2006,  Shirikyan2007, Shirikyan2008, Shirikyan2010, Nersisyan2010,
  Nersesyan2015}.  While related, our results more directly extend the
geometric control theory work of Jurdjevic and Kupka~\cite{JK_81,
  JK_85, Jur_97}.  One advantage is that this approach
interfaces cleanly the local smoothing/contraction estimates coming from the infinite-dimensional version of hypoellipticity~\cite{Hormander1967} developed in~\cite{HM09,HM06}. Both theories
are built on Lie-bracket calculations and a flag of associated
subspaces which capture the ability of the nonlinearity to move
randomness and control action between the degrees of freedom, or rather different scales, in
the SPDE setting. To prove these results, we recast the `method of
saturation' from~\cite{JK_81, JK_85, Jur_97} into a form suitable for the
infinite-dimensional setting. We also make our results applicable to dynamics, which need only exist locally in time, on a very general phase space $X$. In particular, our
formalism is well adapted to infinite-dimensional systems generated by
PDEs which might only have a local existence theory, such as the 3D
Euler equations.  As such, we think that the control theoretic contributions in
this work hold independent interest beyond the immediate
probabilistic applications which motivated us.

The connection between the control problem and support properties of the associated stochastic dynamics is well established~\cite{StroockVaradhan1972a, StroockVaradhan1972, AK87, ZabczykDaPrato1992, ZabczykDaPrato1996}.  More precisely, fixing the initial condition $u_0$, the solution $u(t)$ to an SPDE at time $t$ is obtained
by the solution map $\phi_t: \omega \mapsto u(t)$ where $\omega$ is a
realization of the stochastic forcing, in our setting a finite collection of
independent Brownian motions. We will assume that $u(t) \in X$ for
some function space $X$ and generally take $\omega \in
\Omega_t=C([0,t],\R^m)$ for finite $m$. Approximate
controllability is then simply the statement that for any $u_0, v \in X$ and
$\delta, \, t>0$ there exists an
$\omega \in \Omega_t$ so that $\|\phi_t(u_0, \omega)-v\| \leq \delta$.

As already mentioned, a second but related goal of the paper is to
give conditions guaranteeing the positivity of the density of the 
random variable $\pi u(t)$, where  $\pi$ is a
continuous projection onto a finite dimensional subspace
$\pi(X)$. This generalizes the results and techniques from \cite{MattinglyPardoux06}. The fact that the random variable $\pi u(t) \in \pi(X)$ has a density with respect
to Lebesgue measure on $\pi(X)$ follows from the general principle that the
push forward of a density through smooth map remains a nice density provided the Jacobian at typical points is non-degenerate. In our examples, we push forward the law of the Wiener measure
through the projected SPDE solution map $\pi\phi_t$ to obtain
the random variable $\pi u(t)$.  The theory of Malliavin Calculus
precisely shows that the Jacobian of the map is non-degenerate if the
Malliavin covariance matrix is sufficiently non-degenerate. The
needed control over the Malliavin covariance matrix and its inverse is one
of the main results of the theory of hypoellipticity developed
in~\cite{HM09,HM06}. Positivity of this density then follows from the
the work of Ben-Arous-L\'{e}andre~\cite{BAL91a, BAL91b} (see also
\cite{NualartSF,BouleauHirsch_91}), which makes precise the above ideas concerning the push forward in the finite-dimensional setting.

The existence of a control which drives the solution exactly, after
projection, to a given point requires an extension of the preceding
approximate controllability results to an `exact controllability on
projections' result. More precisely,  we show that for any $v \in X$ and $\delta, t >0$
there exist and $\omega \in \Omega_t$ so that both $\| \phi_t(\omega)
- v\| \leq \delta$ and $\pi\phi_t(\omega)=\pi v$. In  \cite{JK_81,
  JK_85, Jur_97}, this extension relies on the structure of smooth
vector fields on $\R^d$. As the phase space in our setting is infinite-dimensional, we had to develop other methods. Here, we
produce this stronger form of controllability using a more refined
notion of saturation, which we call \emph{uniform saturation},
allowing us to transfer continuity properties of the underlying
semigroups from one approximation to the next.  This transfer of
continuity ultimately facilitates the use of the Brouwer fixed-point
theorem to establish exact controllability on projections.

As mentioned above, in addition to this exact controllability on projections
result, to prove positivity of the projected density of the random variable $\pi u_t$, one must show that the appropriate Jacobian of the projected flow map $\pi\phi_t$ is non-degenerate when evaluated at some control which gives the desired exact control on the projected subspace.  The ideas to prove the existence of this non-degenerate control generalize those in \cite{HerMat15} from the finite-dimensional setting, and \cite{MattinglyPardoux06} from
the specific context of the 2D Navier-Stokes equation. These ideas for proving such non-degeneracy leverage the fact that the Malliavin covariance matrix was proven to be almost
surely non-degenerate in a closely related setting~\cite{HM06,HM09}.

The first major development in the paper, as carried out in
Section~\ref{sec:sat},  extends the ideas of Jurdjevic and Kupka
from control theory~\cite{JK_81, JK_85, Jur_97} to dynamics on a very
general phase space $X$ which need only exist locally in time. 
Section~\ref{sec:app:SPDEs} pairs previously
obtained control information via saturation with the Malliavin
calculus to infer support properties of the solution of the associated
SPDE.  Specifically, in Section~\ref{sec:app:SPDEs} we provide a
self-contained presentation of the Malliavin calculus in the abstract setting of cocycles, giving criteria for the existence
and positivity of the probability density function (with respect to
Lebesgue measure) of the projected stochastic process $\pi u$ living
on a finite-dimensional subspace of $X$.  The topic of unique
ergodicity is also discussed.  It is important to highlight that the
criteria given here for the existence and positivity of $\pi u$ do not
require moment bounds on the Malliavin matrix.  For
example, our hypotheses for existence of the density are comparable to
those given in the language of Dirichlet forms in the
work~\cite{BouleauHirsch_91}.

In order to illustrate 
our framework, the
methods developed in Section~\ref{sec:sat} and
Section~\ref{sec:app:SPDEs} are applied to a number of specific equations in
Section~\ref{sec:examples}.  In particular, using the methods of
Section~\ref{sec:sat} we study the low-mode control problems for a
reaction-diffusion equation, the 2D incompressible Navier-Stokes
equation, the Boussinesq equation and the 3D incompressible Euler
equation.  Support and ergodic properties of the associated stochastic
perturbations are then inferred using the results of
Section~\ref{sec:app:SPDEs}.  We are optimistic 
that our techniques will prove useful in the study of other concrete 
examples in the future.

While the low-mode control problems for
the 2D incompressible Navier-Stokes, the 3D incompressible Euler
equation and variants thereof have been studied
previously~\cite{AgrachevSarychev2005, AgrachevSarychev2006,
  Shirikyan2007, Shirikyan2008, Shirikyan2010, Nersisyan2010,
  Nersesyan2015} (see also~\cite{MattinglyPardoux06,
  AgrachevKuksinSarychevShirikyan2007} for consequences for the
support of the stochastically forced systems), we provide these two
examples for completeness of presentation and to illustrate the
efficacy of our formalism.   On the other hand the results for the
Reaction-Diffusion equations and the Boussinesq equations are to the 
best of our knowledge new and may be seen to compliment other recent works on ergodic properties
of these equations in the presence of a degenerate stochastic forcing
\cite{HM09,FoldesGlattHoltzRichardsThomann2013}. Here it is also
important to highlight that our formalism is used to show that 
equations such as the Navier-Stokes and  Boussinesq equations 
remain uniquely ergodic when, in addition to the stochastic perturbation terms,
we add a more or less arbitrary deterministic source term.

The organization of the remainder of this paper is as follows.  In
Section~\ref{sec:overview}, we provide heuristics which both motivate
and give an overview of the rigorous control methods developed in
Section~\ref{sec:sat}.  Section~\ref{sec:app:SPDEs} concerns
applications of the control results of Section~\ref{sec:sat} to an
associated SPDE whose dynamics generates a \emph{continuous adapted
  cocycle}.  In particular, support properties of the law of the
solution $u$ of the SPDE and unique ergodicity are studied from this
point of view.  In Section~\ref{sec:examples}, the theoretical
frameworks developed in previous sections are applied to specific
equations.  Basic a priori estimates for each of the examples studied
are saved for the appendices.

\vspace{.05in}
\noindent {\bf Acknowledgements.}
This work was initiated when the three authors were research members
at the Mathematical Science Research Institute (MSRI) under the ``New
Challenges in PDE: Deterministic Dynamics and Randomness in High
Infinite Dimensional Systems'' program held in the Fall 2015.  We are also 
grateful for the hospitality and travel support provided by 
the mathematics departments at Iowa State University and Tulane University which hosted a number of research visits that facilitated the completion of this work.  We would like to warmly thank Juraj F\"oldes, Susan
Friedlander and Vlad Vicol for numerous helpful discussions and
encouraging feedback on this work.  Our efforts were supported in part through grants DMS-1313272 (NEGH), 
DMS-1612898 (DPH) and DMS-1613337 (JCM) from the National Science Foundation.

\section{Heuristics, Overview of Methods}
\label{sec:overview}

In order to introduce the main ideas and methods employed below in concrete examples, we consider 
an abstract, controlled evolution equation of the form
\begin{align}
   \frac{du}{dt}  + L u + N(u) = f + \sum_{k \in \mathcal{Z}} \alpha_k(t) \sigma_k, \quad u(0) = u_0 \in X.
   \label{eq:abs:evo}
\end{align}
Our system evolves on a phase space $X$ which, for the purposes of
discussion here, may be thought of as a separable Hilbert space with
norm $\| \ccdot \|$.  We assume that $L$ is a linear (unbounded)
operator and that $N$ is a polynomial nonlinearity of the form
\begin{align}
  N(u) = \sum_{k=2}^M N_k(u)
  \label{eq:m:l:form}
\end{align}
where the highest-order term $N_M(u)= N_M(u,u, \ldots, u)$ is such
that $N_M(u_1, u_2, \ldots, u_M)$ is a symmetric multilinear operator
of degree $M$ and for each $2\leq k < M$, $N_k(u)$ is either $0$ or a
homogeneous operator of degree $k$.  We assume that
$\mathcal{Z}$ is a finite set of indices, for example, $\mathcal{Z}$
might be subset of $\ZZ$, $\ZZ^2$, or any other convenient
alphabet of labels. We further assume that the elements
$f$ and $\sigma_k$, $k \in \mathcal{Z}$, represent fixed directions in
the phase space.  The dynamics \eqref{eq:abs:evo} is influenced by the
(piecewise constant) controls $\alpha_k: [0,\infty) \to \RR$.  Of
course in each example presented below in Section~\ref{sec:examples},
we will make concrete assumptions on $L$, $N$, $f$, $\sigma_k$, etc,
so that \eqref{eq:abs:evo} makes sense and is at least locally
well-posed.   In particular in Section~\ref{sec:examples}, we will treat a reaction-diffusion equation, the 2D Boussinesq equation as well as the 2D/3D the Navier-Stokes and Euler equations, all of which can be posed in the
form \eqref{eq:abs:evo}.

Our goal will be to understand when, for any $u_0, v_0 \in X$, any hitting time $t > 0$ and any tolerance of error $\eps >0$,
we can construct a piecewise constant control $\alpha = (\alpha_k :k
\in \mathcal{Z} ): [0,t] \to \RR^{|\mathcal{Z}|}$ so that 
\begin{align}
   \| u(t, u_0, \alpha \cdot \sigma) - v_0\|  < \eps
   \label{eq:approx:cont}
\end{align}
where $u(t, u_0, \alpha \cdot \sigma)$ denotes the solution of
equation~\eqref{eq:abs:evo} with initial condition $u_0$ and control
$\sum_{k\in \mathcal{Z}} \alpha_k(t) \sigma_k$.  Here $|\mathcal{Z}|$
denotes the cardinality of the set $\mathcal{Z}$.
Furthermore, we would often like to show more strongly that if $\pi: X
\to X$ is a fixed continuous projection onto a finite-dimensional
subspace of $X$, and $u_0, v_0 \in X$, $\eps, t>0$ are given, then
there exists a piecewise constant control $\alpha = (\alpha_k : k \in
\mathcal{Z}): [0,t] \to \RR^{|\mathcal{Z}|}$ satisfying~\eqref{eq:approx:cont} and 
\begin{align}
   \pi (u(t, u_0, \alpha\cdot))  = \pi( v_0);
      \label{eq:exact:cont}
\end{align}
namely, $\alpha$ simultaneously provides approximate control on $X$ and exact control on $\pi(X)$.  Throughout, both of these control problems will be referred to generically as the \emph{low mode control problems}, as one typically assumes that $\sigma_k(x) \sim e^{ikx}$ so that we are trying to drive our system \eqref{eq:abs:evo} through a few select frequencies.

Of course we do not expect to be able to solve the low mode control
problems without making further assumptions on the $\sigma_k$'s. In particular, there needs to be \emph{sufficient} control; that is,
$\mathcal{Z}$ must contain enough elements as is typically dictated by its relative structure
with the nonlinearity $N$.  However, because such assumptions can vary from equation to equation, our goal in this section is to illustrate  
how such assumptions on the controls arise as well as how the methods used to solve the low mode control problem work.  

As we shall see, two notions play a central role in our approach to controlling \eqref{eq:abs:evo}, namely \emph{scaling} and \emph{saturation}.  The former notion, based on the introduction of a large parameter, is used to infer new directions along which the system can move besides the explicit directions (the $\sigma_k$'s) acting on the controlled dynamics.  This approach to deriving the needed controls is illuminating as it provides some explicit understanding of the construction of a desirable control $\alpha$; see \eqref{eq:ray:scal} and \eqref{eq:non:lin:tw}.  On the other hand, we will quickly see that
composing scalings to produce further directions iteratively can become unwieldy due to the multiple time scales present in each scaling limit.   
The notion of \emph{saturation}, originally pioneered by Jurdjevic and Kupka in the finite-dimensional setting of ODEs \cite{JK_81, JK_85, Jur_97}, 
provides a highly effective tool which allows one to directly use the new trajectories obtained from each scaling limit.   
Thus, this saturation machinery allows us to iteratively produce a sequence of seemingly more controlled systems which nevertheless reach the same portions of the phase space as the original control problem.  Below in Section~\ref{sec:sat}, we develop a generalization of the Jurdjevic and Kupka approach which is applicable to the abstract spatial 
setting of a metric space.

To introduce these two ideas on a basic level, we begin by adopting further notation.  We use the semigroup formalism and write the solution of~\eqref{eq:abs:evo} at time $t$ with initial data $u_0 \in X$ and constant control $\alpha \cdot \sigma = \textstyle{\sum}_{k\in \mathcal{Z}} \alpha_k \sigma_k$; that is, $\alpha :[0, \infty)\rightarrow \R^{|\mathcal{Z}|}$ is independent of the time parameter $t$, as
\begin{align}
  \Phi^{\alpha\cdot \sigma}_t u_0 = u(t, u_0, \alpha\cdot\sigma).  
  \label{eq:evol:semi}
\end{align}
Throughout, we also make extensive use of the \emph{ray semigroups}
\begin{align}
  \rho^g_t u_0 := u_0 + t g, \,\,\, t\geq 0,
  \label{eq:ray:sg}
\end{align}
defined for any $u_0, g \in X$.  We let $\mathcal{S}$ denote the collection of \emph{continuous local semigroups} on $X$ (see Definition~\ref{def:cont:sg}).\footnote{The collection $\mathcal{S}$  plays essentially the same role as the family of vector fields on finite-dimensional smooth manifolds do in Jurdevic and Kupka's work \cite{JK_81, JK_85, Jur_97}.}  We use a calligraphic font to distinguish between subcollections of these semigroups, i.e. $\mathcal{F}, \mathcal{G} \subseteq \mathcal{S}$. Specifically,
observe that finite compositions of elements in the set
\begin{align}
  \mathcal{F}_0 := \{  \Phi^{\alpha \cdot \sigma} \, :\,  \alpha \in \RR^{|\mathcal{Z}|} \}
  \label{eq:base:sg}
\end{align}
represent the totality of possible paths that solutions of \eqref{eq:abs:evo} can be made to follow using piecewise constant controls.   In each problem we consider in Section~\ref{sec:examples}, we will see that there is enough structure to guarantee that indeed $\mathcal{F}_0\subseteq \mathcal{S}$.  With this notational convention, we introduce the accessibility sets 
\begin{align}
    A_{\mathcal{F}}(u, t) := \big\{ \Psi_{t_m}^m \cdots \Psi_{t_1}^1 u\, :\,  \Psi^\ell \in \mathcal{F} \, \text{ for all } \ell=1,2, \ldots, m \text{ and }\textstyle{\sum}_{i=1}^m t_i = t\big\},
\label{eq:exact:ass}
\end{align}
so that the condition \eqref{eq:approx:cont} holds precisely when $\overline{A_{\mathcal{F}_0}(u,t)} = X$ for every $u \in X$
and every $t > 0$.

\subsection{Scaling Arguments}
We will primarily use two flavors of scalings to generate new directions in the phase space.  The first scaling increases the magnitude of the control while reducing the time interval on which it acts, thereby allowing us to see that we can approximately reach anything in the set $u_0 + \text{span}\{ \sigma_k \, : \, k \in \mathcal{Z}\}$, $u(0)=u_0$,
in arbitrarily small amounts of time.  This gives us the freedom to flow in the direction of any element in the set $\text{span}\{ \sigma_k \, : \, k \in \mathcal{Z}\}$ in small times.  The second scaling type uses these previously obtained directions and then cycles them through the nonlinearity $N$ via an appropriate composition of flows.  Due to the resonant interaction between the nonlinearity $N$ and scaled rays, we are then able to generate new directions not belonging to the span of the $\sigma_k$'s.  As we will see, this process can then be iterated to produce even more directions.

\begin{remark}
\label{rem:notgenproc}
Although the two specific scalings presented below give one recipe for generating directions in the phase space, the particular path taken in this section may not generate all needed directions to solve a particular control problem.  Indeed, many other scalings are possible and further ingenuity may be needed to demonstrate access to new directions.   In particular, three of the four examples presented in Section~\ref{sec:examples} rely entirely on the two scalings introduced in this section while the Boussinesq equation requires rather different scaling combinations to produce a suitable control.      
\end{remark}

To describe the first scaling type in more detail, fix $\alpha \in \RR^n$ and take $\lambda \gg 1$ to be a 
scaling parameter.  For any $u_0 \in X$, take 
\begin{align}
	u_\lambda(t) = \Phi^{\lambda\alpha \cdot \sigma}_{t/\lambda} u_0  
	\label{eq:ray:scal}
\end{align}
and observe that $u_\lambda$ satisfies:
\begin{align}
	\frac{d u_\lambda}{dt} =  \alpha \cdot \sigma - \frac{1}{\lambda} (L u_\lambda + N(u_\lambda)) + \frac{1}{\lambda} f \approx \alpha \cdot \sigma, \quad u_\lambda(0) = u_0.
	\label{eq:rescal:0}
\end{align}
Thus, one might expect that by employing suitable a priori estimates
\begin{align}
  \lim_{\lambda \to + \infty} \|  \Phi^{\lambda \alpha}_{t/\lambda} u_0  -  \rho^{\alpha\cdot \sigma}_t u_0 \| = 0
  \label{eq:frst:scalling}
\end{align}
for any $u_0 \in X$, $\alpha \in \RR^n$ and $t>0$.    In summary, the scaling introduced in \eqref{eq:ray:scal} allows us to push the dynamics of \eqref{eq:abs:evo}
along rays of the form 
\begin{align*}
\rho_t^{\alpha \cdot \sigma} u_0= u_0 + (\alpha \cdot \sigma) t, \,\,\,\, t\geq 0,
\end{align*}
 in a short burst of time ($t/\lambda$ for $\lambda \gg 1$), thus providing an initial step to generate directions.  Hence, setting 
\begin{align}
  X_0 = \{\alpha \cdot \sigma \, : \, \alpha \in \RR^{|\mathcal{Z}|} \}
  \label{eq:dir:ass}
\end{align}  
we expect to be able to approximately flow along rays in the direction of elements in $X_0$ in small positive times.  

To describe the intuition behind the second scaling that allows us to
generate directions besides those explicitly acting on the dynamics, 
we will use the trajectories $\rho_t^{\alpha \cdot \sigma} u_0$,
$\alpha \in \RR^{|\mathcal{Z}|}$ obtained by the previous scaling,
and  `push' the directions $\alpha \cdot \sigma$, $\alpha \in \RR^{|\mathcal{Z}|}$, through the nonlinear term $N$.   This is carried out as follows.  Consider the scaling 
\begin{align*}
  v_\lambda(t) =  \Phi^0_{\frac{t}{\lambda^M}} \rho^{\lambda^2 \alpha \cdot \sigma}_{\frac{1}{\lambda}} u_0 
\end{align*}
for $\lambda \gg 1$, where we emphasize that the null superscript in
$\Phi^0_{t/\lambda^M}$ means that we completely `turn off' the control
in \eqref{eq:abs:evo}.  
In the above expression for $v_\lambda(t)$, we note that for $\lambda \gg 1$ 
\begin{align*}
\rho_{\lambda^{-1}}^{\lambda^2 \alpha \cdot \sigma} u_0 = u_0 + \lambda \alpha \cdot \sigma \sim \lambda \alpha \cdot \sigma 
\end{align*}
and thus the scale $t/\lambda^M$ is picked to balance the asymptotic
behavior of the leading-order term along $\lambda \alpha \cdot \sigma$
in $N$.  More formally, presuming that $N_M(\alpha \cdot \sigma)\neq 0$ we have  $N_M(\lambda \alpha \cdot \sigma) = \lambda^M N_M(\alpha \cdot \sigma)$, so that for $\lambda \gg 1$
\begin{align*}
   \int_0^{t/\lambda^M}(L(v_\lambda) + N(v_\lambda))ds \approx \int_0^{t/\lambda^M}( L(u_0 + \lambda \alpha \cdot \sigma)+  N(u_0 + \lambda \alpha \cdot \sigma)) \, ds \approx t N_M(\alpha \cdot \sigma).
\end{align*} 
where we have also used that $v_\lambda \approx u_0 + \lambda \alpha
\cdot \sigma$ on the short time interval $[0, t/\lambda^M]$.
Hence we see that
\begin{align}
   v_\lambda(t) &= u_0 + \lambda \alpha \cdot \sigma - \int_0^{t/\lambda^M} [L(v_\lambda) + N(v_\lambda) ]ds + \frac{t}{\lambda^M}f  \notag\\
  &\approx  u_0 + \lambda \alpha \cdot \sigma - t N_M(\alpha \cdot \sigma),
    \label{eq:rescal:m}
\end{align}
which should be valid for all $\lambda \gg 1$. 

Observe that while we have picked up the direction $N_M(\alpha \cdot \sigma)$ the trajectory $v_\lambda(t)$ does not stabilize since $u_0 + \lambda \alpha \cdot \sigma$ blows up as $\lambda \rightarrow \infty$.  To take care of this, we introduce another composition 
 \begin{align}
  w_\lambda(t) = \rho_{\lambda^{-1}}^{-\lambda^2 \alpha \cdot \sigma} v_\lambda(t)
   = \rho_{\lambda^{-1}}^{-\lambda^2 \alpha \cdot \sigma} \Phi^0_{t/\lambda^M} \rho^{\lambda^2 \alpha \cdot \sigma}_{\lambda^{-1}} u_0 
  \label{eq:non:lin:tw}
\end{align}
which, starting from $v_\lambda(t)$, simply flows along the direction $-\lambda^2 \alpha \cdot \sigma$ in $\lambda^{-1}$ time units.  Using~\eqref{eq:rescal:m} we see that for $\lambda \gg 1$
\begin{align*}
w_\lambda(t) \approx u_0 -t N_M(\alpha \cdot \sigma).  
\end{align*} 
One can also see the approximation above by noting that $w_\lambda$ satisfies
\begin{align}
	\frac{d w_\lambda}{dt} =  - \frac{1}{\lambda^M} (L (w_\lambda+\lambda \alpha \cdot \sigma) + N(w_\lambda + \lambda \alpha \cdot \sigma)) + \frac{1}{\lambda^M} f 
  \approx - N_M(\alpha \cdot \sigma),
  \label{eq:rs:nLin:twit}
\end{align}
starting from  $w_\lambda(0)=u_0$.  In summary, given suitable
PDE-dependent a priori estimates, we therefore expect
\begin{align}
\lim_{\lambda \to + \infty} 
	\|   \rho^{- \lambda^2\alpha \cdot \sigma}_{\lambda^{-1}} \Phi^0_{t/\lambda^M} \rho^{\lambda^2 \alpha \cdot \sigma}_{\lambda^{-1}} u_0   - \rho^{-N_M(\alpha \cdot \sigma) }_t u_0 \| = 0,
	 \label{eq:m:scl}
\end{align}
for any fixed $u_0$ and any $\alpha \cdot \sigma$, $t\geq 0$.  

Of course it is not immediately clear that the trajectory given by
\eqref{eq:non:lin:tw} can be obtained from (\ref{eq:abs:evo}) by
composing elements solely from the set \eqref{eq:base:sg}.   
On the other hand by the first scaling argument, we can expect to get arbitrarily close to $\rho^{\lambda^2 \alpha \cdot \sigma}_{\lambda^{-1}} u_0 $ for $\lambda \gg 1$ by considering
\begin{align*}
  \Phi_{(\lambda \mu)^{-1}}^{(\lambda\mu)^2 \alpha \cdot \sigma}\,\,\,\, \text{ for } \,\,\,\, \mu=\mu(\lambda) \gg 1.
\end{align*}  
Thus for each $\lambda \gg 1$ by picking $\mu=\mu(\lambda) \gg 1$ and considering 
\begin{align}
  \Phi_{(\lambda \mu)^{-1}}^{-(\lambda\mu)^2 \alpha \cdot \sigma}   
  \Phi^0_{\frac{t}{\lambda^M}} \Phi^{(\lambda\mu)^2 \alpha \cdot \sigma}_{(\lambda \mu)^{-1}} u_0,
  \label{eq:sc:it:1}
\end{align}
we may expect to be able to approximate $w_\lambda$ using the trajectories in $\mathcal{F}_0$.  

In view of the above discussions we would now like to iterate the use of
the two scalings (\ref{eq:ray:scal}) and (\ref{eq:non:lin:tw}) to
approximate a much richer collection of ray semigroups.    To this end we
define
\begin{align*}
X_1 = \text{span}  \Big\{X_0 \cup \{N_M(g) \, : \,  g \in X_0\} \Big\}
\end{align*}
with $X_0$ as in \eqref{eq:dir:ass} and for $k\geq 2$
we define $X_{k}$ inductively by 
\begin{align}
\label{eq:brak:gen}
X_k = \text{span}\Big\{X_{k-1} \cup \{ N_M(g) \, :
  \,  g \in X_{k -1} \}\Big\}.
\end{align}
Thus we might expect to approximately reach
points of the form
\begin{align}
  u_0 + v \quad \text{ for any } v \in X_\infty :=\bigcup_{k \geq 1}
  X_k
  \label{eq:n:reach:pts}
\end{align}
by composing the relevant scalings as we did for points in $X_1$ in
\eqref{eq:sc:it:1} above.  Hence we might expect the density condition
\begin{align}
   \overline{X_\infty} = X
  \label{eq:den:brak:prelim}
\end{align}
to be sufficient to achieve approximate controllability as in \eqref{eq:approx:cont}.  

Before turning to the notion of \emph{saturation} which we will use to
facilitate the iterative process of generating elements in
$X_k$ for $k \geq 1$ through multiple scalings, we now make some crucial remarks.  
\begin{remark}
\label{rmk:braks}
\mbox{}
\begin{itemize}
\item[(i)]  If $M$ is even, it is not true in general that we can generate directions in $X_k$ using the scaling analysis above.  This is because one need needs to be able to flow both forwards and backwards along the directions $N_M(g)$, $g\in X_{k-1}$, using the ray semigroup.  In particular in the case when $M$ is even, it is certainly not obvious nor true in general that both $N_M(g), -N_M(g) \in X_{k}$ given that $g\in X_{k-1}$, so the second scaling used above does not work when replacing $\alpha \cdot \sigma$ with $N_M(g)$.  We will see that in some special cases, specifically in some models from fluid mechanics where $M=2$, that this scaling analysis can still in fact be used to realize the sets $X_k$ via the scalings above.  See Remark~\ref{rem:even} for a further discussion of this point.              
\item[(ii)] Given the definition of the $X_k$'s, it is not clear that the set $X_\infty$ is rich enough to be dense in $X$.  However, we will see that Condition~\eqref{eq:den:brak:prelim} is equivalent to an infinite-dimensional analogue of H\"{o}rmander's condition.  See Section~\ref{sec:wHor:cond}.      
\item[(iii)] In each of the examples considered below in
Section~\ref{sec:examples}, it will not be the case that $N_M(g)\in X$
for generic $g\in X$.  This will not pose any significant problem,
however, as we are mainly interested in the low mode
control problem where the control subspace $X_0$ will
consist of smooth ($C^\infty$) elements in $X$.  This, in particular,
will allow us to conclude that $N_M(g) \in X$ and is smooth for each
$g \in X_0$.  Moreover, when the scaling above is iterated,
we will also be able to conclude $N_M(g) \in X$ for each $g\in
X_k$ and each $k\geq 0$.  We will therefore operate
under the assumption that $N_M(g) \in X$ for each $g\in X_k$
and each $k\geq 0$ for the remainder of this section.  
\end{itemize}
\end{remark}

\begin{remark}
\label{rem:ASapproach}
At this stage it is important to note some differences between the control theoretic approach adopted here and the Agrachev-Sarychev approach~\cite{AgrachevSarychev2005, AgrachevSarychev2006,
  Shirikyan2007, Shirikyan2008, Shirikyan2010, Nersisyan2010,
  Nersesyan2015}.  The latter approach relies on establishing three key properties for the control problem~\eqref{eq:abs:evo}:
\begin{itemize}
\item[(I)]  The \emph{Extension Principle}. This states that the system~\eqref{eq:abs:evo} is approximately controllable on $X$ if and only if the following control problem 
\begin{align}
\label{eqn:morecontrolsol}
\partial_t u + L(u + h_1) + N(u + h_1) = f + h_0,
\end{align}    
where $h_0, h_1: [0, \infty)\rightarrow X_0$ belong to an appropriate class of controls, is \emph{approximately controllable} on $X$.  Here, approximate controllability of~\eqref{eqn:morecontrolsol} means that for every $\epsilon, t >0, u_0, v \in X$ there exist controls $h_0, h_1:[0, \infty) \rightarrow X_0$ such that the solution $u(\cdot, u_0)$ of~\eqref{eqn:morecontrolsol} with $u(0,u_0)=u_0$ has $\| u(t,u_0) - v\| < \epsilon$.  Note that by simply setting $h_1\equiv 0$, we retain the original control problem~\eqref{eq:abs:evo}.  Thus this principle is important because~\eqref{eqn:morecontrolsol} has more degrees of freedom (in terms of control) even though both control problems, \eqref{eq:abs:evo} and \eqref{eqn:morecontrolsol}, are equivalent in this sense.        
\item[(II)]  The \emph{Convexification Principle}.  Let $\tilde{X}_1\subseteq X$ denote the finite-dimensional subspace generated by elements of the form
\begin{align*}
h_0 + N_M(h_1, h_2,\ldots, h_M).
\end{align*}      
The Convexification Principle states that~\eqref{eqn:morecontrolsol} is approximately controllable by the $X_0$-valued controls $h_0, h_1$ if and only if the original control problem~\eqref{eq:abs:evo} is approximately controllable by an $\tilde{X}_1$-valued control.  Note here that $\tilde{X}_1 \supseteq X_0$.  Thus provided $\tilde{X}_1 \supsetneq X_0$, we have gained more control directions over the original system via equivalence.  
\item[(III)]  The \emph{Saturating Property}.  This simply states that equivalence in (I) and (II) is transitive so that the process can be iterated, producing an increasing family of subspaces 
\begin{align*}
\tilde{X}_1 \subseteq \tilde{X}_2 \subseteq \cdots \subseteq \tilde{X}_n \subseteq \cdots
\end{align*}       
such that if $\cup_n \tilde{X}_n$ is dense in $X$, then the original control problem~\eqref{eq:abs:evo} is approximately controllable. 
\end{itemize}
The approach adopted here is different in the sense that explicit scalings are used to generate directions along which the dynamics can move in short bursts of time.  For the two scalings used above, we will see in Section~\ref{sec:relhormander} that the subspaces $X_n, \, n=1,2,\ldots$, generated in~\eqref{eq:brak:gen} precisely coincide with the subspaces produced iteratively in step (III).  Furthermore, the explicit nature of the method not only allows us to shed light on how the equivalence in (I) and the directions in (II) arise but it also allows us to bypass showing the Extension Principle and Convexification in each equation altogether.  Later in Remark~\ref{rem:diffexact}, we will highlight another important difference between the two approaches which allows us to induce simultaneous approximate control on $X$ and exact control on $\pi(X)$, $\pi:X\rightarrow X$ denoting a continuous projection onto a finite-dimensional subspace of $X$.  This is done through the combined use of the explicit scalings and a new, refined notion of saturation called \emph{uniform saturation} defined in Section~\ref{sec:exactcontrol}.      
\end{remark}

\subsection{Saturation}
It is clear that there is a tantalizing connection between the scaling arguments \eqref{eq:frst:scalling} and \eqref{eq:m:scl}
and the task of generating controls which approximate points of the form appearing in \eqref{eq:n:reach:pts}.  
However, there are a few issues which prevent us from directly concluding \eqref{eq:approx:cont} from \eqref{eq:frst:scalling} and \eqref{eq:m:scl} under a density condition like \eqref{eq:den:brak:prelim}.  First, notice that in order to iterate our strategy we quickly end up with a horrific tangle of multiple time scales.  Another problem is that the rescaling strategy leading to \eqref{eq:frst:scalling} and to  \eqref{eq:m:scl} are more conducive to studying the 
time relaxed sets defined by 
\begin{align}
  A_{\mathcal{F}}(u, \leq t) := \bigcup_{s \leq t}  A_{\mathcal{F}}(u, s)  = \{  \Psi_{t_m}^m \cdots \Psi_{t_1}^1 u: \Psi^\ell \in \mathcal{F}, \ell=1,2, \ldots, m, \text{ and } \textstyle{\sum}_{i=1}^m t_i  \leq t \}.
  \label{eq:rlx:acs}
\end{align}
It is thus clear that further arguments are needed to mediate between points lying in $A_{\mathcal{F}}(u, \leq t)$ to those lying in the more restricted sets
$A_{\mathcal{F}}(u, t)$.

The notion of \emph{saturation} addresses these dual considerations and more.  Let us begin with the observation that the relaxed accessibility sets \eqref{eq:rlx:acs} provide us with a way to place a partial ordering on $\mathcal{S}$.  Given $\mathcal{F}, \mathcal{G} \subseteq \mathcal{S}$ we will say that $\mathcal{F}$ \emph{subsumes} $\mathcal{G}$, denoted by
$\mathcal{G}\preccurlyeq \mathcal{F}$, if $$\overline{A_{\mathcal{G}}(u, \leq t)} \subseteq \overline{A_{\mathcal{F}}(u, \leq t)}$$ for every $u \in X$ and $t > 0$.
On the other hand we say that two collections of semigroups  $\mathcal{F}, \mathcal{G} \subseteq \mathcal{S}$  are \emph{equivalent}, denoted by $\mathcal{F} \sim \mathcal{G}$, 
if both $\mathcal{G}\preccurlyeq \mathcal{F}$ and
$\mathcal{F}\preccurlyeq \mathcal{G}$.    

As we will see, it is not hard to show that $\mathcal{G} \preccurlyeq \mathcal{F}$ if and only if given any $\Psi \in \mathcal{G}$, $u \in X$ and any $\eps, t>0$
there exists  $\Phi^1, \ldots,   \Phi^n \in \mathcal{F}$ and $t_1 + \cdots + t_n \leq t$ so that
\begin{align}
    \| \Phi_{t_n}^n  \cdots  \Phi_{t_1}^1 u - \Psi_t u \|  < \eps.
    \label{eq:app:char}
\end{align}
This characterization \eqref{eq:app:char} allows us to consider `one scaling at a time' as follows.  
Let 
\begin{align*}
  \mathcal{G}_0 :=  \{ \rho^\xi: \xi \in X_0\} \cup \mathcal{F}_0, 
\end{align*}
and for $k \geq 1$ take
\begin{align*}
  \mathcal{G}_{k} :=\{  \rho^{\xi} : \xi \in X_{k} \} \cup \mathcal{F}_0
\end{align*}
where $X_k$ is defined as above in \eqref{eq:brak:gen}.
Observe that  \eqref{eq:frst:scalling} combined with \eqref{eq:app:char} shows that 
\begin{align}
  \mathcal{G}_0 \preccurlyeq \mathcal{F}_0,
  \label{eq:0th:gen:sat}
\end{align}
where $\mathcal{F}_0$ represent the solutions of our original system \eqref{eq:abs:evo} under constant controls as defined above in \eqref{eq:base:sg}.  Similarly combining \eqref{eq:m:scl} and \eqref{eq:app:char} implies that
\begin{align}
\mathcal{G}_k \preccurlyeq \mathcal{G}_{k-1},
  \label{eq:Mth:gen:sat}
\end{align}
for every $k \geq 1$. 

Let us now see how combining the observations in \eqref{eq:0th:gen:sat} and  \eqref{eq:Mth:gen:sat} now allows us to conclude that
\begin{align}
  \overline{A_{\mathcal{F}}(u, \leq t)} = X
  \label{eq:sat:span:con}
\end{align}
for any $u \in X$ and $t > 0$ under the density condition \eqref{eq:den:brak:prelim}.
Indeed, the characterization of $\mathcal{G} \preccurlyeq \mathcal{F}$ above in \eqref{eq:app:char} allows us
to conclude that if 
\begin{align*}
  \mathcal{H}_i \preccurlyeq \mathcal{F}
\end{align*}
for some collection $\mathcal{H}_i$ of subsets of $\mathcal{S}$, then
\begin{align*}
  \mathcal{F} \sim \mathcal{F}  \cup \bigcup_i \mathcal{H}_i.
\end{align*}
In particular it is clear that the \emph{saturate} of $\mathcal{F}$ defined as
\begin{align*}
  \text{Sat}(\mathcal{F}) := \bigcup_{ \mathcal{H} \preccurlyeq \mathcal{F}} \mathcal{H}
\end{align*} 
satisfies $\text{Sat}(\mathcal{F}) \sim \mathcal{F}$.  Thus in particular
we find that \eqref{eq:0th:gen:sat} and \eqref{eq:Mth:gen:sat} imply
\begin{align*}
   \{ \rho^{\xi}: \xi \in X_\infty \}  \preccurlyeq  \text{Sat}(\mathcal{F}_0) \sim \mathcal{F}_0 , 
\end{align*}
and hence \eqref{eq:sat:span:con} follows from \eqref{eq:den:brak:prelim}. 

Of course \eqref{eq:sat:span:con} does not immediately imply the exact time 
approximate controllability condition \eqref{eq:approx:cont} is satisfied.
This is due to the fact that we lack
precise control over the time at which we get close to the target
$v_0$.  Nevertheless, it turns out that we can show that
\eqref{eq:sat:span:con} implies \eqref{eq:approx:cont} in a very
general setting.  The argument which establishes this time conversion is roughly the following.  Given $t>0$, starting from $u_0\in X$ we can get arbitrarily close to a desired target $v\in X$ at some time $0<s<t$.  We can then bounce back and forth between this neighborhood and other values in $X$ to make up the remaining time $t-s$. For further details, see Lemma~\ref{lem:ex:tm:conv}.

\subsection{Exact Control on Projections: Uniform Saturation, Fixed
  Point Arguments}
 
The arguments sketched so far provide a broadly applicable approach to obtaining
approximate controllability at a fixed time $t > 0$.  However, in order to simultaneously provide an approximate control on $X$, condition \eqref{eq:approx:cont}, and exact control on $\pi(X)$  as in~\eqref{eq:exact:cont}, further refinements of the saturation formalism are needed.  As already noted we are mainly interested in the situation where $\pi:X\rightarrow X$ is a continuous projection onto a finite-dimensional subspace of $X$.

Below in Section~\ref{sec:exactcontrol} we introduce the notion of \emph{uniform saturation}
which essentially guarantees continuity in our control with respect to changes
in the initial condition and target point.   This continuity is used conjunction with the Brouwer fixed
point theorem to infer \eqref{eq:exact:cont} when $\pi$ has finite-dimensional range.  
See Theorem~\ref{thm:exact:cont:abs} for our precise formulation. Note that we use the term
`uniform saturation' since we require scaling approximations to hold
uniformly over compact subsets of initial data and compact subsets of
the control parameter space.  This uniformity allows us to transfer
continuity from one approximation to the next.

While requiring uniformity does complicate the presentation of the saturation formalism, 
the needed estimates at the level of the PDE do not change much.  Specifically we will see that it is sufficient
to replace \eqref{eq:frst:scalling} with
\begin{align}
  \lim_{\lambda \to + \infty} \sup_{u_0 \in K, \alpha \in \tilde{K}} \|  \Phi^{\lambda  \alpha\cdot\sigma }_{t/\lambda} u_0  -  \rho^{\alpha\cdot \sigma}_t u_0 \| = 0,
  \label{eq:uni:scale:b}
\end{align}
for any compact $K \subseteq X, \tilde{K} \subseteq \RR^n$. Similarly, \eqref{eq:m:scl} needs to be extended to
\begin{align}
\lim_{\lambda \to + \infty} \sup_{u_0 \in K, g \in \tilde{K}} 
	\|   \rho^{- \lambda^2 g}_{1/\lambda} \Phi^0_{t/\lambda^M} \rho^{\lambda^2 g}_{1/\lambda}  u_0 - \rho^{-N(g) }_t u_0 \| = 0,
	\label{eq:uni:scale:it}
\end{align}
over any compact sets $K, \tilde{K} \subseteq X$.  We may expect such bounds to follow from \eqref{eq:rescal:0} or  \eqref{eq:rescal:m}, by similar estimates for any reasonably well-behaved equations of the form \eqref{eq:abs:evo}.  

\begin{remark}
\label{rem:diffexact}
The previous paragraph highlights another difference between the control theoretic approach developed here and the Agrachev-Sarychev approach.  In particular, using the approach developed in our paper, one does not need to prove approximate controllability and then prove simultaneous approximate control on $X$ and exact control on $\pi(X)$ for a given finite-dimensional projection $\pi$.  Rather, the stronger form of controllability follows immediately by the strength of the explicit scaling estimates.  In other words, one bypasses this step when estimates such as \eqref{eq:uni:scale:b} and \eqref{eq:uni:scale:it} are satisfied, so long as a dense set of directions can be generated by iterating the scaling estimates.        
\end{remark}

\subsection{Further Remarks on Spanning Conditions}
\label{sec:wHor:cond}

We finally return to the discussion of the sequence of approximating spaces $X_k$ defined above in \eqref{eq:brak:gen}.  
As already mentioned, the scope of algebraic conditions covered by this set up is wider than it at first appears.
First we will show that condition~\eqref{eq:den:brak:prelim} is equivalent to an infinite-dimensional analogue of 
H\"{o}rmander's Condition introduced and employed in~\cite{HM09}.  We then conclude this subsection with some remarks
showing that in certain cases when the degree of the polynomial $N$ in \eqref{eq:abs:evo} is even, 
the subspaces $X_k$ can still be produced using the scaling and saturation arguments above.  
Both of these observations will be crucially used for the examples in Section~\ref{sec:examples} below.

\subsubsection{Relationship to H\"{o}rmander's Condition}
\label{sec:relhormander}
To introduce this infinite-dimensional version of H\"{o}rmander's Condition, starting from 
\begin{align*}
   \tilde{X}_0 := \mbox{span} \{  \sigma_k : k \in
  \mathcal{Z} \}
\end{align*}
for $n \geq 1$ let 
\begin{align*}
   \tilde{X}_n := \mbox{span} \left\{ \tilde{X}_{n-1} 
                                                   \cup 
                                      \{ N_M(g_1, \ldots, g_M) : g_j \in
                                            \tilde{X}_{n-1}\}\right\} 
\end{align*}
where we are assuming that $N_M(g_1, \ldots, g_M) \in X$ whenever $g_1,g_2, \ldots, g_M \in X_n$ for some $n$.  
\begin{definition}\label{def:hor:cond}
We say that $(N_M, \sigma)$ satisfies \emph{H\"{o}rmander's Condition on} $X$ if
\begin{align}
   \overline{\bigcup_{n \geq 0}   \tilde{X}_n}  = X.
  \label{eq:Hors:Cond}
\end{align}  
\end{definition}

We now state and prove the following proposition giving equivalence of condition~\eqref{eq:den:brak:prelim} and H\"{o}rmander's condition.  

\begin{proposition}
\label{prop:horeq}
We have that $X_k= \tilde{X}_k$ for all $k\geq 0$.  Consequently, condition~\eqref{eq:den:brak:prelim} is satisfied if and only if $(N_M, \sigma)$ satisfies H\"{o}rmander's condition on $X$.  
\end{proposition}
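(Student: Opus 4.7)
My plan is to proceed by induction on $k$, with the polarization identity for symmetric multilinear forms as the single nontrivial ingredient. Its role is precisely to bridge the gap between the diagonal values $N_M(g) = N_M(g,\ldots,g)$ that generate the spaces $X_k$ and the full off-diagonal values $N_M(g_1,\ldots,g_M)$ that generate the spaces $\tilde{X}_k$.

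The base case $X_0 = \tilde{X}_0$ is immediate from the definition in \eqref{eq:dir:ass} together with the definition of $\tilde{X}_0$; both equal $\mathrm{span}\{\sigma_k : k \in \mathcal{Z}\}$. For the inductive step, assuming $X_{k-1} = \tilde{X}_{k-1}$, the inclusion $X_k \subseteq \tilde{X}_k$ is essentially tautological: every generator $N_M(g)$ of $X_k$ is the diagonal case $N_M(g,\ldots,g)$ of a generator of $\tilde{X}_k$, and the remaining generators from $X_{k-1}$ already lie in $\tilde{X}_{k-1} \subseteq \tilde{X}_k$.

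The content of the lemma lies in the reverse inclusion $\tilde{X}_k \subseteq X_k$. Here I would invoke the standard polarization identity for a symmetric $M$-linear form, namely
\begin{align*}
N_M(g_1,\ldots,g_M) \;=\; \frac{1}{M!}\sum_{S \subseteq \{1,\ldots,M\}} (-1)^{M-|S|}\, N_M\!\left(\sum_{i \in S} g_i\right),
\end{align*}
obtained by extracting the mixed partial $\partial_{t_1}\cdots\partial_{t_M}$ at $t = 0$ of $N_M(\sum_i t_i g_i, \ldots, \sum_i t_i g_i)$ via finite differences. Given any $g_1,\ldots,g_M \in \tilde{X}_{k-1} = X_{k-1}$, each partial sum $\sum_{i \in S} g_i$ again lies in the linear subspace $X_{k-1}$, and so each diagonal value appearing on the right-hand side is a generator of $X_k$. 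The displayed identity then exhibits $N_M(g_1,\ldots,g_M)$ as a finite linear combination of generators of $X_k$, which yields $\tilde{X}_k \subseteq X_k$ and completes the induction.

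The equivalence of \eqref{eq:den:brak:prelim} and \eqref{eq:Hors:Cond} is then a bookkeeping step: both $(X_k)_{k \geq 0}$ and $(\tilde{X}_k)_{k \geq 0}$ are nondecreasing sequences of subspaces with $X_k = \tilde{X}_k$, so $X_\infty = \bigcup_{k \geq 1} X_k$ coincides with $\bigcup_{n \geq 0} \tilde{X}_n$ and their closures agree. I do not anticipate a serious obstacle; the only care point is ensuring that the polarization identity is invoked correctly and that $\tilde{X}_{k-1}$ is used as a vector subspace (not merely as a generating set), which is precisely the reason the identity produces elements of $X_k$ rather than something larger. Note also that the argument uses only the symmetry and $M$-linearity of $N_M$ guaranteed in \eqref{eq:m:l:form}, and no analytic structure on $X$.
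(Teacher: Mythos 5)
Your proof is correct, and it takes a genuinely different route to the reverse inclusion $\tilde{X}_k \subseteq X_k$ than the paper does. The paper adapts the argument of Lemma~6 of \cite{JK_85}: it fixes $g,h \in X_{k-1}$, observes that $X_{k-1}(g,h) := \spa\{N_M(g+\alpha h): \alpha \in \R\}$ is a finite-dimensional (hence closed) subspace of $X_k$, and extracts the single off-diagonal entry $M\cdot N_M(g,\ldots,g,h)$ as the limit of the difference quotients $\tfrac{1}{\lambda}(N_M(g+\lambda h)-N_M(g))$ as $\lambda \to 0$; it then iterates this slot by slot to depolarize fully. Your polarization identity accomplishes all $M$ slots at once through an exact finite linear combination of diagonal values $N_M(\sum_{i\in S} g_i)$ with $\sum_{i\in S} g_i \in X_{k-1}$, so no limit is taken and no topology on $X$ is used — in particular you do not need the closedness of finite-dimensional subspaces, which is the one analytic ingredient in the paper's argument. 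The trade-off is essentially stylistic: the paper's limiting argument mirrors the Lie-bracket/derivative computations that recur throughout the scaling analysis, while yours is purely algebraic and arguably cleaner. Your handling of the base case, the easy inclusion, and the final bookkeeping for the equivalence with H\"ormander's condition all match the paper.
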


\begin{proof}
Clearly, $X_0= \tilde{X}_0$.  Also, for $k\geq 1$, $X_k \subseteq \tilde{X}_k$.  To see the opposite inclusion for $k\geq 1$, we adapt
the argument in Lemma~6 of \cite{JK_85}.  Fix $g,h \in X_{k-1}$ and consider $$X_{k-1}(g,h):=\text{span}\, \{ N_M(g + \alpha h) \, : \, \alpha \in \R\}.$$ 
Observe that since $N_M$ is multilinear of degree $M$, $X_{k-1}(g, h)$ is a finite-dimensional subspace of $X_k$, hence is closed.  In particular, since the sequence      
\begin{align*}
\bigg\{\frac{1}{\lambda}(N_M(g+ \lambda h) - N_M(g))\bigg\}_{\lambda \in (0, 1]} \subseteq X_{k-1}(g,h)
\end{align*}   
converges as $\lambda \rightarrow 0$ to $M \cdot N_M(g,g,\ldots, g, h)$, 
we conclude that $N_M(g,g,\ldots, g, h) \in X_{k}$ for all $g, h \in X_{k-1}$.  Recall here that the multilinear operator $N_M$ has been symmetrized.  This argument can then be iterated to see that $N_M(h_1, h_2, \ldots, h_M) \in X_{k}$  for all $h_i \in X_{k-1}$, allowing us to conclude~$\tilde{X}_k \subseteq X_k$.        
\end{proof}

\subsubsection{Even Degree Nonlinearities}
\label{rem:even}
Let us next make some remarks concerning even degree polynomial nonlinearity $N$ in \eqref{eq:abs:evo}.   
Specifically we introduce conditions on $N$ applicable to the 2D Navier-Stokes equations, the 3D Euler Equations 
and the 2D Boussinesq equations (in each case in the absence of boundaries)
considered below in Section~\ref{sec:examples}.

Suppose that the leading-order nonlinearity $N_2$ is a bilinear form and assume we 
have countable set of elements $\{ e_j \}_{j\in \N}\subseteq X$ satisfying the following conditions:
\begin{itemize}
\item[(1)] $\text{span}\{e_j \,: \, j=1,2, \ldots, n\} \subseteq \text{span}\{ \sigma_k  \, : \, k \in \mathcal{Z}\}$ for some $n\geq 1$;    
\item[(2)] We have the cancellation property
\begin{align}
  N_2(e_j, e_j)=0  \qquad \text{ for every } j \in \N;
  \label{eq:bi:form:can}
\end{align}
\item[(3)] For all $j,k \in \N$ there exists a natural number $N(j, k)$ such that  
\begin{align}
  N_2(e_j, e_k) \subseteq \text{span}\{ e_\ell \, : \, \ell \leq N(j, k) \}.  
  \label{eq:bi:form:span}
\end{align}
\end{itemize}
To see how these conditions may be satisfied see for example \eqref{eq:rel:deg:NSE} below.

In this case defining $X_0 = \text{span}\{ e_j \, : \, j =1,2, \ldots, n \}$ and $X_k$, $k\geq 1$, as 
\begin{align*}
X_k = \text{span} \Big\{ X_{k-1} \cup \{ N_2(g) \, : \, g \in X_{k-1} \}\Big\},
\end{align*}
we now see that each set $X_k$ can be realized using the two scaling arguments above.
In this regard, the key observation is that for any $\alpha \in \R$ and $j, k\in \N$ the first part of condition (2) implies
\begin{align*}
N_2(\alpha e_j + e_k)= 2 \alpha N_2(e_j, e_k) .
\end{align*}
Hence by condition (1) and the second part of condition (2),
inductively the $X_k$ can be obtained using the scaling and saturation
arguments above by choosing the $\alpha$ to have the correct sign
(either positive or negative).  In other, more imprecise words, the
nonlinearity $N_2$ is `behaving like' an odd degree polynomial.  In
the finite-dimensional setting, this behavior is captured in the
notion of \emph{relative degree} introduced and studied
in~\cite{HerMat15}.  See also~\cite{Rom_04}.


\section{Saturation in Infinite Dimensions}
\label{sec:sat}

We turn now to provide a rigorous treatment of saturation in the sprit of the framework developed by Jurdjevic and Kupka~\cite{JK_81, JK_85, Jur_97}.  
Much of the formalism developed here requires little underlying structure of the phase space, and we therefore present many of the results in the section in the general setting 
of a metric space.  After introducing the rigorous setup in Section~\ref{eq:gen:sat:not}, we turn to proving some
results about saturation that are crucial elements for establishing \eqref{eq:approx:cont} in the forthcoming examples
in Section~\ref{sec:examples}.  This subsection concludes with a `conversion lemma' (Lemma~\ref{lem:ex:tm:conv})
which allows us to translate controllability on relaxed time sets a la (\ref{eq:sat:span:con}) to exact time controllability (\ref{eq:approx:cont}), (\ref{eq:exact:ass}).   
The final subsection (Section~\ref{sec:exactcontrol})
introduces a more refined version of saturation, called \emph{uniform saturation}, which also tracks the continuity of approximations with respect to parameters.  This notion is crucial for the main result of this section, Theorem~\ref{thm:exact:cont:abs}, which is used in conjunction with Lemma~\ref{lem:sat} and Lemma~\ref{lem:ex:tm:conv}
to establish establish exact controllability for finite-dimensional projections via \eqref{eq:exact:cont} in the examples treated below in Section~\ref{sec:examples}.

\subsection{General Notions For Controllability}\label{eq:gen:sat:not}
Let $(X, d)$ be a metric space.  We fix an additional point $\death$, called the \emph{explosive state}, not belonging to $X$.  This is the `death state' where locally-defined semigroups will live at times after they fail to exist in $X$.       
 
\begin{definition}\label{def:cont:sg}
We call a mapping $(t,u) \mapsto \Phi_t u : [0,\infty) \times X \rightarrow X \cup \{\death\} $ a \emph{continuous local semigroup} on $(X,d)$ if, 
for every $u\in X$, there exists $T_u \in (0, \infty]$ such the following conditions are satisfied: 
\begin{itemize}
\item[(i)] For $t \in [0, T_u)$, $\Phi_t u \in X$ and for $t \geq T_u$, $\Phi_t u= \death$.   
\item[(ii)] $\Phi_0 u =u$ and for all $t, s\in [0, T_u)$ with $t+s \in [0, T_u)$, we have that $t\in[0, T_{\Phi_s u})$ and $\Phi_{t+s}u= \Phi_t \Phi_s u$.
\item[(iii)]  For all $t\in [0, T_u)$ and all $\eps >0$, there exists $\delta >0$ such that whenever $(t', u') \in [0, \infty) \times X$ satisfies 
\begin{align*}
|t-t'| + d(u, u') < \delta
\end{align*}
we have that $t' \in [0, T_{u'})$ and 
\begin{align*}
d(\Phi_t u, \Phi_{t'}u') < \eps.  
\end{align*}
\end{itemize}
\end{definition}

For notational convenience, we will use $\Phi$ to denote a continuous local semigroup $(t,u) \mapsto \Phi_t u:[0, \infty)\times X \rightarrow X \cup \{\death\}$.   
We will say that $\Phi$ is \emph{global}  if $T_u = \infty$ for every $u \in X$.  Throughout, $\mathcal{S}$ will denote the set of all such continuous local semigroups on $X$.  

\begin{remark}
Some of the semigroups we will work with are only known to be defined locally in time, e.g. the 3D Euler equation 
considered in Section~\ref{sec:e:nse:ex}.  Thus, when we write the composition 
\begin{align*}
\Phi^n_{t_n} \Phi^{n-1}_{t_{n-1}} \cdots \Phi_{t_1}^1 u
\end{align*}
below, it is implicitly assumed that $\Phi^j_{t_j} \Phi^{j-1}_{t_{j-1}} \cdots \Phi_{t_1}^1 u \in X$ for all $j=1,2, \ldots, n$.  
\end{remark}

Given $\mathcal{F} \subseteq \mathcal{S}$ arbitrary, we now introduce the \emph{accessibility sets} corresponding to $\mathcal{F}$, which are simply the points in 
$X$ that can be reached by iteratively composing elements in $\mathcal{F}$.  
\begin{definition}\label{def:acc:cont}
Consider $\mathcal{F} \subseteq \mathcal{S}$.   
\begin{itemize}
\item[(i)] For $u \in X$ and $t>0$, define
\begin{align}
  A_{\mathcal{F}}(u, t) = \big\{\Phi_{t_n}^n  \Phi_{t_{n-1}}^{n-1} \cdots  \Phi_{t_1}^1 u \,: \, \Phi^j \in \mathcal{F}, \,\textstyle{\sum} t_j = t \big\}
  \label{eq:ex:ass:sets}
\end{align}
and take
\begin{align*}
A_{\mathcal{F}}(u, \leq t) = \bigcup_{0<s\leq t} A_{\mathcal{F}}(u, s).  
\end{align*}
These are the \emph{accessibility sets} of $\mathcal{F}$.
\item[(ii)] We say $\mathcal{F}$ is \emph{approximately controllable on $X$} if for any $t > 0$ and any $u \in X$
\begin{align*}
\overline{A_{\mathcal{F}}(u, t)} = X  
\end{align*}
where for $A\subseteq X$, $\overline{A}$ is the closure of $A$.    Equivalently, $\mathcal{F}$ is approximately controllable on $X$ if for any $u, v \in X$ and any $t, \eps > 0$, there exist positive times $t_1, \ldots, t_n$ and elements $\Phi^1, \ldots, \Phi^n \in \mathcal{F}$ such that  $t_1 + \cdots + t_n = t$ and
\begin{align*}
   d( \Phi_{t_n}^n  \Phi_{t_{n-1}}^{n-1} \cdots  \Phi_{t_1}^1 u , v) < \eps.
\end{align*}
\item[(iii)] Suppose that $\pi: X \to Y$ is continuous where $Y$ is another metric space (which we will usually take to be a subset of $X$).   We say that $\mathcal{F}$ is  
	\emph{approximately controllable on $X$ and exactly controllable on $\pi(X)$} if for any $u, v \in X$ and any $\eps, t > 0$,
	there exist positive times $t_1, \ldots, t_n$, $\Phi^1, \ldots, \Phi^n \in \mathcal{F}$ such that $t_1 + \cdots + t_n = t$,  
	\begin{align*}
	  \pi( \Phi_{t_n}^n  \Phi_{t_{n-1}}^{n-1} \cdots  \Phi_{t_1}^1 u ) = \pi(v) \quad \text{ and }\quad   d( \Phi_{t_n}^n  \Phi_{t_{n-1}}^{n-1} \cdots  \Phi_{t_1}^1 u , v) < \eps.
	\end{align*}
\end{itemize}
\end{definition}

\begin{remark}
When $X$ is a Fr\'echet space and $\pi$ is a continuous linear projection onto a finite-dimensional subspace, the notion introduced in Definition~\ref{def:acc:cont} (iii) reduces 
to exact controllability on finite-dimensional projections.  This is the setting in which we provide criteria for establishing (iii) below in Section~\ref{sec:exact:cont} which is based on establishing approximate controllability with continuous dependence on the target point.  Note that this notion of controllability in (iii) above is a slight generalization of the usual notion of simultaneous approximate controllability on $X$ and exact control on a given finite-dimensional projection on $X$ as in~\cite{AgrachevSarychev2005, AgrachevSarychev2006,
  Shirikyan2007, Shirikyan2008, Shirikyan2010, Nersisyan2010,
  Nersesyan2015} since $\pi$ here can be a given continuous mapping and not just a finite-dimensional projection.  
\end{remark}

\subsection{Saturation} \label{sec:sat:lems}
The scaling arguments introduced above in Section~\ref{sec:overview}, in particular in \eqref{eq:rescal:0} and \eqref{eq:rescal:m}, do not immediately yield approximate controllability due to the lack of control over the time parameter.  Thus, in order to identify points in the sets  
\begin{align}
  \overline{A_{\mathcal{F}}(u, t)}, \,\, u\in X\text{ and }t>0, 
  \label{eq:exact:set:rig}
\end{align}
we first determine points which belong to the time-relaxed sets 
\begin{align*}
\overline{A_{\mathcal{F}}(u, \leq t)},\,\, u \in X\text{ and }t>0,
\end{align*}
with the aid of saturation.  Under suitable circumstances, for example when $$\overline{A_{\mathcal{F}}(u, \leq t)} = X$$ for all $u\in X$ and $t>0$,  we can then employ general arguments to obtain information about the exact time sets. This is captured in Lemma~\ref{lem:ex:tm:conv} of this section. 

To see how this works in a general context, we need to introduce some further definitions.   
\begin{definition}
\label{def:sat}
Let $\mathcal{F}, \mathcal{G} \subseteq \mathcal{S}$. 
 \begin{itemize}
 \item[(i)] We say that $\mathcal{G}$ \emph{subsumes} $\mathcal{F}$, written as $\mathcal{F} \preccurlyeq \mathcal{G}$, if 
 \begin{align}
  \overline{A_{\mathcal{F}}(u, \leq t)} \subseteq   \overline{A_{\mathcal{G}}(u, \leq t)},
  \label{eq:sg:inclusion}
\end{align}
for all $u\in X$ and $t>0$, where we recall that $\overline{A}$ denotes the closure of $A\subseteq X$.  
\item[(ii)]
We say that $\mathcal{F}$ is \emph{equivalent} to $\mathcal{G}$, denoted by $\mathcal{F}\sim \mathcal{G}$, if both
$\mathcal{G} \preccurlyeq \mathcal{F}$ and $\mathcal{F} \preccurlyeq \mathcal{G}$.
\item[(iii)] The \emph{saturate of $\mathcal{F}$}, denoted by $\mbox{Sat}(\mathcal{F})$, is defined by
\begin{align*}
   \mbox{Sat}(\mathcal{F}) = \bigcup_{ \mathcal{G} \preccurlyeq \mathcal{F}} \mathcal{G}.
\end{align*} 
\end{itemize}
\end{definition}

The next Lemma gives a characterization of equivalence for collections of semigroups.  This will provide a basic formulation which we will use in applications 
below.  Moreover this formulation is the basis for a generalization to the uniform setting introduced below in Subsection~\ref{sec:exactcontrol}.
\begin{lemma}[Saturation Lemma]\label{lem:sat}  For any collections $\mathcal{F}, \mathcal{G} \subseteq \mathcal{S}$,
$\mathcal{F} \preccurlyeq \mathcal{G}$ if and only if
for every $\Psi \in \mathcal{F}$, $u \in X$ and $\eps, t>0$ with $\Psi_t u\in X$, there exists  $\Phi^1, \ldots,   \Phi^n \in \mathcal{G}$ and positive times $t_1, \ldots, t_n$
such that $\sum t_j  \leq t$ and
\begin{align}
     d( \Phi_{t_n}^n  \Phi_{t_{n-1}}^{n-1} \cdots  \Phi_{t_1}^1 u , \Psi_t u )  < \eps.
     \label{eq:equiv:approx}
\end{align}
Moreover, given any collection
$ \mathcal{H}^i \subseteq \mathcal{S}$ such that $\mathcal{H}^i \preccurlyeq  \mathcal{G}$ for every $i$, then  $\mathcal{G} \sim \bigcup_i \mathcal{H}^i \cup \mathcal{G}$.  
In particular, $\mathcal{F} \sim   \mbox{\emph{Sat}}(\mathcal{F}).$
\end{lemma}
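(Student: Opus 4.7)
The plan is to first establish the biconditional characterization of $\preccurlyeq$, then derive the remaining equivalence statements as consequences.

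The forward direction $(\Rightarrow)$ of the characterization is essentially immediate: if $\mathcal{F} \preccurlyeq \mathcal{G}$, then for any $\Psi \in \mathcal{F}$, $u \in X$ and $t > 0$ with $\Psi_t u \in X$, the point $\Psi_t u$ itself lies in $A_{\mathcal{F}}(u, \leq t) \subseteq \overline{A_{\mathcal{G}}(u, \leq t)}$ by definition of $\preccurlyeq$, which directly yields the approximation \eqref{eq:equiv:approx}.

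For the backward direction $(\Leftarrow)$, I would proceed by induction on the length $m$ of a composition $\Psi^m_{s_m} \cdots \Psi^1_{s_1} u$ representing an arbitrary element of $A_{\mathcal{F}}(u, \leq t)$. The base case $m = 1$ is exactly the hypothesis. For the inductive step I would: (i) use the hypothesis to approximate $v_1 := \Psi^1_{s_1} u$ to any prescribed tolerance $\delta > 0$ by some $v_1' \in X$ realized as a $\mathcal{G}$-composition from $u$ of total time $\leq s_1$; (ii) invoke the joint continuity from Definition~\ref{def:cont:sg}(iii), applied inductively to the $m-1$ outer semigroups, to conclude that for $\delta$ sufficiently small $\Psi^m_{s_m} \cdots \Psi^2_{s_2} v_1'$ remains in $X$ and stays within $\eps/2$ of $\Psi^m_{s_m} \cdots \Psi^2_{s_2} v_1 = \Psi^m_{s_m} \cdots \Psi^1_{s_1} u$; and (iii) apply the inductive hypothesis to the length-$(m-1)$ $\mathcal{F}$-composition starting from $v_1'$ to produce a $\mathcal{G}$-composition of total time $\leq s_2 + \cdots + s_m$ approximating it to within $\eps/2$. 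Concatenating the two $\mathcal{G}$-compositions yields an approximation with total time $\leq \sum_j s_j \leq t$.

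For the secondary statements, the inclusion $\mathcal{G} \preccurlyeq \bigcup_i \mathcal{H}^i \cup \mathcal{G}$ is immediate from the definition of the accessibility sets. The reverse direction is where the characterization pays dividends: every single-term orbit $\Psi_t u$ with $\Psi \in \bigcup_i \mathcal{H}^i \cup \mathcal{G}$ can be approximated by a $\mathcal{G}$-composition of total time $\leq t$, either trivially (if $\Psi \in \mathcal{G}$) or by invoking the hypothesis $\mathcal{H}^i \preccurlyeq \mathcal{G}$ combined with the just-established characterization (if $\Psi \in \mathcal{H}^i$). The backward direction then gives $\bigcup_i \mathcal{H}^i \cup \mathcal{G} \preccurlyeq \mathcal{G}$, hence $\mathcal{G} \sim \bigcup_i \mathcal{H}^i \cup \mathcal{G}$. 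The claim $\mathcal{F} \sim \mathrm{Sat}(\mathcal{F})$ follows by taking $\{\mathcal{H}^i\}$ to range over all collections satisfying $\mathcal{H}^i \preccurlyeq \mathcal{F}$, noting that $\mathcal{F} \preccurlyeq \mathcal{F}$ places $\mathcal{F}$ itself inside $\mathrm{Sat}(\mathcal{F})$.

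The main obstacle is controlling the propagation of continuity in step (ii) of the inductive argument. The subtlety is that Definition~\ref{def:cont:sg}(iii) provides continuity only within the domain of existence, so when propagating the perturbation through $\Psi^2_{s_2}, \ldots, \Psi^m_{s_m}$ I must verify at each stage that the approximate composition remains in $X$ (rather than transitioning to the explosive state $\death$) on the relevant time interval. This is ultimately handled by the openness built into Definition~\ref{def:cont:sg}(iii), which simultaneously guarantees membership in the domain of existence and the desired closeness provided each preceding tolerance is chosen small enough; tracking these nested tolerances from the outermost semigroup inward is bookkeeping-intensive but conceptually routine.
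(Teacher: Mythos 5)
Your proof is correct, and the treatment of the forward direction and the secondary statements (subsumption closure, $\mathcal{F}\sim\text{Sat}(\mathcal{F})$) matches the paper essentially step for step. The one genuine difference is in the inductive step of the backward direction, and it is worth flagging because your choice is the more laborious of the two available. You peel off the \emph{innermost} factor: you first approximate $\Psi^1_{s_1}u$ by a $\mathcal{G}$-word, then in step (ii) you must push that $\delta$-perturbation forward through all $m-1$ remaining semigroups $\Psi^2_{s_2},\dots,\Psi^m_{s_m}$, which requires a nested chain of $m-1$ invocations of Definition~\ref{def:cont:sg}(iii) (with the attendant checks that each intermediate point stays in $X$ rather than hitting $\death$); only then do you apply the inductive hypothesis to the outer word from the perturbed base point $v_1'$. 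The paper instead peels off the \emph{outermost} factor: it uses continuity of the single semigroup $\Phi^n_{t_n}$ at the penultimate point $\prod_{i=1}^{n-1}\Phi^i_{t_i}u$ to fix one tolerance $\delta$, applies the inductive hypothesis to the inner $(n-1)$-fold word to land within $\delta$ of that penultimate point, and then applies the base-case hypothesis once more to replace the final $\Phi^n_{t_n}$. This way only one continuity invocation is needed per inductive step, the triangle inequality closes the argument immediately, and the ``nested tolerance'' bookkeeping you identify as the main obstacle disappears entirely. Both routes are sound and yield identical time budgets ($s_1 + (s_2+\cdots+s_m)\leq t$ versus $(\sum_{i<n}t_i) + t_n \leq t$), but if you rewrite, I would recommend the outer-peel decomposition: it trades your $O(m)$ chain of $\delta$'s in step (ii) for a single $\delta$, which is exactly the simplification that makes the argument transparent.
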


In the following proofs, we will frequently encounter expressions of the form
\begin{align*} 
\Phi_{t_n}^{n}  \Phi_{t_{n-1}}^{n-1} \cdots \Phi_{t_1}^{1} u
\end{align*}   
where $\Phi^j \in \mathcal{F}$.  As such, we will write the product above as $\prod_{i=1}^n \Phi_{t_i}^i u.$  Also, we introduce the notation
\begin{align}
\mathbf{\Phi}_s u := \sum_{j =1}^n
\Phi_{s- s_{j-1}}^j \Phi_{t_{j-1}}^{j-1} \cdots \Phi_{t_1}^1u \indFn{[s_{j-1}, s_{j})}(s)
\label{eq:comp:not}
\end{align}
where $s_0 =0$ and $s_j = \sum_{k = 1}^j t_j$ so that
\begin{align*}
\mathbf{\Phi}_s u   = \Phi^j_{s - s_{j-1}} \Phi_{t_{j-1}}^{j-1}  \cdots \Phi_{t_1}^1 u \quad \text{ when } \quad s_{j-1} \leq  s < s_{j}.
\end{align*}
We offer the abuse of notation $\mathbf{\Phi} \in \mathcal{F}$ when the context is clear.  

\begin{proof}[Proof of Lemma~\ref{lem:sat}]
It is clear that if $\mathcal{F} \preccurlyeq \mathcal{G}$, then the property 
in \eqref{eq:equiv:approx} holds for all $\Phi \in \mathcal{F}$.  Suppose now that the characterization leading to \eqref{eq:equiv:approx} is assumed.  To infer $\mathcal{F} \preccurlyeq \mathcal{G}$ we will prove that $A_{\mathcal{F}}(u, \leq t) \subseteq   \overline{A_{\mathcal{G}}(u, \leq t)}$ for any $u \in X$ and $t >0$.  Fix $\eps, t>0$ and $v\in A_{\mathcal{F}}(u, \leq t)$.  By hypothesis, there exists $\Phi^1, \Phi^2, \ldots, \Phi^n \in \mathcal{F}$ and times $t_1, t_2, \ldots, t_n >0$ with $\sum t_j \leq t$ and  
\begin{align*}
v = \prod_{i=1}^n \Phi_{t_i}^i u .  
\end{align*} 
By induction on $n\geq 1$, we will prove that there exists $\mathbf{\Psi} \in \mathcal{G}$ such that  $d( v, \mathbf{\Psi}_s u ) < \eps$ for some $s\leq \sum t_i$.  If $n=1$ in the product above, then by the hypothesis there is nothing to prove.  Supposing that $n\geq 2$ we may write the product as 
\begin{align*}
v = \prod_{i=1}^n \Phi_{t_i}^i u = \Phi^n_{t_n}  \prod_{i=1}^{n-1} \Phi^i_{t_i} u.  
\end{align*}
First, invoking the continuity of $\Phi^n_{t_n}$, we may pick $\delta >0$ such that for all $w\in X$: 
\begin{align}
d\bigg( w, \prod_{i=1}^{n-1} \Phi^{i}_{t_i} u \bigg)< \delta \,\,\,\,\text{ implies }\,\, \,\, d ( \Phi_{t_n}^n w, v)< \frac{\eps}{2}.  
  \label{eq:sat:lm:1}
\end{align}
By the inductive hypothesis, we may pick $\mathbf{\Psi}^1 \in \mathcal{G}$ such that  
\begin{align*}
  d \bigg(\mathbf{\Psi}_{s_1}^1 u ,\prod_{i=1}^{n-1} \Phi^{i}_{t_i} u \bigg)< \delta 
\end{align*}
for some $s_1\leq \sum_{i\leq n-1} t_i$.  Also by hypothesis and \eqref{eq:sat:lm:1}, we may pick $\mathbf{\Psi}^2 \in \mathcal{G}$ such that 
\begin{align*}
d \bigg(\mathbf{\Psi}^2_{s_2}\mathbf{\Psi}_{s_1}^1 u ,\Phi_{t_n}^n \mathbf{\Psi}_{s_1}^1 u\bigg)< \frac{\eps}{2}
\end{align*}
for some $s_2 \leq t_n$.  
The triangle inequality then implies 
\begin{align*}
d \bigg(\prod_{i=1}^n \Phi^i_{t_i} u, \mathbf{\Psi}^2_{s_2}\mathbf{\Psi}_{s_1}^1 u \bigg) \leq d \bigg(\prod_{i=1}^n \Phi^i_{t_i} u , \Phi^n_{t_n} \mathbf{\Psi}^1_{s_1} u\bigg) + d \bigg(\Phi^n_{t_n} \mathbf{\Psi}^1_{s_1} u , \mathbf{\Psi}^2_{s_2}\mathbf{\Psi}_{s_1}^1 u \bigg) < \frac{\eps}{2}+ \frac{\eps}{2}=\eps.  
\end{align*}
This finishes this part of the proof as $s_1+s_2\leq \sum t_i$.

To address the second property, it is obvious that $\mathcal{G} \preccurlyeq  \bigcup_i \mathcal{H}^i \cup \mathcal{G}$.  On the other hand
$ \bigcup_i \mathcal{H}^i \cup \mathcal{G} \preccurlyeq \mathcal{G}$ follows immediately from the characterization of containment given
by  \eqref{eq:equiv:approx}.   This completes the proof.

\end{proof}

We now state and prove the `Conversion Lemma' which allows us to convert between the relaxed sets and the exact time sets.  Its statement and proof are fundamentally different than in the works of Jurdjevic and Kupka~\cite{JK_81, JK_85, Jur_97} because we cannot rely on topological properties of the underlying space $X$.    

\begin{lemma}[Conversion Lemma]\label{lem:ex:tm:conv}
Suppose that $\mathcal{F} \subseteq \mathcal{S}$  and that $V\subseteq X$ is open with the property that 
\begin{align*}
V\subseteq \overline{A_{\mathcal{F}}(u, \leq t)},  
\end{align*}
for all $u\in V$, $t>0$.  Then, 
\begin{align*}
V\subseteq \overline{A_{\mathcal{F}}(u, t)},
\end{align*}  
for all $t>0$ and every $u\in V$.
\end{lemma}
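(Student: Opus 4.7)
My plan is to leverage the continuity of elements of $\mathcal{F}$ together with the relaxed-time hypothesis to assemble a composition of total time exactly $t$ ending near $v$. The key difficulty, and the reason this does not follow from the hypothesis by a one-line argument, is that the hypothesis only produces compositions with total time bounded above by $t$, and we have no control over exact times of the constituent pieces. To convert the inequality to equality, I will pad short trajectories by flowing along a fixed element of $\mathcal{F}$ for a variable amount of additional time, creating whole intervals of achievable total times, and then tune a sequence of intermediate compositions so that $t$ lands inside one of these intervals.

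Concretely, fix $u, v \in V$ and $\varepsilon, t > 0$. Without loss of generality $\mathcal{F}$ is nonempty (otherwise the hypothesis forces $V = \emptyset$) and $\varepsilon$ is small enough that the open ball $B_\varepsilon(v)$ satisfies $B_\varepsilon(v) \subseteq V$. Pick any $\Psi \in \mathcal{F}$; by the continuity axiom of Definition~\ref{def:cont:sg} applied at $(0, v)$, choose $\eta \in (0, \min(\varepsilon, t/2))$ so that $B_\eta(v) \subseteq V$ and $d(\Psi_s w, v) < \varepsilon$ for every $w \in B_\eta(v)$ and every $s \in [0, \eta]$. Using the hypothesis with initial point $u$ and time bound $\eta/4$, obtain a composition $\mathbf{\Phi}^{(0)}$ of total time $\tau_0 \in (0, \eta/4]$ ending at some $w_0 \in B_\eta(v)$. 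Inductively, for each $j \geq 1$, since $\Psi_\eta w_{j-1}$ lies in $B_\varepsilon(v) \subseteq V$ by the choice of $\eta$, a further application of the hypothesis with a small bound $\rho_j > 0$ (to be fixed below) supplies a composition $\mathbf{\Phi}^{(j)}$ of time $\tau_j \in (0, \rho_j]$ from $\Psi_\eta w_{j-1}$ to some $w_j \in B_\eta(v)$.

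Setting $T_k := \tau_0 + k\eta + \sum_{j=1}^k \tau_j$, the composition obtained by successively applying $\mathbf{\Phi}^{(0)}$, $\Psi_\eta$, $\mathbf{\Phi}^{(1)}$, $\Psi_\eta$, $\ldots$, $\mathbf{\Phi}^{(k)}$, and finally $\Psi_s$, has total time $T_k + s$ and endpoint $\Psi_s w_k \in B_\varepsilon(v)$ for every $s \in [0, \eta]$; hence the whole interval $[T_k, T_k + \eta]$ consists of achievable total times whose corresponding compositions land within $\varepsilon$ of $v$. To force $t$ into one of these intervals, set $m := \lfloor (t - \tau_0)/\eta \rfloor$: if $t > \tau_0 + m\eta$, I pick $\rho_1, \ldots, \rho_m$ small enough that $\sum_{j=1}^m \tau_j < t - \tau_0 - m\eta$, which places $t \in [T_m, T_m + \eta]$; in the boundary case $t = \tau_0 + m\eta$, making $\sum_{j=1}^{m-1} \tau_j < \eta$ instead places $t \in [T_{m-1}, T_{m-1} + \eta]$. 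In either case, the value $s := t - T_k \in [0, \eta]$ in the correct interval yields the desired element of $A_{\mathcal{F}}(u, t)$ within $\varepsilon$ of $v$, establishing $v \in \overline{A_{\mathcal{F}}(u, t)}$.

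The main obstacle is precisely this exact-time matching: the hypothesis delivers only an inequality for the total time, so I must manufacture intervals of achievable times (via continuity of $\Psi$) and simultaneously retain enough freedom in the bounds $\rho_j$ to slide the endpoints of these intervals until one of them covers $t$. A secondary subtlety is the boundary case where $t$ falls at the left endpoint of some interval $[T_m, T_m + \eta]$, forcing a retreat to the previous interval; handling both cases reduces to choosing the $\rho_j$ sufficiently small at the outset, and the continuity axiom ensures that every step of the construction produces a bona fide composition in $\mathcal{F}$ that remains defined.
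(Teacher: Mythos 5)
Your proof is correct and is essentially the paper's own ``pinball strategy'': reach a small ball around $v$ quickly via the relaxed-time hypothesis, exploit continuity of a fixed $\Psi \in \mathcal{F}$ to dwell near $v$ for a definite time per bounce, and interleave tiny correction compositions (whose total duration you keep below one dwell length) so that the exact time $t$ is attained while the endpoint stays within $\eps$ of $v$. Your reorganization in terms of intervals $[T_k, T_k+\eta]$ of achievable times is only a cosmetic repackaging of the same argument.
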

We have the following  immediate, but important corollary.
\begin{corollary}
Suppose that $\mathcal{F} \subseteq \mathcal{S}$ is such that $\overline{A_{\text{\emph{Sat}}(\mathcal{F})}(u, \leq t)} = X$ for any $t > 0$ and any $u \in X$.  Then
 $\mathcal{F}$ is approximately controllable on $X$ in the sense of Definition~\ref{eq:equiv:approx}.
\end{corollary}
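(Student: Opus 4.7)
The plan is short because this result is, as advertised, an immediate consequence of two pieces already in place in this subsection: the second assertion of the Saturation Lemma (Lemma~\ref{lem:sat}) and the Conversion Lemma (Lemma~\ref{lem:ex:tm:conv}). The strategy is to transfer the hypothesis from $\text{Sat}(\mathcal{F})$ to $\mathcal{F}$ itself using equivalence of families, and then to feed the resulting relaxed-time reachability into the Conversion Lemma in order to upgrade it to exact-time reachability.

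First I would invoke the second assertion of Lemma~\ref{lem:sat}, which states that $\mathcal{F} \sim \text{Sat}(\mathcal{F})$. Unwinding Definition~\ref{def:sat}, this equivalence is precisely the statement that
\[
  \overline{A_{\mathcal{F}}(u,\leq t)} \;=\; \overline{A_{\text{Sat}(\mathcal{F})}(u,\leq t)}
\]
for every $u\in X$ and every $t>0$. Combining this identity with the standing hypothesis $\overline{A_{\text{Sat}(\mathcal{F})}(u,\leq t)} = X$ yields $\overline{A_{\mathcal{F}}(u,\leq t)} = X$ for all $u\in X$ and all $t>0$. In particular, the choice $V := X$ satisfies the hypothesis of Lemma~\ref{lem:ex:tm:conv}: the set $X$ is trivially open in itself, and $V \subseteq \overline{A_{\mathcal{F}}(u,\leq t)}$ holds for every $u\in V$ and every $t>0$.

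Applying Lemma~\ref{lem:ex:tm:conv} with this $V$ then produces $X \subseteq \overline{A_{\mathcal{F}}(u,t)}$ for every $u\in X$ and every $t>0$, so that in fact $\overline{A_{\mathcal{F}}(u,t)} = X$. By Definition~\ref{def:acc:cont}(ii) this is exactly approximate controllability of $\mathcal{F}$ on $X$, as required. Because all of the substantive work is carried by the bouncing argument inside the Conversion Lemma and by the characterization of equivalence in the Saturation Lemma, I do not anticipate any obstacle in assembling the corollary beyond the minor point of verifying that $V = X$ is an admissible choice; this is automatic, since Lemma~\ref{lem:ex:tm:conv} only asks that $V\subseteq X$ be open.
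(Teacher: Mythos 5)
Your proof is correct and is precisely the intended argument: the paper states this corollary without a written proof, calling it ``immediate,'' and the chain you give---use Lemma~\ref{lem:sat} to pass from $\text{Sat}(\mathcal{F})$ to $\mathcal{F}$ on relaxed-time sets, then apply Lemma~\ref{lem:ex:tm:conv} with $V = X$ to convert to exact-time sets---is exactly how the ingredients in Section~\ref{sec:sat:lems} are meant to assemble.
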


\begin{proof}[Proof of Lemma~\ref{lem:ex:tm:conv}]
Fix any $u, v \in V$ and any $\eps, t >0$.  We will establish the desired result by showing that there is a corresponding $\mathbf{\Phi} \in \mathcal{F}$
such that $d( \mathbf{\Phi}_t u, v) < \eps$, where we are maintaining the notational convention introduced above in \eqref{eq:comp:not}.
Observe that, without loss of generality, we may suppose that $\eps >0$ is such that $B(v, \eps) \subseteq V$.

As a first step pick any $\psi^* \in \mathcal{F}$.  Invoking continuity we may choose $\eps' \in(0, \eps)$ such that
\begin{align*}
  \sigma := \inf_{\tilde{v} \in B(v, \eps')} \Big\{\inf\{ s> 0 \, : \, d( \psi^*_s \tilde{v}, v) > \eps \}\Big\}>0.  
\end{align*}
By assumption, we may pick $\mathbf{\Psi}^0 \in \mathcal{F}$ so that 
\begin{align*}
  v_0 := \mathbf{\Psi}_{\tau^0}^0 u \in B(v, \eps')
\end{align*}
for some $\tau^0 \leq t$ (See Figure~\ref{fig:con:lem}).  
\begin{figure}[tb]
 \centering
 \includegraphics{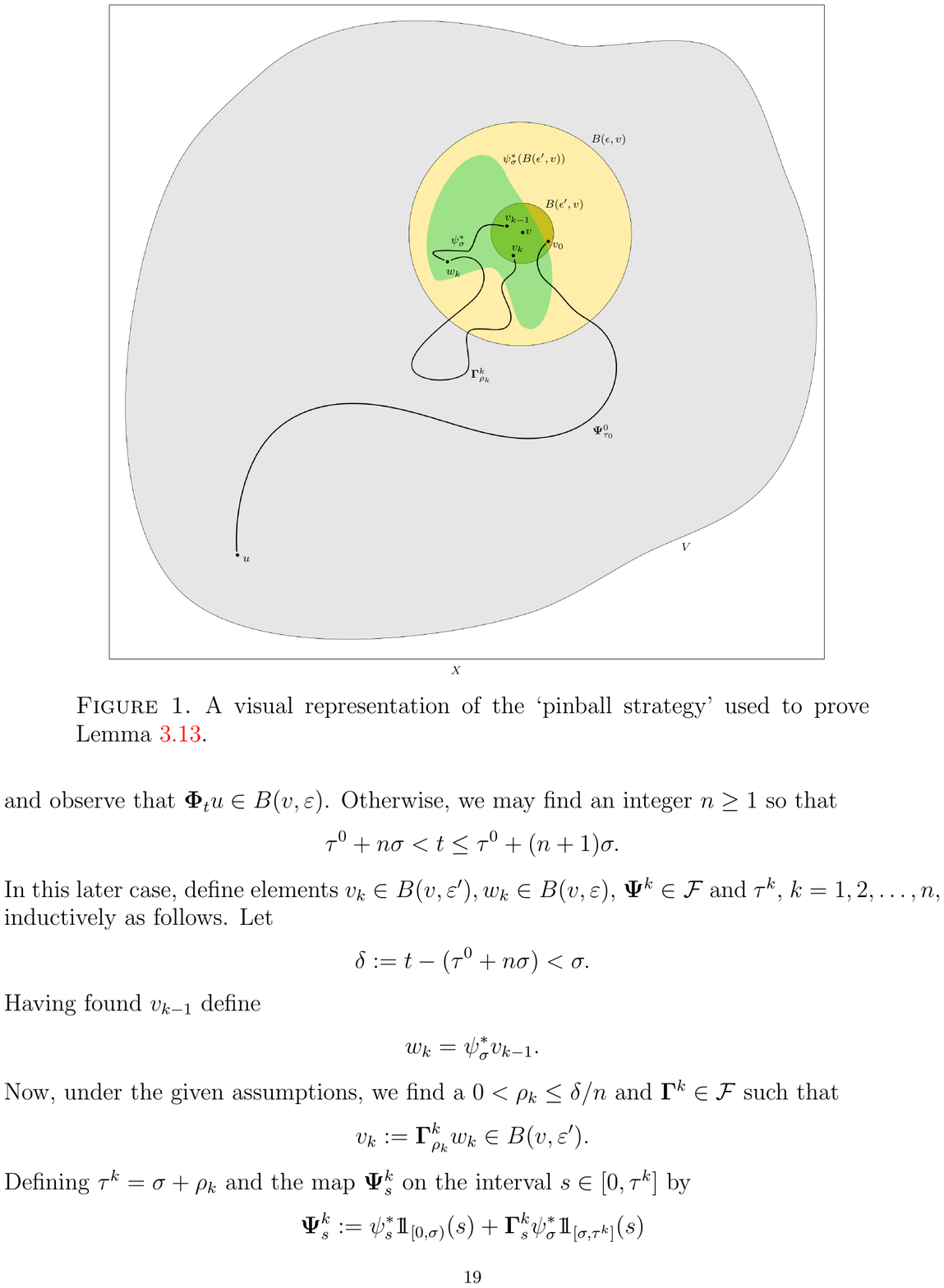}
 \vspace{-.2in}
 {\scriptsize
 \caption{A visual representation of the `pinball strategy' used to prove Lemma~\ref{lem:ex:tm:conv}. \label{fig:con:lem}}}
 \end{figure}
If it happens that $\tau^0 + \sigma \geq t$ then we simply take
\begin{align*}
  \mathbf{\Phi}_s u := \mathbf{\Psi}_{s}^0 u\indFn{[0, \tau^0)}(s) + \psi^*_{s}\mathbf{\Psi}_{\tau^0}^0 \indFn{[\tau^0, \tau^0 + \sigma)}(s)
  \end{align*}
  and observe that $\mathbf{\Phi}_t u \in B(v, \eps)$.  Otherwise, we may find an integer $n \geq 1$ so that
\begin{align*}
	\tau^0 + n \sigma < t \leq  \tau^0 + (n+1) \sigma.
\end{align*}
In this later case, define elements $v_k\in B(v, \eps'), w_k \in B(v, \eps)$, $\mathbf{\Psi}^k\in \mathcal{F}$ and $\tau^k$, $k=1,2,\ldots, n$,  inductively as follows.
Let 
\begin{align*}
	\delta := t - (\tau^0 + n \sigma) < \sigma.
\end{align*}
Having found $v_{k-1}$ define
\begin{align*}
	w_k = \psi^*_\sigma v_{k-1}.
\end{align*}
Now, under the given assumptions, we find a $0 < \rho_k \leq \delta/n$ and $\mathbf{\Gamma}^k \in \mathcal{F}$ such that 
\begin{align*}
v_k :=\mathbf{\Gamma}^k_{\rho_k} w_k \in B(v, \eps').
\end{align*}
Defining $\tau^k = \sigma + \rho_k$ and the map $\mathbf{\Psi}_s^k$ on the interval $s\in [0, \tau^k]$ by 
\begin{align*}
\mathbf{\Psi}^k_s := \psi^*_{s} \indFn{[0, \sigma)}(s) + \mathbf{\Gamma}^k_{s}\psi^*_\sigma  \indFn{[\sigma, \tau^k]}(s)
\end{align*}
we find that $\mathbf{\Phi} \in \mathcal{F}$ given by 
\begin{align*}
\mathbf{\Phi}_s u&= \psi^*_{s- \sum_{l=0}^n \tau^l} \mathbf{\Psi}^n_{\tau^n} \cdots \mathbf{\Psi}^0_{\tau^0}u \indFn{[\sum_{l=0}^{n}\tau^l, \tau^0 + (n+1)\sigma ]}(s) \\
&\qquad +\sum_{j=0}^{n} \mathbf{\Psi}^j_{s-\sum_{l=0}^{j-1}\tau^l} \mathbf{\Psi}^{j-1}_{\tau^{j-1}} \cdots \mathbf{\Psi}^0_{\tau^0} u  \indFn{[\sum_{l=0}^{j-1}\tau^l, \sum_{l=0}^{j}\tau^l)}(s)
\end{align*}
has $\mathbf{\Phi}_t \in B(v, \eps)$.  The proof is now complete.
\end{proof}

\subsection{Uniform Saturation}
\label{sec:exactcontrol}
In order to obtain results concerning exact controllability on certain projections, the above framework is not sufficient.    As we are specifically interested in finite-dimensional projections, we introduce an extension of the above formalism which provides a way of tracking the continuity of controls with respect to parameters.  This continuity is then used in conjunction with the Brouwer fixed point theorem below in Section~\ref{sec:exact:cont} to establish an abstract result suitable for our applications below.  

We begin by extending our notion of continuous semigroups, Definition~\ref{def:cont:sg}, to include
parameter dependence.   For what follows we consider an auxiliary metric space $(Y, d_Y)$.
\begin{definition}
\label{def:oneparamlocal}
Let $Z \subseteq Y$ be non-empty.  We call a function 
\begin{align*}
(t,u,p) \mapsto \Phi_t^p u: [0, \infty) \times X\times Z \rightarrow X\cup \{ \death\} 
\end{align*}
a \emph{one-parameter family of continuous local semigroups on $X$ parametrized by $Z$} if, for every $u\in X$ and $p \in Z$, there exists a $T_{u,p}>0$, called \emph{the time of existence},
for which the following conditions are met:
\begin{itemize}
\item[(i)]  For $t\in [0, T_{u,p})$ we have $\Phi_t^p u  \in X$ and for $t\geq T_{u,p}$ we have $\Phi_t^p u=\death$.   
\item[(ii)] $\Phi_0^p u= u$ and for all $s,t\geq 0$ with $s+t \in [0, T_{u,p})$ one has
  $t\in[0, T_{\Phi_s^p u,p})$ and $\Phi_{t+s}^p u= \Phi_t^p \Phi_s^p u$.
\item[(iii)] For all $t\in [0, T_{u,p})$ and all $\eps >0$, there exists $\delta >0$ such that 
whenever $(t', u',p') \in [0, \infty)\times X\times Z$ satisfies 
\begin{align*}
 |t - t'| + d(u, u') + d_Y(p, p') < \delta
\end{align*}
we have $t'\in [0, T_{u',p'})$ and 
\begin{align*}
  d(\Phi_t^p u, \Phi_{t'}^{p'} u') < \eps.  
\end{align*}  
\end{itemize}
Analogously to Definition~\ref{def:cont:sg} above, we abbreviate $\Phi$ for 
this mapping or write $(\Phi, Z)$ when we need to emphasize the associated parameter set $Z \subseteq Y$.  For $p \in Z$ 
we write $\Phi^p$ for the element in $\mathcal{S}$ defined by 
$(t, u) \mapsto \Phi^p_t u: [0,\infty) \times X \to X \cup \{\death\}$.
\end{definition}   

Before proceeding further, we introduce some useful notations. 
 For $Z\subseteq Y$, we let $\mathfrak{S}(Z)$ denote the collection of one-parameter 
 continuous local semigroups on $X$ parametrized by $Z$, and define
\begin{align*}
\mathfrak{S} = \bigcup_{\substack{Z \subseteq Y}} \mathfrak{S}(Z).  
\end{align*}    
A generic element of $\mathfrak{S}$ will be denoted by $\Phi$ and $\mathcal{P}(\Phi)$ will denote the parameter set of $\Phi$; that is, $\mathcal{P}(\Phi) = Z$ 
means that $\Phi \in \mathfrak{S}(Z)$.  We will use $\mathfrak{F}$ to denote an arbitrary subset of $\mathfrak{S}$ 
using this typographic choice to distinguish between subsets $\mathcal{F}$ of $\mathcal{S}$ introduced in the previous 
section in Definition~\ref{def:cont:sg}.   
Given  $\mathfrak{F} \subseteq \mathfrak{S}$ we associate a subset of $\mathcal{S}$ according to
\begin{align}
  \mathbb{D}(\mathfrak{F}) := \{ \Phi^p : \Phi \in \mathfrak{F}, p \in \mathcal{P}(\Phi) \}.
  \label{eq:unwrap:opp}
\end{align}

We now define the analogue of the saturate, which we call the \emph{uniform saturate}, in this setting.   
This builds on the characterization of equivalent collections of semigroups revealed by Lemma~\ref{lem:sat}.

\begin{definition}\label{def:uni:sat}
Suppose that $\mathfrak{F}, \mathfrak{G} \subseteq \mathfrak{S}$. 
\begin{itemize}
 \item[(i)]  We say that $\mathfrak{G}$ \emph{uniformly subsumes} $\mathfrak{F}$, denoted $\mathfrak{F} \preccurlyeq_u \mathfrak{G}$  if, 
for any $\Psi \in \mathfrak{F}$, $t, \eps > 0$, and any compact subsets $K_1\subseteq X$, $K_2 \subseteq
\mathcal{P}(\Psi)$, there exists  $\Phi^1, \ldots,   \Phi^n \in \mathfrak{G}$, times $t_1, \ldots, t_n > 0$ 
and continuous functions $f_k:  K_2 \to\mathcal{P}(\Phi^k)$,
$k = 1, \ldots, n$, such that $\sum t_j  \leq t$ and
\begin{align}
   \sup_{u \in K_1, p \in K_2}  d\Big(  \Psi_t^p u , \prod_{k=1}^n  \Phi_{t_{k}}^{k, f_k(p)} u \Big)  < \eps.
   \label{eq:uni:stat:def}
\end{align}
We say that $\mathfrak{F}$ and $\mathfrak{G}$ are \emph{uniformly equivalent}, denoted $\mathfrak{F} \sim_u \mathfrak{G}$, if both $\mathfrak{F} \preccurlyeq_u \mathfrak{G}$
and $\mathfrak{G} \preccurlyeq_u \mathfrak{F}$.
\item[(ii)] The \emph{uniform saturate} of $\mathfrak{F}$, denoted  $\text{Sat}_u(\mathfrak{F})$ is taken to be 
\begin{align*}
  \text{Sat}_u(\mathfrak{F}) := \bigcup_{ \mathfrak{G} \preccurlyeq_u \mathfrak{F}} \mathfrak{G}.
\end{align*}
\end{itemize}
\end{definition}     

\begin{remark}\label{rmk:u:imply:non:u}
It is worth emphasizing that uniform subsumption and uniform saturation imply regular subsumption and saturation.  More precisely 
if $\mathfrak{F} \preccurlyeq_u \mathfrak{G}$ then $\mathbb{D}(\mathfrak{F}) \preccurlyeq \mathbb{D}(\mathfrak{G})$. 
With the fact that $\mathfrak{F} \sim_u \text{Sat}_u(\mathfrak{F})$ this implies
\begin{align}
  \mathbb{D}( \text{Sat}_u(\mathfrak{F}))  \sim \text{Sat}(\mathbb{D}(\mathfrak{F})).
  \label{eq:D:Sat:com:ob}
\end{align}
We use this observation below in Corollary~\ref{cor:exact:con}.
\end{remark}

We next show that an analog of Lemma~\ref{lem:sat} holds in the setting of Definition~\ref{def:uni:sat}.  Here, however, we have to be careful
to show that $\preccurlyeq_u$ is in fact a transitive relation on $\mathfrak{S}$.

\begin{lemma}
\label{lem:uni:stat:m}
If $\mathfrak{F} \preccurlyeq_u \mathfrak{G}$ and $\mathfrak{G} \preccurlyeq_u \mathfrak{H}$ then $\mathfrak{F} \preccurlyeq_u \mathfrak{H}$.
Moreover 
if $\mathfrak{F}^i \preccurlyeq_u \mathfrak{G}$ then 
\begin{align}
\bigcup_i \mathfrak{F}^i \cup \mathfrak{G} \sim_u \mathfrak{G},
\label{eq:sat:uni}
\end{align}
so that, in particular, $\mathfrak{F} \sim_u \text{\emph{Sat}}_u(\mathfrak{F}).$

\end{lemma}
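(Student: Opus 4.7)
My plan is to prove the transitivity of $\preccurlyeq_u$ first; the rest then follows easily from that and the definition of $\text{Sat}_u$.

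For transitivity, fix $\Psi \in \mathfrak{F}$, tolerance $\eps > 0$, time $t > 0$, and compact sets $K_1 \subseteq X$, $K_2 \subseteq \mathcal{P}(\Psi)$. Using $\mathfrak{F} \preccurlyeq_u \mathfrak{G}$, I pull out $\Phi^1, \ldots, \Phi^n \in \mathfrak{G}$, times $t_1, \ldots, t_n$ with $\sum t_k \leq t$, and continuous $f_k : K_2 \to \mathcal{P}(\Phi^k)$ so that the partial compositions $w_0(u,p) := u$ and $w_k(u,p) := \Phi_{t_k}^{k, f_k(p)} w_{k-1}(u,p)$ satisfy $\sup_{K_1 \times K_2} d(\Psi^p_t u, w_n(u,p)) < \eps/2$. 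Each $w_k$ is continuous on $K_1 \times K_2$ (by induction using property (iii) of Definition~\ref{def:oneparamlocal}), so $L_k := w_k(K_1 \times K_2)$ is compact and $f_k(K_2) \subseteq \mathcal{P}(\Phi^k)$ is compact.

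Next I replace each factor $\Phi^{k, f_k(p)}_{t_k}$ by a composition from $\mathfrak{H}$. The approximations must be chained, so I work backwards from $k = n$: by a further application of property (iii), the map $v \mapsto \Phi^{k+1, f_{k+1}(p)}_{t_{k+1}} \cdots \Phi^{n, f_n(p)}_{t_n} v$ is well-defined and jointly continuous on some open neighborhood $U_k$ of $L_k \times K_2$, hence uniformly continuous on a compact set sandwiched between $L_k \times K_2$ and $U_k$. This lets me choose tolerances $\delta_n, \delta_{n-1}, \ldots, \delta_1 > 0$ such that a $\delta_k$-perturbation of any $v \in L_k$ within this neighborhood propagates to at most an $\eps/(2n)$-perturbation of the terminal composition, uniformly in $p \in K_2$. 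Then for each $k$ I invoke $\mathfrak{G} \preccurlyeq_u \mathfrak{H}$ applied to $\Phi^k$ on the compact sets $L_{k-1}$ (enlarged to a compact neighborhood to accommodate the perturbed input) and $f_k(K_2)$, producing elements $\Xi^{k, j} \in \mathfrak{H}$, times $s_{k,j}$ with $\sum_j s_{k,j} \leq t_k$, and continuous $h_{k,j} : f_k(K_2) \to \mathcal{P}(\Xi^{k,j})$ whose composition $\tilde\Phi^k_{t_k}(v, p)$ approximates $\Phi^{k, f_k(p)}_{t_k} v$ within $\delta_k$. Composing $h_{k,j} \circ f_k$ yields the required continuous $K_2$-valued parameter maps, and the telescoping error from $\tilde w_n$ to $w_n$ is bounded by $n \cdot \eps/(2n) = \eps/2$, giving total error $< \eps$.

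The main obstacle is keeping the iterated perturbed trajectories $\tilde w_k$ inside the domain of the next local semigroup $\Phi^{k+1, f_{k+1}(p)}_{t_{k+1}}$. The fact that these are only \emph{local} semigroups means I cannot naively apply them to an arbitrary point near $L_k$; the resolution is that property (iii) of Definition~\ref{def:oneparamlocal} implies the set of $(s, v, q)$ for which $\Phi^{q}_s v$ exists is open, so compactness of $L_k \times f_{k+1}(K_2)$ together with $t_{k+1} \in [0, T_{v, q})$ on this set gives a positive-width tubular neighborhood on which all the required compositions remain defined and uniformly continuous. This uniformity feeds into the backward choice of tolerances $\delta_k$.

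The second assertion is then straightforward. The inclusion $\mathfrak{G} \preccurlyeq_u \bigcup_i \mathfrak{F}^i \cup \mathfrak{G}$ holds trivially since $\mathfrak{G} \subseteq \bigcup_i \mathfrak{F}^i \cup \mathfrak{G}$ and any $\Psi \in \mathfrak{G}$ is its own uniform approximation by a single element. For the reverse, any $\Psi$ in the union lies either in some $\mathfrak{F}^i$, where $\mathfrak{F}^i \preccurlyeq_u \mathfrak{G}$ supplies the uniform approximation, or in $\mathfrak{G}$ itself, where the one-element approximation suffices; these together verify \eqref{eq:uni:stat:def} against $\mathfrak{G}$. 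Finally, $\mathfrak{F} \sim_u \text{Sat}_u(\mathfrak{F})$ follows by writing $\text{Sat}_u(\mathfrak{F}) = \bigcup_{\mathfrak{G} \preccurlyeq_u \mathfrak{F}} \mathfrak{G}$ (which includes $\mathfrak{F}$ itself since $\mathfrak{F} \preccurlyeq_u \mathfrak{F}$) and applying \eqref{eq:sat:uni}.
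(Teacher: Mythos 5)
Your proof is correct and follows essentially the same route as the paper's: fix $\Psi$, $\eps$, $t$, compact $K_1$, $K_2$; use $\mathfrak{F}\preccurlyeq_u\mathfrak{G}$ to get an approximation within $\eps/2$; then telescope the replacement of each factor $\Phi^k$ by a chain from $\mathfrak{H}$, budgeting $\eps/(2n)$ per factor. The one place where you diverge slightly from the paper is in how you control the compact sets for the inner applications of $\mathfrak{G}\preccurlyeq_u\mathfrak{H}$: you enlarge the unperturbed images $L_{k-1}$ to fixed tubular neighborhoods chosen in advance via openness of the local-existence set, whereas the paper proceeds forward $l=1,\dots,n$ and at stage $l$ takes the compact set to be $K_1^l$, the actual image of $K_1\times K_2$ under the already-chosen perturbed chain $\prod_{k'<l}\mathbf{\Gamma}^{k';h_{k'}(p)}_{s_{k'}}$, interleaving the choice of $\delta_l$ (via their Lemma~\ref{lem:gen:comp}) with the choice of $\Gamma^l$. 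The paper's bookkeeping avoids having to argue separately that the perturbed trajectory remains inside the a priori tube, which is the only point where your version needs a little more care; both are sound. The second assertion is handled identically.
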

The approach here mimics the proof of Lemma~\ref{lem:sat} but requires
a little more bookkeeping.  In particular, in order to use a bound analogous to \eqref{eq:equiv:approx} in the
proof of Lemma~\ref{lem:sat} we make the following
elementary observation concerning compactness and continuity:
\begin{lemma}
\label{lem:gen:comp}
Consider two continuous mappings $f, g: Y \times X \to X$.  Then, given any compact
sets $K \subseteq Y, \tilde{K} \subseteq X$ and any $\eps > 0$ there
exists $\delta >0$ such for any function $h: Y \times X \to X$ with
\begin{align*}
  \sup_{p \in K, x \in \tilde{K}} d(g(p, x), h(p,x)) < \delta 
\end{align*}
we have that
\begin{align*}
    \sup_{p \in K, x \in \tilde{K}} d(f(p, g(p,x)), f(p, h(p,x))) < \eps. 
\end{align*}
\end{lemma}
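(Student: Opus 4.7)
The plan is to reduce this to the standard fact that a continuous map is uniformly continuous on compact sets, applied to $f$ on a suitable compact enlargement of the image of $g$. The delicate point is that $X$ need only be a metric space (not locally compact), so one cannot appeal to a compact neighborhood of $g(K\times\tilde K)$ directly; instead one has to cover $K \times g(K \times \tilde K)$ by product balls whose radii are halved, so that perturbations of size $\delta$ still land in the original balls.

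First, observe that $C := g(K \times \tilde K)$ is compact in $X$, since $g$ is continuous and $K \times \tilde K$ is compact in $Y \times X$. Consequently $K \times C$ is compact in $Y \times X$. For each $(p_0, x_0) \in K \times C$, continuity of $f$ at $(p_0, x_0)$ yields an $\eta(p_0, x_0) > 0$ such that
\begin{align*}
    d_Y(p, p_0) < \eta(p_0,x_0) \text{ and } d(x, x_0) < \eta(p_0, x_0) \implies d(f(p, x), f(p_0, x_0)) < \tfrac{\eps}{2}.
\end{align*}
The open sets $B_Y(p_0, \eta(p_0,x_0)/2) \times B_X(x_0, \eta(p_0,x_0)/2)$ form an open cover of $K \times C$, so I extract a finite subcover indexed by points $(p_i, x_i)$ with associated radii $\eta_i$, and set $\delta := \tfrac{1}{2}\min_i \eta_i$.

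Now suppose $h: Y \times X \to X$ satisfies $\sup_{p \in K, x \in \tilde K} d(g(p,x), h(p,x)) < \delta$. Fix $p \in K$ and $x \in \tilde K$. Since $(p, g(p,x)) \in K \times C$, there exists an index $i$ with $d_Y(p, p_i) < \eta_i/2$ and $d(g(p,x), x_i) < \eta_i/2$. By the defining property of $\eta_i$, this gives $d(f(p, g(p,x)), f(p_i, x_i)) < \eps/2$. Meanwhile, by the triangle inequality,
\begin{align*}
   d(h(p,x), x_i) \leq d(h(p,x), g(p,x)) + d(g(p,x), x_i) < \delta + \eta_i/2 \leq \eta_i,
\end{align*}
so the same property yields $d(f(p, h(p,x)), f(p_i, x_i)) < \eps/2$. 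One more triangle inequality finishes.

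The only step that requires care is the halving of the radii, which is essential to absorb the $\delta$-perturbation from $g$ to $h$; everything else is a routine finite-subcover argument. I expect no nontrivial obstacle, and the argument proceeds without any assumption on $X$ beyond being a metric space.
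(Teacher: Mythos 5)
Your argument is correct: compactness of $K\times g(K\times\tilde K)$, a finite subcover with halved radii, and the choice $\delta=\tfrac12\min_i\eta_i$ give exactly the uniform-continuity-type estimate needed, and the halving correctly absorbs the $\delta$-perturbation from $g$ to $h$. The paper states this lemma without proof as an elementary observation, so there is nothing to compare against; your write-up is the standard argument the authors evidently had in mind and supplies the omitted details.
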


We this in hand we turn to the proof of Lemma~\ref{lem:uni:stat:m}.
\begin{proof}[Proof of Lemma~\ref{lem:uni:stat:m}]
The main step is to establish the desired transitivity in the uniform
subsumption relation.  To this end let $\Psi \in \mathfrak{F}$, $\eps, t > 0$ and $K_1 \subseteq X$, $K_2 \subseteq \mathcal{P}(\Psi)$,
both compact sets, be given. Suppose for $n
\geq 1$ and $m_k \geq 1$, $k = 1, \ldots, n$, we have $\Phi^k \in \mathfrak{G}$, $\Gamma^{k, l} \in \mathfrak{H}$, 
along with continuous functions $f_k: K_2 \to \mathcal{P}(\Phi^k)$, $g_{k,l}: f_k(K_2) \to \mathcal{P}(\Gamma^{k,l})$
and times $t_k > 0$, $s_{k,l} > 0$ with
\begin{align*}
  t_1 + \cdots + t_n \leq t, \quad s_k := s_{k,1} + \cdots + s_{k, m_k} \leq t_k, \text{ for } k = 1, \ldots, n.
\end{align*}
Analogously to \eqref{eq:comp:not} above we adopt the abbreviated notation
\begin{align*}
  \mathbf{\Gamma}^{k; h_k(p)}_{s_k} := \prod_{l =1}^{m_k} \Gamma^{k,l; h_{k,l}(p)}_{s_{k,l}}
  \quad \,\text{ where }\, \quad h_{k,l}(p) := g_{k,l}(f_k(p)) \quad p \in K_2.
\end{align*}
Observe that, by the triangle inequality,
\begin{align}
   \sup_{u \in K_1, p \in K_2}& d\biggl( \Psi_t^p u, \prod_{k =1}^n \mathbf{\Gamma}^{k; h_k(p)}_{s_k}u\biggr)
   	\leq \sup_{u \in K_1, p \in K_2} d\biggl( \Psi_t^p u, \prod_{k =1}^n \Phi^{k; f_k(p)}_{t_k}u\biggr)
	\notag\\
	 +& \sum_{l = 1}^n \sup_{u \in K_1, p \in K_2}d \biggl( 
		\prod_{k = l}^n \Phi^{k; f_k(p)}_{t_k} \prod_{k' =1}^{l-1} \mathbf{\Gamma}^{k'; h_{k'}(p)}_{s_{k'}}u,  
		\prod_{k =l+1}^n \Phi^{k; f_k(p)}_{t_k} \prod_{k' =1}^{l} \mathbf{\Gamma}^{k'; h_{k'}(p)}_{s_{k'}}u\biggr)
		\notag\\
	\leq& \sup_{u \in K_1, p \in K_2} d\biggl( \Psi_t^p u, \prod_{k =1}^n \Phi^{k; f_k(p)}_{t_k}u\biggr)
	\notag\\
	 +& \sum_{l = 1}^n \sup_{v \in K_1^l , p \in K_2}d \biggl( 
		\prod_{k = l+1 }^n \Phi^{k; f_k(p)}_{t_k} \Phi^{l; f_l(p)}_{t_l}v, 
		\prod_{k =l+1}^n \Phi^{k; f_k(p)}_{t_l}  \mathbf{\Gamma}^{l; h_{l}(p)}_{s_{l}}v\biggr)
		\label{eq:pr:st:ineq}
\end{align}
where 
\begin{align*}
  K_1^l := \biggl\{ \prod_{k =1}^{l-1} \mathbf{\Gamma}^{k; h_{k}(p)}_{s_{k}}u: u \in K_1, p \in K_2 \biggr\}
\end{align*}
so that, under the standing continuity assumptions, $K_1^l$ is a compact subset of $X$ for $l =1, \ldots, n$.
Note that as above in Lemma~\ref{lem:sat} we are maintaining the convention that $\prod_{k = n+1}^n \Phi^{k; f_k(p)}_{t_k} = \text{Id} =  
\prod_{k' =1}^{-1} \mathbf{\Gamma}^{k'; h_{k'}(p)}_{s_{k'}}$.

Under our assumption that $\mathfrak{F} \preccurlyeq_u
\mathfrak{G}$, we may choose $n \geq 1$, and elements $\Phi^{k}$'s and 
$f_k$'s such that
\begin{align}
	\sup_{u \in K_1, p \in K_2} d\biggl( \Psi_t^p u, \prod_{k =1}^n \Phi^{k; f_k(p)}_{t_k}u\biggr) < \frac{\eps}{2}.
	\label{eq:sb:bd:f:g}
\end{align}
Next, according to Lemma~\ref{lem:gen:comp} we choose $\delta_l > 0$, 
so that, for any $h: Y \times X \to X$, the bound
\begin{align}
\sup_{v \in K_1^l, p \in K_2}
		d \biggl( \prod_{k = l+1 }^n \Phi^{k; f_k(p)}_{t_k} \Phi^{l; f_l(p)}_{t_l}v,  
		\prod_{k =l+1}^n \Phi^{k; f_k(p)}_{t_l} h(p, v)\biggr) < \frac{\eps}{2n}
		\label{eq:ug:tri:bnd:1}
\end{align}		
holds whenever
\begin{align*}
  \sup_{v \in K_1^l, p \in K_2}
		d ( \Phi^{l; f_l(p)}_{t_l}v, h(p, v)) < \delta_l.
\end{align*}
On the other hand, invoking that $\mathfrak{G} \preccurlyeq_u
\mathfrak{H}$ and referring back to \eqref{eq:uni:stat:def}  we may
choose $\mathbf{\Gamma}^{l}$, $g_l$ and $s_l$ such that 
\begin{align}
     \sup_{v \in K_1^l, p \in K_2}d \biggl(  \Phi^{l; f_l(p)}_{t_l}v, \mathbf{\Gamma}^{l; h_{l}(p)}_{s_{l}}v\biggr) 
     = \sup_{v \in K_1^l, q \in f(K_2)}d \biggl(  \Phi^{l; q}_{t_l}v, \mathbf{\Gamma}^{l; g_{l}(q)}_{s_{l}}v\biggr) 
< \delta_l.
   \label{eq:ug:tri:bnd:2}
\end{align}
Thus \eqref{eq:ug:tri:bnd:1}, \eqref{eq:ug:tri:bnd:2} yield
\begin{align}
    \sup_{v \in K_1^l, p \in K_2}&d \biggl( 
		\prod_{k = l+1 }^n \Phi^{k; f_k(p)}_{t_k} \Phi^{l; f_l(p)}_{t_l}v,  
		\prod_{k =l+1}^n \Phi^{k; f_k(p)}_{t_l}  \mathbf{\Gamma}^{l; h_{l}(p)}_{s_{l}}v\biggr)
	< \frac{\eps}{2n}.
	\label{eq:ug:tri:bnd:3}
\end{align}
By combining \eqref{eq:pr:st:ineq} with the bounds \eqref{eq:sb:bd:f:g}
and \eqref{eq:ug:tri:bnd:3} and recalling that $\eps > 0$ was
arbitrary we now conclude that $\mathfrak{F}\preccurlyeq_u
\mathfrak{H}$, as desired.  

As above in Lemma~\ref{lem:sat}, \eqref{eq:sat:uni} and uniform saturation follows immediately from \eqref{eq:uni:stat:def}.  The proof is now complete.

\end{proof}
        
\subsection{Exact Controllability for Projections}
\label{sec:exact:cont}

We are now prepared to state the main final abstract result of this section.  Here we specialize and 
assume that $X$ is a Fr\'echet space so that, in particular, an
addition operation, $+$, is defined and the metric $d$ is shift
invariant with respect to $+$; namely, we have the property that 
\begin{align*}
d(u, v) = d(u + w, v+w), \,\,\, \text{ for any} \,\,\, u,v, w \in X.
\end{align*}

\begin{theorem}
\label{thm:exact:cont:abs}
Consider $\mathfrak{F} \subseteq \mathfrak{S}$ and let $\pi: X \to X$
be a linear, continuous projection operator mapping onto a finite-dimensional subspace $W = \pi(X)$ of $X$.   For $v \in X$ define 
\begin{align*}
   B^{\pi, v}(\eps) := \{ v+  w : d(0, w) \leq  \eps, w \in W\}.
\end{align*}
Suppose the following: 
\begin{itemize}
\item[(i)] $\mathbb{D}(\mathfrak{F})$ is approximately controllable
on $X$; that is, for any $u \in X, \, t > 0$ we have that
$\overline{A_{\mathbb{D}(\mathfrak{F})}(u, t)} = X$ where recall that
$A_{\mathbb{D}(\mathfrak{F})}(u, t)$ is defined as in \eqref{eq:exact:set:rig}.
\item[(ii)] For any $v \in X$ and $\eps,\eps', t > 0$, there exists an initial condition 
	$\tilde{u} \in X$, elements 
	$\Phi^1, \ldots,  \Phi^k \in \mathfrak{F}$,
	times $s_1, \ldots, s_k >0$,
	and continuous functions $g_j: B^{\pi,v}(\eps) \to \mathcal{P}(\Phi^k)$, $j =1, \ldots, k$,
	such that $\sum_j s_j \leq t$ and
	\begin{align}
	   \sup_{\tilde{v} \in B^{\pi,v}(\eps)} d\biggl( \prod_{j=1}^k \Phi^{j; g_j(\tilde{v})}_{s_j}\tilde{u} , \tilde{v}\biggr) < \eps'.
	   \label{eq:main:stat:cond}
	\end{align}
\end{itemize}	
	Then, for all $u, v \in X$, $t > 0$ and $\eps > 0$ there exists $\Psi^1, \ldots, \Psi^n \in \mathbb{D}(\mathfrak{F})$, $t_1, t_2, \ldots, t_n>0$
	such that $t_1 + t_2 + \cdots + t_n =t$ and
	\begin{align}
	   \pi( \Psi^n_{t_n} \cdots \Psi^1_{t_1}u) = \pi(v), \quad d(\Psi^n_{t_n} \cdots \Psi^1_{t_1}u, v) < \eps.
	   \label{eq:cont:con}
	\end{align}
	In other words, $\mathbb{D}(\mathfrak{F})$ is approximately controllable on $X$ and exactly controllable
	on $W = \pi(X)$ in the sense of Definition~\ref{def:acc:cont}.
\end{theorem}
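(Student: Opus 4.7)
The plan is to feed the continuously parametrized family produced by condition~(ii) into a Brouwer fixed-point argument on a compact convex ball in the finite-dimensional subspace $W = \pi(X)$, using condition~(i) only to transport the given initial datum $u$ to a neighborhood of the auxiliary point $\tilde u$ supplied by~(ii). Fix $u, v \in X$, $t > 0$, $\eps > 0$, and pick auxiliary tolerances $\eps_1 < \eps/2$, $\eps' > 0$, and $\tau > 0$, all refined below.

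First, apply~(ii) with time $t - \tau$, target $v$, radius $\eps_1$, and tolerance $\eps'$ to obtain $\tilde u \in X$, elements $\Phi^1, \dots, \Phi^k \in \mathfrak{F}$, times $s_1, \dots, s_k > 0$ with $S := s_1 + \cdots + s_k \leq t - \tau$, and continuous $g_j \colon B^{\pi, v}(\eps_1) \to \mathcal{P}(\Phi^j)$ obeying the uniform $\eps'$-bound \eqref{eq:main:stat:cond}. Since $B^{\pi,v}(\eps_1)$ is compact in the finite-dimensional space $W$ and each $\Phi^j$ is continuous in its initial datum, iterated application of Lemma~\ref{lem:gen:comp} produces $\delta > 0$ such that replacing $\tilde u$ by any $\hat u$ with $d(\hat u, \tilde u) < \delta$ alters the composition by less than $\eps'$ uniformly in $\tilde v \in B^{\pi, v}(\eps_1)$. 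Invoking condition~(i) at exact time $\tau_0 := t - S \geq \tau$ yields $\Psi^1, \dots, \Psi^m \in \mathbb{D}(\mathfrak{F})$ and times $\tau_1, \dots, \tau_m > 0$ summing to $\tau_0$ such that $\hat u := \Psi^m_{\tau_m} \cdots \Psi^1_{\tau_1} u$ satisfies $d(\hat u, \tilde u) < \delta$. Together these give
\[
  \sup_{\tilde v \in B^{\pi, v}(\eps_1)} d\Bigl( \prod_{j=1}^k \Phi^{j; g_j(\tilde v)}_{s_j} \hat u,\ \tilde v \Bigr) < 2\eps'.
\]

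Next, define $F(\tilde v) := \pi\bigl( \prod_{j=1}^k \Phi^{j; g_j(\tilde v)}_{s_j} \hat u \bigr)$, a continuous map $B^{\pi, v}(\eps_1) \to W$. Fix any norm $\|\cdot\|_W$ on $W$ and choose $\rho > 0$ with $\overline{B}_{\rho}^{\|\cdot\|_W} \subseteq \{w \in W : d(0, w) \leq \eps_1\}$; then, since $d|_W$ and $\|\cdot\|_W$ generate the same topology on the finite-dimensional space $W$, pick $\eps'' > 0$ with $\{w \in W : d(0, w) \leq \eps''\} \subseteq \overline{B}_{\rho}^{\|\cdot\|_W}$. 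Shrinking $\eps'$ using continuity of $\pi$ at $0$, arrange $d(F(\tilde v), \pi(\tilde v)) \leq \eps''$ uniformly on $B^{\pi, v}(\eps_1)$. Parametrize the affine slice through $v$ by $w \mapsto v + w$ and set
\[
  T \colon \overline{B}_{\rho}^{\|\cdot\|_W} \to W, \qquad T(w) := \pi(v) + w - F(v + w).
\]
Since $w \in W$ and $\pi$ is a projection onto $W$, $\pi(v + w) = \pi(v) + w$, so by translation invariance of $d$, $d(0, T(w)) = d(F(v + w), \pi(v + w)) \leq \eps''$, placing $T(w) \in \overline{B}_{\rho}^{\|\cdot\|_W}$. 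Brouwer's fixed-point theorem applied to this continuous self-map of a compact convex set yields $w^* \in \overline{B}_{\rho}^{\|\cdot\|_W}$ with $T(w^*) = w^*$, equivalently $F(v + w^*) = \pi(v)$. Setting $\tilde v^* := v + w^*$, the composition
\[
  \Phi^{k; g_k(\tilde v^*)}_{s_k} \cdots \Phi^{1; g_1(\tilde v^*)}_{s_1}\, \Psi^m_{\tau_m} \cdots \Psi^1_{\tau_1} u
\]
has total time $S + \tau_0 = t$, $\pi$-image exactly $\pi(v)$, and lies within $2\eps' + \eps_1 < \eps$ of $v$ by the triangle inequality with $d(\tilde v^*, v) \leq \eps_1$. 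This verifies \eqref{eq:cont:con}.

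The main obstacle is the compatibility between the ambient metric $d$ and the convex ball required by Brouwer: $d$-balls in $W$ are a priori neither convex nor bounded in a norm sense, which forces passage to an auxiliary norm on the finite-dimensional $W$ and careful sequencing of the tolerances $(\eps, \eps_1, \eps', \eps'', \rho, \delta)$ so that the self-mapping property of $T$ survives. The remaining ingredients are routine: approximate controllability from~(i) supplies the bridge $u \to \hat u$, Lemma~\ref{lem:gen:comp} propagates the uniform approximation from $\tilde u$ to $\hat u$ without disturbing the parametric continuity of $F$, and linearity of $\pi$ together with translation invariance of $d$ converts the $W$-valued fixed-point equation into the desired exact-projection identity.
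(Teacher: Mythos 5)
Your proof is correct and follows essentially the same route as the paper's: condition~(ii) supplies a continuously parametrized family hitting a projected neighborhood of $v$, condition~(i) bridges from $u$ to the auxiliary initial datum $\tilde u$ at exactly the leftover time, and Brouwer's fixed-point theorem applied to $w \mapsto \pi(v) + w - \pi(\text{composition at } v+w)$ on a ball in $W$ yields the exact projection. The one place you go beyond the paper is in passing to an auxiliary norm ball $\overline{B}_\rho^{\|\cdot\|_W}$ nested between $d$-balls to guarantee the compact convex domain that Brouwer requires -- a point the paper's proof elides by working directly with the $d$-ball $\{w \in W : d(0,w) < \eps/2\}$ -- and this refinement is welcome.
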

\begin{proof}
 Fix any $u, v \in X$ and any $t, \eps > 0$.  Observe that for  any 
 $\eps' > 0$, we may invoke assumption (ii) and 
 choose $\tilde{u} \in X$, 
 $\Phi^1, \ldots, \Phi^k \in \mathfrak{F}$, $s_1, \ldots, s_k >0 $ and 
continuous functions $g_j: B^{\pi, v}(\eps/2) \to \mathcal{P}(\Phi^k)$, $j=1,2,\ldots, k$, such that
 $s:=\sum_{j=1}^k s_j \leq t/2$ and 
 	\begin{align*}
	   \sup_{\tilde{v} \in B^{\pi,v}(\eps/2)} d\biggl( \prod_{j=1}^k \Phi^{j, g_j(\tilde{v})}_{s_j}\tilde{u} , \tilde{v}\biggr) < \frac{\eps'}{2}.
	\end{align*}
Using continuity, compactness of the closed ball $B^{\pi,v}(\eps/2)$ and assumption (i), we can 
pick $\Psi^1 \in \mathbb{D}(\mathfrak{F})$ to ensure that
\begin{align*}
  \sup_{\tilde{v} \in B^{\pi,v}(\eps/2)} d\biggl( \prod_{j=1}^k \Phi^{j, g_j(\tilde{v})}_{s_j}\Psi^1_{t-s}u ,  
  \prod_{j=1}^k \Phi^{j, g_j(\tilde{v})}_{s_j}\tilde{u}\biggr) < \frac{\eps'}{2}
\end{align*}
and hence infer that
\begin{align}
 	   \sup_{\tilde{v} \in B^{\pi,v}(\eps/2)} 
	   d\biggl( \prod_{j=1}^k \Phi^{j, g_j(\tilde{v})}_{s_j}\Psi^1_{t-s}u - \tilde{v}, 0\biggr) < \eps',
	   \label{eq:glob:cont}
\end{align}
where we have also used the shift invariance of $d$.  

With this bound \eqref{eq:glob:cont} in hand, we now invoke the continuity of $\pi$ and pick $\eps' >0$ 
such that
\begin{align}
  \eps' \leq \frac{\eps}{2} \text{ and whenever } d(y, 0) < \eps' 
   \text{ then } \|\pi(y) \|_W < \frac{\eps}{2}.
   \label{eq:uni:pi}
\end{align}
For this value of $\eps'$ and the corresponding values of $\Phi^j$,
$g$, $\Psi$ etc. leading to \eqref{eq:glob:cont} we next define 
\begin{align*}
    G(w) = \pi\left( v + w - \prod_{j=1}^k \Phi^{j, g_j(v+ w)}_{s_j}\Psi^0_{t -s} u \right)
\end{align*}
for $w\in W$ satisfying $d(0,w)< \frac{\eps}{2}$.  
According \eqref{eq:uni:pi}, \eqref{eq:glob:cont} $G$ defines a continuous map on
\begin{align*}
\{w\in W \, : \, d(0, w) < \eps/2 \}.
\end{align*}
into itself.  Recalling that $W$ is a finite-dimensional subspace, 
we infer from the Brouwer fixed point theorem that there exists $w^*
\in W$ such that 
\begin{align*}
  v+ w^* \in B^{\pi, v}(\eps/2) \,\, \text{  and } \, \,  \pi( v)  = \pi\bigg( \prod_{j=0}^{k} \Psi^{j}_{t_j}u \bigg), 
\end{align*}
where $t_0 = t -s$ and $\Psi^{j}_{t_j} = \Phi^{j, g_{j}(v +
  w^*)}_{s_{j}} \in \mathbb{D}(\mathfrak{F})$, $j = 1, \ldots, k$.
This is the first condition in \eqref{eq:cont:con}.
In view of \eqref{eq:glob:cont} and our choice that $\eps' \leq \eps/2$ it is clear that this control
also satisfies the global approximation condition in \eqref{eq:cont:con}.  The proof is therefore
complete.
\end{proof}

We will make use of the following corollary of Theorem~\ref{thm:exact:cont:abs} 
in the examples considered below in
Section~\ref{sec:examples}.  To state this result, let us first recall and
extend the `ray semigroup' notation introduced above in \eqref{eq:ray:sg}.
Following the notational convention introduced in
Definition~\ref{def:oneparamlocal}, given any $Y \subseteq X$
we take $(\rho,Y)$ to be the \emph{ray semigroup
  parameterized by $Y$}; namely,
 \begin{align}
  (t,u,p) \mapsto \rho^p_tu: [0,\infty) \times X \times
  Y \to X \quad \text{ where } \quad \rho^p_tu := u + t p.
   \label{eq:para:ray:semi}
 \end{align}

\begin{corollary}\label{cor:exact:con}
Suppose that $\mathfrak{F} \subseteq \mathfrak{S}$ and suppose that 
$X_n$ is an increasing sequence of subspaces of $X$
such that $(\rho, X_n) \in \text{\emph{Sat}}_u(\mathfrak{F})$.  If $\cup_n X_n$ is dense in $X$ then $\mathbb{D}(\mathfrak{F})$
is approximately controllable on $X$ and exactly controllable for 
any continuous projection mapping into a finite dimensional subspace
as in Definition~\ref{def:cont:sg}.
\end{corollary}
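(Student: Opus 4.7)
The plan is to apply Theorem~\ref{thm:exact:cont:abs} directly to $\mathfrak{F}$ by verifying both hypotheses in turn. For hypothesis (i), observe that by Remark~\ref{rmk:u:imply:non:u}, $(\rho, X_n) \preccurlyeq_u \mathfrak{F}$ implies $\{\rho^p : p \in X_n\} \preccurlyeq \mathbb{D}(\mathfrak{F})$ for each $n$, so that $\{\rho^p : p \in \bigcup_n X_n\} \subseteq \mathrm{Sat}(\mathbb{D}(\mathfrak{F}))$. Given $u, v \in X$ and $t > 0$, the density of $\bigcup_n X_n$ lets us pick $p \in \bigcup_n X_n$ close to $(v-u)/t$, and then $\rho^p_t u = u + tp$ is close to $v$. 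Hence $\overline{A_{\mathrm{Sat}(\mathbb{D}(\mathfrak{F}))}(u, \leq t)} = X$, and combining Lemma~\ref{lem:sat} (which yields $\mathrm{Sat}(\mathbb{D}(\mathfrak{F})) \sim \mathbb{D}(\mathfrak{F})$) with the Conversion Lemma (Lemma~\ref{lem:ex:tm:conv}, applied with $V = X$) upgrades this to $\overline{A_{\mathbb{D}(\mathfrak{F})}(u, t)} = X$.

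For hypothesis (ii), fix $v \in X$ and $\eps, \eps', t > 0$ and take $\tilde{u} = v$. Let $W = \pi(X)$ with basis $e_1, \ldots, e_d$. Using density of $\bigcup_n X_n$ together with continuity of the Fr\'echet-space operations, choose $N$ large and approximants $\tilde{e}_i \in X_N$ close enough to $e_i$ so that the linear map $h : W \to X_N$ defined by $h(\sum c_i e_i) = \sum c_i \tilde{e}_i$ satisfies
\[
    \sup\{ d(h(w), w) : w \in W,\ d(0,w) \leq \eps \} < \eps'/2,
\]
which is possible because the set on the left is compact in the finite-dimensional subspace $W$. Define the continuous map $g : B^{\pi, v}(\eps) \to X_N$ by $g(\tilde{v}) = h(\tilde{v} - v)/t$. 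Then $\rho^{g(\tilde{v})}_t v = v + h(\tilde{v} - v)$ lies within $\eps'/2$ of $\tilde{v}$, uniformly over $B^{\pi, v}(\eps)$. Now invoke the uniform subsumption $(\rho, X_N) \preccurlyeq_u \mathfrak{F}$ in Definition~\ref{def:uni:sat}(i) with $\Psi = \rho$, $K_1 = \{v\}$, $K_2 = g(B^{\pi,v}(\eps)) \subset X_N$ (compact as the continuous image of a compact set), time $t$, and tolerance $\eps'/2$. This produces $\Phi^1, \ldots, \Phi^k \in \mathfrak{F}$, positive times $s_j$ with $\sum s_j \leq t$, and continuous $f_j : K_2 \to \mathcal{P}(\Phi^j)$ approximating $\rho^p_t v$ by $\prod_j \Phi^{j, f_j(p)}_{s_j} v$ within $\eps'/2$, uniformly for $p \in K_2$. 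Setting $g_j := f_j \circ g : B^{\pi, v}(\eps) \to \mathcal{P}(\Phi^j)$ and applying the triangle inequality delivers the bound \eqref{eq:main:stat:cond}, verifying hypothesis (ii). Theorem~\ref{thm:exact:cont:abs} then gives the conclusion.

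The main obstacle is the verification of hypothesis (ii): Theorem~\ref{thm:exact:cont:abs} insists that the approximating families lie in $\mathfrak{F}$ itself and, crucially, that the parameter assignments $g_j$ depend \emph{continuously} on the target $\tilde{v} \in B^{\pi, v}(\eps)$, since this continuity is what feeds the Brouwer fixed-point argument in the proof of Theorem~\ref{thm:exact:cont:abs}. The ordinary (non-uniform) saturate would only supply, target by target, some composition in $\mathfrak{F}$ getting close to a given $\tilde v$, with no guarantee of continuous dependence. It is precisely the uniform quantifier built into Definition~\ref{def:uni:sat}(i), packaged in the hypothesis $(\rho, X_n) \in \mathrm{Sat}_u(\mathfrak{F})$, that allows a single fixed tuple $(\Phi^1, \ldots, \Phi^k; s_1, \ldots, s_k)$ in $\mathfrak{F}$ together with continuous parameter maps $f_j$ to approximate the whole compact family of ray semigroups at once—this is the crux of how uniform saturation, as opposed to saturation alone, turns approximate controllability into exact controllability on finite-dimensional projections.
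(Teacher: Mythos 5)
Your proof is correct and follows essentially the same route as the paper: for condition (i) you combine density with Lemma~\ref{lem:sat} and the Conversion Lemma, and for condition (ii) you take $\tilde u = v$, approximate a basis of $\pi(X)$ by elements of $X_N$, and invoke the uniform subsumption $(\rho, X_N) \preccurlyeq_u \mathfrak{F}$ to produce the continuous parameter maps that Theorem~\ref{thm:exact:cont:abs} demands. The only difference is that you make explicit (the linear approximant $h$, the map $g$, the composition $g_j = f_j \circ g$) what the paper's last sentence compresses into ``combining this observation with the fact that $(\rho, X_n) \in \text{Sat}_u(\mathfrak{F})$.''
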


\begin{proof}
We will establish this result by showing that the conditions in Theorem~\ref{thm:exact:cont:abs} hold.
Regarding the first condition (i), fix any $u, v \in X$ and $t > 0$.  By our density assumption on $\cup_n X_n$ we 
may consider a sequence of 
elements $w_n \in X_n$ such that $w_n \to w = \frac{v -u}{t}$.  This in turn implies that
$\rho^{w_n}_t u \to v$ as $n \to \infty$. On the other hand we have by assumption that
$\rho^{w_n} \in \mathbb{D}(\text{Sat}_u(\mathfrak{F}))$.  Referring back to \eqref{eq:D:Sat:com:ob} this means that $\rho^{w_n} \in \text{Sat}(\mathbb{D}(\mathfrak{F}))$ for all $n$.
We thus conclude that
\begin{align*}
v \in \overline{A_{\text{Sat}(\mathbb{D}(\mathfrak{F}))}(u,t)} 
\subseteq \overline{A_{\text{Sat}(\mathbb{D}(\mathfrak{F}))}(u,\leq t)} 
= \overline{A_{\mathbb{D}(\mathfrak{F})}(u,\leq t)},
\end{align*}
where we have use Lemma~\ref{lem:sat} for the last equality.  Since $u, v \in X$, $t >0$ were arbitrary here
this shows that $\overline{A_{\text{Sat}(\mathbb{D}(\mathfrak{F}))}(u,t)} = X$ for any $u \in X$, $t > 0$. 
Thus, by Lemma~\ref{lem:ex:tm:conv}, we have that $\overline{A_{\mathbb{D}(\mathfrak{F})}(u, t)} = X$.  In particular, we have established condition (i) of Theorem~\ref{thm:exact:cont:abs}.

Turning to the second condition in Theorem~\ref{thm:exact:cont:abs}, again fix any $u,v \in X$,
$t > 0$ and a finite dimensional projection $\pi$.  For any given $\eps, \eps' > 0$
we may show that the condition \eqref{eq:main:stat:cond} is satisfied by taking $\tilde{u} = v$.   Indeed,  since
$\pi(X)$ is finite dimensional we can approximate the basis elements $u^{(1)}, \ldots, u^{(N)}$ 
up to any precision $\delta > 0$ by elements in $X_n$ for some $n = n(\pi, \delta)$.   In particular 
this implies that we may choose $\delta$ and $\tilde{u}^{(1)}, \ldots, \tilde{u}^{(N)} \in X_n$ such that
\begin{align*}
 \sup_{|\alpha| \leq  \eps} d(  \rho_t^{\alpha_1 \tilde{u}^{(1)} + \ldots + \alpha_N \tilde{u}^{(N)}} v,\rho_t^{\alpha_1 u^{(1)} + \cdots + \alpha_N u^{(N)}}v) < 
 \frac{\eps'}{2}.
\end{align*}
Combining this observation with the fact that $(\rho,X_n) \in \text{Sat}_u(\mathfrak{F})$ we
thus conclude \eqref{eq:main:stat:cond}, completing the proof.
\end{proof}

\begin{remark}\label{rmk:stupid:spans}
The following observation concerning uniform subsumption
and ray semigroups is used several times below in order 
to establish the conditions for Corollary~\ref{cor:exact:con}.
Maintaining our assumption that the phase space $X$ is a Fr\'echet space
consider a collection $\mathfrak{F} \subset \mathfrak{S}$.  Suppose that 
$Y_1, Y_2$ are linear subspaces of $X$ such that 
\begin{align*}
   (\rho, Y_1), (\rho, Y_2)  \in  \text{Sat}_u(\mathfrak{F}).
\end{align*}
Then an argument similar to the one given in Lemma~\ref{lem:uni:stat:m}
yields that
\begin{align*}
  (\rho, \spa \{ Y_1 \cup Y_2 \}) \in  \text{Sat}_u(\mathfrak{F}).
\end{align*}
\end{remark}

\section{Applications to Stochastic Partial Differential Equation}
\label{sec:app:SPDEs}

We now turn our attention to applying the previous control results to stochastic partial differential equations (SPDEs) of the form 
\begin{align}\label{SPDE}
  \partial_t u + L u + N(u) = f + \sum_{k\in \mathcal{Z}} \sigma_k \partial_t W_k  
\end{align}
where $L$ is a linear operator, $N$ is a nonlinear operator, 
$\sigma_k$ and $f$ are fixed spatial functions, and $\{W_k\}_{k\in \mathcal{Z}}$ is
an independent collection of standard real-valued Brownian motions.
We assume that the set $\mathcal{Z}$ is 
finite and that the phase space $X$ is a Hilbert space with norm $\| \ccdot \|$ and inner product $\langle \ccdot, \ccdot \rangle_X$.  

Observe that by replacing the Brownian motions $W_k(\,\cdot\,)$ with actuators $\int_0^\cdot \alpha_k ds$ 
in \eqref{SPDE} we obtain the control system (\ref{eq:abs:evo}). Our goal in this section is
to illustrate some implications of the controllability of the
system (\ref{eq:abs:evo}) for the SPDE \eqref{SPDE}.  Specifically, we present results concerning
topological irreducibility, unique ergodicity as well as density properties 
of finite-dimensional projections of \eqref{SPDE} for which the
control properties (\ref{eq:approx:cont}) and (\ref{eq:exact:cont}) play a
crucial role.

To avoid the technicalities of defining solutions of~\eqref{SPDE} in a general abstract setting,  we will instead simply posit the existence of a suitable \emph{cocycle} 
$\phi$.  See Definition~\ref{def:co-cyc} below and, for example, \cite{Arnold2013} for a general discussion of this formalism.  
The analysis in this section is carried out from this starting point.  
Below in Section~\ref{sec:examples}, we provide details of a concrete functional setting in each example, 
hence inferring the existence of such a cocycle $\phi$ corresponding to an equation of the form \eqref{SPDE} on a case-by-case basis.

\subsection{Cocycle Setting}
\label{sec:cocyclesetting}
Let us now recall the precise setting of the cocycle formalism.
In the process, we will introduce some assumptions used throughout this section and  notational conventions used throughout the rest of the paper.  

In what follows it will be convenient to take the \emph{Wiener space} as our underlying probability space. 
For this purpose we take the sample space $\Omega$ to be
\begin{align}
\label{eq:omega}
  \Omega := \{ V\colon (-\infty, \infty) \rightarrow \R^{|\mathcal{Z}|}\, \text{ continuous with }\, V(0) = 0 \}
\end{align}
and endow $\Omega$ with the usual topology induced by the semi-norms
\begin{align}
   \| V\|_{\infty, s, t} = \sup_{s\in [s, t]}|V(s)| \quad \text{ for any } \quad V\in \Omega  
    \label{eq:sup:norm}
\end{align}
defined for $-\infty < s < t < \infty$.
Similarly for $t>0$ we take
\begin{align*}
  \Omega_t := \{ V\colon [0, t] \rightarrow \R^{|\mathcal{Z}|} \, \text{ continuous with } \, V(0) = 0 \}.
\end{align*}
We use $\| \ccdot \|_{\infty, t} := \| \ccdot \|_{\infty, 0, t}$ to
denote the sup norm on $\Omega_t$ as in \eqref{eq:sup:norm}.  We will
also make use of the \emph{Cameron-Martin} subspace $\mathcal{H}_t
\subseteq \Omega_t$ defined as
\begin{align}
   \mathcal{H}_t := \{ H \in H^1([0, t];\R^{|\mathcal{Z}|}) : H(0) = 0\}
   \label{eq:Cam:Mar:Sp}
\end{align}
and endowed with the inner product
\begin{align*}
   \langle H, G \rangle_{\mathcal{H}_t} = \int_0^t \dot{H} \dot{G} ds ,
\end{align*}
for any $H, G \in \mathcal{H}_t$.

We let $\P$ denote the Wiener measure on the space $\Omega$, which is
the unique measure so that the process 
induced by the evaluation map on $\Omega$ is a two-sided Brownian motion on $\RR^{|\mathcal{Z}|}$.  The associated expectation will be denoted by $\E$.  Here the $\sigma$-algebra is provided by the Borel subsets of 
$\Omega$.  See, for example, \cite{RevuzYor2013} for detailed constructions.

We define the \emph{shift map} $\theta_s :\Omega \rightarrow \Omega$ for $s \in \RR$ by
\begin{align}
  \theta_s V(t)= V(t+s)-V(s) \quad \text{ for any } \; V \in \Omega,\, t \in \RR.
  \label{eq:shf:mp:act}
\end{align}
Recall that $\{\theta_s\}_{s \in \RR}$ is a group of measure preserving transformations; 
namely,  $\theta_s\theta_r = \theta_{s+r}$ for any $s,r \in \RR$ and
$\P(\Gamma) = \P(\theta_s(\Gamma))$ for any $s \in \RR$ and $\Gamma \in \mathcal{F}$.

We recall the definition of a (continuous, adapted) cocycle as follows:
\begin{definition}\label{def:co-cyc}
We say that a mapping $\phi\colon  [0, \infty) \times X \times \Omega \rightarrow X$ is a \emph{continuous adapted cocycle} if
\begin{itemize} 
\item[(1)]  $\phi$ is continuous;
\item[(2)]  for every $u \in X$ and $V\in \Omega$, $\phi_0(u, V)=u$;
\item[(3)]  for every $u\in X$, $V\in \Omega$ and $t, s >0$, 
\begin{align}
  \phi_{t+s}(u, V) = \phi_t(\phi_s(u, V), \theta_s V)
  \label{eq:cocyc:prop}
\end{align} 
where we recall that $\theta_s$ is the shift map;
\item[(4)] For any $t > 0$, $u \in X$, $V, \tilde{V} \in \Omega$, 
\begin{align}
  \text{if } V(s) = \tilde{V}(s) \text{ for all } s \in [0,t] \text{ then } \phi_t(u, V) = \phi_t(u, \tilde{V}).
  \label{eq:no:future}
\end{align}
\end{itemize}
\end{definition}

Throughout this section, we let $\phi$ denote an arbitrary fixed
cocycle satisfying (1)-(4).  Note that the level of generality of a continuous
cocycle will be sufficent to establish the irreducibility and ergodicity results 
Sections~\ref{sec:top:irritability}, \ref{sec:er:god}.  In order to prove results on finite-dimensional
projections below in Section~\ref{sec:den:pos} some further, more refined
conditions on $\phi$ will be imposed (see Assumption~\ref{ass:Mal:der}).

\begin{remark} \label{rmk:res:ccy:ex}
Given $t > 0$ and any measurable map $\mathcal{E} : \Omega_t \to \Omega$
such that for any $V \in \Omega_t$
\begin{align*}
  \mathcal{E}(V)(s) = V(s) \quad \text{ for every } s \in [0,t].
\end{align*}
We can define a map $\phi_t^{\mathcal{E}}:  X \times \Omega_t \to X$
according to
\begin{align}
   \phi^{\mathcal{E}}_t(u, V) :=  \phi_t(u, \mathcal{E}(V)),
\end{align}
for any continuous adapted cocycle $\phi$.
In view of assumption (4), it is clear that $\phi^\mathcal{E}_t$ is independent of
$\mathcal{E}$ and continuous on $X$.  In what follows we will abuse notation and 
consider $\phi_t$ as also defining a continuous map from $X \times \Omega_t$
into $X$.  

It is worth emphasizing that assumption (4) implies that, for every fixed $u \in X$
the random process
\begin{align}
  t \mapsto \phi_t(u, W) \quad \text{ is adapted to the filtration generated by } W.
  \label{eq:adptd:coc}
\end{align}
This property will allow us to associate a Markovian framework with $\phi$ in what follows.\footnote{In the language 
of \cite{Arnold2013}, condition (4) in Definition~\ref{def:co-cyc} in implies 
that $\phi$ defines a \emph{Markov random dynamical system}.  Thus, in particular, $\phi$ is in the wider class of cocycles 
for which a corresponding Markovian framework can be defined.}
\end{remark}

\begin{remark}
\label{rem:pdecocycle}
Below in the examples considered in Section~\ref{sec:examples}, we 
have that each  concrete formulation of \eqref{SPDE} 
can be written as a continuous functional of the sample path of the Brownian motion. 
In fact this is a property typically enjoyed by systems with additive noise since we can
write solutions of \eqref{SPDE} as $u(t, u_0, W) = v(t,u_0, W) + \sigma W$ where
$v$ obeys
\begin{align*}
   \partial_t v + L (v + \sigma W)  + N(v + \sigma W) = f, \quad v(0) = u_0,
\end{align*}
We can then in turn \emph{define} the cocycle $\phi$ by $\phi_t(u_0,V)= v(t, u_0, V) + \sigma V$
which make sense for \emph{every} $V=(V_k)_{k\in \mathcal{Z}}\in \Omega$.    
\end{remark}
\begin{remark}
  Of course the notation of a cocycle introduced above can be extended to cover systems defined locally in time.  Just as with the formalism in Section~\ref{sec:sat} we expect that many of the results in this section can be extended to such a local setting.  For the sake of clarity and 
  simplicity, we will refrain from addressing this situation here leaving this case for future work. For applications of 
  degenerate control problems to locally defined finite dimensional stochastic systems, see \cite{HerMat15} in a general context and \cite{Her11, BirHerWehr12} for further specific applications.
\end{remark}

We associate a Markovian framework with $\phi$ as follows.
In view of \eqref{eq:adptd:coc} and the cocycle property (\ref{eq:cocyc:prop})
\begin{align}
  P_t g(u_0) = \E g(\phi_t(u_0,W))\quad \text{ for any } \; g \in \mathcal{M}_b(X),
  \label{eq:mar:sg:cyc}
\end{align}
defines a Feller Markov semigroup.
Here $\mathcal{M}_b(X)$ is the collection of real valued bounded, measurable functions from $X$.   
The associated transition kernel is given by 
\begin{align}
  P_t(u_0, A) = (P_t \one_A)(u_0) \quad \text{for any} \;  
                           t \geq 0, \, u_0 \in X, \,  A\in  \mathcal{B}(X),
  \label{eq:trn:fn:cyc}
\end{align}
where $\mathcal{B}(X)$ denotes the Borel $\sigma$-algebra of subsets of $X$.
Recall that $P_t$ acts dually on probability measures $\mu$ on $X$ via 
\begin{align*}
\mu P_t(A)= \int_X P_t(u, A) \mu(du) = \int_X \E [\one_A (\phi(u, W)) ]\mu(du)
\end{align*}   
for any $A\in \mathcal{B}(X)$. We call a probability measure $\mu $ on $\mathcal{B}(X)$ \emph{invariant} if 
\begin{align*}
\mu P_t = \mu
\end{align*} 
for all $t>0$.\footnote{Recall that the collection of such measures is a convex set with the 
extremal points being the \emph{ergodic} invariant measures, i.e. those measures $\mu$ such that 
if $P_t \one_A = \one_A$ $\mu$-almost everywhere, then $\mu(A) \in \{0, 1\}$.  
Note that any two ergodic invariant measures either coincide or are mutually singular.  
See, e.g., \cite{RB_06} for further details.}

Finally it remains to connect the cocycle formalism with the control
theoretic setting described above in Sections~\ref{sec:overview}~\ref{sec:sat}.  
Observe that to the cocycle $\phi$ we may associate the following
collection of 
continuous (global) semigroups $\mathcal{F}_0$ given by   
\begin{align}
\mathcal{F}_0 =  \{ (t, u) \mapsto \phi_t(u, V_\alpha) : V_\alpha(s) = s \alpha \text{
  for } \alpha \in \R^{|\mathcal{Z}|}, s \geq 0\}.  
\label{eq:in:sgs:cyc}
\end{align}
Observe that $\phi_t(u_0, V_\alpha)$ is a continuous
semigroup in the sense of Definition~\ref{def:cont:sg}
for any fixed $\alpha \in \RR^{|\mathcal{Z}|}$.
\begin{remark}
Following the notation introduced in Section~\ref{sec:overview}, we
recall that $\Phi_t^{\alpha \cdot \sigma}u_0$ formally denotes the solution
of~\eqref{eq:abs:evo} with initial condition $u_0\in X$ and control
$\alpha\in L^2([0, \infty); \R^{|\mathcal{Z}|})$.  In particular, given a Cameron-Martin direction $V \in \mathcal{H}_t$ (cf. \eqref{eq:Cam:Mar:Sp}) we have that 
\begin{align*}
u(t, u_0, V)=\phi_t(u_0, V)= \Phi_t^{\alpha\cdot \sigma}u_0  
\end{align*}  
where $\alpha = \dot V$.   
Notationally the use
of $\phi$ is natural 
in this section as we are now considering sample paths 
from a Brownian forcing which does not have a
traditional time derivative. 
\end{remark}

The notions of controllability given in Definition~\ref{def:acc:cont}
are equivalently formulated in the cocycle formalism as follows:
\begin{definition}\label{def:control:cyc}
Let $\phi$ be a continuous adapted cocycle.  We say that 
\begin{itemize}
\item[(i)] $\phi$ is \emph{approximately controllable} if the
associated collection of continuous semigroups $\mathcal{F}_0$ given by \eqref{eq:in:sgs:cyc} are
approximately controllable on $X$. In other words $\phi$ is approximately 
controllability if, for any $u, v \in X$ and any $\delta, t>0$, there exists a 
piecewise linear function $V \in \Omega_t$ so that 
\begin{align}
   \| \phi_t(u,V)- v\| < \delta.
  \label{eq:ap:cyc:con}
\end{align}
\item[(ii)] Let $\pi: X \to X$ be a projection onto a finite-dimensional subspace of $X$.
If, for any $u,v \in X$ and $t, \delta >0$, there is a piecewise linear
function $V \in \Omega_t$
such that \eqref{eq:ap:cyc:con} holds and additionally
\begin{align}
\label{eqn:exact:proj}
  \pi(\phi_t(u, V)) = \pi(v)
\end{align}
then we say that  $\phi$ is \emph{approximately controllable and exactly controllable on $\pi(X)$}.  If there exists $V\in \Omega_t$ such that \eqref{eqn:exact:proj} holds, then we say that $\phi$ is \emph{exactly controllable} on $\pi(X)$.   
\end{itemize}
\end{definition}

\subsection{Topological Irreducibility} 
\label{sec:top:irritability}
We now show that approximate controllability implies a form 
of topological irreducibility that all points on the phase space
are approximately reached with positive probability.

\begin{lemma}\label{l:control}  Let $\phi$ be a continuous adapted cocycle 
  and $P_t(u_0, A)$ be its associated Markov transition function.
  If $\phi$ is approximately controllable, then 
  $P_t(u,B_\delta(v)) >0$ for all $u,v \in X$ and 
  $\delta>0$ or, in other words, $\text{\emph{supp}}(P_t(u,\ccdot)) = X$. 
  Furthermore, for any compact set $K \subseteq X$ and any $\delta, t>0$, $v \in X$, there 
  exists an $\epsilon_0 =\epsilon_0(K, \delta, t, v) >0$ such that 
  \begin{align}
    \inf_{u \in K} P_t(u,B_\delta(v)) \geq \epsilon_0 > 0. 
    \label{eq:irred:comp}
  \end{align}
\end{lemma}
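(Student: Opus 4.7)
The plan is to establish the first, pointwise assertion by combining three ingredients: the approximate controllability hypothesis, the joint continuity of $\phi_t$ on $X \times \Omega_t$, and the fact that Wiener measure restricted to $\Omega_t$ has full topological support with respect to the sup norm $\|\ccdot\|_{\infty,t}$. The uniform bound on compacta will then follow from a standard finite subcover argument.

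First I would fix $u,v \in X$ and $\delta, t > 0$. By the approximate controllability hypothesis (Definition~\ref{def:control:cyc}(i)), there exists a piecewise linear path $V^* \in \Omega_t$ with $\|\phi_t(u, V^*) - v\| < \delta/2$. Using the continuity of $\phi_t : X \times \Omega_t \to X$ from Definition~\ref{def:co-cyc} together with Remark~\ref{rmk:res:ccy:ex}, there exists $\rho > 0$ such that for every $V \in \Omega_t$ with $\|V - V^*\|_{\infty,t} < \rho$ one has $\|\phi_t(u,V) - \phi_t(u,V^*)\| < \delta/2$, hence $\phi_t(u,V) \in B_\delta(v)$. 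Since $V^*$ is a continuous path with $V^*(0)=0$, it lies in the topological support of Wiener measure on $\Omega_t$ (a standard fact for Brownian motion in $\mathbb{R}^{|\mathcal{Z}|}$, see e.g.\ \cite{RevuzYor2013}), so
\begin{equation*}
   \P( \|W - V^*\|_{\infty,t} < \rho ) > 0.
\end{equation*}
Combining these observations yields $P_t(u, B_\delta(v)) \geq \P(\|W - V^*\|_{\infty,t} < \rho) > 0$, which establishes the first claim.

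For the uniform statement \eqref{eq:irred:comp}, fix a compact set $K \subseteq X$, $v \in X$ and $\delta, t > 0$. For each $u \in K$, the argument above produces $V^u \in \Omega_t$ piecewise linear with $\|\phi_t(u, V^u) - v\| < \delta/2$. Joint continuity of $\phi_t$ at $(u, V^u)$ furnishes radii $r_u, \rho_u > 0$ such that
\begin{equation*}
    \sup_{u' \in B(u, r_u),\, V' \in B_{\infty,t}(V^u, \rho_u)} \| \phi_t(u', V') - v \| < \delta.
\end{equation*}
By compactness, extract a finite subcover $K \subseteq \bigcup_{i=1}^n B(u_i, r_{u_i})$, and set $\eps_0 := \min_{1 \leq i \leq n} \P(\|W - V^{u_i}\|_{\infty,t} < \rho_{u_i})$. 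Each term is strictly positive by the full-support property of Wiener measure, so $\eps_0 > 0$. For any $u' \in K$, choose $i$ with $u' \in B(u_i, r_{u_i})$; then $P_t(u', B_\delta(v)) \geq \P(\|W - V^{u_i}\|_{\infty,t} < \rho_{u_i}) \geq \eps_0$, giving the desired uniform lower bound.

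The main subtlety, and the only nontrivial step, is invoking the full topological support of Wiener measure on $(\Omega_t, \|\ccdot\|_{\infty,t})$; everything else is a direct consequence of the continuity assumptions on the cocycle together with elementary compactness. I do not foresee any additional obstacles.
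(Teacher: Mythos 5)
Your proof of the first statement follows the paper's argument almost verbatim. For the uniform bound on compacta, however, you take a genuinely different route. The paper first establishes that $w \mapsto P_t(w, B_\delta(v))$ is a \emph{continuous} function of the initial condition, via a dominated-convergence interchange (which, strictly speaking, requires the indicator $\one_{\{\|\phi_t(\cdot,W)-v\| < \delta\}}$ to be $\P$-a.s.\ continuous at $u$, i.e.\ that the boundary event $\{\|\phi_t(u,W)-v\| = \delta\}$ is $\P$-null --- a point the paper leaves implicit); it then covers $K$ with finitely many balls on which $P_t(\cdot, B_\delta(v))$ drops by at most a factor of $\tfrac12$. You instead bypass the Markov kernel entirely and work at the level of the cocycle: joint continuity of $\phi_t$ in $(u,V)$ hands you, around each $(u, V^u)$, a product neighborhood $B(u, r_u) \times B_{\infty,t}(V^u, \rho_u)$ mapped into $B_\delta(v)$, and a finite subcover of $K$ in the $u$-variable gives the uniform lower bound directly as a minimum of finitely many Wiener-ball probabilities. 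Your approach is more elementary, sidesteps the implicit null-boundary issue in the paper's dominated-convergence step, and uses only the joint continuity already posited in Definition~\ref{def:co-cyc}; the paper's approach, in exchange, isolates the reusable statement that $P_t(\cdot, B_\delta(v))$ is continuous. Both are valid.
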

\begin{remark}
  Generally the first consequence in Lemma~\ref{l:control} is a sufficient 
  form of topological irreducibility to establish unique ergodicity.
  See Corollary~\ref{cor:full:sup} and Corollary~\ref{cor:uniq:criteria} below and also
  \cite{HM06}.   The uniform lower bound over compact sets, \eqref{eq:irred:comp}, is useful in establishing 
  rates of convergence to the stationary distribution. 
\end{remark}

\begin{proof}[Proof of Lemma~\ref{l:control}]
  Let $t, \delta >0$ and $u\in X$.  First observe that approximate
  controllability of $\phi$ implies the existence of a piecewise
  linear $V\in \Omega_t$ such that $\| \phi_t(u, V) - v\| <
  \frac{\delta}{2}.$  By assumption, the mapping
 $\phi_t(u,\ccdot)\colon \Omega_t \rightarrow X$ is continuous.  Hence,
 there exists an $\eps>0$ so that if $\| \tilde{V}-V\|_{\infty, t} <
 \eps$ with $\tilde{V}\in \Omega_t$ then 
  $\|\phi_t(u,\tilde{V})- \phi_t(u,V)\| < \delta/2$. Combining everything we 
  have $\| \tilde V -V\|_{\infty, t} < \eps$ implies 
  \begin{align*}
    \|\phi_t(u,\tilde V)- v\|  \leq \|\phi_t(u,\tilde V)- \phi_t(u,V)\|  +
   \|\phi_t(u,V)-v\|< \delta.   
  \end{align*}
Since, for any $\eps>0$, $\P(  \| W-V\|_{\infty, t} < \eps) >0$, the proof of the first statement now follows.

Turning to the proof of the second statement, fix $K\subseteq X$ compact and let $\delta, t >0$ and $v\in X$.  First observe that $P_t(\,\ccdot\,, B_\delta(v))\colon X\rightarrow [0,1]$ is continuous by dominated convergence and the fact that $\phi$ is a continuous cocycle.  Indeed, note that for any $u\in X$ we have
\begin{align*}
\lim_{w\rightarrow u} P_t(w, B_\delta(v)) &= \lim_{w\rightarrow u} \int \one_{\{\| \phi_t(w, W) - v\| < \delta\}} \, d\P = \int \lim_{w\rightarrow u} \one_{\{\|\phi_t(w, W)-v\| < \delta \}}\, d\P = P_t(u, B_\delta(v)).  
\end{align*} 
Consequently, for every $u\in K$ define $\eps_u>0$ such that the following holds 
\begin{align*}
  \| w - u \| < \eps_u \,\, \text{ implies }\,\, P_t(w,
  B_\delta(v)) \geq \tfrac12  P_t(u,
  B_\delta(v))\,. 
\end{align*}
Since 
$\{ B_{\eps_u}(u) : u \in K\}$ is an open cover of the compact set 
$K$, there exists a finite subcover $\{ B_{\eps_{u_k}}(u_k) : k=1,\dots,m\}$ for some collection $\{u_1, \ldots, u_m\} \subseteq K$. By the first statement proven, $P_t(u_k,B_\delta(v))>0$ for each $k$ and hence 
\begin{align*}
  \inf_{u\in K} P_t(u, B_\delta(v)) \geq \epsilon_0 := \tfrac12 \min_k P_t(u_k,B_\delta(v)) >0,
\end{align*}
which is the desired result.
\end{proof}

We have the following simple but important consequence:
\begin{corollary}
\label{cor:full:sup}
 If $\phi$ is approximately controllable, then $\mu(B)>0$ for any
 invariant measure $\mu$ and any open set $B\subseteq X$.  In other words,
 $\text{\emph{supp}}(\mu) = X$ for every probability measure $\mu$ which is invariant under $P_t$.
\end{corollary}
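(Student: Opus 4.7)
The plan is to deduce the full-support conclusion directly from the pointwise positivity statement in Lemma~\ref{l:control} combined with the defining invariance identity $\mu P_t = \mu$. Fix any non-empty open set $B \subseteq X$, any $t > 0$, and choose $v \in B$ together with $\delta > 0$ so that $B_\delta(v) \subseteq B$. By monotonicity of $P_t(u, \ccdot)$,
\begin{align*}
P_t(u, B) \;\geq\; P_t(u, B_\delta(v)) \;>\; 0
\end{align*}
for every $u \in X$, where the strict positivity is precisely the first conclusion of Lemma~\ref{l:control} under the standing hypothesis of approximate controllability.

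Next I would invoke invariance of $\mu$, which gives
\begin{align*}
\mu(B) \;=\; (\mu P_t)(B) \;=\; \int_X P_t(u, B)\, \mu(du).
\end{align*}
Since the integrand is strictly positive on all of $X$ and $\mu$ is a probability measure (so in particular $\mu(X) = 1 > 0$), the integral must be strictly positive, yielding $\mu(B) > 0$. Because a Borel probability measure $\mu$ on the Hilbert space $X$ satisfies $\mathrm{supp}(\mu) = X$ if and only if every non-empty open set has positive $\mu$-measure, this immediately gives the stated equality of supports.

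There is no real obstacle here: the work has already been done in establishing Lemma~\ref{l:control}, and the corollary is essentially a one-line consequence of invariance and the measurability of $u \mapsto P_t(u, B)$ (the latter being automatic for the transition kernel of a Feller Markov semigroup as defined in \eqref{eq:trn:fn:cyc}). The only thing worth flagging is that one should mention why the strict inequality survives integration, namely that $\mu$ is a probability measure rather than possibly zero — which is why we can conclude $\mu(B) > 0$ from pointwise positivity of the integrand alone.
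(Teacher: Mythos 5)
Your proposal is correct and follows essentially the same route as the paper: invoke Lemma~\ref{l:control} for the pointwise positivity $P_t(u,B)>0$, then use invariance $\mu(B)=\int_X P_t(u,B)\,\mu(du)$ to conclude. The only cosmetic difference is that the paper spells out the final step (that a strictly positive integrand against a probability measure integrates to something positive) via the level sets $A_k=\{u: P_t(u,B)\geq k^{-1}\}$ and monotone convergence, whereas you assert it directly — both are fine.
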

\begin{proof}
For $k\in \N$, define the subsets 
\begin{align*}
  A_k = \{ u\in X\,: \, P_t(u, B) \geq k^{-1}\} 
\end{align*}
and note that since $\mu$ is invariant  
 \begin{align*}
   \mu(B)= \int_X P_t(u,B) \mu(du) \geq \tfrac 1k \mu(A_k)
 \end{align*}
for every $k\in \N$.  Since, according to Lemma~\ref{l:control},  $P_t(u, B) >0$ for every $u\in X$ we have that $\mu(A_k) \uparrow \mu(X)=1$ as $k\rightarrow \infty$.  In particular, $\tfrac 1k \mu(A_k)>0$ for some $k\in \N$, thus finishing the proof.    
\end{proof}

\subsection{Unique ergodicity}
\label{sec:er:god}

We turn next to examine some consequences of approximate controlability for unique ergodicity in systems
like (\ref{SPDE}).

In \cite{HM06}, the concept of an \textit{asymptotically strong 
  Feller} Markov process was introduced. The asymptotic strong Feller property is a generalization 
of the well-known strong Feller property. 
It furnishes the semigroup $P_t$ with just enough 
smoothing to be able to conclude unique ergodicity when all its invariant measures have a point of 
common support (see Theorem~\ref{thm:disjoinSuport} below).  This is useful especially for classes 
of stochastic partial differential equations where 
the strong Feller property appears to be untenable to prove or may not hold.  

Following \cite{HM06,HM09}, we will work mainly in the context of the following lemma which establishes 
the asymptotically strong Feller property by means of an estimate controlling the derivative of the Markov 
semigroup with respect to the initial  condition.  For further reference, see Section~1.1 from~\cite{HM06}. 

  \begin{proposition}[Proposition 3.12 from \cite{HM06}, Proposition 
    1.1 from~\cite{HM09}] 
    A Markov semigroup $\{P_t\}_{t \geq 0}$ on $X$ is asymptotically strong Feller at a point 
    $u \in X$ if there exist an open neighborhood $U$ of $u$ and positive sequences $\{t_n\}_{n \geq 1}$ and $\{\delta_n\}_{n\geq1}$ with $\{t_n\}_{n \geq 1}$ non-decreasing and $\{\delta_n\}$ converging to zero such that 
    \begin{align}
      \label{eq:ASF}
      \sup_{\| \xi\| = 1}|D P_{t_n} f(v) \xi | \leq C (|f |_\infty + \delta_n \|D f \|_\infty)
    \end{align}
    for all $n\in \N$, $v\in U$ and all test functions $f \in C^1(X)$.  Here 
    \begin{align*}
      | f |_\infty = \sup_{x \in X} |f(x)|, \quad \| D f\|_\infty  = \sup_{x \in X, \| \xi\| = 1} | Df(x)\xi|
    \end{align*}
    and $C > 0$ is a fixed constant.
  \end{proposition}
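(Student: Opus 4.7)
The plan is to verify the asymptotically strong Feller property by exhibiting a totally separating system of pseudo-metrics $\{d_n\}$ on $X$ together with the sequence $\{t_n\}$ supplied by the hypothesis, and to estimate the associated Kantorovich--Rubinstein (Wasserstein-type) distances $\|P_{t_n}(u, \cdot) - P_{t_n}(v, \cdot)\|_{d_n}$ on a neighborhood of $u$. A natural choice is
\begin{equation*}
  d_n(x,y) := 1 \wedge \frac{\|x-y\|}{\delta_n},
\end{equation*}
which defines a metric on $X$, is bounded by $1$, and satisfies $d_n(x,y) \to 1$ for every fixed $x \ne y$ because $\delta_n \to 0$. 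This last property is exactly what it means for $\{d_n\}$ to be totally separating in the sense of \cite{HM06}.

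Next I would shrink the neighborhood $U$ from the hypothesis to a convex neighborhood $U' \subseteq U$ of $u$, so that any segment joining two points of $U'$ remains in $U$. For $u,v \in U'$ the Kantorovich--Rubinstein duality expresses the distance as
\begin{equation*}
  \|P_{t_n}(u, \cdot) - P_{t_n}(v, \cdot)\|_{d_n}
  \;=\; \sup_{f} \bigl( P_{t_n}f(u) - P_{t_n}f(v)\bigr),
\end{equation*}
where the supremum is over $f : X \to \R$ with $d_n$-Lipschitz constant at most $1$. Any such $f$ has oscillation at most $1$ over $X$, so after subtracting a constant we may assume $|f|_\infty \leq \tfrac12$; moreover the $d_n$-Lipschitz condition translates to $\|Df\|_\infty \leq 1/\delta_n$ in the ambient norm of $X$. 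By standard regularization on the separable Hilbert space $X$ (e.g.\ Moreau--Yosida or a finite-dimensional Galerkin smoothing followed by a mollification), it suffices to prove the forthcoming estimate for test functions $f \in C^1(X)$ with $|f|_\infty \leq \tfrac12$ and $\|Df\|_\infty \leq 1/\delta_n$.

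For such $f$, the fundamental theorem of calculus along the segment from $u$ to $v$, combined with the hypothesis \eqref{eq:ASF} applied to $f$ at interior points $w = u + s(v-u) \in U$, gives
\begin{equation*}
  |P_{t_n}f(u) - P_{t_n}f(v)|
  \;\leq\; \int_0^1 |DP_{t_n}f(u + s(v-u))(v-u)|\, ds
  \;\leq\; \|v-u\| \cdot C\bigl(\tfrac12 + \delta_n \cdot \tfrac{1}{\delta_n}\bigr)
  \;\leq\; \tfrac{3C}{2}\|v-u\|.
\end{equation*}
Taking the supremum over admissible $f$ yields $\|P_{t_n}(u, \cdot) - P_{t_n}(v, \cdot)\|_{d_n} \leq \tfrac{3C}{2}\|u-v\|$ uniformly in $n$, and hence
\begin{equation*}
  \lim_{\gamma \to 0} \limsup_{n \to \infty} \sup_{v \in B(u,\gamma)} \|P_{t_n}(u, \cdot) - P_{t_n}(v, \cdot)\|_{d_n}
  \;=\; 0,
\end{equation*}
which is precisely the asymptotic strong Feller property at $u$.

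The bulk of the argument is essentially bookkeeping once the right family $\{d_n\}$ is chosen; the one genuinely non-routine point is the regularization step, justifying that an abstract $d_n$-Lipschitz function on the infinite-dimensional Hilbert space $X$ can be approximated pointwise (or in the sense of the Kantorovich duality) by $C^1$ functions with the same $|\cdot|_\infty$ and $\|D\cdot\|_\infty$ bounds, so that \eqref{eq:ASF} may legitimately be invoked. I would handle this by finite-dimensional projection plus standard Gaussian mollification, which preserves both bounds up to an arbitrarily small loss.
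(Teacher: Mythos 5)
The paper itself gives no proof of this proposition---it is quoted verbatim from Hairer--Mattingly and cited as such---so there is no in-paper argument to compare against. Your reconstruction is, in all essentials, the original Hairer--Mattingly proof of Proposition 3.12 in \cite{HM06}: the choice of pseudo-metrics $d_n = 1 \wedge \delta_n^{-1}\|\cdot - \cdot\|$, the reduction via Kantorovich--Rubinstein duality to $d_n$-Lipschitz test functions normalized so that $|f|_\infty \leq 1/2$ and $\|Df\|_\infty \leq \delta_n^{-1}$, the integration of $DP_{t_n}f$ along the segment in a convex shrinking of $U$, and the resulting bound $\|P_{t_n}(u,\cdot) - P_{t_n}(v,\cdot)\|_{d_n} \leq \tfrac{3C}{2}\|u-v\|$ uniform in $n$ are exactly the ingredients used there, and your arithmetic is correct.

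Two small points are worth recording. First, the definition of a totally separating system of pseudo-metrics in \cite{HM06} requires $\{d_n\}$ to be \emph{increasing}; since $\{\delta_n\}$ is merely assumed to converge to zero, one should replace $\delta_n$ by $\tilde\delta_n := \sup_{m \geq n}\delta_m$, which is nonincreasing and still null, and note that using $\tilde\delta_n$ in place of $\delta_n$ only weakens the hypothesis \eqref{eq:ASF} in the favorable direction. Second, you have correctly isolated the one genuinely non-routine step: the Kantorovich--Rubinstein supremum is a priori taken over all $d_n$-Lipschitz functions, and one must justify that it may be computed (up to an arbitrarily small error) over Fr\'echet-$C^1$ functions retaining the $|\cdot|_\infty$ and $\|D\cdot\|_\infty$ bounds so that \eqref{eq:ASF} applies. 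Your proposed finite-dimensional projection followed by mollification is the standard way this is handled in \cite{HM06}, but at the level of a complete proof this approximation lemma needs to be stated and proved, not merely invoked.
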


In the current context, our interest in the asymptotic strong Feller property is 
the following result.  See also Theorem 2.1 and Corollary 2.2 from \cite{HM09}. 

\begin{theorem}[Theorem 3.16 from \cite{HM06}]\label{thm:disjoinSuport}
  Suppose $P_t$ is asymptotically strong Feller at a point $u \in X$. If $P_t$ 
  admits two distinct ergodic invariant measures $\mu$ and 
  $\nu$, then $u \not \in \mathrm{supp}(\mu) \cap \mathrm{supp}(\nu)$. 
\end{theorem}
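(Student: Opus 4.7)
The plan is to argue by contrapositive: assume $u \in \mathrm{supp}(\mu) \cap \mathrm{supp}(\nu)$ for distinct ergodic invariant probability measures $\mu, \nu$ of $P_t$, and derive a contradiction from the asymptotically strong Feller bound~\eqref{eq:ASF}. Distinct ergodic invariants are mutually singular, so there exist disjoint Borel sets $E_\mu, E_\nu$ with $\mu(E_\mu) = \nu(E_\nu) = 1$. Using inner regularity on the Polish space $X$, I pick disjoint compact sets $K_\mu \subseteq E_\mu$ and $K_\nu \subseteq E_\nu$ with $\mu(K_\mu), \nu(K_\nu) \geq 1-\eta$ for arbitrarily small $\eta>0$; setting $d_0 := d(K_\mu, K_\nu) > 0$, an Urysohn construction via the distance function yields a Lipschitz $F \colon X \to [0,1]$ with $F \equiv 1$ on $K_\mu$, $F \equiv 0$ on $K_\nu$, and Lipschitz constant $L \leq 1/d_0$. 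Then $\int F\, d\mu - \int F\, d\nu \geq 1 - 2\eta$.

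The second step combines invariance with the ASF bound. By the invariance of $\mu$ and $\nu$ under $P_t$, $\int F\, d(\mu - \nu) = \int P_{t_n} F\, d(\mu-\nu)$ for every $n$. Approximating $F$ by $C^1$ functions with the same sup and Lipschitz bounds (via a standard smoothing on $X$) and applying \eqref{eq:ASF}, integration along line segments in the ASF-neighborhood $U$ of $u$ yields
\begin{align*}
 |P_{t_n} F(v_1) - P_{t_n} F(v_2)| \leq C\bigl(1 + L \delta_n\bigr) \|v_1 - v_2\|, \quad v_1, v_2 \in U.
\end{align*}
For $n$ large enough so that $L \delta_n \leq 1$, the function $G_n := P_{t_n} F$ is uniformly $2C$-Lipschitz on $U$ with $|G_n|_\infty \leq 1$. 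The natural framework is Kantorovich--Rubinstein duality with the $n$-dependent metric $\rho_n(x,y) := 1 \wedge \delta_n^{-1} \|x - y\|$: functions with $\|f\|_{\mathrm{Lip}(\rho_n)} \leq 1$ are precisely those bounded by $1$ with Euclidean Lipschitz constant $\leq \delta_n^{-1}$, and the same ASF derivation gives the Wasserstein bound $W_{\rho_n}(P_{t_n}(v_1, \cdot), P_{t_n}(v_2, \cdot)) \leq 2C \|v_1 - v_2\|$ for $v_1, v_2 \in U$. Since $\rho_n \uparrow \mathbf{1}_{\{x \neq y\}}$, this is a coupling statement at resolution $\delta_n$ that tightens as $n$ grows.

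The final step is to produce a contradiction using that $u$ lies in both supports, and this is the main obstacle. Because every neighborhood of $u$ carries positive $\mu$- and $\nu$-mass, one can condition $\mu$ and $\nu$ on small balls $B_r(u) \subseteq U$ and use the above Wasserstein estimate --- coupling trajectories started at nearby points in $B_r(u)$ so that they coincide up to scale $\delta_n$ with high probability --- to argue that the pushforwards under $P_{t_n}$ of the two conditional measures become asymptotically indistinguishable. Combined with the invariance decompositions $\mu = \int P_{t_n}(v, \cdot)\, \mu(dv)$ and $\nu = \int P_{t_n}(v, \cdot)\, \nu(dv)$, and the pointwise ergodic theorem to absorb the mass outside $B_r(u)$ (trajectories started $\mu$-a.e. and $\nu$-a.e. have time-averaged laws tending to $\mu$ and $\nu$ respectively), one contradicts the Step 1 lower bound $\int P_{t_n}F\, d(\mu - \nu) \geq 1 - 2\eta$. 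The delicate issue is reconciling the inherently local nature of the ASF estimate (control only on $U$) with the global integrals $\int G_n\, d\mu, \int G_n\, d\nu$; this reconciliation is the heart of the argument in \cite{HM06}.
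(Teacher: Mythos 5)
Your Steps 1 and 2 are essentially the right ingredients — mutual singularity of distinct ergodic measures, and deriving from~\eqref{eq:ASF} the Wasserstein contraction $W_{\rho_n}(P_{t_n}(v_1,\cdot), P_{t_n}(v_2,\cdot)) \leq 2C\|v_1 - v_2\|$ on $U$ with the pseudo-metric $\rho_n(x,y) = 1 \wedge \delta_n^{-1}\|x-y\|$ — and you are honest that Step 3 is incomplete. But the route you propose for closing the gap (conditioning on $B_r(u)$ plus the pointwise ergodic theorem to ``absorb the mass outside $B_r(u)$'') is the wrong tool and would not yield a proof. The ergodic theorem controls Ces\`aro time-averages of bounded observables along $\mu$-typical trajectories; it gives no pointwise control of $P_{t_n}F$ off $U$, and the integrals $\int P_{t_n}F\,d\mu$, $\int P_{t_n}F\,d\nu$ still run over all of $X$. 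The tension you flag between the local ASF estimate and these global integrals is real, but trying to bound a single test function $F$ uniformly is not how it gets resolved.

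The resolution in Hairer--Mattingly~\cite{HM06} replaces the fixed test function by the Wasserstein distances $\|\mu - \nu\|_{\rho_n}$ themselves, and relies on two facts you are missing. First, for a \emph{totally separating} sequence of pseudo-metrics (the $\rho_n$ qualify: $\rho_n \leq 1$ and $\rho_n(x,y)\uparrow 1$ for $x\neq y$), one has the exact identity $\|\mu-\nu\|_{TV} = \lim_n \|\mu-\nu\|_{\rho_n}$, which for mutually singular ergodic measures equals $2$. Second, a convexity decomposition removes the local/global tension without making the off-$U$ mass ``vanish'': writing $\mu = a_\mu\, \mu^B + (1-a_\mu)\,\mu^{B^c}$ and $\nu = a_\nu\, \nu^B + (1-a_\nu)\,\nu^{B^c}$ with $B = B_\delta(u) \subseteq U$, $a := \min(a_\mu, a_\nu) = \min(\mu(B),\nu(B)) > 0$ (since $u$ is in both supports), and invariance $\mu = \mu P_{t_n}$, $\nu = \nu P_{t_n}$, one estimates
\begin{align*}
  \|\mu - \nu\|_{\rho_n} \;=\; \|\mu P_{t_n} - \nu P_{t_n}\|_{\rho_n}
  \;\leq\; 2(1-a) \;+\; a \sup_{v,w \in B} \|P_{t_n}(v,\cdot) - P_{t_n}(w,\cdot)\|_{\rho_n}
  \;=\; 2(1-a) + a\,\varepsilon_n,
\end{align*}
where the $2(1-a)$ bounds the off-$B$ contribution trivially because $\rho_n \leq 1$ (so every term in the Kantorovich dual is bounded by $1$ in sup norm), and $\varepsilon_n \to 0$ by your Step 2 together with $\operatorname{diam} B \leq 2\delta$. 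Sending $n \to \infty$ gives $2 = \|\mu-\nu\|_{TV} \leq 2(1-a) < 2$, the contradiction. The point you should internalize: the off-neighborhood mass is never controlled by the ASF estimate, nor does it need to be — it only contributes the factor $(1-a) < 1$, and that deficit, not vanishing of the outside contribution, is what closes the argument.
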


Combining this result with Lemma~\ref{l:control} gives the following. 
\begin{corollary}
\label{cor:uniq:criteria}
If  $\phi$ is approximately controllable and the associated Markov semigroup $P_t$
  asymptotically strong Feller at some point $u\in X$, then  
$P_t$ has at most one invariant measure. 
\end{corollary}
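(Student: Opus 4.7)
The plan is to argue by contradiction, combining the full support conclusion of Corollary~\ref{cor:full:sup} with the disjoint support dichotomy provided by Theorem~\ref{thm:disjoinSuport}. Suppose, toward a contradiction, that $P_t$ admits two distinct invariant probability measures. I would first reduce to the case of two distinct \emph{ergodic} invariant measures: the set of invariant probability measures for $P_t$ is convex, and standard Choquet/ergodic decomposition theory guarantees that its extreme points are exactly the ergodic invariant measures, so two distinct invariant measures force the existence of two distinct ergodic invariant measures $\mu$ and $\nu$.

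Next I would invoke approximate controllability. By Corollary~\ref{cor:full:sup}, since $\phi$ is approximately controllable, every invariant probability measure has full topological support on $X$. In particular, $\mathrm{supp}(\mu) = \mathrm{supp}(\nu) = X$, so the point $u \in X$ at which $P_t$ is asymptotically strong Feller lies in $\mathrm{supp}(\mu) \cap \mathrm{supp}(\nu)$.

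Finally, I would apply Theorem~\ref{thm:disjoinSuport} directly: since $P_t$ is asymptotically strong Feller at $u$ and admits two distinct ergodic invariant measures $\mu$ and $\nu$, the theorem asserts $u \notin \mathrm{supp}(\mu) \cap \mathrm{supp}(\nu)$, contradicting what we just established. Hence $P_t$ cannot admit two distinct invariant measures, and the corollary follows.

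There is essentially no obstacle here, as this is a short deduction from two results already in the excerpt; the only point of care is the reduction from arbitrary distinct invariant measures to distinct \emph{ergodic} ones, which is what makes Theorem~\ref{thm:disjoinSuport} applicable. This step is standard and justified by the extremal decomposition of the invariant measure set together with the fact that two ergodic invariant measures either coincide or are mutually singular, as recalled in the footnote following \eqref{eq:trn:fn:cyc}.
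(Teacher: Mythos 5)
Your proof is correct and follows essentially the same route as the paper's: invoke full support of any invariant measure via approximate controllability (Corollary~\ref{cor:full:sup}), apply Theorem~\ref{thm:disjoinSuport} to rule out two distinct ergodic invariant measures, and conclude by ergodic decomposition. The only cosmetic difference is that you phrase it as a proof by contradiction while the paper states the implications directly.
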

\begin{proof}Lemma~\ref{l:control} implies that the support of any 
  invariant measure is the whole space. Because the semigroup is 
  asymptotically strong Feller, Theorem~\ref{thm:disjoinSuport}
  implies that any distinct ergodic measure must have disjoint 
  supports. Since the support is the whole space, there can be at most one 
  ergodic invariant measure. Since any invariant measure can be 
  decomposed into ergodic invariant measures, there can be at most one 
  invariant probability measure. 
\end{proof}

Establishing the asymptotically strong Feller property, in particular the 
estimate \eqref{eq:ASF}, is a story in and of itself.  In fact, one of the 
central assumptions often employed to assure this property is a formal `H\"ormanader like' bracket 
condition very reminiscent of, if not exactly the same as, the condition used to establish approximate controllability.
A further discussion on the formal relationship between H\"ormander's condition and sufficient conditions for approximate 
controllability is given at the end of Section~\ref{sec:overview}.  
In regards to establishing the asymptotically strong Feller property from H\"ormander's condition,
we refer the reader to \cite{HM09} for a general framework and assumptions. In particular, see Meta-Theorem 1.5 and 
Theorem 8.1 in this article \cite{HM09}.  The methods introduced in \cite{HM06, HM09} have also played 
a central role in a number of other recent works~\cite{FoldesGlattHoltzRichardsThomann2013, ConstantinGlattHoltzVicol2013, 
FoldesFriedlanderGlattHoltzRichards2016, FriedlanderGlattHoltzVicol2014}.

\subsection{Strict positivity of the density on finite-dimensional
  projections}
\label{sec:den:pos}

 We next show how the stronger form of controllability outlined in
 equation~\eqref{eq:exact:cont} can be used to show that,
for any fixed $u\in X$ and $t>0$, the random variable $\pi (\phi_t(u, W))$ 
has a strictly positive density with respect to Lebesgue measure on $\pi(X)$.
See Theorem~\ref{prop:exod} and Theorem~\ref{thm:pos} below.
Here $\pi$ is any projection onto a finite-dimensional subspace of
$X$. Throughout this subsection, we take $m$ to be the dimension of
$\pi(X)$ so that $\pi(X) \cong \RR^m$ and let $\{e_j\}_{j =1}^m$
 to be an orthonormal basis for $\pi(X)$.

For the results in this section we impose some further properties
on the continuous adapted cocycle $\phi$.  
 These assumptions essentially allow us to work in the setting of Malliavin calculus.  
While the proofs in this section make significant use
of methods from the Malliavin calculus, cf. \cite{BouleauHirsch_91,
  NualartSF, Nualart2ndEd}, our presentation is essentially self-contained. 

Our additional standing assumptions on $\phi$ are as follows:
\begin{assump}\label{ass:Mal:der}
\mbox{}
\begin{itemize}
\item[(i)] For every $u \in X$ and $t>0$, the map
  $\phi_t(u,\,\ccdot\,)\colon \Omega_t\rightarrow X$ is Frech\'et
  differentiable in the Cameron-Martin subspace $\mathcal{H}_t$ of
  $\Omega_t$ where $\mathcal{H}_t$ is defined as in \eqref{eq:Cam:Mar:Sp}.
  The derivative with respect to the `noise variable' will be denoted by
  $D_w$, respectively.\footnote{The random variable
    $D_w\phi_t(u,W)H$ coincides with the Malliavian derivative of
    $\phi_t(u, W)$ in the direction $H$.   See Section~1.2.1 of \cite{Nualart2ndEd}.}
\item[(ii)] For any fixed $V\in \Omega_t$, the map $\phi_t(\ccdot,V):X\rightarrow X$ is Frech\'et
  differentiable in $X$.  This derivative with respect to the `initial condition'
  will be denoted by $D_u$ and we suppose
  \begin{align}
    D_u\phi: [0,\infty) \times X \times \Omega \times X \to X  \text{ is continuous}.
    \label{eq:IC:cont}
  \end{align}
\item[(iii)] For all $t>0$, $v\in X$ and $V\in \Omega_t$, the
  linear map $D_u \phi_t(v, V)$ is \emph{non-degenerate}; i.e., 
  \begin{align*}
    D_u \phi_t(v, V) \xi \neq 0 \,\, \text{ whenever } \,\, \xi\in X\setminus \{ 0\}.  
  \end{align*}
\item[(iv)] For every $u\in X$ and $V\in \Omega_t$ the following integral representation
\begin{align}
   D_w\phi_t(u,V)H=\sum_{k\in \mathcal{Z}}\int_0^t J_{s,t}(u,V)\sigma_k \,
  \dot H_k(s) \, ds 
  \label{eq:du:ham:form}
\end{align}
holds for any $H=(H_k)_{k\in \mathcal{Z}} \in \mathcal{H}_t$ where $J_{s,t}$ is the \emph{Jacobi flow} and is defined by 
\begin{align}
\label{eqn:JacobiFlow}
  J_{s,t}(u,V) \xi = (D_u\phi_{t-s})(\phi_s(u,V), \theta_{s}V)\xi \quad \text{ for }\quad  \xi \in X.
\end{align} 
\item[(v)] For any $u \in X$, $s\leq t$ and $V\in \Omega_t$ the adjoint $J_{s,t}^*(u, V)$ of the linear map $J_{s,t}(u, V)$ is also non-degenerate. 
\end{itemize}
\end{assump}
\begin{remark}
We emphasize that our assumptions are not particularly restrictive for the type of 
SPDEs in which we are interested.  Below in Section~\ref{sec:examples} we show how (i) -- (v) follow from routine a priori estimates. 
See also Remark~\ref{rmk:du:ham} concerning the structural 
assumptions \eqref{eq:du:ham:form} and \eqref{eqn:JacobiFlow}.
\end{remark}
\begin{remark}
One can show that the Jacobi flow $J_{s,t}$ 
satisfies
\begin{align}
\label{eqn:Jacobico}
  J_{s,t}(u, V) \xi = J_{r,t}(u, V) J_{s, r}(u, V)\xi 
\end{align}  
for $s\leq r\leq t$, $u, \xi \in X$, $V\in \Omega_t$.  
We will use this group property extensively below.   Note also that
\eqref{eqn:JacobiFlow}, \eqref{eq:IC:cont} with \eqref{eq:du:ham:form}
implies that, for any $t > 0$
\begin{align}
  D_w\phi:  [0,t] \times X \times \Omega_t \times \mathcal{H}_t \rightarrow X  \text{ is continuous}.
  \label{eq:noise:cont}
\end{align}
\end{remark}
\begin{remark}
\label{rmk:du:ham}
Returning to the formal setting of the abstract stochastic evolution equation (\ref{SPDE}),  
it is not hard to see that for any $\xi \in X$, $s <t$ and $u_0
\in X$, $\rho := J_{s,t}(u_0, W)\xi$ would be
expected to satisfy the linear system
\begin{align*}
  \partial_t \rho  + L \rho + D N(u) \rho = 0, \quad \rho(s) = \xi,
\end{align*}
on the interval $[s,t]$ where $u = u(\ccdot, u_0, W)$ is the solution of (\ref{SPDE}) corresponding to $u_0$ and the Brownian path $W$.  
Here again $D N$ is the Frechet Derivative of $N$ so that, 
recalling polynomial structure of $N$ given in
(\ref{eq:m:l:form}), we have
\begin{align*}
  D N ( u) \rho = \sum_{k = 2}^M k N_k(\rho, u, \ldots, u).
\end{align*}
On the other hand the Malliavin derivative 
$\bar{\rho} = D_w \phi_t(u_0, W) H$ in the noise direction $H \in \mathcal{H}_t$ would be expected to satisfy
\begin{align*}
  \partial_t \bar{\rho}  + L \bar{\rho} + D N(u) \bar{\rho} 
   = \sum_{k \in \mathcal{Z}} \sigma_k \dot{H}_k  , \quad \rho(0) = 0.
\end{align*}
Thus, our assumption \eqref{eq:du:ham:form} is simply a reflection of the Duhamel formula.   
See, e.g., \cite{HM06} and below in Section~\ref{sec:examples} for further details in a concrete setting.
\end{remark}

A basic object in the Malliavin calculus is the Malliavin covariance
matrix (see, e.g., \cite{Nualart2ndEd}).  The spectral and invertibility properties of this operator can be used to
derive important consequences for the law of the associated random 
variable.  For example, in finite dimensions, such properties are often
used to derive the existence and regularity of 
the probability density function; cf. \cite{BouleauHirsch_91, Nualart2ndEd,
  NualartSF, MattinglyPardoux06}.   The Malliavin matrix and its spectral properties are also central to 
proofs of unique ergodicity in  \cite{HM06, HM09, FoldesGlattHoltzRichardsThomann2013, 
ConstantinGlattHoltzVicol2013, FoldesFriedlanderGlattHoltzRichards2016, FriedlanderGlattHoltzVicol2014}.

In our current cocycle setting we define this operator as follows:
\begin{definition}
\label{def:Mal:Mtrx}
For any $u \in X$ and $ V \in \Omega_t $, let
\begin{align}
   M_t(u,V) := D_w\phi_t(u,V) (D_w\phi_t(u,V))^*
  \label{eq:MM:def}
\end{align}
where $(D_w\phi_t(u,V))^*$ is the adjoint of $D_w\phi_t(u,V)$.  
Equivalently $M_t(u,V)$ is defined by 
\begin{align}
\langle M_t(u,V) \xi,\rho \rangle_{X}= \sum_{k\in \mathcal{Z}} \int_0^t\langle 
  J_{s,t}(u,V)\sigma_k\, ,\xi\rangle_X \langle J_{s,t}(u,V)\sigma_k\, ,\rho \rangle_X ds 
  \label{eq:MM:alt:def}
\end{align}
for any $\rho, \xi \in X$.  
\begin{itemize}
\item[(i)] For any fixed $V\in \Omega_t$, the we refer to the operator
  $M_t(u,V)$ as the \emph{Gramian}
 following the terminology of control theory.  For $t>0$ $u \in X$ and $V\in \Omega_t$, we say that the Gramian $M_t(u,V)$ 
is \emph{non-degenerate for the control $V$} if 
\begin{align}
    \langle M_t(u,V) \xi,\xi\rangle_{X} >0 \quad\text{for all} \quad \xi \in X\setminus\{ 0\}.   
  \label{eq:pt:wise:pd:MM}
\end{align}
\item[(ii)] When $V\in \Omega_t$ is replaced by the random variable $W$, we call  $M_t(u,W)$ the 
\emph{Malliavin Covariance Matrix} of the random variable $\phi_t(u,W)$.  
 We say that the Malliavin Covariance Matrix $M_t(u,W)$ is
 \emph{non-degenerate} if it is non-degenerate for almost every Brownian path, i.e.,
\begin{align}
  \P \Big( \langle M_t(u,W)\xi , \xi \rangle_X >0 \quad \text{for all}
  \quad \xi \in X\setminus\{ 0\} \Big) =1. 
  \label{eq:Mal:as:pos:cond}
\end{align}
\end{itemize}
\end{definition}

\begin{remark}
\label{rem:malliavin:spec}
  The condition \eqref{eq:Mal:as:pos:cond} is a nontrivial property of
  stochastic systems like \eqref{SPDE}.    We may expect such a condition
  to hold when \eqref{SPDE} satisfies some form of the H\"ormander 
  bracket condition.   See, e.g. \cite{Nualart2ndEd}, for the finite-dimensional
  setting.    The infinite dimensional case has been addressed in  
  \cite{MattinglyPardoux06, HM09, FoldesGlattHoltzRichardsThomann2013}.   
  In these works on SPDEs, it is established that the associated Malliavin
  matrix satisfies bounds like
  \begin{align}
    \P \left(  \sup_{\xi \in \mathcal{S}_{\alpha, N}} \frac{\langle M_t(u, W) \xi, \xi \rangle_X}{\|\xi\|_X^2} > \epsilon  \right) \geq 1 - r_{\alpha, N}(\epsilon)
    \label{eq:M:pos:cond}
  \end{align}
  where $r_{\alpha, N}(\epsilon) \to 0$ as $\epsilon \to 0$ for any fixed $\alpha \in (0,1)$ and $N \geq 1$.  Here 
  $\mathcal{S}_{\alpha, M} := \{ \xi \in X : \| P_N \xi\| \geq \alpha \| \xi \|  \}$ where $P_N$ is the projection onto the first $N$
 elements of an orthonormal basis for $X$.  
 Note that, by a simple limiting argument one may infer \eqref{eq:Mal:as:pos:cond} from \eqref{eq:M:pos:cond}.
\end{remark}

\begin{remark}
  \label{rmk:abv:not}
  In what follows $t >0$, $u \in X$ and even $V \in \Omega$ will sometimes be fixed quantities.  We therefore frequently
  adopt the abbreviated notations
  \begin{align}
    M = M_t(u,V), \quad \phi = \phi_t(u, V) \quad D_w \phi = D_w \phi_t(u,V),
    \label{eq:abv:Not}
  \end{align}
  exhibiting the dependence on $V$ etc. only when it is warranted.  We will also take 
  \begin{align}
    M^\pi = M^\pi_t(u,V) = \pi M_t(u,V)\pi
    \label{eq:abv:Not:MPM}
  \end{align}
  for the `projected Grammian (or Malliavin) matrix'.
\end{remark}

With these preliminaries now in place we turn to the first result in this section.
It gives a criterion under which $\pi (\phi_t(u, W))$ possesses a probability density. 
This `absolute continuity' result is an adaptation of Theorem~5.2.2 in 
Bouleau and Hirsch~\cite{BouleauHirsch_91} to the setting of this paper.  
\begin{theorem}\label{prop:exod}
  Let $\phi$ be a continuous adapted cocycle satisfying 
  the conditions imposed in Assumption~\ref{ass:Mal:der}.  Suppose
  $\pi$ is a projection onto  a finite dimensional subspace of $X$ and 
  assume that for some fixed $u\in X$ and $t>0$, the Malliavin matrix $M_t(u, W)$ is non-degenerate,
  in the sense of Definition~\ref{def:Mal:Mtrx}, (ii).
  Then the law of the
  random variable $\pi(\phi_t(u,W))$ is absolutely continuous with respect to
  Lebesgue measure on $\pi(X) \cong \R^m$.
\end{theorem}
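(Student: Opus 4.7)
The plan is to reduce the statement to a finite-dimensional application of the Bouleau--Hirsch absolute continuity criterion. Working in the basis $\{e_j\}_{j=1}^m$ of $\pi(X)$, I identify $\pi(\phi_t(u,W))$ with the $\R^m$-valued random variable $F = (F_1,\ldots,F_m)$, where $F_j := \langle \phi_t(u,W), e_j\rangle_X$. Thus absolute continuity of the law of $\pi(\phi_t(u,W))$ on $\pi(X)$ is equivalent to absolute continuity of the law of $F$ on $\R^m$, which is the statement that $\P(F \in A) = 0$ for every Lebesgue-null Borel set $A \subseteq \R^m$.

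From Assumption~\ref{ass:Mal:der}(i), each $F_j$ is Fr\'echet differentiable along the Cameron--Martin subspace $\mathcal{H}_t$, with derivative at $H \in \mathcal{H}_t$ equal to $\langle D_w\phi_t(u,W) H, e_j\rangle_X$. Combining the integral representation \eqref{eq:du:ham:form} with the Riesz representation theorem on $\mathcal{H}_t$, this derivative is represented by the element $DF_j \in \mathcal{H}_t$ whose time derivative has components $(\dot{DF_j})_k(s) = \langle J_{s,t}(u,W)\sigma_k, e_j\rangle_X$ for $k \in \mathcal{Z}$. The Malliavin covariance matrix of $F$ is therefore the $m \times m$ matrix
\begin{align*}
\Sigma^F_{jk} \;=\; \langle DF_j, DF_k\rangle_{\mathcal{H}_t}
\;=\; \sum_{k'\in\mathcal{Z}}\int_0^t \langle J_{s,t}(u,W)\sigma_{k'}, e_j\rangle_X \langle J_{s,t}(u,W)\sigma_{k'}, e_k\rangle_X\, ds
\;=\; \langle M_t(u,W)\, e_j, e_k\rangle_X,
\end{align*}
the last equality being \eqref{eq:MM:alt:def}. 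Since $M_t(u,W)$ is almost surely non-degenerate on all of $X$ and $\pi(X)$ is finite-dimensional, testing against a countable dense subset of the unit sphere in $\pi(X)$ shows that, outside a $\P$-null set, $\Sigma^F$ is strictly positive definite on $\R^m$ and hence invertible.

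With $F_j$ Fr\'echet differentiable along $\mathcal{H}_t$ and $\Sigma^F$ almost surely invertible, the Bouleau--Hirsch criterion (Theorem 5.2.2 of \cite{BouleauHirsch_91}; see also Theorem 2.1.3 of \cite{Nualart2ndEd}) yields that the law of $F$ is absolutely continuous with respect to Lebesgue measure on $\R^m$. The main technical point to verify is that Fr\'echet differentiability in the sense of Assumption~\ref{ass:Mal:der}(i) --- which imposes no integrability hypothesis on $\|DF_j\|_{\mathcal{H}_t}$ --- is enough to place each $F_j$ in the local Dirichlet domain used by the Bouleau--Hirsch framework. I would address this by a standard localization argument: truncate using the events $\Omega_N := \{\|DF_j\|_{\mathcal{H}_t} \leq N \text{ for } j = 1,\ldots,m\}$, which are $\P$-almost sure in the limit by continuity of $DF_j$ from \eqref{eq:noise:cont} together with the $\P$-a.s.\ finiteness of $\|DF_j\|_{\mathcal{H}_t}$, apply the absolute continuity criterion on each $\Omega_N$, and let $N \to \infty$. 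Transferring back through the basis identification $\pi(X) \cong \R^m$ then gives the absolute continuity of the law of $\pi(\phi_t(u,W))$ on $\pi(X)$, completing the proof.
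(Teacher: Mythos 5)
Your reduction to a finite-dimensional random vector $F$ and your identification of its Malliavin covariance matrix with $\pi M_t(u,W)\pi$ via \eqref{eq:du:ham:form} and \eqref{eq:MM:alt:def} are both correct, and they match the setup the paper uses. The difficulty is concentrated exactly where you flagged it, but your proposed fix does not close it. The Bouleau--Hirsch/Nualart criterion requires $F^j \in \mathbb{D}^{1,p}_{loc}$, which by definition means exhibiting events $\Omega_N \uparrow \Omega$ together with genuinely Malliavin-differentiable random variables $F_N \in \mathbb{D}^{1,p}$ satisfying $F_N = F$ a.e.\ on $\Omega_N$. Assumption~\ref{ass:Mal:der} only provides \emph{pathwise} Fr\'echet differentiability along $\mathcal{H}_t$ with no integrability of $\|D_wF\|_{\mathcal{H}_t}$, and by Sugita's characterization of $\mathbb{D}^{1,p}$ this is not enough to place $F$ in the global space. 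Truncating on $\Omega_N = \{\|DF_j\|_{\mathcal{H}_t}\leq N\}$ and ``applying the criterion on each $\Omega_N$'' is not a well-defined operation: the criterion applies to random variables, not to events, and bounding the derivative on $\Omega_N$ does not produce the required global representative $F_N$. Filling this in from pathwise $H$-differentiability alone is a nontrivial result (of Kusuoka type), not a routine localization.

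The paper sidesteps this entirely by \emph{adapting the proof} of Bouleau--Hirsch rather than citing the theorem --- which is precisely why the hypotheses can be kept pathwise and free of moment bounds. Concretely: fix an orthonormal basis $\{H_\ell\}$ of $\mathcal{H}_t$, truncate the covariance to $M_n^\pi$ using the first $n$ basis elements, and for $y\in\R^n$ consider the Girsanov shift $T_yW = W + \sum_\ell y_\ell H_\ell$. For fixed $W$ the map $\eta(y) = \pi(\phi_t(u,T_yW))$ is an a.s.\ Lipschitz map from $\R^n$ to $\R^m$ with generalized Jacobian $\sqrt{\det M_n^\pi(T_yW)}$, so Federer's coarea formula gives
$\E\,\psi(\pi(\phi_t(u,W)))\sqrt{\det M_n^\pi(W)} = 0$
for every $\psi \geq 0$ vanishing Lebesgue-a.e.; Fatou's lemma then passes $n\to\infty$, and the a.s.\ positivity of $\det M^\pi(W)$ (which you established correctly) converts this into $\psi(\pi(\phi_t(u,W)))=0$ a.s., hence absolute continuity. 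If you want to keep your black-box route, you must either add an integrability hypothesis on $D_w\phi_t(u,W)$ or invoke and verify a theorem placing a.s.\ $H$-$C^1$ functionals in $\mathbb{D}^{1,p}_{loc}$; otherwise the Girsanov-plus-coarea argument is the step your proof is missing.
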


\begin{remark}
The proof of Theorem~\ref{prop:exod} relies on Federer's coarea formula to establish the desired absolute 
continuity.   Recall that for $n \geq m$, any Lipshitz continuous $\eta: \RR^n \to \RR^m$ and any
measurable, non-negative $g: \RR^n \to \RR$
\begin{align}
  \int_{\RR^n} g(y) \mathfrak{J}_n(\eta)(y) dy =  \int_{\RR^m} \int_{\eta^{-1}(x)} g(y) d \mathcal{H}^{n-m}(y) dx
  \label{eq:fed:up:w:Federer}
\end{align}
where $\mathcal{H}^{n-m}$ is the $n-m$ dimensional Hausdorff measure on $\RR^n$ and 
\begin{align}
  \mathfrak{J}_n(\eta) :=  \sqrt{\det (\nabla \eta) (\nabla \eta)^*}.
  \label{eq:wiedo:jacob}
\end{align}
Here, $\nabla \eta$ is the Jacobian of $\eta$ and $(\nabla \eta)^*$ is the adjoint of this matrix.\footnote{Recall that, 
by Rademacher's theorem, every Lipshitz continuous function is differentiable almost everywhere.}  See e.g.
\cite{Federer2014} for further details.
\end{remark}

\begin{proof}[Proof of Theorem~\ref{prop:exod}]
Since $u \in X$, $t > 0$ are fixed throughout we adopted the abbreviated notation as in Remark~\ref{rmk:abv:not}
exhibiting the dependence on the Brownian path as needed.
We proceed by showing that 
\begin{align}
  \E \biggl( \psi(\pi(\phi(W))) \sqrt{\det( M^\pi(W) )} \biggr) = 0,
  \label{eq:abs:con:sf}
\end{align}
for any $\psi\colon \R^m \rightarrow [0, \infty)$ with $\psi(x)=0$ 
almost surely on $\R^m$.  Since we have assumed that $M$ is non-degenerate 
it follows that 
\begin{align}
  \sqrt{\det(M^\pi(W))} > 0
  \label{eq:M:pos:con}
\end{align}
up to a set of measure zero.  
Hence \eqref{eq:abs:con:sf} and \eqref{eq:M:pos:con} imply that 
$\psi(\pi(\phi(W))) =0$ almost surely for any such $\psi$.
By now selecting $\psi= \one_B$ where $B\subseteq \R^m$ is 
any Borel set with  Lebesgue measure zero we infer the desired absolute continuity
of $\pi(\phi_t(u,W))$ with respect to Lebesgue measure on $\R^m$, hence the desired result.

In order to apply the coarea formula \eqref{eq:fed:up:w:Federer} to prove \eqref{eq:abs:con:sf} 
we make use of the Girsanov theorem with a suitable truncation of the projected
Malliavin matrix $M^\pi$.   Fix
\begin{align*}
  \{h_\ell\}_{\ell \geq 1} \text{ to be an orthonormal basis of } L^2([0, t]; \R^{|\mathcal{Z}|}) 
\end{align*}
so that  the elements $H_\ell(\, \cdot \,) = \int_0^\cdot
h_\ell(s) \, ds$ form an orthonormal basis in $\mathcal{H}_t$.  
Using the assumed continuity of $D_w \phi$ we have that
\begin{align}
   \langle M e_i, e_j \rangle_X 
         &= \sum_{\ell = 1}^\infty 
           \bigl\langle D_w \phi  H_\ell \langle (D_w\phi)^* e_i , H_\ell \rangle_{\mathcal{H}_t} , 
           e_j \bigr\rangle 
         = \sum_{\ell = 1}^\infty 
             \langle D_w \phi  H_\ell , e_j \rangle 
             \langle D_w \phi  H_\ell , e_i \rangle.
             \label{eq:CM:exp}
\end{align}
Truncating in this expansion we define the random matrices $M_n^\pi$ according to
\begin{align}
   ( M_n^\pi)_{ij} = \langle M_n^\pi e_i, e_j \rangle := \sum_{\ell = 1}^n 
             \langle D_w \phi H_\ell , e_j \rangle 
             \langle D_w \phi H_\ell , e_i \rangle,
  \label{eq:tran:mal:mtx}
\end{align}
for $n \geq 1$.  
For any  $y\in \R^n$ we denote
\begin{align*}
T_y W(r) = W(r) + \sum_{\ell=1}^n y_\ell H_{\ell}(r). 
\end{align*}  
for $r \geq 0$.  Since $H_\ell$ are fixed elements in $\mathcal{H}_t$, we observe that
the translation $T_y W$ is an admissible Girsanov shift for any given values of 
$n \geq 1$, $y \in \RR^n$.

Fix any $\rho\in C^\infty(\R^n; (0, \infty))$ satisfing $\int_{\R^n}\rho(y)
\, dy =1$.  Then, for any $\psi \colon \R^m \rightarrow [0, \infty)$
bounded and  measurable, the Girsanov theorem implies
\begin{align}
  \E \psi (\pi(\phi(W))) \sqrt{\det (M_n^\pi(W))} 
  &= \int_{\R^n}\E   \psi (\pi(\phi(W))) \sqrt{\det (M_n^\pi(W))} \rho(y) \, dy \notag\\
  &= \E \int_{\R^n}  \psi (\pi(\phi(T_yW)))  G_H(y,W) \rho(y)  \sqrt{\det (M_n^\pi(T_yW))} \, dy   
    \label{eq:Gir:app:int}
\end{align} 
where $G_H(y, W) > 0$ denotes the Girsanov density associated to the shift
$T_y W$.   

For any $n \geq m$ define $\eta: \RR^n \to \RR^m$ according to
$\eta(y) = \pi(\phi(T_y W))$.  By Assumption~\ref{ass:Mal:der}, it is clear that $\eta$ 
is $\P$-almost surely Lipschitz and we have that 
\begin{align*}
   \mathfrak{J}_n(\eta)(y) = \sqrt{\det (M_n^\pi(T_yW))} 
\end{align*}
$\P$-almost surely, 
where $\mathfrak{J}_n$ is defined according to \eqref{eq:wiedo:jacob}.
Thus, the coarea formula (\ref{eq:fed:up:w:Federer}) implies that, for any $n \geq m$,
\begin{align}
\int_{\R^n}  \psi (\pi(\phi(T_yW))) & G_H(y,W) \rho(y)  \sqrt{\det (M_n^\pi(T_yW))} \, dy   \notag\\
  &= \int_{\R^m} \bigg[ \int_{\eta^{-1}(x)} \psi( \pi( \phi(T_y W))) 
    G_H(y, W) \rho(y) \, d\mathcal{H}^{n-m}(y) \bigg] \, dx \notag\\
  &= \int_{\R^m}\psi(x) \bigg[ \int_{\eta^{-1}(x)} G_H(y, W) \rho(y) \, d\mathcal{H}^{n-m}(y) \bigg] \, dx 
    =0,
   \label{eq:co:area:app}
\end{align}  
almost surely.  

By combining \eqref{eq:Gir:app:int} and \eqref{eq:co:area:app} we obtain that
\begin{align*}
  \E \psi (\pi(\phi(W))) \sqrt{\det (M_n^\pi(W))} = 0.
\end{align*}
In view of (\ref{eq:CM:exp}) and (\ref{eq:tran:mal:mtx}), $M_n^\pi(W) \to M^\pi(W)$ almost surely.  We therefore infer 
\eqref{eq:abs:con:sf} from Fatou's lemma, completing the proof.
\end{proof}

\begin{remark}
Perusing this proof, it is notable that Theorem~\ref{prop:exod} still holds under the weaker 
condition that only $M^\pi = \pi M_t(u,W) \pi$ is non-degenerate.
\end{remark}

We next state and prove the final result of this section which gives a sufficient
condition under which, for fixed $u \in X$ and $t>0$, the density of
$\pi(\phi_t(u, W))$ is strictly positive.    We refer to Nualart~\cite[Theorem 4.2.2]{NualartSF} and 
 \cite[Theorem 8.1]{MattinglyPardoux06} for previous related results.

\begin{theorem}\label{thm:pos} 
Let $\phi$ be a continuous adapted cocycle satisfying the hypotheses of Assumption~\ref{ass:Mal:der}. 
In addition we suppose that 
  \begin{enumerate}
    \item $M_t(u,V)$ is non-degenerate from some fixed $u\in X$, $t>0$ and a fixed sample path $V\in \Omega_t$;
      cf. Definition~\ref{def:Mal:Mtrx}, (i).
    \item $\phi$ is exactly controllable on $\pi(X)$ as in Definition~\ref{def:control:cyc}.    
   \end{enumerate}
   If, for some $s> 0$, the law of $\pi (\phi_{t+s}(u, W))$ has a density $p_{t+s}(\ccdot)$ 
   with respect to Lebesgue measure on $\R^m$, 
   then $p_{t+s}(x) >0$ for Lebesgue almost every $x\in \R^m$. 
\end{theorem}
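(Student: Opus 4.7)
The plan is to show that for every $y \in \R^m$ there is an open neighborhood $U_y \subseteq \R^m$ of $y$ on which $p_{t+s} > 0$ Lebesgue-almost everywhere; since $\R^m$ is separable, a countable covering by such neighborhoods then yields the conclusion. For each $y$ I will first construct a Cameron--Martin element $V^\ast \in \mathcal{H}_{t+s}$ satisfying $\pi \phi_{t+s}(u, V^\ast) = y$ \emph{and} $M^\pi_{t+s}(u, V^\ast)$ non-degenerate, and then run a Ben-Arous--L\'eandre style local positivity argument near $V^\ast$ in the spirit of \cite[Theorem~4.2.2]{NualartSF} and \cite[Theorem~8.1]{MattinglyPardoux06}.

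To construct $V^\ast$, I decompose $[0, t+s]$ via the cocycle property \eqref{eq:cocyc:prop}. Let $V \in \Omega_t$ be the path from hypothesis~(1); by continuity of $M_t(u, \cdot)$ and density of $\mathcal{H}_t \subset \Omega_t$ I may assume $V \in \mathcal{H}_t$ after an arbitrarily small sup-norm perturbation (the relevant non-degeneracy is on the fixed finite-dimensional subspace $J^\ast_{t,t+s}(\pi(X))$, which is stable under such perturbations). Applying exact controllability on $\pi(X)$ from initial datum $\phi_t(u, V)$ over time $s$ (hypothesis~(2)) to a target $v^\ast \in X$ with $\pi v^\ast = y$ produces a piecewise linear $\widetilde V \in \mathcal{H}_s$ with $\pi \phi_s(\phi_t(u, V), \widetilde V) = y$. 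Concatenating gives $V^\ast \in \mathcal{H}_{t+s}$ defined by $V^\ast(r) = V(r)$ for $r \in [0, t]$ and $V^\ast(r) = V(t) + \widetilde V(r - t)$ for $r \in [t, t+s]$, so that $\pi \phi_{t+s}(u, V^\ast) = y$. Using the Jacobi chain rule \eqref{eqn:Jacobico} inside \eqref{eq:MM:alt:def} and splitting the integral at $t$, one obtains for every $\xi \in X$ the lower bound
\begin{equation*}
\langle M_{t+s}(u, V^\ast) \xi, \xi \rangle_X \;\geq\; \langle M_t(u, V)\, J^\ast_{t, t+s}(u, V^\ast) \xi,\, J^\ast_{t, t+s}(u, V^\ast) \xi \rangle_X,
\end{equation*}
which together with Assumption~\ref{ass:Mal:der}(v) (injectivity of $J^\ast_{t,t+s}$) and hypothesis~(1) forces $M^\pi_{t+s}(u, V^\ast) = \pi M_{t+s}(u,V^\ast) \pi$ to be strictly positive definite.

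For the local positivity step, non-degeneracy of $M^\pi_{t+s}(u, V^\ast)$ is equivalent to surjectivity of the bounded linear map $\pi \circ D_w \phi_{t+s}(u, V^\ast) \colon \mathcal{H}_{t+s} \to \pi(X)$, so I may pick $h_1, \ldots, h_m \in \mathcal{H}_{t+s}$ with $\pi D_w \phi_{t+s}(u, V^\ast) h_j = e_j$. The auxiliary map
\begin{equation*}
\Psi(\alpha, W) := \pi \phi_{t+s}\Bigl(u, \, W + V^\ast + \textstyle\sum_{j=1}^m \alpha_j h_j \Bigr), \qquad (\alpha, W) \in \R^m \times \Omega_{t+s},
\end{equation*}
satisfies $\Psi(0, 0) = y$ and $\partial_\alpha \Psi(0, 0) = \mathrm{Id}_{\R^m}$, so the implicit function theorem furnishes a smooth local inverse $\alpha = \alpha(\beta, W)$ solving $\Psi(\alpha(\beta, W), W) = \beta$ for $(\beta, W)$ in a neighborhood of $(y, 0)$. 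Combining the Girsanov shift by $V^\ast + \sum_j \alpha_j(\beta, W) h_j$ -- admissible since this path lies in $\mathcal{H}_{t+s}$ -- with Fubini and the coarea formula \eqref{eq:fed:up:w:Federer}, just as in the proof of Theorem~\ref{prop:exod}, expresses $p_{t+s}(\beta)$ on a neighborhood $U_y$ of $y$ as the integral of a strictly positive Girsanov exponential against a positive Jacobian determinant; this forces $p_{t+s} > 0$ Lebesgue-almost everywhere on $U_y$. The principal technical obstacle is making this last Girsanov--coarea step quantitative and uniform in $\beta \in U_y$, which is precisely the content of the Ben-Arous--L\'eandre / Bouleau--Hirsch machinery of \cite{BouleauHirsch_91, NualartSF, MattinglyPardoux06}.
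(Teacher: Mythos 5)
Your proposal follows the same high-level blueprint as the paper — build a special path $V_x$ by concatenating the non-degenerate path from hypothesis~(1) with an exact control from hypothesis~(2), propagate non-degeneracy through the Jacobi flow via the analogue of Lemma~\ref{l:BetterAndBetter}, and then use Girsanov plus an inverse-function-theorem argument to lower-bound the density near $x$. However, there is a genuine gap at the very first step that also colours the Girsanov step, and the final uniformity step is acknowledged but not actually supplied.

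The gap is the passage where you ``may assume $V \in \mathcal{H}_t$ after an arbitrarily small sup-norm perturbation.'' You introduce this because your Girsanov shift is $W \mapsto W + V^\ast + \sum_j \alpha_j(\beta, W) h_j$, which is admissible only if $V^\ast \in \mathcal{H}_{t+s}$. But hypothesis~(1) only gives $V \in \Omega_t$, and the justification offered for the perturbation is circular: the ``fixed finite-dimensional subspace $J^\ast_{t,t+s}(\pi(X))$'' on which you want to control non-degeneracy depends on the full path $V_x$, and in particular on $\hat V$, which in turn is chosen by exact controllability \emph{from} $\phi_t(u, V)$. Perturbing $V$ changes $\phi_t(u,V)$, hence invalidates $\hat V$ as an exact control to $y$, hence changes the subspace you are trying to hold fixed; and since $M_t(u,V)$ is a compact operator on an infinite-dimensional space, mere continuity of $V \mapsto M_t(u,V)$ does not preserve non-degeneracy under a perturbation unless the relevant finite-dimensional subspace is genuinely pinned down in advance. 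There is also a secondary issue: the shift $V^\ast + \sum_j \alpha_j(\beta, W) h_j$ depends on $W$ through $\alpha_j(\beta, W)$, so it is a random (anticipating, after the IFT inversion) shift, and you would need to justify the Girsanov formula for such a shift rather than for a deterministic Cameron--Martin element.

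The paper sidesteps both issues by \emph{never shifting by $V_x$}. Instead it uses the deterministic, $y$-indexed Cameron--Martin shift $T_y^x W(\tau) = W(\tau) + \sum_{\ell=1}^m y_\ell \int_0^\tau \langle J_{r,t+s}(u,V_x)\sigma_k, e_\ell\rangle_X \, dr$, which lies in $\mathcal{H}_{t+s}$ regardless of whether $V_x$ does. The role of $V_x$ is only to localize: after the Girsanov change of variables, the integral over $W$ is restricted to the sup-norm neighborhood $\{\|W - V_x\|_{\infty,t+s} < \delta\}$, which has strictly positive Wiener measure and on which the Jacobian $\nabla g^x$ stays uniformly close to $M^\pi_{t+s}(u,V_x)$. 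The uniform lower bound on the image of the inverse function — the ``principal technical obstacle'' you flag — is then supplied by the quantitative inverse function theorem of Lemma~\ref{lem:quant:IFT}, applied to the compact family $\{g^x(\,\cdot\,, \tilde V) : \|\tilde V - V_x\|_{\infty,t+s} < \delta\}$, rather than by the pointwise IFT at $(0,0)$ together with a reference to the Ben-Arous--L\'eandre machinery. If you replace your perturbation-plus-shift-by-$V^\ast$ step with this deterministic shift and localization, the remainder of your argument aligns with the paper's proof.
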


As in the proof Theorem~\ref{prop:exod}, Theorem~\ref{thm:pos} is established 
with aide of a carefully chosen change of variables
using the Girsanov Theorem.  Two additional ingredients are needed for the proof: 
one concerns invertibility properties of $M$ while the second is a quantitative 
version of the inverse function theorem.  We provide some further
intuition, beyond the given proof, as to why the invertibility of $M$ and
the exact controlability of $\phi$ on $\pi(X)$ implies that the density
is strictly positive in Remark~\ref{rmk:submersion} following the proof.

Regarding the Grammian Matrix $M_t(u,V)$ we make the following observation.  
It shows that once this matrix is non-degenerate at a time $t$ it remains non-degenerate for all later times $t + s$.
\begin{lemma}\label{l:BetterAndBetter}
  Let $u\in X$ and $t>0$.  Suppose that $V\in \Omega_t$ is such that $M_t(u,V)$ 
  is non-degenerate, cf. (\ref{eq:pt:wise:pd:MM}).  
  Then, for any $s>0$ and any $V_{e}\in \Omega_{t+s}$ with $ V_{e}(r)= V(r)$ for every
  $r\in [0,t]$, the operator $M_{t+s}(u,V_{e})$ is also non-degenerate.  
\end{lemma}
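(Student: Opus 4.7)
The plan is to argue by contradiction, propagating the non-degeneracy of $M_t(u,V)$ forward to time $t+s$ using the cocycle identity \eqref{eqn:Jacobico} for the Jacobi flow together with the non-degeneracy of its adjoint from Assumption~\ref{ass:Mal:der}~(v). Unpacking the definition \eqref{eq:MM:alt:def}, I would fix $\xi \in X$ with
\begin{align*}
 \langle M_{t+s}(u,V_e)\xi,\xi\rangle = \sum_{k \in \mathcal{Z}} \int_0^{t+s} |\langle J_{r,t+s}(u,V_e)\sigma_k, \xi\rangle|^2\, dr = 0,
\end{align*}
and aim to show $\xi = 0$. The vanishing of this non-negative integrand forces $\langle J_{r,t+s}(u,V_e)\sigma_k, \xi\rangle = 0$ for a.e. $r \in [0,t+s]$ and every $k \in \mathcal{Z}$; I will only use this on the subinterval $[0,t]$.

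Next I would invoke the composition identity \eqref{eqn:Jacobico} to write, for $r \in [0,t]$,
\begin{align*}
 J_{r,t+s}(u,V_e)\sigma_k = J_{t,t+s}(u,V_e)\, J_{r,t}(u,V_e)\sigma_k.
\end{align*}
A key observation is that $J_{r,t}(u,V_e)$ depends only on the restriction of $V_e$ to $[0,t]$: from \eqref{eqn:JacobiFlow} this operator is built out of $\phi_{t-r}$ evaluated at $\theta_r V_e$, and condition (4) of Definition~\ref{def:co-cyc} ensures $\phi_{t-r}$ only sees its noise argument on $[0,t-r]$. Since $V_e|_{[0,t]} = V$, this gives $J_{r,t}(u,V_e) = J_{r,t}(u,V)$ for all $r \in [0,t]$. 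Introducing $\eta := J^*_{t,t+s}(u,V_e)\xi$ and using the defining property of the adjoint, the previous vanishing becomes
\begin{align*}
 \langle J_{r,t}(u,V)\sigma_k, \eta\rangle = 0 \quad \text{for a.e. } r \in [0,t] \text{ and every } k \in \mathcal{Z}.
\end{align*}

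Substituting this back into \eqref{eq:MM:alt:def} yields $\langle M_t(u,V)\eta,\eta\rangle = 0$. The hypothesis that $M_t(u,V)$ is non-degenerate then forces $\eta = 0$, i.e. $J^*_{t,t+s}(u,V_e)\xi = 0$, and the non-degeneracy of the adjoint Jacobi flow from Assumption~\ref{ass:Mal:der}~(v) gives $\xi = 0$, as required. The only step that is more than formal bookkeeping is the identification $J_{r,t}(u,V_e) = J_{r,t}(u,V)$ on $[0,t]$, which hinges on the non-anticipativity condition \eqref{eq:no:future}; I would make this verification explicit since the remainder of the argument is an essentially algebraic manipulation of the Gramian once this causal property is in hand.
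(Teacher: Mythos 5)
Your proof is correct and follows essentially the same route as the paper: both arguments drop the contribution from $[t,t+s]$, use the cocycle identity \eqref{eqn:Jacobico} together with the non-anticipativity \eqref{eq:no:future} to reduce to $\langle M_t(u,V)\,J^*_{t,t+s}\xi,\,J^*_{t,t+s}\xi\rangle$, and then invoke Assumption~\ref{ass:Mal:der}~(v). One small imprecision worth tightening: your justification of $J_{r,t}(u,V_e)=J_{r,t}(u,V)$ mentions only the second argument $\theta_r V_e$ of $D_u\phi_{t-r}$, but $J_{r,t}(u,V_e)\xi = (D_u\phi_{t-r})(\phi_r(u,V_e),\theta_r V_e)\xi$ also depends on the first argument $\phi_r(u,V_e)$, which uses $V_e$ on $[0,r]$; combining both dependences still gives exactly $V_e|_{[0,t]}$, so the conclusion is unaffected.
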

\begin{proof}[Proof of Lemma~\ref{l:BetterAndBetter}]
  First let $u\in X$, $t>0$ and $V\in \Omega_t$ be as in the statement
  of the lemma, and suppose that 
  $V_{e}\in \Omega_{t+s}$ satisfies $ V_{e}(r)= V(r)$ for $r\in [0,t]$.  We first observe that, cf. \eqref{eq:MM:alt:def},
  \begin{align*}
   \langle M_{t+s}(u,V_{e}) \xi,\xi\rangle_{X} &= \sum_{k\in \mathcal{Z}} \int_0^t\langle 
  J_{r,t+s}(u,V_{e})\sigma_k\, ,\xi\rangle_X^2  dr + \sum_{k\in \mathcal{Z}} \int_t^{t+s}\langle 
  J_{r,t+s}(u,V_{e})\sigma_k\, ,\xi\rangle_X^2  dr \,. \\
    &\geq \sum_{k\in \mathcal{Z}} \int_0^t\langle 
  J_{r,t+s}(u,V_{e})\sigma_k\, ,\xi\rangle_X^2  dr,
  \end{align*}
which holds for any $\xi \in X$.  Let $J_{r,t+s}^*$ be the adjoint of $J_{r,t+s}(u, V_{e})$ in $X$. 
Using the group property (\ref{eqn:Jacobico}) and \eqref{eq:no:future} we have
\begin{align*}
  \sum_{k\in \mathcal{Z}} \int_0^t\langle 
  J_{r,t}(u,V)\sigma_k\, ,J_{t,t+s}^*\xi\rangle_X^2  dr =  \langle M_t(u,V) J_{t,t+s}^*\xi,J_{t,t+s}^*\xi\rangle_{X} .   
\end{align*}
From Assumption~\ref{ass:Mal:der} we have that $J_{t,t+s}^*\xi \not= 0$ whenever $\xi$.  
Thus, combining these observations, we infer the desired non-degeneracy of $M_{t+s}(u,V_{e})$, 
completing the proof.
\end{proof}

The following quantitative invertibility criteria for $C^1$ functions
may be established in a very similar fashion to the standard 
Inverse Function Theorem.  Also note that our statement here is a variation of
Lemma~4.2.1 of~\cite{NualartSF}.
\begin{lemma}
  \label{lem:quant:IFT}
   Suppose that $\mathcal{G} \subseteq C^{1}(\RR^m)$ is a collection such that
   \begin{itemize}
     \item[(i)] For every $g \in \mathcal{G}$, $g(0) = 0$.
     \item[(ii)] There is an $0< A < \infty$ such that 
       \begin{align}
         \|\nabla g(0)^{-1}\| \leq A \quad 
         \text{ for every } g \in \mathcal{G}.
         \label{eq:inv:bnd:IFT}
         \end{align}
       \item[(iii)] For some $\bar{\gamma} > 0$
         \begin{align}
             \sup_{x \in B_{\bar{\gamma}}(0)} A \| \nabla g(x)  - \nabla g(0) \| < \frac{1}{2}
              \quad \text{ for every } g \in \mathcal{G}.
           \label{eq:growth:bnd:IFT}
           \end{align}
   \end{itemize}
   Then, for any $\gamma \in (0, \bar{\gamma}]$, there is a $\kappa = \kappa(\gamma) > 0$
   such that, for every $g$, $U_g = g(B_\gamma(0))$ is open set with $g$ diffeomorphic between $B_\gamma(0)$
   and $U_g$ with
    \begin{align}
      B_\kappa(0)  \subseteq g(B_\gamma(0)) = U_g, \quad \text{ for every } g \in \mathcal{G}.
      \label{eq:uni:ngh:R}
    \end{align}
\end{lemma}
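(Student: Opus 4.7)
The plan is to mimic the standard proof of the inverse function theorem via Banach's fixed point theorem, tracking constants carefully so that a single $\kappa(\gamma)$ works simultaneously for every $g \in \mathcal{G}$. For each $g \in \mathcal{G}$ and $y \in \R^m$ I introduce the auxiliary map
\[
T_{g,y}(x) := x - \nabla g(0)^{-1}\bigl(g(x) - y\bigr),
\]
whose fixed points in $B_\gamma(0)$ coincide exactly with the preimages of $y$ under $g$.

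First I would establish a uniform contraction estimate. Since $\nabla T_{g,y}(x) = \nabla g(0)^{-1}\bigl(\nabla g(0) - \nabla g(x)\bigr)$, hypotheses (ii)--(iii) give $\|\nabla T_{g,y}(x)\| < \tfrac{1}{2}$ for every $x \in B_{\bar\gamma}(0)$ and every $g \in \mathcal{G}$, so $T_{g,y}$ is Lipschitz with constant $\tfrac{1}{2}$ on $\overline{B_\gamma(0)}$ for any $\gamma \in (0,\bar\gamma]$. Since $T_{g,y}(0) = \nabla g(0)^{-1}y$, condition (ii) gives $\|T_{g,y}(0)\| \leq A\|y\|$. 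Setting $\kappa = \kappa(\gamma) := \gamma/(2A)$, if $\|y\| < \kappa$ and $x \in \overline{B_\gamma(0)}$ then
\[
\|T_{g,y}(x)\| \leq \|T_{g,y}(x) - T_{g,y}(0)\| + \|T_{g,y}(0)\| \leq \tfrac{1}{2}\|x\| + A\|y\| < \gamma,
\]
so $T_{g,y}$ maps the closed ball strictly into itself. Banach's theorem produces a unique fixed point $x = x(g,y) \in \overline{B_\gamma(0)}$ solving $g(x) = y$, and the bound $\|x\| \leq \tfrac{1}{2}\|x\| + A\|y\|$ forces $\|x\| \leq 2A\|y\| < \gamma$, yielding the desired inclusion $B_\kappa(0) \subseteq g(B_\gamma(0))$ uniformly in $g$.

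To conclude that $g$ is a diffeomorphism from $B_\gamma(0)$ onto the open set $U_g := g(B_\gamma(0))$, I would note that $\|I - \nabla g(0)^{-1}\nabla g(x)\| < \tfrac{1}{2}$ on $B_{\bar\gamma}(0)$, so by a Neumann series argument $\nabla g(x)$ is invertible throughout $B_\gamma(0)$. Injectivity of $g$ on $B_\gamma(0)$ then follows from the contraction property: if $g(x_1) = g(x_2)$ and we set $y := g(x_1)$, then $x_1 - x_2 = T_{g,y}(x_1) - T_{g,y}(x_2)$, forcing $\|x_1 - x_2\| \leq \tfrac{1}{2}\|x_1 - x_2\|$ and hence $x_1 = x_2$. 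Openness of $U_g$ and $C^1$-regularity of $g^{-1}$ then follow from the classical local inverse function theorem applied pointwise. I anticipate no serious obstacle, as the argument is essentially a bookkeeping exercise on the standard proof; the one thing to guard against is letting constants depend on the individual $g$, which is avoided by working solely with the uniform data $A$ and $\bar\gamma$ provided by (ii) and (iii).
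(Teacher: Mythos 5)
Your proof is correct and is exactly the argument the paper has in mind: the paper gives no proof of Lemma~\ref{lem:quant:IFT}, remarking only that it ``may be established in a very similar fashion to the standard Inverse Function Theorem'' (citing Lemma~4.2.1 of Nualart), and your contraction-mapping argument with the uniform choice $\kappa(\gamma)=\gamma/(2A)$ is precisely that standard proof with the constants tracked uniformly over $\mathcal{G}$. The only point worth a passing remark is that when $\gamma=\bar\gamma$ the Lipschitz bound on $\overline{B_\gamma(0)}$ requires extending \eqref{eq:growth:bnd:IFT} to the closure by continuity of $\nabla g$, which is immediate.
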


\begin{proof}[Proof of Theorem~\ref{thm:pos}]  Let $u\in X$,  $t>0$, $V\in \Omega_t$ be as in the 
  statement of the result and let $s> 0$ be such that $\pi(\phi_{t+s}(u, W))$ has density $p_{t+s}(\ccdot)$ 
  with respect to Lebesgue measure on $\R^m$.  We proceed by showing that, for any $x\in \R^m$, 
  that there exists a continuous function $h^x: \RR^m \to \RR$ such that 
  \begin{align}
    h^x(x) > 0 \quad  \text{ and } \quad  \int_{B_\epsilon(x)} p_{t +s}(y) dy \geq \int_{B_\epsilon(x)} h^x(y) dy \quad \text{ for every } \epsilon > 0.
    \label{eq:suff:cond:lbf}
  \end{align}
  With such an $h^x$ the desired result, that $p_{t+s}(x)>0$ for almost every $x$, then follows from the  Lebesgue differentiation theorem. 

  To establish \eqref{eq:suff:cond:lbf} for a suitable $h^x$ we begin by building $V_x \in \Omega_{t+s}$ with
  \begin{align}
    \pi(\phi_{t+s}(u, V_x))=x \,\,\text{ and }\,\,
    M_{t+s}(u, V_x) \text{ is non-degenerate},
    \label{eq:gd:cont}
  \end{align}
  for any $x \in \RR^m$.
  By assumption
  $M_{t}(u,V)$ is non-degenerate.     On the other hand, since the $\phi$ is 
  exactly controllable on $\pi(X)$, there exists a piecewise linear $\hat{V} \in
  \Omega_s$  so that 
  \begin{align}
    \pi(\phi_s( \phi_t(u,V),\hat V))=x. 
    \label{eq:ex:cont:comp}
  \end{align}
  We now define $V_x \in \Omega_{s + t}$ according to
  \begin{align}
    V_x (r)=
    \begin{cases}
      V(r)  &\text{ for } r \in [0,t]   \\
      \hat V(r) + V(t)  &\text{ for } r \in [t,t + s]   
    \end{cases}
     \label{eq:g:path:ext}
  \end{align}
  With \eqref{eq:ex:cont:comp} and the cocycle property
  \eqref{eq:cocyc:prop} we infer the first condition in \eqref{eq:gd:cont}.
  By Lemma~\ref{l:BetterAndBetter}, since $M_{t}(u,V)$ is
  non-degenerate, we conclude that $M_{t+s}(u,V_x)$ is itself
  non-degenerate, yielding the second condition in \eqref{eq:gd:cont}.

  With this $V_x$ in hand we construct $h^x$ in terms of a suitable Girsanov density 
  and a small perturbations around $(\pi M_{t+s}(u, V_x) \pi)^{-1}$.
  For any $y \in \R^{m}$ we take 
  \begin{align} 
    T_y^x W(\tau) 
    = W(\tau) + 
       \sum_{\ell=1}^m y_\ell \int_0^{\tau} \langle J_{r,t+s}(u, V_x) \sigma_k,e_\ell\rangle_X \, dr \quad
    \text{ for any } \tau \in [0, t+s], 
    \label{eq:spec:sft}
  \end{align}
  where we recall that the elements $e_i$ are an orthonormal basis for $\pi(X)$.  In particular we have that
  \begin{align*}
    \int_0^{\ccdot} \langle J_{r,t+s}(u, V_x) \sigma_k,e_\ell\rangle_X \, dr  \in \mathcal{H}_{t +s},
  \end{align*}
  so that $T_y^x  W$ is an admissible Girsanov shift for any $y \in \RR^m$.
  
  Fix a $\rho \in C(\R^m; (0, \infty))$ 
  satisfing $\int_{\R^m }\rho(y) \, dy =1$.  From the Girsanov theorem we infer that, 
  for any measurable $\psi: \RR^m \to [0,\infty)$,
  \begin{align}
     \int \psi(y) p_{t+s}(y) dy  &= \int_{\R^m} \E \psi(\pi(\phi_{t+s}(W))) \rho(y) \, dy 
                  \notag\\
                  &= \E \int_{\R^m} \psi(\pi(\phi_{t+s}(T_y^x W))) G^x(y, W) \rho(y) \, dy \notag\\
                  &= \E \int_{\R^m} \psi(g^x(y, W) + \pi(\phi_{t+s}(W))) G^x(y, W) \rho(y) \, dy 
                    \label{eq:g:shift:V:DIR}
  \end{align}    
  where $G^x(y, W)$ is the Girsanov density associated to the shift $T_y W$ and 
  \begin{align}
    g^x(y, \tilde{V}) = \pi(\phi_{t+s}(T_y\tilde{V})) -\pi(\phi_{t+s}(\tilde{V}))
    \label{eq:fd:cg:of:var}
  \end{align}
  for any $\tilde{V} \in \Omega_{t +s}$.   We therefore
  obtain an expression for an $h^x$ from \eqref{eq:g:shift:V:DIR}
  with the desired proprieties in (\ref{eq:suff:cond:lbf}) 
  by showing that $g^x$ is invertible and changing variables.

  In view of Assumption~\ref{ass:Mal:der},
  $g^x(y,\tilde{V})$ is differentiable in $y$ for any $\tilde{V} \in \Omega_{t+s}$ and 
  \begin{align*}
    \nabla g^x(y,\tilde{V}) =  \mathfrak{T}(T_y^x\tilde{V}) 
  \end{align*}
  where $\mathfrak{T}: \Omega_{t +s} \to \RR^{m \times m}$ is the continuous map given by
  \begin{align*}
    (\mathfrak{T}(\tilde{V}))_{ij} := \sum_{k \in \mathcal{Z}} 
    \int_0^{t +s} \langle J_{r, t+s}(u,\tilde{V}) \sigma_k, e_i\rangle
                \langle J_{r, t+s}(u,V_x) \sigma_k, e_j\rangle dr.
  \end{align*}
  In particular we have that $\nabla g^x(0, V_x) := M^\pi_{t+s}(u, V_x)$.  Thus, the non-degeneracy of $M_{t+s}(u,V_x)$, (\ref{eq:gd:cont}), 
  and the continity of $\mathfrak{T}$ implies that,
  there exists a $\delta > 0$, $\alpha > 0$, such that,
  \begin{align}
    |\det \nabla g^x(y, \tilde{V})| \geq \alpha \quad 
    \|\nabla g^{x}(0, \tilde{V})^{-1}\| \leq \frac{1}{\alpha} \quad 
     \| \nabla g^{x}(0, \tilde{V}) - \nabla g^{x}(y, \tilde{V})\| < \frac{\alpha}{2},
    \label{eq:non:deg:cond}
  \end{align}
  whenever $| y | + \| \tilde{V} - V_x\|_{\infty, t + s}  < 2\delta$.

  Observe that, choosing $\delta > 0$ so that \eqref{eq:non:deg:cond}, we have
  \begin{align*}
    \mathcal{G} := \{ g^x( \ccdot, \tilde{V}) : \| \tilde{V} - V_x\|_{\infty, t +s} < \delta \} \subseteq C^{1}(\RR^m)
  \end{align*}
  satisfies the conditions of Lemma~\ref{lem:quant:IFT}.  Picking $\gamma = \min\{\delta, \bar{\gamma}\}$ we invoke the lemma and obtain the  $\kappa > 0$ so that (\ref{eq:uni:ngh:R}) holds.   For $\tilde{V}$
  we let $f^x( \ccdot , \tilde{V})$ be the corresponding inverse of $g^x(\ccdot, \tilde{V})$ 
  mapping $U_{\tilde{V}} := g^x(B_\gamma(0), \tilde{V})$ to $B_\gamma(0)$.
  Continuing  from \eqref{eq:g:shift:V:DIR} and denoting $\mathscr{H}^x_{\delta}(W) = \one_{\{ \|W-V^x\|_{\infty, t+s} < \delta\}}$
  we have
    \begin{align*}
      &\int \psi(y) p_{t+s}(y) dy\\
       &\geq \E \mathscr{H}^x_{\delta}(W) \int_{B_\gamma(0)} \psi(g^x(y,W) + \pi(\phi_{t+s}(W))) G^x(y, W) \rho(y) \, dy\\
       &=  \E \mathscr{H}^x_{\delta}(W) \int_{U_W}  \! \!  \! \! 
                        \psi(z+ \pi(\phi_{t+s}(W))) G^x(f^x(z, W), W) \rho(f^x(z,W)) | \det \nabla f^x(z,W)|\, dz\\
       &\geq  \E \mathscr{H}^x_{\delta}(W) \int_{B_\kappa(0)}
                        \frac{\psi(z+ \pi(\phi_{t+s}(W))) G^x(f^x(z, W), W) \rho(f^x(z,W))}{| \det \nabla g^x(f(z,W),W)|}\, dz\\
       &= \int_{\R^m} \psi(z)  h^x(z)dz
  \end{align*} 
  where 
  \begin{align*}
    h^x(z)\! :=  \E \left[\frac{\one_{\{ \|W-V^x\|_{\infty, t+s} < \delta \}} \one_{\{ |X(z, W)| < \kappa \}}G^x(f^x(X(z),W), W) \rho(f^x(X(z),W))}
                               {| \det \nabla g^x(f^x(X(z),W),W)|}\right]
  \end{align*}
  and $X(z,W)=z -\pi(\phi_{t+s}(W))$.   In view of \eqref{eq:non:deg:cond}, standard properties of Brownian motion
  and noting that $\rho$ and $G^x$ are both strictly positive we therefore conclude that $h^x(x)$ is strictly positive. Hence
  $h^x$ satisfies (\ref{eq:suff:cond:lbf}), completing the proof of the result.
 \end{proof}

\begin{remark}
  Note that, in contrast to Theorem~\ref{prop:exod} which requires $M$ to be
  non-degenerate for almost every Brownian path, Theorem~\ref{thm:pos}
  simply requires that the $M$ be non-degenerate for a single $V \in
  \Omega_t$.  In practice, however, we will prove an estimate like
  \eqref{eq:M:pos:cond} and use the implication
  \eqref{eq:Mal:as:pos:cond} to select one path from a set of full
  $\P$ measure on $\Omega_t$ to satisfy the conditions in
  Theorem~\ref{thm:pos}.  
\end{remark}

\begin{remark}
\label{rmk:submersion}
 While the exact controllability of $\pi(\phi)$ produces, for each $x \in \RR^m$, at least one noise path $V_x \in \Omega_{t +s}$
 such that $x = \pi(\phi_{t + s}(u, V_x))$ the invertibility of $\pi M_{t+s}(u, V_x) \pi$ shows that the
 tangent space around this point $x$ produced by Cameron-Martin perturbations $\mathcal{H}_{t+s}$ is of full rank.    
 Indeed, to show that $D_w\pi(\phi_{t + s}(u,V_x))$ is of full rank in $\mathcal{H}_{t+s}$ we would like to 
 show that, for any unit length $\xi \in \RR^m$ there is a
 corresponding perturbation $H_\xi \in \mathcal{H}_{t +s}$ of $V_x$,
 such that
\begin{align}
  \pi(\phi_{t+s}(u,V_x + \epsilon H_\xi)) \approx x + \epsilon D_w \pi (\phi(u,V_x))H_\xi = x + \epsilon \xi,
  \label{eq:submersion}
\end{align}
 for $0< \epsilon \ll 1$.
 We may produce such an $H_\xi$ by solving the following least squares
 problem.   Assuming that $H_\xi$ has the form  $H_\xi = (D_w \phi_{t+s}(u, V_x))^* \pi \eta$
 then, cf.  (\ref{eq:MM:def}), we have that $\xi = D_w \pi( \phi_{t+s}) H_\xi$ 
 when $\eta = M_{t-s}^\pi(t, V_x)^{-1} \xi$. Hence 
 \begin{align*}
 H_\xi = (D_w \phi_{t+s}(u, V_x))^* \pi  M_{t-s}^\pi(t, V_x)^{-1} \xi.
 \end{align*}
 yields \eqref{eq:submersion}.
\end{remark}


\section{Examples}
\label{sec:examples}

The goal of this section is to see how the results of Sections~\ref{sec:sat} and \ref{sec:app:SPDEs} can be applied 
to study specific degenerately forced problems.  
In particular we will consider the following examples illustrating different aspects of the theory introduced previously:
\begin{itemize}
\item[(i)] The Reaction-Diffusion equation.
\item[(ii)] The 2D Navier-Stokes equations.
\item[(iii)] The 2D Boussinesq equations. 
\item[(iv)] The 3D Euler equations.
\end{itemize}
In each example we will also see how the control results in Section~\ref{sec:sat} can be used in conjunction with 
the formalism introduced in Section~\ref{sec:app:SPDEs} 
to infer properties of the support of the law of the random variable 
solving the associated stochastic partial differential equation.   In the examples
(ii) and (iii), we also deduce unique ergodicity of
invariant measures in the presence of inhomogenous forcing terms.  To the best of the authors' knowledge, the results concerning (i) and (iii) are new while the results on (ii) and (iii) in similar functional settings have been obtained and discussed previously in, respectively,~\cite{AgrachevSarychev2005,AgrachevSarychev2006, HM06, HM09,MattinglyPardoux06, AgrachevKuksinSarychevShirikyan2007} and~\cite{Shirikyan2008, Nersisyan2010, Nersesyan2015}.  Nevertheless, the examples (ii) and (iii) illustrate the applicability of the methods of Section~\ref{sec:sat} and Section~\ref{sec:app:SPDEs}.   

\subsection{Reaction-Diffusion equation}
\label{eq:rd:ex}

For a first example, we consider the following reaction-diffusion equation  
\begin{align}
  \partial_t u- \kappa \partial_{xx} u = f(u) + \sigma \cdot \partial_t V 
  \label{eq:RD:cont}
\end{align}
where $\kappa >0$ is the diffusivity constant.  The (scalar) equation~\eqref{eq:RD:cont} is posed on the interval $[0,\pi]$ and is supplemented 
with the Dirichlet boundary conditions 
\begin{align}
   u(t, 0) = 0 = u(t, \pi) \,\, \text{ for all } \,\, t \geq 0.  
   \label{eq:RD:bcs}
\end{align}
Here, the nonlinearity $f$ is assumed to be an odd polynomial of the form
\begin{align}
  f(v) = \sum_{k = 0}^{2n -1} b_k v^k, \,\, n\geq 2.
  \label{eq:RD:nonlin}
\end{align}
We suppose that the leading-order coefficient $b_{2n-1}$ in the nonlinearity is such that
\begin{align*}
b_{2n-1} \leq - \nu
\end{align*}
for some $\nu > 0$.   In particular this implies that, for some constant $K$ depending only on $f$
\begin{align}
   v f(v) \leq K - \frac{\nu}{2} v^{2n} \quad \text{ and } \quad \sup_{v \in \RR} f'(v) \leq K
   \label{eq:RD:nl:prop}
\end{align}
for all $v\in \R$.  The term $\sigma \cdot V$ takes the form
\begin{align}
  \sigma \cdot \partial_t V = \sum_{k \in \mathcal{Z}} \sigma_k \partial_t V_k
\end{align}
The controlled directions $\mathcal{Z} \subseteq \Z_{\geq 1} = \{1,2,3, \ldots\}$ are a finite subset with
$\sigma_k(x) =\sin(kx)$ and $V= (V_k)_{k\in \mathcal{Z}}$ is a fixed element in 
\begin{align*}
\Omega = \{V:(-\infty, \infty) \rightarrow \R^{\mathcal{|Z|}} \text{ continuous with }V(0)=0 \},
\end{align*}
following the notation introduced in Section~\ref{sec:app:SPDEs}.  

We proceed with our analysis of \eqref{eq:RD:cont}--\eqref{eq:RD:bcs} by recalling the cocycle setting in Proposition~\ref{prop:contco:rd} followed by the main control results in Theorem~\eqref{thm:RD:main}.  The main scaling estimates are encapsulated in Lemmas~\ref{lem:sc:lem:1:RD} and \ref{lem:sc:lem:2:RD} below.

\subsubsection{Mathematical Setting, Cocycle Formulation} 
Regarding the mathematical formulation of
\eqref{eq:RD:cont}--(\ref{eq:RD:bcs}) we consider \emph{weak
  solutions}.   Smoother classes of solutions of
\eqref{eq:RD:cont}--(\ref{eq:RD:bcs}) could just as well be considered
but we omit detailed discussion for simplicity and clarity of presentation.\footnote{Note also that \eqref{eq:RD:cont}
is just one example of a wide variety of reaction-diffusion equations which are in principal accessible to the formalism
developed above in Sections~\ref{sec:sat} and \ref{sec:app:SPDEs}.  
See also, Remark~\ref{rmk:its:complicated} below.}
We refer to, e.g., \cite{Smoller2012,Temam2012} for further background on general 
mathematical theory surrounding \eqref{eq:RD:cont} and its variants.

The phase space for the cocycle associated with
\eqref{eq:RD:cont}--~\eqref{eq:RD:bcs} is taken to be the Hilbert space 
$X= L^2= L^2([0,\pi])$ equipped with the standard norm $\| \ccdot \|$ 
and inner product $\langle \ccdot, \ccdot \rangle$. 
 Note that $\{ \sigma_k : k = 1, 2, 3, \ldots\}$ provides an orthogonal basis for $L^2$.
As usual we take $H^1_0 = H^1_0([0,\pi])$ to be all of the elements in $L^2$
whose (weak) derivative is in $L^2$ and which vanishes at $0$ and
$\pi$.  Some of the forthcoming estimates will also involve 
bounds in $L^{2n}= L^{2n}([0,\pi])$ where $2n-1$ is the degree of the
polynomial $f$.

Of course, \eqref{eq:RD:cont} does not make 
sense directly for generic elements $V$ in $\Omega$.
Following Remark~\ref{rem:pdecocycle}, we define the solution $u(t, u_0, V)$ of \eqref{eq:RD:cont} corresponding to the initial condition
 $u(0,u_0, V)=u_0 \in L^2$  by $u(t, u_0, V):= v(t, u_0,V) + \sigma \cdot V$ where $\sigma\cdot V= \sum_{k\in \mathcal{Z}} \sigma_k(x) V_k(t)$ 
and $v=v(t, u_0, V)$ solves the equation
\begin{align}
\label{eq:RD:shift}
\partial_t v - \kappa \partial_{xx} (v + \sigma \cdot V) = f(v+\sigma \cdot V), \,\, v(0)= u_0,
\end{align} 
in the weak sense.

To make this all precise, we have the following well posedness results for \eqref{eq:RD:cont}--(\ref{eq:RD:bcs})
which is compatible with the setting of Section~\ref{sec:sat} and much of Section~\ref{sec:app:SPDEs}.  
The proof of this well posedness result is based on standard a priori estimates which we outline below in Appendix~\ref{sec:apriori:R-D}.      
\begin{proposition}
\label{prop:contco:rd}
Consider \eqref{eq:RD:cont}--\eqref{eq:RD:bcs} with $f$ as in \eqref{eq:RD:nonlin} and $\mathcal{Z}$ finite.  
Then, for any  $V\in \Omega$ and $u_0 \in L^2$, there exists a unique weak solution $u=u(\ccdot , u_0, V)$, namely
\begin{align}
  u \in L^2_{loc}([0,\infty); H^1_0) \cap C([0,\infty), L^2) \cap L^{2n}_{loc}([0,\infty), L^{2n})
\end{align}
with $u(0) = u_0$ and such that $v :=u-\sigma \cdot V$ solves \eqref{eq:RD:shift} in the weak 
sense; that is integrated against smooth, compactly supported test functions. 
Furthermore, the induced mapping $\phi: [0, \infty) \times L^2 \times \Omega \rightarrow L^2$ given 
by $\phi_t(u_0, V) = u(t, u_0, \sigma \cdot V)$ is a continuous adapted cocycle in the sense 
of Definition~\ref{def:co-cyc}.    
\end{proposition}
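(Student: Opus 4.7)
The plan is to use the substitution $v = u - \sigma \cdot V$ to convert \eqref{eq:RD:cont} into a deterministic PDE with continuous (rather than Brownian) forcing, namely \eqref{eq:RD:shift}, and then apply the standard Galerkin--plus--energy--estimates scheme. Since $\sigma_k(x) = \sin(kx)$ is smooth and vanishes at the endpoints, and $V \in \Omega$ is continuous in time, the forcing $\sigma\cdot V$ and each of its spatial derivatives lie in $C([0,\infty); H^1_0 \cap C^\infty)$, so \eqref{eq:RD:shift} is a tame PDE for $v$.

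First I would construct Galerkin approximants $v^{(N)}$ by projecting onto $X_N := \mathrm{span}\{\sin(kx) : 1 \leq k \leq N\}$, which produces a locally-defined ODE in $\R^N$ with polynomial right-hand side that is continuous in $t$. The key a priori bound comes from testing the equation against $u^{(N)} := v^{(N)} + P_N(\sigma\cdot V)$ and exploiting the dissipation inequality \eqref{eq:RD:nl:prop}, yielding a bound of the form
\begin{align*}
\tfrac{1}{2}\tfrac{d}{dt}\|v^{(N)}\|^2 + \kappa\|\partial_x u^{(N)}\|^2 + \tfrac{\nu}{2}\|u^{(N)}\|_{L^{2n}}^{2n} \leq C\bigl(1 + \|\sigma\cdot V\|_{H^1}^2 + \|\sigma\cdot V\|_{L^{2n}}^{2n}\bigr),
\end{align*}
where the right-hand side is controlled on compact time intervals by $\sup_{s\leq T}|V(s)|$. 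Integration yields uniform-in-$N$ bounds for $v^{(N)}$ in $L^\infty_{\mathrm{loc}}([0,\infty); L^2) \cap L^2_{\mathrm{loc}}([0,\infty); H^1_0) \cap L^{2n}_{\mathrm{loc}}([0,\infty); L^{2n})$. An Aubin--Lions compactness argument, together with the polynomial growth of $f$, then permits passage to the limit to obtain a weak solution $u$ of the claimed regularity.

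For uniqueness, I would take two weak solutions $u_1, u_2$ with the same data and test the difference $w := u_1 - u_2 = v_1 - v_2$ against itself; the $\sigma\cdot V$ terms cancel, and since $\sup_{\R}f' \leq K$ by \eqref{eq:RD:nl:prop} we have $\int (f(u_1) - f(u_2))w\,dx \leq K\|w\|^2$, so Gr\"onwall forces $w\equiv 0$. The same comparison, applied to solutions with data $(u_0, V)$ and $(u_0', V')$, gives quantitative continuous dependence on the data, and combined with temporal continuity $u \in C([0,\infty); L^2)$ this yields joint continuity of $(t, u_0, V) \mapsto \phi_t(u_0, V)$ on $[0,\infty) \times L^2 \times \Omega$. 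Rather than a deep obstacle, the main technical nuisance is tracking how the a priori and stability constants depend on the polynomial degree $2n-1$ and on $\|V\|_{\infty, T}$; this is precisely where the superlinear dissipation in \eqref{eq:RD:nl:prop} is essential to absorb the growth of $f$ and close the estimates.

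Finally, the cocycle identity \eqref{eq:cocyc:prop} follows from uniqueness: for fixed $u_0, V$ and $s > 0$, the maps $t \mapsto \phi_{t+s}(u_0, V)$ and $t \mapsto \phi_t(\phi_s(u_0, V), \theta_s V)$ both solve \eqref{eq:RD:cont} on $[0,\infty)$ with initial datum $\phi_s(u_0, V)$ and forcing $\sigma\cdot(\theta_s V)$, hence agree. The adaptedness condition \eqref{eq:no:future} is immediate because the Galerkin ODE and all of the a priori estimates on $[0,t]$ depend on $V$ only through $V|_{[0,t]}$, and this property is preserved in the limit. The remaining properties, that $\phi$ is continuous and $\phi_0 = \mathrm{Id}$, are built into the construction, completing the verification of Definition~\ref{def:co-cyc}.
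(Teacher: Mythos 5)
Your proposal follows essentially the same route as the paper, which relegates the entire proof to the two energy estimates of Proposition~\ref{prop:apest:rd} in Appendix~\ref{sec:apriori:R-D} --- the a priori bound \eqref{apest1:rd} for $v$ in $L^\infty(L^2)\cap L^2(H^1_0)\cap L^{2n}(L^{2n})$ and the stability estimate \eqref{apest2:rd} comparing solutions with different data \emph{and} different paths $V$ --- with existence by Galerkin and the cocycle identity from uniqueness treated as standard, exactly as you do. Two points of hygiene: testing the $v^{(N)}$-equation against $u^{(N)}=v^{(N)}+P_N(\sigma\cdot V)$ does not produce $\tfrac{d}{dt}\|v^{(N)}\|^2$ on the left, and since $V\in\Omega$ is merely continuous you cannot integrate the resulting term $\langle \partial_t v^{(N)}, P_N(\sigma\cdot V)\rangle$ by parts in time; the paper instead tests against $v$ and uses that $\langle v, f(v+\sigma\cdot V)\rangle \leq K(|\sigma\cdot V|_{T})-\tfrac{\nu}{2}\|v\|_{L^{2n}}^{2n}$ still holds for bounded shifts. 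Likewise, when $V\neq V'$ the nonlinear difference term is $\langle w, f'(\xi)(w+\sigma\cdot\bar V)\rangle$ and the cross term is not absorbed by $\sup f'\leq K$ alone: the paper bounds $\|f'(\xi)\|_{L^{2n/(2n-2)}}$ by the space--time $L^{2n}$ a priori estimate via H\"older, which is the one genuinely quantitative use of the superlinear dissipation that your sketch gestures at but does not carry out.
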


\begin{remark}\label{rmk:shit:sln:RD}
The notion of solutions to \eqref{eq:RD:cont} given in Proposition~\ref{prop:contco:rd} subsumes two more classical notions which arise as special cases. 
If $V$ belongs to the Cameron-Martin space $\mathcal{H}_t$ as defined in~\eqref{eq:Cam:Mar:Sp}, 
then the more usual sense of weak solution solutions of~\eqref{eq:RD:cont} are well defined and coincide with 
the solutions provided by Proposition~\ref{prop:contco:rd}.  
On the other hand, if we replace $V\in \Omega$ by a standard two-sided $|\mathcal{Z}|$-dimensional Brownian motion $W$, 
then \eqref{eq:RD:cont} may be regarded as a stochastic partial differential equation for which we may obtain solutions in the 
setting of infinite-dimensional stochastic analysis as in, e.g., \cite{ZabczykDaPrato1992}.
Regardless, since the noise is additive, upon replacing $V$ by $W$ in~\eqref{eq:RD:shift} and defining $u(t, u_0, W)=v(t, u_0, W)+ \sigma\cdot W$ 
we obtain the same pathwise solution as the one defined using the stochastic analysis approach. 
See, e.g., \cite{CrauelFlandoli1994,CrauelDebusscheFlandoli1997}.      
\end{remark}

\subsubsection{Statement of the main results for equation~\eqref{eq:RD:cont}} 

 In order to state the main control results for the reaction-diffusion equation~\eqref{eq:RD:cont}, 
 we first outline further assumptions we make on the noise/control
 directions $\sigma= (\sigma_k : {k\in \mathcal{Z}})$.  Let 
 \begin{align}
 f_*(v_1, v_2, \ldots, v_{2n-1})= b_{2n-1} v_1 v_2 \ldots v_{2n-1}
 \end{align}
 denote the multilinear form corresponding to the leading-order term in $f$ and define $L^2$-subspaces $X_m$, $m\geq 0$, by 
 \begin{align}
   Y = X_0 = \text{span} \{ \sigma_k : k \in \mathcal{Z}\}
   \label{eq:dir:cont:RD}
 \end{align}
and 
 \begin{align}
 X_m = \text{span}\{ X_{m-1} \cup \{f_*(h_1, h_2, \ldots, h_{2n-1}) \, : \, h_i \in X_{m-1} \} \}.    
      \label{eq:it:dir:RD}
 \end{align}
We recall as in Definition~\ref{def:hor:cond} that the pair $(f_*, \sigma)$ satisfies \emph{H\"{o}rmander's condition} on 
$L^2$ if $\bigcup_{m\geq 0} X_m$ is dense in $L^2$.  
 
 Our main control result is the following: 
 \begin{theorem}
   \label{thm:RD:main}
   Suppose that we are under the conditions of Proposition~\ref{prop:contco:rd}
   and that $(f_*, \sigma)$ satisfies H\"{o}rmander's condition on $L^2$. Let $\pi:~L^2 \rightarrow L^2$ be any continuous linear projection 
   onto a finite-dimensional subspace $\pi(X)\subseteq L^2$.  Then the associated cocycle $\phi$ is approximately controllable on $L^2$ 
   and exactly controllable on $\pi(X)$ in the sense of Definition~\ref{def:control:cyc}.  
\end{theorem}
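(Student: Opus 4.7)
The plan is to apply Corollary~\ref{cor:exact:con} with $\mathfrak{F}$ taken to be the natural one-parameter family associated with $\mathcal{F}_0$ (constant-control solutions of \eqref{eq:RD:cont} indexed by $\alpha \in \RR^{|\mathcal{Z}|}$), so that $\mathbb{D}(\mathfrak{F}) = \mathcal{F}_0$. By Proposition~\ref{prop:horeq}, H\"ormander's condition on $(f_*, \sigma)$ is equivalent to density of $\bigcup_{k \geq 0} X_k$ in $L^2$, where $X_k$ is defined in \eqref{eq:it:dir:RD}. Hence it suffices to verify, by induction on $k$, that $(\rho, X_k) \in \text{Sat}_u(\mathfrak{F})$ for every $k \geq 0$.

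For the base step $k = 0$, the first scaling heuristic \eqref{eq:ray:scal}--\eqref{eq:frst:scalling} should upgrade to the uniform limit \eqref{eq:uni:scale:b}, namely
\[
\lim_{\lambda \to \infty} \sup_{u_0 \in K,\, \alpha \in \tilde K} \| \Phi^{\lambda \alpha \cdot \sigma}_{t/\lambda} u_0 - \rho^{\alpha \cdot \sigma}_t u_0 \| = 0,
\]
for any compact $K \subseteq L^2$ and $\tilde K \subseteq \RR^{|\mathcal{Z}|}$; this is what I expect Lemma~\ref{lem:sc:lem:1:RD} to contain. Granting this, the characterization of $\preccurlyeq_u$ in Definition~\ref{def:uni:sat} yields $(\rho, X_0) \in \text{Sat}_u(\mathfrak{F})$.

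For the inductive step, assume $(\rho, X_{k-1}) \in \text{Sat}_u(\mathfrak{F})$. Each element of $X_{k-1}$ is a finite polynomial combination of sines, hence lies in $C^\infty \cap L^{2n}$, so $f_*(g) \in L^2$ for every $g \in X_{k-1}$. Since $f$ is odd, the leading multilinear form $f_*$ has \emph{odd} degree $M = 2n-1$, so Remark~\ref{rmk:braks}(i) does not apply and the composed scaling \eqref{eq:non:lin:tw}--\eqref{eq:m:scl} works in both signs. What is needed is the uniform counterpart \eqref{eq:uni:scale:it}, i.e. the content I expect of Lemma~\ref{lem:sc:lem:2:RD}:
\[
\lim_{\lambda \to \infty} \sup_{u_0 \in K,\, g \in \tilde K} \| \rho^{-\lambda^2 g}_{1/\lambda} \Phi^0_{t/\lambda^M} \rho^{\lambda^2 g}_{1/\lambda} u_0 - \rho^{-f_*(g)}_t u_0 \| = 0,
\]
for compact $K \subseteq L^2$ and compact $\tilde K \subseteq X_{k-1}$. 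Combining this with the inductive hypothesis via the transitivity of $\preccurlyeq_u$ (Lemma~\ref{lem:uni:stat:m}) gives $(\rho, Y_k) \in \text{Sat}_u(\mathfrak{F})$ with $Y_k := \text{span}\{f_*(g) : g \in X_{k-1}\}$. The multilinear polarization argument in the proof of Proposition~\ref{prop:horeq} then upgrades $\{f_*(g,\ldots,g) : g \in X_{k-1}\}$ to the full symmetric multilinear span $\{f_*(h_1,\ldots,h_{2n-1}) : h_j \in X_{k-1}\}$, which together with the inductive hypothesis and the span observation of Remark~\ref{rmk:stupid:spans} yields $(\rho, X_k) \in \text{Sat}_u(\mathfrak{F})$.

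The main obstacle is clearly the uniform scaling estimate underlying Lemma~\ref{lem:sc:lem:2:RD}. The difficulty is that the intermediate profile $u_0 + \lambda g$ has $L^{2n}$-norm of order $\lambda$ while the dissipation operates on a time interval of length $O(\lambda^{-M})$; one must show that the rescaled variable $w_\lambda$ satisfying \eqref{eq:rs:nLin:twit} stays in a compact set of $L^2$ uniformly in $\lambda \gg 1$, and then pass to the limit. My plan is to run energy estimates directly on $w_\lambda$ in $L^2$, $H^1_0$, and $L^{2n}$, using the sign condition $b_{2n-1} \leq -\nu$ through \eqref{eq:RD:nl:prop} to dominate the lower-order terms $L w_\lambda$, $f(w_\lambda + \lambda g) - \lambda^M f_*(g)$, and the forcing by the gain from the leading nonlinear term, exactly as in the a priori bounds deferred to the appendix. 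The oddness of $f$ is used both to make the second scaling with either sign of $\alpha$ admissible and to provide this dissipation uniformly in $\lambda$, which is why the even-degree obstruction of Remark~\ref{rmk:braks}(i) is absent here.
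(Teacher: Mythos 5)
Your overall architecture matches the paper's proof exactly: invoke Corollary~\ref{cor:exact:con}, use Proposition~\ref{prop:horeq} to convert H\"ormander's condition into density of $\bigcup_m X_m$, establish $(\rho, X_m) \in \text{Sat}_u(\mathfrak{F})$ inductively via a base scaling lemma and a second scaling lemma that pushes rays through $f_*$, and use the odd homogeneity $f_*(\alpha h) = \alpha^{2n-1} f_*(h)$ to obtain both signs and hence the full span. This is precisely how the paper organizes Theorem~\ref{thm:satrd} and deduces Theorem~\ref{thm:RD:main}. One small notational remark: the paper actually uses Proposition~\ref{prop:horeq} in the opposite direction from how you phrase it --- it notes that the multilinear span $X_m$ of \eqref{eq:it:dir:RD} coincides with the span of diagonal images $f_*(h)$, so the scaling only needs to produce the diagonal, rather than ``upgrading'' the diagonal to the multilinear span afterward; but this is the same polarization identity read in the other direction and is immaterial.

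Where your plan is genuinely off is in the mechanism for Lemma~\ref{lem:sc:lem:2:RD}. You propose to ``run energy estimates directly on $w_\lambda$ in $L^2$, $H^1_0$, and $L^{2n}$, using $b_{2n-1}\leq -\nu$ to dominate ... by the gain from the leading nonlinear term,'' i.e.\ coercivity $vf(v)\leq K-\frac{\nu}{2}v^{2n}$ as in the well-posedness appendix. But the quantity that must be estimated is the \emph{error} $\phi_\lambda = w_\lambda - \rho_t^{f_*(h)}\pi_N u_0$, and coercivity pairs $f(v)$ against the \emph{same} $v$ that is its argument, which is not the structure of the error equation (the test function $\phi_\lambda$ is not the argument $w_\lambda + \lambda h$). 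What the paper actually uses --- and what works --- is the other half of \eqref{eq:RD:nl:prop}, namely the one-sided Lipschitz bound $\sup f' \leq K$ (which also hinges on $b_{2n-1}<0$), together with the mean value theorem to write $f(w_\lambda^N + \lambda h) - f(r^N + \lambda h) = f'(\xi)\phi_\lambda^N$, an explicit polynomial expansion for the remaining term $f(r^N + \lambda h) - \lambda^{2n-1}f_*(h)$, and a Fourier truncation $\pi_N u_0$ of the $L^2$ initial datum so that the reference profile is smooth enough to close the estimate. Moreover, your fallback of showing $w_\lambda$ ``stays in a compact set of $L^2$ ... and then pass to the limit'' would not directly deliver the \emph{uniform-over-compact-sets} convergence demanded by Definition~\ref{def:uni:sat}; without explicit Gronwall-type rate bounds in $\lambda$ (which is what the paper's truncation-plus-one-sided-Lipschitz argument produces), the uniformity needed to feed Lemma~\ref{lem:uni:stat:m} and the Brouwer argument in Theorem~\ref{thm:exact:cont:abs} is not secured.
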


\begin{example}
  With the use of elementary trigonometric identities, one may verify the H\"{o}rmander condition algebraically for a wide variety of configurations
  of $\mathcal{Z}$ and $f$ in (\ref{eq:RD:cont}).
  For example recall that
  \begin{align*}
    \sin(jx)&\sin(kx)\sin(lx)\\ 
    &= \frac{1}{4}( \sin((l + j -k) x) + \sin((l - j + k) x) - \sin((l + j +k) x) - \sin((l - j -k)x))
  \end{align*}
  for any $j, k, l$. Thus, in the case when the degree of $f$ is $3$, the H\"{o}rmander condition is satisfied if, for instance, 
  $\{1,2\} \subset \mathcal{Z}$.
\end{example}

Recall that the structure of the cocycle $\phi$ allows us to define a Markov semigroup $P_t$ with associated transitions 
$P_t(u_0, A)$, $u_0 \in L^2$ and $A\subseteq L^2$ Borel, as in~\eqref{eq:mar:sg:cyc} and~\eqref{eq:trn:fn:cyc}.  Combining the previous 
result with Lemma~\ref{l:control} and Corollary~\ref{cor:full:sup} of Section~\ref{sec:app:SPDEs}, we have the following immediate consequence.  

\begin{corollary}
Suppose the assumptions of Theorem~\ref{thm:RD:main} are satisfied.   
For any $t>0$, $u_0 \in L^2$ and $B\subseteq L^2$ open we have $P_t (u_0, B) >0.$  
Furthermore, any invariant measure $\mu$ for $P_t$ satisfies $\mu(B)>0$ for all $B\subseteq L^2$ open.   
\end{corollary}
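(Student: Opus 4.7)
The plan is simply to chain together the controllability result of Theorem~\ref{thm:RD:main} with the general Markovian consequences established back in Section~\ref{sec:app:SPDEs}. There is no substantive new content to prove; the task is to verify that the hypotheses of the abstract lemmas are met and then quote them.

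First I would note that Proposition~\ref{prop:contco:rd} guarantees that $\phi$ is a continuous adapted cocycle on $X = L^2$, so the Markov semigroup $P_t$ and transition function $P_t(u_0, A)$ given in \eqref{eq:mar:sg:cyc}--\eqref{eq:trn:fn:cyc} are well defined. Next, under the standing assumption that $(f_*, \sigma)$ satisfies H\"ormander's condition on $L^2$, Theorem~\ref{thm:RD:main} yields that $\phi$ is approximately controllable on $L^2$ in the sense of Definition~\ref{def:control:cyc}~(i) (the exact controllability on $\pi(X)$ is not needed here).

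For the first claim, fix $t > 0$, $u_0 \in L^2$ and an open set $B \subseteq L^2$. Since $B$ is open and nonempty (the statement is vacuous otherwise), pick any $v \in B$ together with $\delta > 0$ so that $B_\delta(v) \subseteq B$. By the approximate controllability just established, Lemma~\ref{l:control} applies and gives $P_t(u_0, B_\delta(v)) > 0$, so monotonicity of $P_t(u_0, \cdot)$ yields
\begin{align*}
   P_t(u_0, B) \geq P_t(u_0, B_\delta(v)) > 0,
\end{align*}
which is the desired strict positivity.

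For the second claim, approximate controllability of $\phi$ is precisely the hypothesis of Corollary~\ref{cor:full:sup}, so any invariant probability measure $\mu$ for $P_t$ has full topological support, i.e.\ $\mu(B) > 0$ for every open $B \subseteq L^2$. There is no real obstacle in any of these steps; the only thing to double-check is that the notion of ``approximately controllable'' produced by Theorem~\ref{thm:RD:main} (phrased in terms of piecewise constant controls) aligns with the hypothesis of Lemma~\ref{l:control} and Corollary~\ref{cor:full:sup} (phrased in terms of piecewise linear $V \in \Omega_t$), which is immediate from Definition~\ref{def:control:cyc}~(i) and the identification $V_\alpha(s) = s\alpha$ used to build $\mathcal{F}_0$ in \eqref{eq:in:sgs:cyc}.
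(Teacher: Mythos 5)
Your proof is correct and takes exactly the route the paper takes: the paper presents this corollary as an immediate consequence of Theorem~\ref{thm:RD:main} combined with Lemma~\ref{l:control} and Corollary~\ref{cor:full:sup}, which is precisely the chain you spell out. The additional checks you include (cocycle structure from Proposition~\ref{prop:contco:rd}, compatibility of the two phrasings of approximate controllability) are sound and simply make explicit what the paper leaves implicit.
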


\begin{remark}\label{rmk:its:complicated}
Note that a much broader class reaction-diffusion of equations in regards to boundary conditions, the structure of the reaction term $f$
and the spatial dimension are all accessible to the formalisms presented in Section~\ref{sec:sat} and Section~\ref{sec:app:SPDEs}.
We choose to focus on the special case presented in \eqref{eq:RD:cont}-\eqref{eq:RD:bcs} for simplicity and clarity of exposition 
in our first example.   

Similarly, to keep the presentation of this first example simple, we will avoid the Malliavin calculus 
and focus on the rigorous scaling arguments giving control on the phase space $L^2$ as stated in Theorem~\ref{thm:RD:main}.  
Indeed, to be able to apply the results of Section~\ref{sec:den:pos}, we need to establish a non-degeneracy for the Malliavin 
matrix associated with (\ref{eq:RD:cont}) al la Definition~\ref{def:Mal:Mtrx}.   
In previous work, \cite{HM09}, the analysis of this operator was carried out for (\ref{eq:RD:cont})
in a smoother space where the maximum principal is immediately applicable.  It is expected that the Malliavin analysis carried out in \cite{HM09}
could be readily extended to the $L^2$ setting followed here.  Conversely, with some further work, the controlability results of this section 
could be generalized to arbitary higher order Sobolev spaces which was the setting of \cite{HM09}.    

We leave both questions, along with more general more general formulations of (\ref{eq:RD:cont}), for future work.
\end{remark}

\subsubsection{Proof of the main control result}

Given the existence of the cocycle $\phi$, observe that by taking the parameter space
$Y= \text{span}\{ \sigma_k \, :\, k \in \mathcal{Z}\}$, we have defined a one-parameter family of continuous (global) semigroups 
$(t, u_0, h) \mapsto \Phi_t^h u_0 : [0, \infty) \times L^2\times Y \rightarrow L^2$ by setting 
\begin{align*}
\Phi_t^h u_0= \phi_t(u_0, V_h)
\end{align*}
 where $V_h \in \Omega$ is defined by $V_h(t) = t h$.  See Definition~\ref{def:oneparamlocal} in Section~\ref{sec:exactcontrol}.
Throughout, we will denote this one-parameter family using the notation $(\Phi, Y)$ and take $\mathfrak{F}=\{ (\Phi, Y)\}$.

The proof of Theorem~\ref{thm:RD:main} follows immediately from Corollary~\ref{cor:exact:con} once we establish the following result.  
\begin{theorem}
\label{thm:satrd}
For each $m\geq 0$, $(\rho, X_m) \in \text{\emph{Sat}}_u(\mathfrak{F})$.  
\end{theorem}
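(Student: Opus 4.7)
The plan is to proceed by induction on $m \geq 0$, leveraging the two PDE-level scaling estimates in Lemmas~\ref{lem:sc:lem:1:RD} and~\ref{lem:sc:lem:2:RD} together with the general saturation machinery of Lemma~\ref{lem:uni:stat:m} and Remark~\ref{rmk:stupid:spans}. For the base case $m = 0$ (so that $X_0 = Y$), I would apply Lemma~\ref{lem:sc:lem:1:RD} directly: given compact $K_1 \subseteq L^2$, $K_2 \subseteq Y$ and tolerances $\eps, t > 0$, taking $\lambda$ large enough yields
\[
  \sup_{u \in K_1, h \in K_2} \| \Phi^{\lambda h}_{t/\lambda} u - \rho^h_t u \| < \eps,
\]
and the continuous parameter map $h \mapsto \lambda h \in Y$ together with the time $t_1 = t/\lambda \leq t$ verifies \eqref{eq:uni:stat:def}, establishing $(\rho, X_0) \in \text{Sat}_u(\mathfrak{F})$.

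For the inductive step, I would proceed in two substeps. First, introduce the auxiliary one-parameter family $\Psi^h_t u := \rho^{F(h)}_t u$ parameterized by $h \in X_{m-1}$, where $F(h) := f_*(h, \ldots, h)$ is $(2n-1)$-homogeneous. Since $2n-1$ is odd, $F(-h) = -F(h)$, so Lemma~\ref{lem:sc:lem:2:RD} (applied with $g = -h$) gives
\[
  \lim_{\lambda \to \infty} \sup_{u \in K_1, h \in K_2} \left\| \rho^{\lambda^2 h}_{1/\lambda} \Phi^0_{t/\lambda^{2n-1}} \rho^{-\lambda^2 h}_{1/\lambda} u - \Psi^h_t u \right\| = 0,
\]
with the parameters $\pm \lambda^2 h$ depending continuously on $h \in X_{m-1}$. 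This shows $\{\Psi\} \preccurlyeq_u \{(\rho, X_{m-1}),\Phi\}$, and combining with the inductive hypothesis via transitivity (Lemma~\ref{lem:uni:stat:m}) places $\Psi \in \text{Sat}_u(\mathfrak{F})$. Second, I would lift from $\Psi$ to a genuine ray semigroup parameterized by $Z_m := \spa\{F(h) : h \in X_{m-1}\}$. Choose a basis $g_1 = F(h_1), \ldots, g_r = F(h_r)$ of $Z_m$ with linear dual coordinates $c_1, \ldots, c_r$, and use the fact that the real $(2n-1)$-th root is continuous on $\RR$ to define the continuous map $\tilde h_i(g) := (r c_i(g))^{1/(2n-1)} h_i$. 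Homogeneity gives $F(\tilde h_i(g)) = r c_i(g) g_i$, hence the \emph{exact} identity
\[
  \rho^g_t u = \Psi^{\tilde h_r(g)}_{t/r} \circ \cdots \circ \Psi^{\tilde h_1(g)}_{t/r} u, \quad g \in Z_m,
\]
with total time $t$. Thus $(\rho, Z_m) \preccurlyeq_u \{\Psi\}$, and another application of transitivity gives $(\rho, Z_m) \in \text{Sat}_u(\mathfrak{F})$. Remark~\ref{rmk:stupid:spans} then produces $(\rho, \spa\{X_{m-1} \cup Z_m\}) \in \text{Sat}_u(\mathfrak{F})$, and Proposition~\ref{prop:horeq} identifies this span with $X_m$, closing the induction.

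The main obstacle, setting aside the PDE-level a priori estimates underlying Lemmas~\ref{lem:sc:lem:1:RD}--\ref{lem:sc:lem:2:RD}, is the continuous lifting from the image of the nonlinear map $F$ back to a parameter in $X_{m-1}$: the scaling lemma produces $\Psi$ as a family indexed by $X_{m-1}$, but the target ray semigroups are indexed by the image subspace $Z_m$. It is precisely the oddness of $M = 2n-1$ that makes the lifting work, since the real $(2n-1)$-th root provides a continuous scalar inverse for the homogeneity of $F$, and simultaneously supplies both signs of $F(h)$ in the scaling limit (Lemma~\ref{lem:sc:lem:2:RD} would otherwise only produce $-F(h)$). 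As noted in Remark~\ref{rem:even}, the even-degree case requires a separate argument, which is not needed here.
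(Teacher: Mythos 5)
Your proof is correct and follows essentially the same route as the paper's: the base case via Lemma~\ref{lem:sc:lem:1:RD}, then inductively generating each new direction $f_*(h)$ through Lemma~\ref{lem:sc:lem:2:RD}, using the $(2n-1)$-th root and odd homogeneity of $f_*$ to recover a full one-parameter ray semigroup from the single direction the scaling produces, and finally closing to the span $X_m$ via Remark~\ref{rmk:stupid:spans} together with Proposition~\ref{prop:horeq}. Your substep 2, with the explicit basis $g_i = F(h_i)$ and continuous lift $\tilde h_i(g) = (r c_i(g))^{1/(2n-1)} h_i$, is a nice elaboration of what the paper compresses into the composition identity $\rho^{\alpha g}_t \rho^{\beta h}_t = \rho^{\alpha g+\beta h}_t$ and the remark; it makes the required continuity of the parameter maps and the time accounting $\sum t_k = t$ visible, which is the content of Remark~\ref{rmk:stupid:spans}.

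One small transcription error: the displayed scaling limit in substep 1 has the signs reversed. Lemma~\ref{lem:sc:lem:2:RD} as stated already converges to $\rho^{f_*(h)}_t u_0 = \Psi^h_t u_0$ with the original composition $\rho^{-\lambda^2 h}_{1/\lambda}\,\Phi^0_{t/\lambda^{2n-1}}\,\rho^{\lambda^2 h}_{1/\lambda}$, so no substitution $g = -h$ is needed. The formula you wrote, $\rho^{\lambda^2 h}_{1/\lambda}\,\Phi^0_{t/\lambda^{2n-1}}\,\rho^{-\lambda^2 h}_{1/\lambda} u$, would instead converge to $\rho^{-F(h)}_t u$, not $\Psi^h_t u$. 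This does not affect the structure of the argument since $\Psi$ is parameterized over all of $X_{m-1}$ and $F$ is odd, but the displayed line as written is inconsistent with its claimed limit.
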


Here we recall that $(\rho, X_m)$ is the one-parameter family of continuous (global) semigroups defined by 
\begin{align*}
\rho_t^h u_0= u_0 + t h, \, t\geq 0, \, u_0\in L^2 , \, h\in X_m, 
\end{align*}  
and $X_m$ is as in \eqref{eq:it:dir:RD}.  The notion of the uniform saturate $\text{Sat}_u(\mathfrak{F})$
of a one-parameter family of continuous (global) semigroups  $\mathfrak{F}$ is given in Definition~\ref{def:uni:sat} above.

Theorem~\ref{thm:satrd} will be proven inductively using the next two scaling estimates.  
The first result, Lemma~\ref{lem:sc:lem:1:RD}, starts the inductive generation of the subspaces 
$X_m$ by showing that $(\rho, X_0)\in \text{Sat}_u (\mathfrak{F})$.  
In light of the heuristics outlined in Section~\ref{sec:overview}, 
the second scaling estimate, Lemma~\ref{lem:sc:lem:2:RD} allows us to `push' existing directions 
through the nonlinearity $f$ to iteratively show that $(\rho, X_m) \in \text{Sat}_u(\mathfrak{F})$ for all $m\geq 0$.  

\begin{remark}
By the proof of Proposition~\ref{prop:horeq}, we recall that $X_m$, $m\geq 1$, satisfies
\begin{align*}
X_m = \text{span} \Big\{ X_{m-1} \cup \{ f_*(h) \, : \, h \in X_{m-1}\} \Big\}.  
\end{align*}
Note that this simplifies the argument since we will only need to see how to generate directions of the form $f_*(h)$ for $h\in X_{m-1}$.  
\end{remark}

\begin{lemma}
\label{lem:sc:lem:1:RD}
Let $K_1\subseteq L^2$ and $K_2 \subseteq X_0$ be compact and fix $\eps, t> 0$.  Then there exists $\lambda_0 >0$ such that for all $\lambda \geq \lambda_0$ 
\begin{align}
\sup_{u_0 \in K_1, h\in K_2} \|  \SRD_{t/\lambda}^{\lambda h} u_0 -   \rho_t^h u_0 \| <\eps.    
  	\label{eq:fc:sc:rd}
\end{align}
In particular, $(\rho, X_0) \in \text{\emph{Sat}}_u(\mathfrak{F})$.    
\end{lemma}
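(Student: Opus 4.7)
The plan is to establish the scaling estimate \eqref{eq:fc:sc:rd} by an energy argument; once that is in hand, the conclusion $(\rho,X_0)\in\text{Sat}_u(\mathfrak{F})$ follows immediately from Definition~\ref{def:uni:sat} with the single-step composition $n=1$, $\Phi^1=\Phi$, dwell time $t_1=t/\lambda\le t$, and continuous (in fact linear) parameter map $f_1(h):=\lambda h$ from $K_2$ into $\mathcal{P}(\Phi)=X_0$. To set up the estimate, I will first pass to the shifted variable of Remark~\ref{rmk:shit:sln:RD}: writing $V_{\lambda h}(\tau)=\tau\lambda h$, one has
$$\SRD^{\lambda h}_{s/\lambda}u_0 = \tilde v^\lambda(s)+sh, \qquad \rho^h_s u_0 = u_0+sh,$$
where $\tilde v^\lambda(s):=v(s/\lambda,u_0,V_{\lambda h})$ obeys $\tilde v^\lambda(0)=u_0$ and
$$\partial_s\tilde v^\lambda = \tfrac{1}{\lambda}\bigl[\kappa\partial_{xx}(\tilde v^\lambda+sh)+f(\tilde v^\lambda+sh)\bigr],$$
reducing the lemma to showing $\sup_{K_1\times K_2}\|\tilde v^\lambda(t)-u_0\|\to 0$ as $\lambda\to\infty$.

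The main obstacle is that a direct energy estimate on $w^\lambda:=\tilde v^\lambda-u_0$ requires pairing $\partial_x u_0$ against $\partial_x w^\lambda$, which is meaningless for a generic $u_0\in L^2$. I propose to handle this by combining a uniform $L^2$ stability bound with a smoothing argument. For stability, two solutions $u_1,u_2$ driven by the same control satisfy $\partial_\tau(u_1-u_2)=\kappa\partial_{xx}(u_1-u_2)+f(u_1)-f(u_2)$, and since $(f(a)-f(b))(a-b)\leq K(a-b)^2$ by \eqref{eq:RD:nl:prop}, Gr\"onwall gives $\|u_1(\tau)-u_2(\tau)\|\leq e^{K\tau}\|u_1(0)-u_2(0)\|$; evaluated at $\tau=t/\lambda\leq t$ this is Lipschitz dependence on initial data with constant $e^{Kt}$ uniform in $\lambda\geq 1$ and $h$. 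For smoothing, I will set $u_0^\epsilon:=e^{\epsilon\Delta}u_0$ (Dirichlet heat semigroup): strong continuity of $e^{\epsilon\Delta}$ on $L^2$ together with the compactness of $K_1$ yields $\sup_{u_0\in K_1}\|u_0-u_0^\epsilon\|\to 0$ as $\epsilon\to 0$, while $\{u_0^\epsilon:u_0\in K_1\}$ is uniformly bounded in $H^1_0\cap L^\infty$ for each fixed $\epsilon$.

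The endgame is an energy estimate on the smoothed flow. Letting $\tilde v^{\lambda,\epsilon}$ denote the analogue of $\tilde v^\lambda$ initialized at $u_0^\epsilon$, stability gives
$$\|\tilde v^\lambda(t)-u_0\| \leq (e^{Kt}+1)\|u_0-u_0^\epsilon\| + \|\tilde v^{\lambda,\epsilon}(t)-u_0^\epsilon\|,$$
so only the last term remains. Setting $w:=\tilde v^{\lambda,\epsilon}-u_0^\epsilon$ (which vanishes at $s=0$ and lies in $H^1_0$ for a.e.\ $s$) and $V:=w+u_0^\epsilon+sh$, multiplying by $w$ and integrating by parts yields
$$\tfrac12\tfrac{d}{ds}\|w\|^2 = \tfrac{1}{\lambda}\bigl[-\kappa\|\partial_x w\|^2 - \kappa\langle \partial_x(u_0^\epsilon+sh),\partial_x w\rangle + \langle f(V),V\rangle - \langle f(V),u_0^\epsilon+sh\rangle\bigr].$$
I will absorb the mixed gradient term into $\|\partial_x w\|^2$ by Young, bound $\langle f(V),V\rangle$ via the dissipation estimate in \eqref{eq:RD:nl:prop}, and handle $\langle f(V),u_0^\epsilon+sh\rangle$ using $|f(v)|\leq C(1+|v|^{2n-1})$, the $L^\infty$ bound on $u_0^\epsilon+sh$, and Young with conjugate exponents $(2n,2n/(2n-1))$ to absorb $\|V\|^{2n-1}_{L^{2n}}$ into $\|V\|^{2n}_{L^{2n}}$. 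This yields $\tfrac{d}{ds}\|w\|^2\leq C(\epsilon,K_1,K_2,t)/\lambda$ and therefore $\|w(t)\|\leq\sqrt{Ct/\lambda}$. Given $\eps>0$, one then chooses $\epsilon$ small enough that $(e^{Kt}+1)\sup_{K_1}\|u_0-u_0^\epsilon\|<\eps/2$ and $\lambda_0$ large enough that $\sqrt{C(\epsilon,K_1,K_2,t)\,t/\lambda_0}<\eps/2$, establishing \eqref{eq:fc:sc:rd} for all $\lambda\geq\lambda_0$.
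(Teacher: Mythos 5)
Your proof is correct and follows the same overall architecture as the paper's: mollify the initial data over the compact set $K_1$, run a $1/\lambda$-scaled energy estimate on the regularized flow (using $\sup f'\leq K$ for the Lipschitz part and the leading-order dissipation from \eqref{eq:RD:nl:prop}), and then take $\epsilon$ small first, $\lambda$ large second. The differences are in the regularization scheme and bookkeeping. The paper truncates with the Fourier projection $\pi_N u_0$ and compares $\Phi^{\lambda h}_{t/\lambda}u_0$ directly against the ray $\rho^h_t\pi_N u_0$; the mismatch $u_0-\pi_N u_0$ then appears as the initial condition $v^N_\lambda(0)$ and is carried through Gr\"onwall, while the inverse Poincar\'e inequality ($\|\pi_N u_0\|_{H^1}\leq N\|u_0\|$) is used to make the $N$-dependence of the constant explicit. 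You instead smooth with the Dirichlet heat semigroup $u_0^\epsilon = e^{\epsilon\Delta}u_0$, extract the initial-data error up front via the $L^2$ stability bound $\|u_1(\tau)-u_2(\tau)\|\leq e^{K\tau}\|u_1(0)-u_2(0)\|$, and then run the energy estimate on a difference $w$ that vanishes at time zero. Both yield the result; the paper's variant produces a quantitative polynomial-in-$N$ constant (which is handy later when similar bounds appear in Lemma~\ref{lem:sc:lem:2:RD}), whereas your heat-kernel version cleanly separates stability from the scaling estimate and avoids inverse Poincar\'e at the cost of an $\epsilon$-dependent constant that is not tracked explicitly. One small hygiene point: the claim that $\{u_0^\epsilon:u_0\in K_1\}$ is bounded in $H^1_0\cap L^\infty$ and that $\sup_{K_1}\|u_0-u_0^\epsilon\|\to 0$ both rest on $K_1$ being compact in $L^2$ together with the uniform boundedness and strong continuity of $e^{\epsilon\Delta}$; you use this implicitly, and it is worth stating, since that is exactly the step that makes the bound uniform over $K_1$.
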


\begin{lemma}
\label{lem:sc:lem:2:RD}
Fix $m\geq 0$ and let $K_1 \subseteq L^2$ and $K_2\subseteq X_m$ be compact.  Then for all $\eps, t>0$, there exists $\lambda_m >0$ such that for all $\lambda\geq \lambda_m$
\begin{align}
  	\sup_{u_0 \in K_1, h\in K_2} \| \rho_{\lambda^{-1}}^{-\lambda^2 h} \, \,  \Phi_{t/\lambda^{2n-1}}^{0} \,\,  \rho_{\lambda^{-1}}^{\lambda^2 h} u_0  
  	&-   \rho_t^{f_*(h)} u_0 \| < \eps
          \label{eq:sc:lem:2:RD}
	\end{align}
	where we recall that $2n-1$ is the degree of the polynomial nonlinearity $f$. \end{lemma}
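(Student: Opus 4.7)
The plan is to rigorously justify the scaling limit encoded in the heuristic formulas \eqref{eq:non:lin:tw}--\eqref{eq:rs:nLin:twit} in the concrete setting of \eqref{eq:RD:cont}. Setting $\tau_\lambda := t/\lambda^{2n-1}$ and $v_\lambda(t) := \Phi^0_{\tau_\lambda}(u_0 + \lambda h)$, observe that the quantity on the LHS of \eqref{eq:sc:lem:2:RD} equals $w_\lambda(t) := v_\lambda(t) - \lambda h$, so that (with $f_*(h) = b_{2n-1}h^{2n-1}$) the claim reduces to
\begin{align*}
  \sup_{(u_0, h) \in K_1 \times K_2} \|w_\lambda(t) - u_0 - t f_*(h)\|_{L^2} \to 0 \quad \text{ as } \lambda \to \infty.
\end{align*}
A key structural observation underpinning all uniform estimates is that every element of $X_m$ is a finite linear combination of sine modes $\sin(kx)$, hence is $C^\infty$ with all derivatives uniformly bounded as $h$ ranges over the finite-dimensional compact set $K_2$.

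The main step is to work in the mild (Duhamel) formulation using the Dirichlet heat semigroup $S(\tau) := e^{\tau\kappa\partial_{xx}}$ on $L^2$; a direct computation yields
\begin{align*}
  w_\lambda(t) = S(\tau_\lambda) u_0 + \lambda [S(\tau_\lambda) - I] h + \int_0^t S\bigl((t-r)/\lambda^{2n-1}\bigr)\, \frac{f(v_\lambda(r))}{\lambda^{2n-1}}\, dr.
\end{align*}
The first term converges to $u_0$ in $L^2$ by strong continuity of $S$, and the second satisfies the explicit bound $\|\lambda[S(\tau_\lambda) - I]h\|_{L^2} \leq \kappa\tau_\lambda\lambda\|\partial_{xx}h\|_{L^2} = O(\lambda^{-(2n-2)})$ since $h \in H^2$. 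For the third term, the finite Taylor expansion of the polynomial $f$ (of degree $2n-1$) around $\lambda h$ gives
\begin{align*}
  \frac{f(\lambda h + w_\lambda)}{\lambda^{2n-1}} = b_{2n-1} h^{2n-1} + r_\lambda(h) + \sum_{k=1}^{2n-1} \lambda^{-k} a_k(\lambda, h)\, w_\lambda^k,
\end{align*}
where $\|r_\lambda(h)\|_{L^\infty}$ and $\|a_k(\lambda, h)\|_{L^\infty}$ are respectively $O(\lambda^{-1})$ and $O(1)$ uniformly for $h \in K_2$ and $\lambda \geq 1$. Thus, provided uniform bounds of the form $\|w_\lambda(s)\|_{L^{2k}} \leq C\lambda^{1 - 1/k}$ hold on $s \in [0,t]$ for $k = 1, \ldots, 2n-1$, the remainder is $o(1)$ in $L^1([0,t]; L^2)$, and dominated convergence together with strong continuity of $S$ yields the third term convergent to $t f_*(h)$ in $L^2$.

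The principal obstacle is thus the required a priori control on $w_\lambda$ on the fixed interval $[0,t]$. The function $w_\lambda$ satisfies
\begin{align*}
  \partial_t w_\lambda = \tfrac{\kappa}{\lambda^{2n-2}}\partial_{xx} h + \tfrac{\kappa}{\lambda^{2n-1}}\partial_{xx} w_\lambda + \tfrac{1}{\lambda^{2n-1}} f(\lambda h + w_\lambda), \quad w_\lambda(0) = u_0,
\end{align*}
and an $L^2$ energy estimate hinges on a crucial algebraic cancellation: writing $\langle f(\lambda h + w_\lambda), w_\lambda\rangle = \langle f(v_\lambda), v_\lambda\rangle - \lambda\langle f(v_\lambda), h\rangle$ and expanding in $v_\lambda = \lambda h + w_\lambda$, the leading $O(\lambda^{2n})$ contributions (namely $b_{2n-1}\lambda^{2n}\|h\|_{L^{2n}}^{2n}$) cancel exactly, leaving a residual of order $\lambda^{2n-1}|\langle h^{2n-1}, w_\lambda\rangle| + O(\lambda^{2n-2})$; after division by $\lambda^{2n-1}$, Gronwall's inequality yields $\sup_{s\leq t}\|w_\lambda(s)\|_{L^2}$ uniformly bounded. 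Obtaining the higher $L^{2k}$-bounds is more subtle: I would first treat the case $u_0 \in L^\infty$ via a bootstrap using the 1D maximum principle, observing that once $\|w_\lambda\|_{L^\infty}$ is a priori controlled the entire forcing in the $w_\lambda$-equation is uniformly $L^\infty$-bounded, and extend to general $u_0 \in K_1 \subseteq L^2$ by smooth approximation, exploiting that continuous dependence of the R-D flow in our scaling degenerates only as $e^{Ct/\lambda} \to 1$ (the effective Lipschitz constant of $f$ on the range of $v_\lambda$ is $O(\lambda^{2n-2})$, integrated over the short horizon $\tau_\lambda = t/\lambda^{2n-1}$). Uniformity over $K_1 \times K_2$ then follows by a standard finite-cover compactness argument combined with the continuous dependence of the cocycle from Proposition~\ref{prop:contco:rd}.
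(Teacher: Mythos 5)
Your decomposition via the Duhamel formula for the Dirichlet heat semigroup is a genuinely different route than the paper takes; the paper instead makes a direct $L^2$ energy estimate on the difference $\phi^N_\lambda := w_\lambda - \rho_t^{f_*(h)}\pi_N u_0$, where $\pi_N$ is a Fourier truncation of $u_0$. The mild-formulation route does have an advantage: it cleanly separates the contributions of $u_0$, the ray deformation $\lambda[S(\tau_\lambda)-I]h$, and the nonlinearity, and avoids the $H^2$-regularity issues that force the paper to introduce $\pi_N$ in the first place. However, your proposal has a genuine gap at what you correctly identify as ``the principal obstacle'': the uniform-in-$\lambda$ a priori control of $w_\lambda$ in $L^{2k}$ for $k=2,\dots,2n-1$ on the fixed interval $[0,t]$.

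Your proposed resolution — an $L^\infty$ maximum-principle bootstrap assuming $u_0 \in L^\infty$, followed by density, citing an effective Lipschitz constant $O(\lambda^{2n-2})$ integrated over the short horizon $\tau_\lambda = t/\lambda^{2n-1}$ — is a sketch, not a proof, and I am skeptical it closes. The difficulty is that the density approximation must be performed \emph{uniformly over the compact set $K_1 \subseteq L^2$}, and the ``effective Lipschitz constant'' heuristic concerns the unscaled flow $\Phi^0$, not the rescaled quantity $w_\lambda$; making this precise would itself require the very $L^{2k}$ bounds you are trying to establish. Moreover, your $L^2$ energy cancellation is also less clean than stated: after the exact cancellation of the $\lambda^{2n}$ term, the residual is not merely $\lambda^{2n-1}\langle h^{2n-1}, w_\lambda\rangle + O(\lambda^{2n-2})$ but involves the full alternating sum $b_{2n-1}\sum_{k=1}^{2n}\binom{2n-1}{k-1}\lambda^{2n-k}\int h^{2n-k}w_\lambda^k$, and the odd-$k$ intermediate terms are not sign-definite; absorbing them requires Young's inequality against the $k=2n$ good term, which again involves moments of $w_\lambda$. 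The paper sidesteps this circle of difficulties entirely with one observation you do not use: since $b_{2n-1} < 0$, the polynomial satisfies the \emph{one-sided} bound $\sup_v f'(v) \leq K$ (equation~\eqref{eq:RD:nl:prop}), and therefore by the mean value theorem the difference $f(w_\lambda^N + \lambda h) - f(r^N + \lambda h) = f'(\xi)\phi^N_\lambda$ pairs against $\phi^N_\lambda$ to give $\langle f'(\xi)\phi^N_\lambda, \phi^N_\lambda\rangle \leq K\|\phi^N_\lambda\|^2$ pointwise, with no higher-moment control required. That single step is the key to making the energy estimate close with only an $L^2$ Gr\"onwall, and it is what your argument is missing.
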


Before turning to the proof of these two lemmata let us first make precise how Theorem~\ref{thm:satrd} follows assuming these two bounds. 
The proofs of these each of these lemmata are given immediately afterwards.
	
	\begin{proof}[Proof of Theorem~\ref{thm:satrd}]
	We note that $(\rho, X_0 ) \in \text{Sat}_u(\mathfrak{F})$ by Lemma~\ref{lem:sc:lem:1:RD}.  
        Also, since $f_*(\alpha u)= \alpha^{2n-1} f_*(u)$ and $2n-1$ is odd, Lemma~\ref{lem:sc:lem:2:RD} 
        implies that if $(\rho, X_m) \in \text{Sat}_u(\mathfrak{F})$ for some $m\geq 0$, then for all 
        $h\in X_m$, $(\rho, Y_{m+1}(h)) \in \text{Sat}_u(\mathfrak{F})$ where $Y_m(h) := \{ \alpha f_*(h) \, : \, \alpha \in \R\}$.  
        Since the ray semigroup has the property that 
	\begin{align*}
	\rho_{t}^{\alpha g} \rho_t^{\beta h} u_0 = \rho_{t}^{ (\alpha g + \beta h)}u_0
	\end{align*}  
	for all $g,h,u_0 \in L^2$, $t >0$ and $\alpha, \beta \in \R$, it follows 
        that $(\rho, X_{m+1}) \in \text{Sat}_u(\mathfrak{F})$.  This finishes the proof.    
	\end{proof}

	\begin{proof}[Proof of Lemma~\ref{lem:sc:lem:1:RD}]
          We proceed to establish a bound suitable for (\ref{eq:fc:sc:rd}) by estimating as follows 
          \begin{align}
            \|  \SRD_{t/\lambda}^{\lambda h} u_0 -   \rho_t^h u_0 \|  \leq    \|  \SRD_{t/\lambda}^{\lambda h} u_0 -   \rho_t^h \pi_N u_0 \|  + \|u_0 - \pi_N u_0\|
            \label{eq:stpd:split}
          \end{align}
          where $N > 0$ is to be determined.   Here, 
          $\pi_N: L^2 \rightarrow L^2$ is the projection onto the Fourier modes of size $N$ or less, i.e.,
          \begin{align*}
            \pi_N u = \sum_{k=1}^N u_k \sin(kx), \quad \text{ where } u_k = \langle u, \sin(kx) \rangle
          \end{align*}
          and $\langle \ccdot, \ccdot \rangle$ denots the $L^2$ inner product.  
          Introducing the shorthand notations
          \begin{align*}
            u_\lambda(t) = \SRD_{t/\lambda}^{\lambda h} u_0,
            \quad 
            w^N(t) = \rho_t^h   \pi_N u_0,
            \quad
            v_\lambda^N (t) =   \SRD_{t/\lambda}^{\lambda h} u_0 -  \rho_t^h  \pi^N u_0
                      =   u_\lambda(t) - w^N,
          \end{align*}
we observe that $v_\lambda^N$ satisfies, 
\begin{align}
    \partial_t v_\lambda^N- \frac{\kappa}{\lambda} \partial_{xx}v_\lambda^N  = \frac{1}{\lambda} f(u_{\lambda}) + \frac{\kappa}{\lambda} \partial_{xx}w^N,
  \label{eq:diff:ray:RD:rscl}
\end{align}
cf. \eqref{eq:rescal:0} above.
Thus, taking an $L^2$ inner product with $v_\lambda$
\begin{align*}
\frac{d}{dt} \| v_\lambda^N\|^2  + \frac{2 \kappa}{\lambda} \| \partial_x v_\lambda^N\|^2  
  = - \frac{2 \kappa}{\lambda}\langle \partial_{x} v_\lambda^N, \partial_{x} w_\lambda^N \rangle+ \frac{2}{\lambda}\langle v_\lambda^N,  f(w^N)\rangle
                                   + \frac{2}{\lambda}\langle v_\lambda^N, f(u_\lambda) - f(w^N)\rangle.   
\end{align*}
Since $f(u)-f(v)= f'(\xi_{u,v})(u-v)$ for some $\xi_{u,v}$ lying between $u$ and $v$ and since, cf. \eqref{eq:RD:nl:prop},
we have that $\sup_{z \in \RR} f'(z) \leq K$, we infer
\begin{align*}
\frac{d}{dt} \| v_\lambda^N\|^2 
  \leq&   \frac{2K}{\lambda} \| v_\lambda^N \|^2 + \frac{\kappa}{\lambda}     \| w^N \|_{H^1}^2  + \frac{C}{\lambda \kappa} \| f(w^N)\|_{L^1}^{2}\\
  \leq&   \frac{2K}{\lambda} \| v_\lambda^N \|^2 + \frac{C}{\lambda} (1+  \| w^N\|_{H^1}^{2(2n-1)}),
\end{align*}
where we have also used the $1D$ Sobolev embedding of $H^1 \subseteq L^\infty$ and Young's inequality.
Here, crucially  $C = C(\kappa) > 0$ is independent of $\lambda > 0$.  Gronwall's inequality then implies that 
\begin{align*}
\| v_\lambda^N(t) \|^2 \leq& \bigg\{\| u_0 - \pi_N u_0\|^2 +\frac{C}{\lambda} \int_0^t (1+  \| w^N\|_{H^1}^{2(2n-1)}) \, ds \bigg\} e^{\frac{2K}{\lambda} t}\\
  \leq& C \biggl(\| u_0 - \pi_N u_0\|^2 +\frac{1+  \| \pi_N u_0 \|^{2(2n-1)}_{H^1} + \| h \|^{2(2n-1)}_{H^1}}{\lambda} \biggr).
\end{align*}
With this bound, \eqref{eq:stpd:split} and the inverse Poincar\'e inequality, we conclude
that, for any $u_0 \in L^2$ and $h \in X_0$,
\begin{align}
  \|  \SRD_{t/\lambda}^{\lambda h} u_0 -   \rho_t^h u_0 \| 
  \leq C_1 \biggl(\| u_0 - \pi_N u_0\| +\frac{1+  N^{2n-1} \| u_0 \|^{2n-1} + \tilde{N}^{2n-1} \| h \|^{2n-1}}{\sqrt{\lambda}}  \biggr)
    \label{eq:scale:pt:bound}
\end{align}
where $\tilde{N} = \max\{k > 0: k \in \mathcal{Z}\}$.  Here we emphasize that the constant $C_1 = C_1(\kappa, K , t) > 0$ 
is independent of $\lambda > 0, u_0 \in L^2, h \in X_0$ as well 
as $N > 0$ and $\tilde{N}$.

Let $\eps, t >0$ and $K_1 \subset L^2$, $K_2 \subset X_0$ compact be arbitrarily given.  Cover $K_1$ with a finite collection of balls
$B_{\bar{\eps}}(v_0^{(1)}), \ldots, B_{\bar{\eps}}(v_0^{(M)})$
where $\bar{\eps} = \eps/(4 C_1)$ and the constant $C_1$ is as in \eqref{eq:scale:pt:bound}.  Then, from \eqref{eq:scale:pt:bound}, we obtain
\begin{align}
  \sup_{u_0 \in K_1, h \in K_2}\|  \SRD_{t/\lambda}^{\lambda h} u_0 -   \rho_t^h u_0 \|  
  \leq& C_1 \max_{j = 1, \ldots, M} \sup_{u_0 \in B_{\bar{\eps}}(v_0^{(M)})} (2\|u_0 - v_0^{(j)}\| + \|v_0^{(j)} - \pi_N v_0^{(j)}\|)  \notag\\
     &+\frac{N^{2n-1} C_1}{\sqrt{\lambda}} \biggl(1+   \sup_{u_0 \in K_1} \| u_0 \|^{2n-1} + \sup_{h \in K_2} \| h \|^{2n-1}\biggr) \notag\\
  \leq& C_1 \max_{j = 1, \ldots, M} \|v_0^{(j)} - \pi_N v_0^{(j)}\|  + \frac{C_2 N^{2n-1} }{\sqrt{\lambda}} + \frac{\eps}{2}
        \label{eq:cmp:sets:bnd}
\end{align}
for any $N \geq \tilde{N}$ where the constant $C_1 = C_1(\kappa, K)$ is independent of the compact sets $K_1, K_2$ both $C_1$ and $C_2$ are 
independent of $N, \lambda > 0$ and $\epsilon >0$.  We can thus choose $N > 0$ sufficiently large so that 
\begin{align}
  C_1 \max_{j = 1, \ldots, M} \|v_0^{(j)} - \pi_N v_0^{(j)}\|  \leq \frac{\epsilon}{4}
\end{align}
Since this choice of $N$ is taken independent of $C_2$ the desired bound, \eqref{eq:fc:sc:rd}, follows for all $\lambda  \geq \lambda_0 = \frac{16C_2^2N^{2(2n-1)}}{\sqrt{\eps}}$.
This finishes the proof of Lemma~\ref{lem:sc:lem:1:RD}. 
\end{proof}

\begin{proof}[Proof of Lemma~\ref{lem:sc:lem:2:RD}]
Fix $m\geq 0$ and let $K_1\subseteq L_0^2$ and $K_2\subseteq X_m$ be compact.  Fix $\eps, t>0$.  
Maintaining the notation that $\pi_N: L^2\rightarrow L^2$ denotes the projection onto the modes of size $N$ or less, 
we let
\begin{align*}
  w_\lambda^N(t) &=  \rho_{\lambda^{-1}}^{-\lambda^2 h}\Phi_{t/\lambda^{2n-1}}^{0} \,\,  \rho_{\lambda^{-1}}^{\lambda^2 h} u_0,\\
  r^N(t)&= \rho_t^{f_*(h)} \pi_N u_0 ,\\  	
  \phi_\lambda^N(t) &= \rho_{\lambda^{-1}}^{-\lambda^2 h} \, \,  \Phi_{t/\lambda^{2n-1}}^{0} \,\,  \rho_{\lambda^{-1}}^{\lambda^2 h} u_0  
  	-   \rho_t^{f_*(h)} \pi_N u_0 = w_\lambda(t) - r^N.
\end{align*}
As in the proof of the previous lemma $N> 0$ is a free parameter which will be fixed futher on below.

Referring back to \eqref{eq:rs:nLin:twit} we see that $w_\lambda^N$ satisfies
\begin{align*}
  \partial_t w_\lambda^N - \frac{\kappa}{\lambda^{2n-1}} \partial_{xx} (w_\lambda^N + \lambda h) = \frac{1}{\lambda^{2n-1}} f(w_\lambda^N + \lambda h).
\end{align*}
With this equation and using also that $w_\lambda^N = \phi_\lambda^N + r^N$, we conclude that $\phi_\lambda^N$ obeys
\begin{align}
  \partial_t \phi^N_\lambda - \frac{\kappa}{\lambda^{2n-1}} \partial_{xx} \phi^N_\lambda 
          =& \frac{\kappa}{\lambda^{2n-1}} \partial_{xx}(r^N + \lambda h)  
            + \frac{f(w_\lambda^N + \lambda h) - f(r^N + \lambda h)}{\lambda^{2n-1}} \notag\\
  &+ \frac{f(r^N + \lambda h) - f^*(\lambda h)}{\lambda^{2n-1}}.
    \label{eq:nlt:error:eqn}
\end{align}
Multiplying \eqref{eq:nlt:error:eqn} by $\phi^N_\lambda$ and integrating we obtain
\begin{align}
\frac{d}{dt}\| \phi_\lambda^N \|^2 + \frac{2\kappa}{\lambda^{2n-1}}&\|\partial_x \phi_\lambda^N\|^2
  = \frac{2\kappa}{\lambda^{2n-1} }\langle \phi_\lambda^N,  \partial_{xx}r^N+ \lambda \partial_{xx}h\rangle  \notag\\
  &+ \frac{2}{\lambda^{2n-1}} 
    \langle \phi_\lambda^N, (f(w_\lambda^N + \lambda h)- f(r^N + \lambda h)) + (f(r^N + \lambda h) - f^*(\lambda h)) \rangle.
    \label{eq:nlt:en:bnd:1}
\end{align}
Since $f(w_\lambda^N + \lambda h)- f(r^N + \lambda h) = f'(\xi)\phi$ for some $\xi$ between $w_\lambda^N + \lambda h$
and $r^N-\lambda h$ and using \eqref{eq:RD:nl:prop} we have that
\begin{align}
  \frac{2 \langle \phi_\lambda^N, 
       f(w_\lambda^N + \lambda h)- f(r^N +\lambda h) \rangle}{\lambda^{2n-1}} 
         \leq \frac{2K}{\lambda^{2n-1}} \|\phi_\lambda^N\|^2
  \label{eq:nlt:en:bnd:2}
\end{align}
On the other hand, from \eqref{eq:RD:nonlin}, we have
\begin{align*}
 f(r^N + \lambda h) - f^*(\lambda h)
   = \sum_{k =0}^{2n-2}b_k \sum_{l =0}^{k} {k \choose l} \lambda^l h^l (r^N)^{k - l} 
    + b_{2n-1}\sum_{l = 0}^{2n-2}{2n-1 \choose l} \lambda^l h^l (r^N)^{2n-1 - l}.
\end{align*}
Thus, for $\lambda \geq 1$, we have
\begin{align}
\frac{\langle \phi_\lambda^N, f(r^N + \lambda h) - f^*(\lambda h) \rangle}{\lambda^{2n-1}}
  \leq \frac{C}{\lambda}\|\phi_\lambda^N\| (1 + \|h\|_{L^{4(2n -1)}}^{2n-1})(1+ \|r^N\|_{L^{4(2n -1)}}^{2n-1})
  \label{eq:nlt:en:bnd:3}
\end{align}
for a constant $C$ depending only on $f$ and which is in particular independent of $\lambda \geq 1$.  

Combining \eqref{eq:nlt:en:bnd:1}--\eqref{eq:nlt:en:bnd:3} we infer:
\begin{align*}
\frac{d}{dt} \| \phi_\lambda^N(t) \|
  &\leq \frac{2K}{\lambda^{2n-1}} \|\phi_\lambda^N\| + 
    \frac{C (\| r^N\|_{H^2} + \lambda \| h\|_{H^2} +\lambda^{2n-2} (1 + \|h\|_{L^{4(2n -1)}}^{2n-1})(1+ \|r^N\|_{L^{4(2n -1)}}^{2n-1}))}{\lambda^{2n-1}} \\
  &\leq \frac{2K}{\lambda^{2n-1}} \|\phi_\lambda^N\| + C
     \frac{  (1 + \|h\|_{H^2}^{2n-1})(1+ \|r^N\|_{H^2}^{2n-1})}{\lambda} \\
  &\leq \frac{2K}{\lambda^{2n-1}} \|\phi_\lambda^N\| + C
     \frac{  (1 + \|h\|_{H^2}^{2n-1})(1+ N^{2(2n-1)}\| u_0\|^{2n-1} + \|f^*(h)\|_{H^2}^{2n-1})}{\lambda} 
\end{align*} 
where we have also used Sobolev embedding and the inverse Poincar\'e inequality.    Here the generic constant $C > 0$ is independent
of $N, \lambda$ and the data.
Hence, with Gr\"onwall's inequality 
\begin{align}
\| \phi_\lambda^N(t) \| \leq C \bigg(\|u_0 - \pi_N u_0 \| +
                        \frac{  (1 + \|h\|_{H^2}^{2n-1})(1+ N^{2(2n-1)}\| u_0\|^{2n-1} + \|f^*(h)\|_{H^2}^{2n-1})}{\lambda} \bigg)
  \label{eq:nlt:gron:bnd}
\end{align}
where again the constant $C = C(t, \kappa, f)$ is independent of $\lambda, N, u_0$ and $h$.

With \eqref{eq:nlt:gron:bnd} in hand we infer the desired bound \eqref{eq:sc:lem:2:RD} by employing an argument very similar to the one used in the proof of the previous Lemma.  See \eqref{eq:cmp:sets:bnd} above.  The proof of Lemma~\ref{lem:sc:lem:2:RD} is now complete.
\end{proof}

\subsection{2D Incompressible Navier-Stokes Equations}
\label{sec:2DNSE}

For our next example, we explain how the scaling and saturation 
framework can be applied to the 2D Navier-Stokes equations and its stochastic 
counterpart.  In particular, it is worth underlining that the control framework developed here allows us to show 
unique ergodicity for the 2D stochastic Navier-Stokes equations
established in \cite{HM06} even when the equations are subject
to a more or less arbitrary background forcing.

Many of the results in this subsection have
been previously established, although with different techniques.  
We will therefore be more sparing in technical details in this section.
We refer the reader to \cite{AgrachevSarychev2005,AgrachevSarychev2006,
MattinglyPardoux06, AgrachevKuksinSarychevShirikyan2007} 
and to the introduction for further references concerning the low-mode control problem for the 2D Navier-Stokes equations.

The 2D Navier-Stokes equations take the form
\begin{align}
  &\pd_t \bfU  + \bfU \cdot \nabla \bfU - \nu \Delta \bfU + \nabla p
    = \mathbf{f} + \mathbf{\rho} \cdot \partial_t V, 
    \quad \nabla \cdot \bfU = 0,
    \label{eq:NSE:con}
\end{align}
where the unknowns are the velocity $\bfU = (u_1, u_2): \TT^2 \to \RR^2$ and pressure 
$p: \TT^2 \to \RR$, the latter of which is a constraint maintaining the divergence-free
condition of the flow.  As in the aforementioned works, we consider the 2D Navier-Stokes
equations on $\TT^2$ so that the nonlinear interactions 
are more tractable to analyze.   The parameter $\nu > 0$ in  \eqref{eq:NSE} is the kinematic viscosity.
The volumetric source term $\mathbf{f} = (f_1, f_2): \TT^2 \to \RR^2$ may be used to model components of a large-scale
stirring mechanism, but it will be taken to have an essentially arbitrary form for the mathematical results which follow.
As in the previous example, $\rho$ represents a finite set of sinusoidal control directions driven by the actuators $V$.  
See \eqref{eq:NSE:con:form} below for the precise formulation of the controls we consider.

As with the other systems in this section, when we consider the stochastic process that results from 
taking $V$ to be Brownian motion in \eqref{eq:NSE:vort:shit} we obtain solutions of the stochastic Navier-Stokes equation.
Of course, for general $V \in \Omega$ we do not make sense
of~\eqref{eq:NSE:con} directly.  Rather, we
use the additive structure of the noise/control term and work with a shifted equation.  
See Remark~\ref{rem:pdecocycle} and Proposition~\ref{prop:2D:NSE:Well:Pos} below.

For simplicity and to connect with previous results, we consider \eqref{eq:NSE:con} 
in its vorticity formulation.  Taking $\Vort = \mbox{curl}(\bfU) = \nabla^{\perp} \cdot \bfU =  \partial_{x_1}u_2 - \partial_{x_2} u_1$, we have
\begin{align}
  \pd_t \Vort + \bfU \cdot \nabla \Vort - \nu \Delta \Vort 
    = g + \sigma \cdot \partial_t V, \quad  \bfU = \mathcal{K} \ast \Vort
    \label{eq:NSE:vort:con}
\end{align}
where $g = \mbox{curl}(\mathbf{f}) = \partial_{x_1}f_2 - \partial_{x_2} f_1$.  In the above, $\mathcal{K}$ denotes the Biot-Savart kernel which recovers $\bfU$ from $\Vort$, thus allowing us to consider the vorticity
formulation of \eqref{eq:NSE:con} in a closed form.\footnote{Here recall
that given any $\Vort \in H$ we define the stream function $\psi$ as the solution of
$\Delta \psi = \xi$ supplemented with periodic boundary conditions.  We then take
\begin{align}
  \bfU = \nabla^\perp \psi = \nabla^\perp (\Delta)^{-1} \xi
  \label{eq:biot:savart}
\end{align}
so that the operator $\mathcal{K}$ has the symbol $k^\perp/|k|^2$.}

Following the setup in~\cite{HM06}, we will consider controls of the type
\begin{align}
 \sigma \cdot V = \sum_{k \in \Zb} (v_k^0(t) \cos(k \cdot x) + v_k^1(t) \sin(k \cdot x)), \,\, V(t) = (v_k^0(t), v_k^1(t))_{k\in \mathcal{Z}}, 
\label{eq:NSE:con:form}
\end{align}
where $V\in \Omega := \{ V: (-\infty, \infty) \rightarrow \RR^{2|\mathcal{Z}|}\, \,\text{ continuous with } \,\, V(0)=0\}$.  Here the controlled
set of frequencies $\mathcal{Z}$  sit in the upper half plane
\begin{align*}
  \Zb \subseteq \Z^{2}_{+} := \left\{ j = (j_1, j_2) \in \Z^2_{\neq 0}: j_1 > 0 \textrm{ or } j_1 = 0, j_2 > 0 \right\}.
\end{align*}
We show that configurations of $\Zb$ for which we have two non-orthogonal frequencies of distinct length are controllable in what follows.

\subsubsection*{Mathematical Formulations}
Let us begin by futher recalling the mathematical setting of \eqref{eq:NSE:vort:con}.   As in the previous example, cf. Remark~\ref{rem:pdecocycle},
we will consider solutions of the shifted system
\begin{align}
  &\pd_t \VortS + \mathcal{K}(\VortS + \sigma \cdot V) \cdot \nabla (\VortS + \sigma \cdot V) - \nu \Delta (\VortS + \sigma \cdot V)
    = g
    \label{eq:NSE:vort:shit}
\end{align}
where $\mathcal{K}$ is the Biot-Savart operator.  We thus \emph{define} the solution of \eqref{eq:NSE:vort:con} corresponding to 
initial condition $\Vort_0$ and `noise path' $V \in \Omega$ as $\Vort(\ccdot, \Vort_0, V) := \VortS(\ccdot, \Vort_0, V) + \sigma \cdot V$, where 
$\VortS(\ccdot, \Vort_0, V)$ is the solution of \eqref{eq:NSE:vort:shit} with the given $V$ starting from $\Vort_0$.  Regarding the functional setting for \eqref{eq:NSE:vort:con} and its associated cocycle, we consider solutions 
evolving on 
\begin{align}
  H = \bigg\{ \Vort \in L^2(\TT^2): \int_{\TT^2} \Vort \, dx = 0 \bigg\}
  \label{eq:NSE:PhSp:V}
\end{align}
recalling that solutions of \eqref{eq:NSE:vort:con} maintain the following mean-free condition: 
\begin{align*}
\int_{\TT^2} \Vort \, dx =0
\end{align*}
provided that the source $g$ does.\footnote{As usual this 
assumption is mathematically convenient as it guarantees that the Poincar\'e inequality holds.  The general case follows in any case from 
a Galilean transformation.}  In what follows, we maintain the notation $\| \ccdot \|$ and $\langle \ccdot, \ccdot \rangle$ 
for the usual $L^2$ norm and inner product.  We denote the higher order Sobolev spaces according to $H^m := H^m(\TT^2) \cap H$ for $m \geq 1$.

We have the following: 
\begin{proposition}\label{prop:2D:NSE:Well:Pos}
  Fix any $g \in H$ and assume that $\sigma$ consists of a finite number of frequencies (i.e. suppose $\Zb$ is a finite set).
  Then, for any $\Vort_0 \in H$ and any $V \in \Omega$, there is a unique $\Vort(\ccdot) = \Vort(\ccdot, \Vort_0, V)$ with
  \begin{align}
    \Vort \in L^2_{loc}([0,\infty); H^1)  \cap C([0,\infty); H)
  \end{align}
  with $\VortS = \Vort - \sigma \cdot V$ solving \eqref{eq:NSE:vort:shit} (in the usual weak sense).  This solution $\Vort(\ccdot, \Vort_0, V)$
  depends continuously in $[0,\infty) \times H \times \Omega$ on $t, \Vort_0$ and $V$ so that \eqref{eq:NSE:vort:con} uniquely defines a continuous
  adapted cocycle $\phi$ in the sense of Definition~\ref{def:co-cyc}.  Additionally, this cocycle satisfies Assumption~\ref{ass:Mal:der}.
\end{proposition}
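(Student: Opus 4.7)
The plan is to establish well-posedness by analyzing the shifted equation \eqref{eq:NSE:vort:shit}, whose unknown $\VortS$ is unambiguously defined for arbitrary $V\in \Omega$ since $\sigma \cdot V$, while only continuous in time, lies in $C^\infty(\TT^2)$ in space. I would run a standard Galerkin approximation $\VortS^N$ on the span of the first $N$ Fourier modes and obtain a priori estimates uniform in $N$. The basic $H$-estimate comes from testing against $\VortS$: the quadratic term $\langle \mathcal{K}(\VortS)\cdot \nabla \VortS, \VortS\rangle$ vanishes by the usual integration-by-parts, and the remaining cross-terms involving $\sigma\cdot V$ are absorbed into $\nu\|\nabla \VortS\|^2/2$ at the price of a $V$-dependent lower-order term controllable by Gronwall. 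This yields $\VortS \in L^\infty_{loc}(H)\cap L^2_{loc}(H^1)$ with bounds depending continuously on $\|V\|_{\infty,T}$, $\|\Vort_0\|$ and $\|g\|$. Weak and strong compactness then pass to the limit, giving a weak solution of \eqref{eq:NSE:vort:shit}, and uniqueness follows from the classical 2D energy argument applied to the difference of two solutions.

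For continuity in $(t,\Vort_0,V)$, I would estimate the difference $\VortS(\ccdot,\Vort_0,V)-\VortS(\ccdot,\tilde{\Vort}_0,\tilde V)$ in $H$: writing out the equation for the difference and using the $L^\infty_tL^2_x\cap L^2_tH^1_x$ bounds on each solution together with the 2D logarithmic/Ladyzhenskaya-type inequalities (or, equivalently, $\|\mathcal{K}(w)\|_{L^\infty}\lesssim \|w\|^{1/2}\|w\|_{H^1}^{1/2}$), one obtains Gronwall-type control by $\|\Vort_0-\tilde\Vort_0\|+\|V-\tilde V\|_{\infty,t}$. Then defining $\phi_t(\Vort_0,V):=\VortS(t,\Vort_0,V)+\sigma\cdot V(t)$ yields the continuity required in Definition~\ref{def:co-cyc}(1). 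The identity $\phi_0=\mathrm{Id}$ is immediate; the cocycle identity \eqref{eq:cocyc:prop} is a tautology once one observes that $\VortS(\ccdot,\Vort_0,V)$ shifted by $s$ solves the same equation with driving path $\theta_sV$ and initial datum $\phi_s(\Vort_0,V)-\sigma\cdot V(s)$, so uniqueness identifies the two expressions. Property \eqref{eq:no:future} follows directly because the weak formulation of \eqref{eq:NSE:vort:shit} on $[0,t]$ only samples $V$ on $[0,t]$.

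To verify Assumption~\ref{ass:Mal:der}, I would linearize about a fixed trajectory $\Vort(\ccdot)=\phi(\Vort_0,V)$. Testing the candidate linearized equation
\begin{equation*}
\pd_t\rho + \mathcal{K}(\rho)\cdot\nabla\Vort + \mathcal{K}(\Vort)\cdot\nabla\rho - \nu\Delta\rho = \sum_{k\in\Zb}\sigma_k \dot H_k,\qquad \rho(0)=\xi,
\end{equation*}
against $\rho$ and using Ladyzhenskaya-type bounds on the transport terms gives an $H$-bound, hence unique solvability. Running the same estimate on the (quadratic) remainder $\phi(\Vort_0+\eps\xi,V+\eps H)-\phi(\Vort_0,V)-\eps\rho$ shows this remainder is $o(\eps)$ in $H$ uniformly on bounded sets, establishing Frech\'et differentiability in both $u$ and the Cameron-Martin direction and identifying the derivatives with $\rho$. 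Specializing $H\equiv 0$ produces $J_{s,t}$ via \eqref{eqn:JacobiFlow}, and Duhamel's formula for the linearized equation yields the integral representation \eqref{eq:du:ham:form}. Continuity \eqref{eq:IC:cont} follows from the same difference estimates applied to the linearized system, using continuity of $\Vort$ in $(\Vort_0,V)$.

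The last points, non-degeneracy of $J_{s,t}$ and of $J_{s,t}^*$, will be the main subtlety since the linearized equation is dissipative and hence not reversible in a classical sense. The key observation is that the linearized operator is an advection-diffusion operator with smooth, bounded coefficients (because $\mathcal{K}(\Vort)\in L^2_tL^\infty_x$ on short intervals and can be refined via parabolic regularity), so backward uniqueness of parabolic equations with $L^\infty_tL^\infty_x$ (or suitably integrable) drift applies, e.g.\ via the log-convexity/Agmon-Nirenberg method or the argument in Ghidaglia or Constantin-Foias. Backward uniqueness for the forward equation gives $J_{s,t}\xi\neq 0$ whenever $\xi\neq 0$, and applying the same method to the formally adjoint equation
\begin{equation*}
-\pd_t\eta + \mathcal{K}(\Vort)\cdot\nabla\eta - \mathcal{K}(\nabla^\perp(-\Delta)^{-1}\eta \cdot \nabla \Vort ) - \nu\Delta\eta = 0,
\end{equation*}
with terminal condition $\eta(t)=\zeta$, yields the corresponding statement for $J_{s,t}^*$. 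These two facts give (iii) and (v) of Assumption~\ref{ass:Mal:der}, completing the proof.
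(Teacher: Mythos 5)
The paper does not actually supply a proof of this proposition: the text following the statement simply refers to \cite{ConstantinFoias1988,Temam1995} for the well-posedness estimates and to the Appendix of \cite{HM06} (and, in the parallel Boussinesq discussion, to Proposition 2.2 of \cite{MattinglyPardoux06}) for verification of Assumption~\ref{ass:Mal:der}. Your sketch fleshes out precisely that reference chain: Galerkin plus $L^2$ energy/Gronwall for the shifted equation and the classical 2D uniqueness argument; difference estimates with Ladyzhenskaya/Agmon for continuity in $(t,\Vort_0,V)$; uniqueness to identify shifted and unshifted trajectories for the cocycle law; linearization and Duhamel for the Jacobi flow and the representation~\eqref{eq:du:ham:form}; and — the genuinely nontrivial point, which you correctly single out — backward uniqueness for the linearized advection--diffusion operator and for its formal $L^2$-adjoint to obtain non-degeneracy of $J_{s,t}$ and $J_{s,t}^*$. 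This is the same route the cited references take, so the proposal is substantively correct and consistent with the paper's intent.

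One small inaccuracy: the adjoint equation as you have written it is off. The $L^2$-adjoint of the lower-order map $\rho\mapsto \mathcal{K}(\rho)\cdot\nabla\Vort$ acting on scalars must itself return a scalar, and a direct computation gives
\begin{align*}
\langle \mathcal{K}(\rho)\cdot\nabla\Vort,\,\eta\rangle = \big\langle \rho,\,\Delta^{-1}\big(\nabla^\perp\Vort\cdot\nabla\eta\big)\big\rangle
= \big\langle \rho,\,-\Delta^{-1}\nabla^\perp\!\cdot(\eta\,\nabla\Vort)\big\rangle.
\end{align*}
Your term $\mathcal{K}\big(\nabla^\perp(-\Delta)^{-1}\eta\cdot\nabla\Vort\big)$ produces a vector-valued quantity (it has an extraneous application of $\mathcal{K}$), so the adjoint PDE should replace it with $-\Delta^{-1}\nabla^\perp\!\cdot(\eta\,\nabla\Vort)$, up to signs depending on the Biot--Savart convention. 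This does not affect the argument — backward uniqueness (log-convexity / Agmon--Nirenberg) applies equally well to the corrected adjoint since it still has $L^\infty_t L^\infty_x$ coefficients on $(0,T]$ after a parabolic-smoothing step — but it should be fixed in a careful write-up.
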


Much of Proposition~\ref{prop:2D:NSE:Well:Pos} is essentially standard and we refer the reader to, e.g., \cite{ConstantinFoias1988, Temam1995} for the necessary estimates and technical details.  In regards to the cocycle associated with \eqref{eq:NSE:vort:con} satisfying Assumption~\ref{ass:Mal:der}, 
see the Appendix in \cite{HM06}.

\subsubsection*{Low Mode Control Results}
With the mathematical framework provided by Proposition~\ref{prop:2D:NSE:Well:Pos} in hand, we now state the main controllability results
for \eqref{eq:NSE:vort:con}.    For these results we make use of the following spanning condition on the controlled directions $\mathcal{Z}$
in \eqref{eq:NSE:con:form} found in \cite{HM06}:
\begin{definition}\label{def:Span:cond}
  We say that $\mathcal{Z}$ is a \emph{sufficent control set} if:
  \begin{itemize}
    \item[(i)]  There exists two elements $k_1, k_2 \in \mathcal{Z}$ such that $|k_1| \not = |k_2|$.
    \item[(ii)]  Integer linear combinations of elements of $\mathcal{Z}$ generate $\Z^2$.  
  \end{itemize}
\end{definition}

As a concrete example we have that $\mathcal{Z}$ is a \emph{sufficent control set} if $\mathcal{Z} \supseteq \{ (1,0), (1,1) \}$.

\begin{remark}\label{rmk:rel:deg:2DNSE}
As shown in \cite{HM06}, the condition given in Definition~\ref{def:Span:cond} yields the spanning condition
\eqref{eq:den:brak:prelim}.    Also note that, for any $k$
\begin{align}
  (\mathcal{K} \ast \sin(k \cdot x)) \cdot \nabla \sin(k \cdot x) = \cos^2(k \cdot x) \frac{k^\perp \cdot k}{|k|^2} = 0
  \label{eq:rel:deg:NSE}
\end{align}
and similarly for other combinations of sines and cosine functions.  Thus the `relative degree conditions' given above in Section~\ref{rem:even}
apply and we may obtain the spaces \eqref{eq:brak:gen} in an iterative fashion with scaling and saturation arguments.  Properties analogous
to \eqref{eq:rel:deg:NSE} also play a key role for the Boussinesq and Euler examples below.
\end{remark}

Our main control result is the following:
\begin{theorem}\label{thm:aprx:ex:cont:NSE}
  Suppose that $\mathcal{Z} \subseteq \Z^2_{+}$ defining the controlled directions in \eqref{eq:NSE:vort:con} 
  is a sufficient control set in the sense of Definition~\ref{def:Span:cond}.  For any $g \in H$, consider the
  cocycle $\phi$ corresponding to $\mathcal{Z}$ and $g$ for the 2D Navier-Stokes equations (defined according to Proposition~\ref{prop:2D:NSE:Well:Pos}).  Then for any continuous, finite dimensional projection $\pi: H \to \RR^m$, $\phi$ is approximately controllable and exactly controllable on $\pi(H)$ 
  as in Definition~\ref{def:control:cyc}; that is, for any $\Vort_{i}, \Vort_{f} \in H$ and any time $t > 0, \eps > 0$ there exists a $V \in \Omega$ such that
  \begin{align*}
    \|\Vort(t, \Vort_{i},V) - \Vort_{f}\| < \eps  \quad \text{ and } \quad  \pi(\Vort(t, \Vort_{i},V)) = \pi(\Vort_{f}).
  \end{align*}
\end{theorem}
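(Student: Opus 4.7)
The plan is to reduce the theorem to an application of Corollary~\ref{cor:exact:con}. Given the cocycle $\phi$ from Proposition~\ref{prop:2D:NSE:Well:Pos}, take the parameter space $Y = \text{span}\{\cos(k\cdot x), \sin(k\cdot x) : k \in \mathcal{Z}\}$ and define a one-parameter family of continuous global semigroups $\Phi^\alpha_t \Vort_0 := \phi_t(\Vort_0, V_\alpha)$ with $V_\alpha(t) = t\alpha$. Setting $\mathfrak{F} = \{(\Phi, Y)\}$, I will produce an increasing chain of finite-dimensional subspaces $X_0 \subseteq X_1 \subseteq \cdots \subseteq H$, with union dense in $H$, such that $(\rho, X_k) \in \text{Sat}_u(\mathfrak{F})$ for every $k$. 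Corollary~\ref{cor:exact:con} then delivers both approximate controllability and exact control on $\pi(H)$.

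To define the chain, let $e_k^0(x) = \cos(k \cdot x)$, $e_k^1(x) = \sin(k \cdot x)$ and write $N_2(\xi_1, \xi_2) = (\mathcal{K}\ast \xi_1)\cdot \nabla \xi_2$ for the symmetrized bilinear nonlinearity in vorticity form. Put $X_0 = \text{span}\{e_k^j : k \in \mathcal{Z}, j \in \{0,1\}\}$ and inductively
\begin{align*}
  X_{k} = \text{span}\bigl\{X_{k-1} \cup \{N_2(g,g) : g \in X_{k-1}\}\bigr\}.
\end{align*}
By the computation \eqref{eq:rel:deg:NSE} in Remark~\ref{rmk:rel:deg:2DNSE}, we have the diagonal cancellations $N_2(e_k^j, e_k^j) = 0$ and the inclusion $N_2(e_k^j, e_\ell^i) \in \text{span}\{e_m^p : |m| \le |k|+|\ell|\}$, so conditions (1)--(3) of Section~\ref{rem:even} are met. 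Under the hypothesis that $\mathcal{Z}$ is a sufficient control set in the sense of Definition~\ref{def:Span:cond}, the argument of \cite{HM06} shows that $\bigcup_k X_k$ is dense in $H$; in the language of Proposition~\ref{prop:horeq}, this is precisely the assertion that $(N_2, \sigma)$ satisfies H\"ormander's condition on $H$.

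The core PDE work is to verify that $(\rho, X_k) \in \text{Sat}_u(\mathfrak{F})$ by establishing two uniform scaling estimates analogous to Lemmas~\ref{lem:sc:lem:1:RD} and \ref{lem:sc:lem:2:RD}. For the first, given compact $K_1 \subseteq H$ and $K_2 \subseteq X_0$, I would show
\begin{align*}
  \lim_{\lambda \to \infty} \sup_{\Vort_0 \in K_1, h \in K_2} \|\Phi^{\lambda h}_{t/\lambda} \Vort_0 - \rho^h_t \Vort_0\| = 0,
\end{align*}
by differencing against a high-mode truncation $\pi_N \Vort_0$ as in \eqref{eq:stpd:split}, testing the resulting equation for $v_\lambda^N$ against itself and absorbing the bilinear term via the standard antisymmetry $\langle (\mathcal{K}\ast u)\cdot\nabla v, v\rangle = 0$; then closing with Gr\"onwall and letting $N \to \infty$ uniformly over the compact sets. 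For the second, by Section~\ref{rem:even} it suffices to prove, for $h \in X_{k-1}$,
\begin{align*}
  \lim_{\lambda \to \infty} \sup_{\Vort_0 \in K_1, h \in K_2} \bigl\|\rho^{-\lambda^2 h}_{1/\lambda}\, \Phi^0_{t/\lambda^2}\, \rho^{\lambda^2 h}_{1/\lambda} \Vort_0 - \rho^{-N_2(h,h)}_t \Vort_0\bigr\| = 0,
\end{align*}
using the analogue of \eqref{eq:rs:nLin:twit}: the shifted unknown $w_\lambda := \rho^{-\lambda^2 h}_{1/\lambda} \Phi^0_{\cdot/\lambda^2} \rho^{\lambda^2 h}_{1/\lambda}\Vort_0$ satisfies an equation whose leading right-hand side is $-\lambda^{-2} N_2(w_\lambda + \lambda h, w_\lambda + \lambda h)$; the relative degree identities ensure that the $\lambda^2$ term vanishes, the $\lambda^1$ term is linear and handled by absorption, and the $\lambda^0$ contribution is exactly $-N_2(h,h)$. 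Because $h \in X_{k-1} \subseteq C^\infty(\TT^2)$, the difficult nonlinear remainders are controlled in any convenient Sobolev norm and the required uniformity over $K_1 \times K_2$ follows as in the covering argument of \eqref{eq:cmp:sets:bnd}.

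The main obstacle is this second scaling: unlike the reaction-diffusion case, the smoothing is only by the Laplacian while the nonlinearity is quadratic and non-local through $\mathcal{K}$, so one has to be careful that the $H^2$-type bounds on $\rho^{\lambda^2 h}_{1/\lambda}\Vort_0 = \Vort_0 + \lambda h$ are used only in combination with factors of $\lambda^{-2}$ which keep the remainder $o(1)$. Once both uniform estimates are in hand, Lemma~\ref{lem:uni:stat:m} and Remark~\ref{rmk:stupid:spans} let us iterate and conclude $(\rho, X_k) \in \text{Sat}_u(\mathfrak{F})$ for all $k \ge 0$, at which point Corollary~\ref{cor:exact:con} completes the proof of Theorem~\ref{thm:aprx:ex:cont:NSE}.
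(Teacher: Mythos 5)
Your proposal follows essentially the same route as the paper: the paper itself only sketches this proof, noting that Theorem~\ref{thm:aprx:ex:cont:NSE} follows from scaling estimates of the type in Lemmas~\ref{lem:sc:lem:1:RD}--\ref{lem:sc:lem:2:RD} (detailed for the fluid setting in Lemmas~\ref{lem:NSfa}--\ref{lem:NSsa} and \ref{lem:0:conv}--\ref{lem:sa:bouseq}), combined with the relative-degree cancellation \eqref{eq:rel:deg:NSE}, the spanning result of \cite{HM06}, and Corollary~\ref{cor:exact:con}. Your two uniform scaling lemmas, the even-degree bookkeeping via Section~\ref{rem:even}, and the final appeal to Corollary~\ref{cor:exact:con} are precisely the intended argument.
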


This result follows immediately from scaling estimates of the type \eqref{eq:fc:sc:rd} and \eqref{eq:sc:lem:2:RD} analyzed in the previous example.  
Since we will detail such estimates the fluids setting in the analogous but more difficult cases of the Boussinesq equation 
\eqref{eq:b:1}--\eqref{eq:b:2} (see Lemmas \ref{lem:0:conv}, \ref{lem:sa:bouseq}) and the 3D Euler equations \eqref{eq:NSE}
(cf. Lemmas \ref{lem:NSfa}, \ref{lem:NSsa}) we omit further details here.  Note that, as for the 3D Euler equations below, the approximate controllability
of 2D Navier-Stokes equations under the conditions in Theorem~\ref{thm:aprx:ex:cont:NSE} can also be established in $H^m$ for every $m \geq 1$.

\subsubsection*{Implications for the Stochastic Navier-Stokes Equations}

Let us now describe some implications of Theorem~\ref{def:Span:cond} for the stochastic 2D Navier-Stokes equations.  That is, in vorticity form we now consider 
\begin{align}
   d\Vort + (\bfU \cdot \nabla \Vort - \nu \Delta \Vort) \, dt
    = g \, dt + \sigma\,  dW, \quad  \bfU = \mathcal{K} \ast \Vort.
  \label{eq:SNSE:2D}
\end{align}
As before, the solution evolves on the periodic box $\TT^2$, $\mathcal{K}$ denotes the Biot-Savart kernel, and $g$ is any element in $H$.  The stochastic forcing $\sigma \, dW$ maintains the structure given in \eqref{eq:NSE:con:form}.  Solutions $\Vort(t, \Vort_0, W)$ of \eqref{eq:SNSE:2D} define a 
Markov transition kernel via
\begin{align*}
P_t(\Vort_0, A) = \PP(\Vort(t, u_0,W) \in A)
\end{align*}
where $\Vort_0$ is any element on $H$ and $A \subseteq H$ is Borel.
As in the previous example, we take $\phi$ to be the cocycle 
corresponding to \eqref{eq:NSE:vort:con} defined according to Proposition~\ref{prop:2D:NSE:Well:Pos}.  

We have the following results concerning \eqref{eq:SNSE:2D}:
\begin{theorem}\label{thm:SNSE:Imp}
  Consider any $g \in H$ and any $\sigma$ corresponding to a $\mathcal{Z}$
  which is a sufficent control set in the sense Definition~\ref{def:Span:cond}.
  Then:
  \begin{itemize}
    \item[(i)] The resulting Markov kernel defined by \eqref{eq:SNSE:2D} possesses exactly one invariant measure 
      $\mu$ which is ergodic.  Moreover, $\supp \mu = H$.\footnote{The Markov semigroup $\{P_t\}$ may furthermore 
      be shown to be mixing in a suitable Wasserstein distance.  See \cite{HairerMattingly2008}.}
    \item[(ii)] For any $\Vort_0 \in H$, $t > 0$ and any continuous projection $\pi: H \to \RR^m$
      onto a finite-dimensional subspace, the probability law of $\pi(\Vort(t, \Vort_0))$ is absolutely
      continuous with respect to Lebesgue measure on $\RR^m$ and its probability density is almost everywhere positive.
  \end{itemize}
\end{theorem}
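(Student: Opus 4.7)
The plan is to combine Theorem~\ref{thm:aprx:ex:cont:NSE} with the abstract results of Section~\ref{sec:app:SPDEs} and previously-established Malliavin bounds for the 2D stochastic Navier-Stokes equation. For part (i), first I would invoke Corollary~\ref{cor:full:sup}: since Theorem~\ref{thm:aprx:ex:cont:NSE} gives that the cocycle $\phi$ is approximately controllable on $H$, every invariant probability measure $\mu$ for $P_t$ has $\supp \mu = H$. Existence of at least one invariant measure follows from the standard Krylov-Bogoliubov argument via the dissipative enstrophy estimate for \eqref{eq:SNSE:2D}. To obtain uniqueness, I would appeal to Corollary~\ref{cor:uniq:criteria}, which requires only approximate controllability together with the asymptotic strong Feller property at some point of $H$. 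The latter was established for exactly this class of degenerately forced 2D Navier-Stokes equations in~\cite{HM06, HM09} (under the sufficient control set assumption of Definition~\ref{def:Span:cond}), yielding the bound \eqref{eq:ASF}. Combining these two ingredients gives uniqueness and ergodicity of $\mu$.

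For part (ii), I would deduce the existence of a density from Theorem~\ref{prop:exod} applied to $\phi$ and $\pi$. The hypotheses of Assumption~\ref{ass:Mal:der} for $\phi$ are already provided by Proposition~\ref{prop:2D:NSE:Well:Pos}. What remains is to verify that the Malliavin covariance matrix $M_t(\Vort_0, W)$ is non-degenerate in the sense of~\eqref{eq:Mal:as:pos:cond}. Here I would invoke Remark~\ref{rem:malliavin:spec}: in the series of works~\cite{MattinglyPardoux06, HM09, FoldesGlattHoltzRichardsThomann2013} the spectral bound~\eqref{eq:M:pos:cond} was proven for \eqref{eq:SNSE:2D} precisely under the sufficient control set assumption, and this implies~\eqref{eq:Mal:as:pos:cond} by the standard limiting argument indicated there. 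Theorem~\ref{prop:exod} then immediately yields that $\pi(\Vort(t, \Vort_0))$ has a density $p_{t}$ with respect to Lebesgue measure on $\RR^m$.

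For the positivity of $p_{t}$, I would invoke Theorem~\ref{thm:pos}. Its two hypotheses match the information in hand: since~\eqref{eq:Mal:as:pos:cond} holds, we can extract a single sample path $V \in \Omega_{t/2}$ for which the Grammian $M_{t/2}(\Vort_0, V)$ is non-degenerate in the sense of Definition~\ref{def:Mal:Mtrx}(i), and Theorem~\ref{thm:aprx:ex:cont:NSE} provides exact controllability of $\phi$ on $\pi(H)$ in the sense of Definition~\ref{def:control:cyc}. Applying Theorem~\ref{thm:pos} with the decomposition $t = t/2 + t/2$ then gives $p_t(x) > 0$ for almost every $x \in \RR^m$, completing part (ii).

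The main obstacle is really outside the scope of this paper: it is the non-degeneracy of the Malliavin matrix~\eqref{eq:M:pos:cond}, which requires the intricate Wiener chaos and smoothing analysis of~\cite{HM06,HM09,MattinglyPardoux06}. Once that input is taken as given, the role of the present framework is clean and modular: approximate controllability (from Theorem~\ref{thm:aprx:ex:cont:NSE}) plus asymptotic strong Feller yields unique ergodicity with full support via Corollary~\ref{cor:uniq:criteria} and Corollary~\ref{cor:full:sup}, while exact controllability on the finite-dimensional projection $\pi(H)$ (again from Theorem~\ref{thm:aprx:ex:cont:NSE}) combines with the Malliavin non-degeneracy through Theorems~\ref{prop:exod} and~\ref{thm:pos} to deliver the existence and strict positivity of the projected density, even in the presence of the arbitrary deterministic forcing $g \in H$.
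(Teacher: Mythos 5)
Your proposal is correct and follows essentially the same route as the paper's own proof: Corollary~\ref{cor:full:sup} and Corollary~\ref{cor:uniq:criteria} together with the asymptotic strong Feller bound from~\cite{HM06,HM09} give item (i), and Theorems~\ref{prop:exod} and~\ref{thm:pos} combined with the Malliavin non-degeneracy estimate~\eqref{eq:M:pos:cond} and the exact controllability of Theorem~\ref{thm:aprx:ex:cont:NSE} give item (ii). Your extra detail (Krylov--Bogoliubov for existence, and the explicit $t = t/2 + t/2$ split to feed a fixed non-degenerate path into Theorem~\ref{thm:pos}) matches what the paper spells out in the proof of the analogous Corollary~\ref{cor:b:pos} for the Boussinesq system.
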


Theorem~\ref{thm:SNSE:Imp} may be established using \cite{MattinglyPardoux06,HM06,HM09} combined with 
Theorem~\ref{thm:aprx:ex:cont:NSE} and the results in Section~\ref{sec:app:SPDEs}.  For the first 
item, we proceed by establishing the condition required by Corollary~\ref{cor:uniq:criteria}.  The 
asymptotic strong Feller condition \eqref{eq:ASF} is demonstrated exactly as in \cite{HM06,HM09}
 and relies in particular on a spectral analysis of the Malliavin matrix associated to \eqref{eq:SNSE:2D}.
See Definition~\ref{def:Mal:Mtrx} and the condition \eqref{eq:M:pos:cond} above.    The other condition 
in Corollary~\ref{cor:uniq:criteria} concerns approximate controlability and follows from
Theorem~\ref{thm:aprx:ex:cont:NSE}.  The full support of the invariant measure is an immediate consequence of
Theorem~\ref{thm:aprx:ex:cont:NSE} combined with Corollary~\ref{cor:full:sup}.
Regarding the second item concerning the regularity of the law of $\pi(\xi)$,
we again combing the estimate \eqref{eq:M:pos:cond} with the exact controllability guaranteed by Theorem~\ref{thm:aprx:ex:cont:NSE}
to infer the desired support properties from Theorems~\ref{prop:exod}, \ref{thm:pos}.

\begin{remark}
In the case when $g=0$, unique ergodicity of~\eqref{eq:SNSE:2D} follows using the same methods as above but one does not need the control theoretic approach outlined in Section~\ref{sec:sat}.  Indeed when $g=0$, the solution in the absence of noise relaxes to zero as $t\rightarrow \infty$; i.e., the global attractor is trivial.  Thus setting the control to be identically zero then implies that $0$ is in the support of any ergodic invariant probability measure.  Hence by ergodic decomposition and the asymptotic strong Feller property, there can be only one such measure. On the other hand when $g\neq 0$, the time infinity deterministic dynamics are highly non-trivial.  Thus one needs further, delicate control arguments to establish topological irreducibility needed to ensure unique ergodicity of the stochastic system.  It is important to point out that the main control result in the case of $g\neq 0$ follows by the main results in~\cite{AgrachevSarychev2005,AgrachevSarychev2006} using the Agrachev-Sarychev approach.             
\end{remark}

\subsection{Boussinesq Equations}
\label{sec:be:eq}

We next consider an example involving the Boussinesq Equations for convective fluids.
These equations couple the Navier-Stokes equation to an active scalar equation evolving the
temperature (or some other proxy determining the density)
of the fluid.   The crucial approximation here is that the density may be regarded
as constant with the important exception of terms due to buoyancy forces.  

In this example, we are interested in the case where a volumetric random
forcing/control acts  only in the equation for the density (or
temperature) through a few
select frequencies.  Specifically, we consider a 2D formulation of 
the Boussinesq equations in the absence of boundaries.  This specific setup is partially motivated 
by the recent work \cite{FoldesGlattHoltzRichardsThomann2013}.
More generally, note that stochastic perturbations acting in the temperature equation as in \eqref{eq:b:2} below has a 
significant physical motivation as a model for radiogenic heating and other volumetric heat 
sources driving turbulent convection.  See \cite{SwHo1977, ScTuOl2001, FoGlRiWh2016,FoldesGlattHoltzRichardsWhitehead2017}. 

 From the point of view of the control theoretic formalism developed
 here, it is worth emphasizing that the Boussinesq equations present 
 a more delicate set of nonlinear interactions compared with 
 the other examples considered in this section.    In particular, this example
 illustrates that scalings detailed in Section~\ref{sec:overview},
 while very powerful, are by no means the only way of leveraging the
 saturation formalism introduced in Section~\ref{sec:sat}.

\subsubsection{Mathematical Formulation}
Following \cite{FoldesGlattHoltzRichardsThomann2013} it will be convenient to consider the Boussinesq Equations in terms of
the vorticity of the flow.  In this formulation the equations read 
\begin{align}
  &\pd_t \Vort + u \cdot \nabla \Vort - \nu \Delta \Vort = g \pd_x \Th,  \quad \Vort(0) = \Vort_0 \label{eq:b:1}\\
  &\pd_t \Th + u \cdot \nabla \Th - \kappa \Delta \Th =  h^0 + \sigma \cdot \partial_t V,  \quad \Th(0) = \Th_0.
  \label{eq:b:2}
\end{align}
where  $\Vort = \nabla^\perp \cdot u = \pd_{x}u_2 - \pd_{y} u_1$ is
the vorticity of the velocity $u=(u_1, u_2)$ and $\Th$ is the
temperature of the fluid.    The system \eqref{eq:b:1}--\eqref{eq:b:2} is posed on $\TT^2 = [-\pi, \pi]^2$
with periodic boundary conditions.\footnote{Note that, as with the 2D
Navier-Stokes equations in \eqref{eq:NSE:vort:con} the vorticity
formulation in \eqref{eq:b:1}--\eqref{eq:b:2} represents a closed
system of equations as $u$ is uniquely recovered from $\Vort$ 
via the Biot-Savart law.  See \eqref{eq:biot:savart} above.}    

The physical parameters in the problem are $\nu, \kappa, g >0$, which
correspond to the kinematic viscosity, thermal diffusivity and
gravitation constants, respectively.  
The thermal body force $h^0 + \sigma \cdot \partial_t V$ is such that $h^0: \TT^2 \to \RR$ is any fixed sufficiently smooth function and  
\begin{align}
\label{eq:bcontrol:form}
 \sigma \cdot V = \sum_{k \in \Zb} (v_k^0(t) \cos(k \cdot x) + v_k^1(t) \sin(k \cdot x)), \,\, V(t) = (v_k^0(t), v_k^1(t))_{k\in \mathcal{Z}}, 
\end{align}
where $V\in \Omega := \{ V: (-\infty, \infty) \rightarrow \RR^{2|\mathcal{Z}|}\, \,\text{ continuous with } \,\, V(0)=0\}$.  
Also, in the sum above, 
\begin{align*}
  \Zb \subseteq \Z^{2}_{+} := \left\{ j = (j_1, j_2) \in \Z^2_0: j_1 > 0 \textrm{ or } j_1 = 0, j_2 > 0 \right\}
\end{align*}
are the directions which are directly actuated by the term $\sigma
\cdot \partial_t V$.   We will make further assumptions on
$\mathcal{Z}$ below for the control results in Theorem~\ref{thm:cntrl:BE}.  See also Remark~\ref{rmk:mo:dirs:buca}.

Note that, as in the previous example of the 2D Naiver-Stokes 
equations, the system \eqref{eq:b:1}--\eqref{eq:b:2} preserves the mean
value of solutions.   As such we will again restrict our discussion
to mean-zero solutions. In particular, we will invoke the Poincar\'e
inequality in the estimates below.

For most of the following discussion, we consider solutions of the Boussinesq Equations
evolving continuously in $L^2$.  Thus accounting for the mean zero 
condition, we take the phase space to be
\begin{align*}
  H = \left\{ U = (\Vort, \Th) \in (L^2(\TT^2))^2: \int \Vort \, dx = \int \Th \, dx = 0  \right\}.
\end{align*}
We will at times also consider smoother solutions of \eqref{eq:b:1}--\eqref{eq:b:2}
and hence make use of the Hilbert spaces $H^m:= H^m(\TT^2)^2 \cap H$
for $m\geq 1$.

Following the discussion in Remark~\ref{rem:pdecocycle} as well as the
setting of Section~\ref{eq:rd:ex}, we recall that the solution
$U(t) = (\Vort(t), \Th(t))=(\Vort(t, U_0,  V), \theta(t, U_0,  V))$ 
of \eqref{eq:b:1}-\eqref{eq:b:2}  with initial condition $U_0  = (\Vort_0, \Th_0) \in H$ is defined by 
\begin{align*}
  U(t) = (\Vort(t), \Th(t)):=
  (\tom(t,U_0, V), \tth(t, U_0 , V)) + (0, \sigma \cdot V) 
\end{align*}
where $\tilde{U} := (\tom(t,U_0, \sigma \cdot V), \tth(t, U_0 , \sigma \cdot V))$ satisfies $(\tom(0), \tth(0))=(\Vort_0, \Th_0)$ and
\begin{align}
\label{eq:sb:1}
&\partial_t \tom + \tilde{u} \cdot \nabla \tom - \nu \Delta \tom = g \partial_x (\tth + \sigma \cdot V)\\
\label{eq:sb:2}
&\partial_t \tth + \tilde{u} \cdot \nabla (\tth + \sigma \cdot V) - \kappa \Delta (\tth + \sigma \cdot V) = h^0 .
\end{align} 
We recall that the shifted equation above allows us to consider solutions of \eqref{eq:b:1}--\eqref{eq:b:2} when $V$ is merely continuous.  
Additionally, if $V$ is replaced by a standard two-sided Brownian motion $W$ on $\R^{2|\mathcal{Z}|}$ in the equations above, 
the resulting random process $U = (\Vort, \Th)$ is the same as the 
one defined by~\eqref{eq:b:1}-\eqref{eq:b:2}, again with $V$ replaced by $W$, using the It\^{o} calculus.  See 
Remarks~\ref{rem:pdecocycle},~\ref{rmk:shit:sln:RD} above.

With these preliminaries in hand, we next state the main structural result which
allows us to apply the results of Section~\ref{sec:sat} and Section~\ref{sec:app:SPDEs}.  
\begin{proposition}
\label{prop:b:structure}
For every $U_0=(\Vort_0, \Th_0) \in H$ and $V\in \Omega$, 
there exists a unique 
\begin{align*}
  U = (\Vort, \Th) 
  \in L^2_{loc}([0,\infty); H^1) 
  \cap C([0,\infty); H)
\end{align*}
such that $U(0) = U_0$ and $\tilde{U} = ( \Vort, \Th - \sigma \cdot V)$
solves \eqref{eq:sb:1}-\eqref{eq:sb:2} in the usual weak sense.
Moreover, 
\begin{itemize}
\item[(i)] The mapping $\phi: [0, \infty) \times H \times
  \Omega\rightarrow H$ defined by $\phi_t(U_0, V)= (\Vort(t,U_0,
  \sigma \cdot V), \Th(t, U_0 , \sigma \cdot V))$ is a continuous
  adapted cocycle in the sense of Definition~\ref{def:co-cyc} which
  moreover satisfies each of the conditions in Assumption~\ref{ass:Mal:der}.  
\item[(ii)] If $U_0 \in H^m$ for some $m \geq 1$, the corresponding solution $U$ maintains
  the regularity
  \begin{align*}
      U \in  L^2_{loc}([0,\infty); H^{m+1}) 
                                         \cap C([0,\infty); H^{m}).
  \end{align*}
  \item[(iii)] For $m\geq 0$ let $X_0(m)$ denote the subspace of $H^m$ given by 
\begin{align}
  X_0(m)= \text{\emph{span}}\left\{  
  \begin{pmatrix} 0 \\  \sin(k \cdot x) \end{pmatrix},
  \begin{pmatrix} 0 \\  \cos(k \cdot x) \end{pmatrix}
  \, : \, k \in \mathcal{Z} \right\}. 
  \label{eq:init:sp}
\end{align}
  Then the mapping $\Phi:[0, \infty) \times H^m \times X_0(m)\rightarrow H^m$ defined by 
\begin{align*}
\Phi_t^h U_0= (\tom(t,U_0, V_h), \tth(t, U_0 , V_h)) + (0, \sigma \cdot V_h),
\end{align*}
where $U_0= (\Vort_0, \theta_0) \in H^m$ and $V_h(t)= t h$, is a one-parameter family of continuous (global) semigroups on $H^m$
in the sense of Definition~\ref{def:oneparamlocal}. 
\end{itemize}
\end{proposition}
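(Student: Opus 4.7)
The plan is to obtain well-posedness and all the stated properties from standard tools applied to the shifted system \eqref{eq:sb:1}--\eqref{eq:sb:2}, which for each fixed $V \in \Omega$ has smooth inhomogeneous terms involving $\sigma \cdot V$. First I would construct local-in-time weak solutions via a Galerkin scheme: project onto $\spa\{\cos(k \cdot x), \sin(k \cdot x): |k| \leq N\}$, obtain a finite-dimensional ODE for which existence is classical, then close uniform-in-$N$ energy estimates. The key estimate is the standard $L^2$ bound obtained by testing \eqref{eq:sb:1} with $\tom$ and \eqref{eq:sb:2} with $\tth$ and absorbing the buoyancy coupling $g \pd_x \tth$ into the viscous terms using Young's inequality, together with the fact that the shift $\sigma \cdot V$ is spatially smooth. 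This yields a bound of the form $\|\tilde{U}(t)\|^2 + \int_0^t \|\nabla \tilde{U}\|^2 ds \leq C(\nu, \kappa, g, h^0, \|V\|_{\infty,t})(1 + \|U_0\|^2)$, giving global existence in $H$ with $\tilde{U} \in L^2_{loc}([0,\infty); H^1) \cap C([0,\infty); H)$. Uniqueness in this class is obtained by the classical 2D argument: the difference of two solutions satisfies a linear equation which, using the Ladyzhenskaya inequality $\|u\|_{L^4}^2 \lesssim \|u\| \|\nabla u\|$ to handle the transport terms, leads to a closed Gronwall estimate.

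Next I would establish the joint continuity $\phi: [0, \infty) \times H \times \Omega \to H$. Given sequences $U_0^n \to U_0$ in $H$ and $V^n \to V$ in $\Omega$ (uniformly on compacts), write the difference $U^n - U = (\omega^n - \omega, \theta^n - \theta)$, which solves a linear system driven by terms that vanish as $n \to \infty$ thanks to the previously obtained a priori bounds and the smoothness of the controlled directions $\sigma_k$. A Gronwall argument analogous to the uniqueness proof yields $\sup_{t \leq T}\|U^n(t) - U(t)\| \to 0$ on any bounded time interval, which is the needed continuity. The cocycle identity \eqref{eq:cocyc:prop} follows from the uniqueness statement together with the translation property of the shift $\theta_s$, while the non-anticipativity \eqref{eq:no:future} is immediate from the observation that $\phi_t(U_0, V)$ depends only on $V\restriction_{[0,t]}$.

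For Assumption~\ref{ass:Mal:der}, I would linearize \eqref{eq:b:1}--\eqref{eq:b:2} around a solution $U$ and study the variational system for $\rho = D_u \phi_t(U_0, V)\xi$ and for the Malliavin derivative $\bar\rho = D_w \phi_t(U_0, V) H$; both are driven by the same linear operator, differing only by the forcing term $\sigma \cdot \dot H$ in the noise perturbation. Fr\'echet differentiability in both arguments reduces to energy estimates on these linear systems, exactly as in the Appendix of \cite{HM06}; the Duhamel representation \eqref{eq:du:ham:form} follows formally and can be justified by approximation. Non-degeneracy of $J_{s,t}$ and its adjoint comes from the backward uniqueness for the adjoint variational problem, which in turn follows from the log-convexity / two-dimensional heat-equation type estimates (as in \cite{HM06}) since both the vorticity and temperature equations are parabolic with smooth drift coefficients.

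Finally, the higher regularity (ii) is obtained by bootstrapping the basic energy estimate: testing the shifted system with $(-\Delta)^m \tilde{U}$, one controls the transport nonlinearities in 2D using the algebra property of $H^m$ for $m \geq 1$ and standard commutator estimates, yielding $\tilde{U} \in L^2_{loc}([0,\infty); H^{m+1}) \cap C([0,\infty); H^m)$ whenever $U_0 \in H^m$; note $\sigma \cdot V \in H^m$ trivially. Claim (iii) is then an immediate consequence of (i), (ii), and the definition $V_h(t) = t h$, since for $h \in X_0(m)$ the shift $\sigma \cdot V_h$ is smooth in both $t$ and $h$, so joint continuity in $(t, U_0, h)$ follows from the continuity of $\phi$ together with continuity of the map $h \mapsto V_h \in \Omega$ on compacts of $[0, \infty)$. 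The main technical obstacle is verifying (i) with the precision demanded by Assumption~\ref{ass:Mal:der}--in particular, simultaneously tracking Fr\'echet differentiability in $V$ on the Cameron-Martin subspace and establishing non-degeneracy of the adjoint Jacobi flow--but all of these are by now well-documented in the 2D incompressible fluids literature and transfer to \eqref{eq:b:1}--\eqref{eq:b:2} essentially verbatim.
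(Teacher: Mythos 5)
Your proposal follows essentially the same route as the paper's Appendix~\ref{sec:beq:aprioriproof}: the paper proves Proposition~\ref{prop:b:apest12} (an $L^2$ energy estimate and a continuous-dependence estimate for the shifted system, both via Gronwall as you describe), then a companion proposition with the analogous energy/continuous-dependence estimates for the linearized equation~\eqref{eqn:lin:buca} (giving items (i)--(iv) of Assumption~\ref{ass:Mal:der}), defers item (v) to the backward-uniqueness argument of \cite{MattinglyPardoux06}, and finally proves the higher-order Sobolev estimates in Proposition~\ref{prop:a:pr:est} via $\pd^\alpha$-commutator bounds exactly as you sketch. Both accounts are structurally identical.
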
    

Proposition~\ref{prop:b:structure} is proved in Appendix~\ref{sec:a priori:est} using standard a priori bounds.

\subsubsection*{Statement of the main results}  

Our main goal in this section is to prove the following control result concerning \eqref{eq:b:1}-\eqref{eq:b:2}.  

\begin{theorem}\label{thm:cntrl:BE}
  Suppose that $\Zb \supseteq \{(1,0), (0,1)\}$.  Then we have the following controllability results
  (cf. Definitions~\ref{def:acc:cont}, \ref{def:control:cyc} above):
  \begin{itemize}
  \item[(i)] For any continuous, linear projection $\pi: H \rightarrow H$ onto a finite dimensional subspace $\pi(H)$, $\phi$ 
  is approximately controllable on $H$ and exactly controllable on $\pi(H)$.    
  \item[(ii)] Let $m \geq 0$ and $\mathfrak{F}(m)$ denote the one-parameter family of continuous (global) semigroups defined by 
  \begin{align}
      \mathfrak{F}(m):=\{(\Phi, X_0(m)) \}         
            \label{eq:para:SE:BE}
    \end{align}
    where $X_0(m)$ was defined in~\eqref{eq:init:sp}.
    Then for any continuous, linear projection $\pi:H^m\rightarrow H^m$ onto a finite-dimensional subspace 
    $\pi(H^m)$, $\mathbb{D}(\mathfrak{F}(m))$ is approximately controllable on $H^m$ and exactly controllable on $\pi(H^m)$.
  \end{itemize}
\end{theorem}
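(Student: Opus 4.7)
The plan is to apply Corollary~\ref{cor:exact:con}: for each $m \geq 0$ we construct an increasing sequence of finite-dimensional subspaces $X_n \subseteq H^m$ with $\bigcup_n X_n$ dense in $H^m$ and $(\rho, X_n) \in \text{Sat}_u(\mathfrak{F}(m))$. Part (i) is the $m=0$ case of part (ii). The construction combines a standard control/time rescaling, a new buoyancy-driven scaling specific to Boussinesq, and iterative closure under the quadratic advection, culminating in a trigonometric spanning argument using $\mathcal{Z} \supseteq \{(1,0),(0,1)\}$.

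The first step is a direct analog of Lemma~\ref{lem:sc:lem:1:RD}: Gr\"onwall estimates on the difference $\Phi_{t/\lambda}^{\lambda h} U_0 - \rho^h_t U_0$, using the a priori bounds of Proposition~\ref{prop:b:structure}, yield the uniform convergence
\begin{align*}
   \sup_{U_0 \in K_1,\, h \in K_2} \| \Phi_{t/\lambda}^{\lambda h} U_0 - \rho^h_t U_0 \|_{H^m} \to 0 \quad \text{as } \lambda \to \infty
\end{align*}
over compacts $K_1 \subseteq H^m$, $K_2 \subseteq X_0(m)$, establishing $(\rho, X_0(m)) \in \text{Sat}_u(\mathfrak{F}(m))$.

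The second step exploits the \emph{linear} buoyancy coupling $g\partial_x \theta$ in the vorticity equation to generate vorticity directions from temperature ones. For $h = (0, h_\theta) \in X_0(m)$ with $h_\theta$ a single Fourier mode, consider
\begin{align*}
   v_\lambda(t) := \rho^{(0, -\lambda^2 h_\theta)}_{1/\lambda} \circ \Phi^0_{t/\lambda} \circ \rho^{(0, \lambda^2 h_\theta)}_{1/\lambda} U_0.
\end{align*}
The initial ray lifts $\theta$ by $\lambda h_\theta$; during the ensuing drift-free flow of duration $t/\lambda$, the linear buoyancy produces vorticity at rate $\sim \lambda g\partial_x h_\theta$, giving a net gain $t g\partial_x h_\theta$ in $\omega$, while thermal diffusion contributes $t\kappa \Delta h_\theta$ to $\theta$. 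Heuristically $v_\lambda(t) \to U_0 + t(g\partial_x h_\theta,\kappa \Delta h_\theta)$; since $\Delta h_\theta \in X_0(m)$ for sinusoidal $h_\theta$, Remark~\ref{rmk:stupid:spans} combined with $(\rho, X_0(m))$ then extracts the pure vorticity rays $(g\partial_x h_\theta, 0)$. With these in hand, closure under the quadratic nonlinearity proceeds via the analog of Lemma~\ref{lem:sc:lem:2:RD}: the cancellations of Section~\ref{rem:even} apply since $N_2((0,\theta),(0,\theta)) = 0$ trivially and $N_2((\omega,0),(\omega,0)) = 0$ for single-mode $\omega$ via $k^\perp \cdot k = 0$, cf.~\eqref{eq:rel:deg:NSE}. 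Iterating the two scalings and applying trigonometric identities (e.g.\ $\omega = \cos x$ induces shear $u = (0,\sin x)$ and $u\cdot\nabla\sin y = \sin x\cos y$, generating temperature modes at frequencies $(1,\pm 1)$; further shear-flow interactions such as $(\sin x\sin y)\partial_x\cos x$ eventually produce vorticity modes with $k_1 = 0$) starting from $\mathcal{Z} \supseteq \{(1,0),(0,1)\}$ reaches all integer Fourier modes in both $\omega$ and $\theta$, delivering density of $\bigcup X_n$.

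The principal obstacle is that the naive buoyancy scaling above does \emph{not} converge cleanly to a ray semigroup: the cross term $u\cdot\nabla\theta$ with $\theta$ elevated by $\lambda h_\theta$ contributes at leading order $\lambda\, u_0\cdot\nabla h_\theta$ to $\partial_t\theta$, integrating over $[0,t/\lambda]$ to an $O(t)$ state-dependent remainder proportional to $u_0 = \mathcal{K}\ast\omega_0$; similarly the self-interaction of the produced vorticity yields an $O(t^2)$ contribution of the form $\tfrac{1}{2}t^2 g(\mathcal{K}\ast\partial_x h_\theta)\cdot\nabla h_\theta$ in $\theta$. Structurally these resemble a Lie-bracket correction $DF_0(U_0)\cdot G$ rather than a constant ray direction. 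Resolving this obstacle requires refining the composition — for instance, a suitably chosen symmetric combination of forward/backward pushes and flows which flips the sign of the $u_0$-dependent remainder under $h_\theta \leftrightarrow -h_\theta$ and thereby cancels it to leading order, or alternatively working in a sufficiently smooth Sobolev topology and absorbing the residual into directions already in the saturate via Remark~\ref{rmk:stupid:spans}. Ensuring that the resulting convergence holds \emph{uniformly} over compact subsets of initial data and parameter, as demanded by \eqref{eq:uni:stat:def}, rests ultimately on the higher regularity provided by Proposition~\ref{prop:b:structure}(ii).
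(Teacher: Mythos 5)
Your overall architecture (reduce to Corollary~\ref{cor:exact:con}, start with the time/amplitude rescaling to get $(\rho, X_0(m)) \in \text{Sat}_u(\mathfrak{F}(m))$, then iterate) matches the paper, and you have correctly identified the limit of the temperature-push composition
$\rho^{-\lambda^2 \iota_\Th h}_{1/\lambda}\,\Phi^0_{t/\lambda}\,\rho^{\lambda^2 \iota_\Th h}_{1/\lambda}U_0$,
including the state-dependent remainder $-t\,b(\pi_\Vort U_0, h_\theta)$ in the temperature component. But naming that remainder "the principal obstacle" and gesturing at a fix is where the proof stops being a proof: neither of your proposed workarounds is carried out, and the second one (absorbing the residual into directions already in the saturate) cannot work as stated, because the residual depends on the initial condition $U_0$ and is therefore not a fixed direction that a ray semigroup could supply. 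The paper's resolution is different in kind: it does \emph{not} try to convert this limit into a ray. Instead it accepts the limit as a new one-parameter family of explicitly solvable, globally defined semigroups
$\Gamma_t^{\alpha\sigma_j^\ell}U_0 = U_0 + t\alpha\bigl(g\partial_x e_j^\ell,\ \kappa\Delta e_j^\ell - b(\pi_\Vort U_0, e_j^\ell)\bigr)$,
places these $\Gamma$'s into $\text{Sat}_u(\mathfrak{F}(m))$ (Lemma~\ref{lem:sa:bouseq}), and then forms the four-fold commutator composition
$\Gamma_t^{-\alpha\sigma_k^n}\Gamma_t^{-\beta\sigma_j^\ell}\Gamma_t^{\alpha\sigma_k^n}\Gamma_t^{\beta\sigma_j^\ell}U_0 = U_0 + t^2\alpha\beta\,[[F,\sigma_j^\ell],[F,\sigma_k^n]]$
(Lemma~\ref{lem:BE:exact}), in which the $U_0$-dependent terms cancel \emph{exactly} and what survives is a genuine ray in the new temperature direction $g\,\iota_\Th(b(\partial_x e_k^n,e_j^\ell)-b(\partial_x e_j^\ell,e_k^n))$. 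This exact algebraic identity — not an asymptotic symmetrization — is the missing idea, and it is also where your "shear advects temperature to new frequencies" heuristic is actually realized as a composition of semigroups in the saturate.

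Two further gaps downstream of this. First, your route to new vorticity directions (subtract $\kappa\Delta h_\theta \in X_0(m)$ from the limit) only yields $g\partial_x h_\theta$ for $h_\theta$ at the originally controlled frequencies; in the paper one first generates temperature rays at \emph{all} frequencies via the iterated bracket (using the spanning results of \cite{FoldesGlattHoltzRichardsThomann2013}), and only then extracts the vorticity directions $\psi_j^\ell$ from the corresponding $\Gamma$'s — and even then only for $j_1 \neq 0$, because of the $\partial_x$. Second, the missing $y$-axis vorticity modes require yet another scaling, a second-order push in two vorticity directions producing $[[F,\psi_k^\ell],\psi_j^n]$ (Lemma~\ref{lem:BE:fourthapprox}); your sketch does not account for these modes at all. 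As written, the proposal is a correct strategy outline with the central mechanism unproved.
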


\begin{remark}
  \label{rmk:mo:dirs:buca}
  We make the assumption that $\Zb\supseteq \{(1,0), (0,1)\}$ for
  concreteness and simplicity of presentation.  Similar to the the low-mode control problem 
  for the Navier-Stokes equation presented in the previous example, 
  this assumption can be replaced with a general algebraic condition that $\mathcal{Z}$ contains elements that 
  generate $\Z^2$ with the appropriate integer linear combinations.  See Remark~5.3 in \cite{FoldesGlattHoltzRichardsThomann2013} and the accompanying diagrams for a further discussion of this point.
\end{remark}

Invoking the results in Section~\ref{sec:app:SPDEs}, we obtain the following corollary as a 
simple consequence of the previous control results and Proposition~\ref{prop:b:structure}.            

\begin{corollary}
\label{cor:b:pos}
Let $P_t$ denote the Markov transition kernel
associated to the cocycle $\phi_t(U_0, W)$ defined by \eqref{eq:b:1}--\eqref{eq:b:2} 
via Proposition~\ref{prop:b:structure}.  Then we have the following:
\begin{itemize}
\item[(1)]  For all $t>0$ and $U_0 \in H$, $\text{\emph{supp}}(P_t(U_0, \, \cdot \,))  = H$.  
\item[(2)]  There exists a unique invariant measure $\mu$ for $P_t$ and this measure has full support, i.e., $\text{\emph{supp}}(\mu) = H$. 
\item[(3)]  Suppose that $\pi: H\rightarrow H$ is a continuous, linear
  projection onto a finite-dimensional subspace $\pi(H)$ and let
  $t>0$, $U_0 \in H$.  Then the random variable $\pi \phi_t(U_0, W)$
  has a density $p_t$ with respect to Lebesgue measure on $\pi(H)$
  which is strictly positive almost everywhere.  
\end{itemize}
\end{corollary}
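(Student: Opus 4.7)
The plan is to deduce each of the three items by combining the concrete controllability results in Theorem~\ref{thm:cntrl:BE} and the structural properties of the cocycle $\phi$ provided by Proposition~\ref{prop:b:structure} with the corresponding abstract consequences developed in Section~\ref{sec:app:SPDEs}, namely Lemma~\ref{l:control}, Corollary~\ref{cor:full:sup}, Corollary~\ref{cor:uniq:criteria}, Theorem~\ref{prop:exod} and Theorem~\ref{thm:pos}. For item (1), approximate controllability of $\phi$ on $H$ is immediate from Theorem~\ref{thm:cntrl:BE}(i), and Proposition~\ref{prop:b:structure}(i) ensures $\phi$ is a continuous adapted cocycle in the sense of Definition~\ref{def:co-cyc}. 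Thus Lemma~\ref{l:control} applies directly and yields $P_t(U_0, B) > 0$ for every non-empty open $B \subseteq H$, which is exactly $\supp(P_t(U_0, \ccdot)) = H$.

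For item (2), existence of an invariant measure will follow from a standard Krylov--Bogoliubov argument once one combines the Feller property of $\phi$ with an energy-type moment bound on $(\Vort,\Th)$ in $H$; such a bound is routine for \eqref{eq:b:1}--\eqref{eq:b:2} with additive forcing and can be extracted from the a priori estimates that underlie Proposition~\ref{prop:b:structure}. For uniqueness I would invoke Corollary~\ref{cor:uniq:criteria}, whose two hypotheses are (a) approximate controllability, already established by Theorem~\ref{thm:cntrl:BE}(i), and (b) the asymptotic strong Feller estimate \eqref{eq:ASF} at some point of $H$. The latter is the main technical obstacle and is not provided by scaling and saturation alone: following the paradigm of~\cite{HM06,HM09}, it requires a quantitative spectral lower bound on the Malliavin covariance matrix $M_t(U,W)$ of the form \eqref{eq:M:pos:cond}, which is in turn derived from the same H\"ormander-type spanning condition on $(N_M,\sigma)$ that enters the proof of Theorem~\ref{thm:cntrl:BE}. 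Here one should essentially transcribe Theorem~8.1 of~\cite{HM09} and the variant for active scalar systems in~\cite{FoldesGlattHoltzRichardsThomann2013}, using Proposition~\ref{prop:b:structure}(ii) to propagate the required higher Sobolev regularity of the Jacobi flow $J_{s,t}$. Once (a) and (b) are in place, Corollary~\ref{cor:uniq:criteria} delivers uniqueness, and $\supp(\mu)=H$ is then immediate from Corollary~\ref{cor:full:sup}.

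For item (3), fix $U_0 \in H$, $t>0$ and a continuous linear projection $\pi$ of $H$ onto a finite-dimensional subspace $\pi(H)\cong\RR^m$. Proposition~\ref{prop:b:structure}(i) guarantees that $\phi$ satisfies Assumption~\ref{ass:Mal:der}, so Theorem~\ref{prop:exod} and Theorem~\ref{thm:pos} are both applicable. The idea is to invoke Theorem~\ref{prop:exod} to obtain absolute continuity of the law of $\pi(\phi_t(U_0,W))$ with respect to Lebesgue measure on $\RR^m$, and then to upgrade this to strict positivity of the density via Theorem~\ref{thm:pos}. Theorem~\ref{prop:exod} requires that $M_t(U_0,W)$ be non-degenerate $\P$-a.s.; this is exactly the H\"ormander-type spectral property discussed in the previous paragraph and has to be imported from the Malliavin analysis of~\cite{HM09,FoldesGlattHoltzRichardsThomann2013}. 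Theorem~\ref{thm:pos} has two hypotheses: exact controllability of $\phi$ on $\pi(H)$, which is supplied by Theorem~\ref{thm:cntrl:BE}(i), and pointwise non-degeneracy of $M_t(U_0,V)$ for a single path $V\in\Omega_t$, which is obtained by selecting any $V$ from the full $\P$-measure set on which the a.s.\ non-degeneracy of $M_t(U_0,W)$ holds. The virtue of this organization is the clean division of labour: the scaling and saturation arguments of Section~\ref{sec:sat} deliver the simultaneous approximate and exact projective control, while the Malliavin analysis controls the non-degeneracy of $M_t$, and Theorem~\ref{prop:exod} and Theorem~\ref{thm:pos} package these two ingredients into the desired density statement.
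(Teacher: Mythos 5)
Your proposal follows essentially the same route as the paper: item (1) via Lemma~\ref{l:control}, item (2) via Krylov--Bogoliubov plus Corollary~\ref{cor:uniq:criteria} (with the asymptotic strong Feller bound imported from the Malliavin analysis of \cite{FoldesGlattHoltzRichardsThomann2013}), and item (3) via Theorem~\ref{prop:exod} and Theorem~\ref{thm:pos} with the a.s.\ non-degeneracy of the Malliavin matrix supplied by \cite{FoldesGlattHoltzRichardsThomann2013} and Remark~\ref{rem:malliavin:spec}.

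One small bookkeeping point in item (3): Theorem~\ref{thm:pos} only yields positivity of the density at a strictly later time $t+s$ with $s>0$, given non-degeneracy of the Gramian at time $t$. To get positivity of $p_t$ for the prescribed $t$, you must therefore first fix some intermediate $0<s<t$ at which $M_s(U_0,W)$ is already non-degenerate (this holds for every $s>0$ by the cited spectral bound), select a deterministic $V$ from the full-measure set where $M_s(U_0,V)$ is non-degenerate, and then apply Theorem~\ref{thm:pos} over the remaining time $t-s$. As written you apply it directly at time $t$, which doesn't match the hypotheses; the paper makes the intermediate-time choice explicit.
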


\begin{remark}
Although we establish the stronger control result above on the spaces
$H^m$, we  remain in the $L^2$ phase space to deduce properties of
random variables $\phi_t(U_0, W)$, $U_0 \in L^2$ and $t>0$, where $W$ 
is a standard two-sided, $2|\mathcal{Z}|$-dimensional Brownian motion 
defined on the Wiener space $(\Omega, \PP)$.  This is allows us to
connect seamlessly with the results
in~\cite{FoldesGlattHoltzRichardsThomann2013} 
concerning the spectral properties of the Malliavin covariance matrix
$M_t(U_0, W)$ corresponding to $\phi_t(U_0, W)$.  
\end{remark}

\begin{proof}[Proof of Corollary~\ref{cor:b:pos}]
Conclusion (1) of the result follows immediately by combining
Theorem~\ref{thm:cntrl:BE} and Proposition~\ref{prop:b:structure} with
Lemma~\ref{l:control}.   Regarding the second item (2), the existence of 
an invariant measure $\mu$ is established with standard energy estimates
and the Krylov-Bogoliubov averaging procedure.  For the question of 
the uniqueness of $\mu$, we rely 
on Corollary~\ref{cor:uniq:criteria}. 
Here the asymptotic strong Feller condition \eqref{eq:ASF}
follows precisely as in \cite[Proposition 2.6]{FoldesGlattHoltzRichardsThomann2013}.\footnote{Strictly speaking,
\cite{FoldesGlattHoltzRichardsThomann2013} establishes~\eqref{eq:ASF}
without the inhomogeneous term $h^0$.  However, this additional term does not 
introduce further complications for establishing the non-degeneracy condition.}
The second requirement of Corollary~\ref{cor:uniq:criteria}, the approximate controllability condition, 
is precisely the content of Theorem~\ref{thm:cntrl:BE}.

Finally to establish conclusion (3), we combine 
Theorem~\ref{thm:cntrl:BE} and Proposition~\ref{prop:b:structure} with Theorems~
\ref{prop:exod}, \ref{thm:pos} from Section~\ref{sec:app:SPDEs}.
Fixing $t>0$ and $U_0 \in L^2$ and applying
Theorem~4.1 of~\cite{FoldesGlattHoltzRichardsThomann2013} with
Remark~\ref{rem:malliavin:spec} of this paper, we find that the
Malliavin matrix $M_s(U_0, W)$ associated with (\ref{eq:b:1})-(\ref{eq:b:2}) 
is non-degenerate for any $0<s\leq t$.   The fact that the random variable
$\pi \phi_t(U_0, W)$ is absolutely continuous with respect to Lebesgue measure follows 
by combining Theorem~\ref{thm:cntrl:BE}, Proposition~\ref{prop:b:structure} and 
Theorem~\ref{prop:exod}.  Regarding the claim that the associated density is almost surely
positive, we fix any $0<s<t$ so that $M_s(U_0, W)$ is non-degenerate.  This then implies the
existence of a fixed deterministic path $V\in \Omega$ such that the Gramian matrix
$M_s(U_0, V)$ is non-degenerate and Theorem~\ref{thm:pos} applies.  The proof is now complete.

\end{proof}

The remainder of this section is devoted to establishing Theorem~\ref{thm:cntrl:BE}.  Before diving
into this proof, we introduce some further notation which 
eases the presentation below and allows us to connect to the setup
presented in \cite{FoldesGlattHoltzRichardsThomann2013}.

\subsubsection*{Notation}  
For $U=(\Vort, \Th) \in \R^2$, we define mappings
$\pi_\Vort, \pi_\Th: \R^2 \rightarrow \R $ by $\pi_\Vort U = \Vort$ and $\pi_\Th U = \Th$. 
On the other hand for $\alpha \in \R$, we let $\iota_\Vort, \iota_\Th : \R\rightarrow \R^2$ be given by 
\begin{align*}
\iota_\Vort \alpha = \begin{pmatrix} \alpha \\
0 \end{pmatrix}\,\,\text{ and } \,\, \iota_\Th \alpha = \begin{pmatrix} 0 \\ \alpha
\end{pmatrix}.   
\end{align*}
For $U=(\Vort, \Th), \, \tilde{U}= (\tilde{\Vort}, \tilde{\Th}):\TT^2\rightarrow \R^2$ sufficiently smooth, we define the following operators
\begin{align}
  &A U := -\nu \iota_\Vort \Delta \Vort - \kappa \iota_\Th \Delta \Th,
  \notag \\ 
  &B(U,\tilde{U}) := \iota_\Vort  [  (\mathcal{K} \ast \Vort) \cdot \nabla \tilde{\Vort} ] 
      + \iota_\Th[ (\mathcal{K} \ast \Vort) \cdot \nabla \tilde{\Th}],
    \label{eq:buca:n:lin:abs}\\
   &GU = -g \iota_\Vort \partial_x \Th,  
  \notag
\end{align}
where recall that $\mathcal{K}$ denotes the Biot-Savart kernel and $\nu, \kappa, g > 0$
are the positive constants defined in \eqref{eq:b:1}--\eqref{eq:b:2}.  We let  
\begin{align}
  F(U) = AU + B(U, U) + G U.   
  \label{eq:abs:LHS:BE}
\end{align}
For the basis elements, we write 
\begin{align}
  \sigma_k^0(x) =
   \iota_\Th \cos(k\cdot x)  : =
   	\iota_\Th e_k^0( x),
   \quad
   \sigma_k^1(x) = \iota_\Th \sin(k \cdot x)    :=
   	\iota_\Th e_k^1( x),  
  \label{eq:BE:basis:1}
\end{align}
and
\begin{align}
  \psi_k^0(x) =
 \iota_\Vort   \cos(k\cdot x)
       :=
   	\iota_\Vort e_k^0( x), 
     \quad
  \psi_k^1(x) = 
\iota_\Vort  \sin(k \cdot x)
  := \iota_\Vort e_k^1( x),
    \label{eq:BE:basis:2}
\end{align}
where $k \in \Z^2$ and $x \in \TT^2$.   For $N \geq 1$, we take 
\begin{align}
  H_N = \{\psi_k^j, \sigma_k^j : |k| \leq N, j \in \{0,1\}\}
  \label{eq:proj:sp:buca}
\end{align}
and take $P_N$ to be the projection onto this subspace of $H^m$ for $m \geq 0$.
Combining these notations, observe that we may rewrite \eqref{eq:b:1}--\eqref{eq:b:2} in an abbreviated fashion as
\begin{align}
  \frac{d}{dt} U + F(U) = \iota_\Th h^0+ \iota_\Th  (\sigma \cdot \partial_t V), \quad U(0) = U_0.
  \label{eq:be:abs}
\end{align}

\subsubsection*{Proof of the main control result}

With these preliminaries in hand we now prove Theorem \ref{thm:cntrl:BE}. 
As in the reaction-diffusion and 2D Navier-Stokes examples, Theorem~\ref{thm:cntrl:BE}
is established via a suitable sequence of scaling lemmata.  In this example, 
however, the path taken to produce new directions inductively using the nonlinearity is 
different than the one taken for the reaction-diffusion equation,
hence different than the one discussed in the hueristics section (Section~\ref{sec:overview}).  

We proceed by first stating the scaling lemmata without proof. 
We then combine them and leverage Corollary~\ref{cor:exact:con}  
to prove Theorem~\ref{thm:cntrl:BE}. 
The section concludes by proving each of the scaling estimates
based on energy bounds and commutator estimates.  

First, we state the pure noise scaling estimate which 
starts the inductive generation of controllable directions       
\begin{lemma}\label{lem:0:conv}
Fix $m \geq 0$, $t>0$ and suppose that $K_1\subseteq H^{m}$
and $K_2 \subseteq X_0(m)$ are compact sets where recall that $X_0(m)$ is
defined in \eqref{eq:init:sp}.  Then  
\begin{align}
	\lim_{\lambda \to \infty} \sup_{U_0 \in K_1, h \in K_2}  \|  \SGf^{\lambda h}_{t/\lambda} U_0 - \rho_t^{h} U_0\|_{H^m} = 0.
  \label{eq:buca:first:sc}
\end{align}
where, as usual $\rho$ denotes the ray semigroup \eqref{eq:ray:sg}.
Consequently, we have that $(\rho, X_0(m)) \in \text{\emph{Sat}}_u(\mathfrak{F}(m)))$
with $\mathfrak{F}(m)$ defined as in \eqref{eq:para:SE:BE}.
\end{lemma}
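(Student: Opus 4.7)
The plan is to follow the template of the reaction-diffusion scaling estimate (Lemma~\ref{lem:sc:lem:1:RD}), adapted to the $H^m$ topology and the structure of the Boussinesq nonlinearity $F = A + B + G$. First, set $u_\lambda(s) := \SGf^{\lambda h}_{s/\lambda} U_0$ and rescale time to see that
\begin{align*}
\partial_s u_\lambda = h + \frac{1}{\lambda}\bigl(\iota_\Th h^0 - F(u_\lambda)\bigr), \qquad u_\lambda(0) = U_0,
\end{align*}
while the target trajectory satisfies $\partial_s \rho_s^h U_0 = h$. Since $h \in X_0(m)$ is already $C^\infty$, the only issue is that $U_0$ need only lie in $H^m$. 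As in the reaction-diffusion argument, introduce a Galerkin cutoff by writing $w_N(s) := P_N U_0 + s h$ (with $P_N$ the spectral projection onto $H_N$ from \eqref{eq:proj:sp:buca}) and split
\begin{align*}
\|u_\lambda(t) - \rho_t^h U_0\|_{H^m} \leq \|\phi_\lambda^N(t)\|_{H^m} + \|U_0 - P_N U_0\|_{H^m},
\qquad \phi_\lambda^N := u_\lambda - w_N.
\end{align*}

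Next I would derive the evolution equation
\begin{align*}
\partial_s \phi_\lambda^N
 = \frac{1}{\lambda}\bigl(\iota_\Th h^0 - F(w_N + \phi_\lambda^N)\bigr),
  \qquad \phi_\lambda^N(0) = U_0 - P_N U_0,
\end{align*}
and run an $H^m$ energy estimate. Expanding $F(w_N + \phi_\lambda^N) = F(w_N) + A\phi_\lambda^N + G\phi_\lambda^N + B(w_N,\phi_\lambda^N) + B(\phi_\lambda^N,w_N) + B(\phi_\lambda^N,\phi_\lambda^N)$, the dissipative term $-\lambda^{-1}\langle \phi_\lambda^N, A\phi_\lambda^N\rangle_{H^m}$ yields a positive contribution proportional to $\lambda^{-1}\|\phi_\lambda^N\|_{H^{m+1}}^2$ (up to the Poincar\'e-style lower bound in $H$); the divergence-free structure of $\mathcal{K}\ast(\,\cdot\,)$ gives the cancellation $\langle \phi_\lambda^N, B(\phi_\lambda^N,\phi_\lambda^N)\rangle_{L^2}=0$, while the $H^m$ version produces standard commutators that are controlled by $\|\phi_\lambda^N\|_{H^m}^2$ via Sobolev embedding in $2D$; the cross terms $B(w_N,\phi_\lambda^N)$ and $B(\phi_\lambda^N,w_N)$ are bounded by $\|w_N\|_{H^{m+1}}\|\phi_\lambda^N\|_{H^m}^2$ plus a term absorbable by the dissipation; and the $w_N$-only piece $\|F(w_N)\|_{H^m}$ is finite because $w_N$ is a polynomial in $s$ with coefficients in $H^{m+2}$. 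Altogether one obtains
\begin{align*}
\frac{d}{ds}\|\phi_\lambda^N\|_{H^m}^2
\leq \frac{C}{\lambda}\bigl(1 + \|w_N\|_{H^{m+2}}^2\bigr)\bigl(1+\|\phi_\lambda^N\|_{H^m}^2\bigr),
\end{align*}
so that Gr\"onwall gives
\begin{align*}
\|\phi_\lambda^N(t)\|_{H^m}^2
\leq \Bigl(\|U_0-P_N U_0\|_{H^m}^2 + \tfrac{C_{t,N,h,h^0}}{\lambda}\Bigr)
       \exp\!\Bigl(\tfrac{C_{t,N,h}}{\lambda}\Bigr).
\end{align*}

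Finally, I would uniformize over the compact sets $K_1 \subseteq H^m$ and $K_2 \subseteq X_0(m)$ exactly as in \eqref{eq:cmp:sets:bnd}: cover $K_1$ by finitely many $\bar{\eps}$-balls around $v^{(1)}_0,\dots,v^{(M)}_0$, pick $N$ large enough that $\max_j \|v^{(j)}_0 - P_N v^{(j)}_0\|_{H^m} < \eps/4$, and then take $\lambda$ large enough to absorb the residual $\lambda^{-1}$ term (which depends only on $N$, $t$, $h^0$, and $\sup_{h\in K_2}\|h\|_{H^{m+2}} < \infty$). This yields \eqref{eq:buca:first:sc}. The uniform saturation statement $(\rho, X_0(m)) \in \text{Sat}_u(\mathfrak{F}(m))$ then follows by taking the continuous functions $f_k$ in Definition~\ref{def:uni:sat} to be the identity $h \mapsto h$, since the time $t/\lambda$ is deterministic and the estimate is already uniform in $(U_0,h) \in K_1\times K_2$. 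The main obstacle will be carefully handling the $H^m$ energy estimate: controlling $B(\phi_\lambda^N,\phi_\lambda^N)$ and $B(w_N,\phi_\lambda^N)$ at regularity $m\geq 1$ requires invoking commutator estimates and using the dissipative $A$-term to absorb top-order derivatives, and one must make sure the resulting constants remain $\lambda$-independent so that the $\lambda^{-1}$ prefactor ultimately drives the bound to zero.
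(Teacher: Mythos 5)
Your decomposition $\phi_\lambda^N := u_\lambda - w_N$ with Galerkin cutoff $P_N$, and the final covering-plus-$N$-selection argument, follow the same route the paper takes. The central gap is in the claimed $H^m$ energy inequality. You write
\begin{align*}
\frac{d}{ds}\|\phi_\lambda^N\|_{H^m}^2
\leq \frac{C}{\lambda}\bigl(1 + \|w_N\|_{H^{m+2}}^2\bigr)\bigl(1+\|\phi_\lambda^N\|_{H^m}^2\bigr),
\end{align*}
which is linear in the energy $\|\phi_\lambda^N\|_{H^m}^2$. This cannot be derived for $m\geq 1$. The quadratic term $B(\phi_\lambda^N,\phi_\lambda^N)$ does vanish against $\phi_\lambda^N$ in the $L^2$ pairing, but the $H^m$ commutator it generates is inherently cubic in the $H^m$ norm, and—this is the crucial point you have overlooked—the dissipative term $A$ that would normally absorb the leading-order piece also carries the same prefactor $1/\lambda$. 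You therefore cannot use parabolic smoothing to trade nonlinear terms against dissipation in a $\lambda$-independent way. After Young's inequality the honest outcome is a superlinear right-hand side; the paper records it (crudely) as $\tfrac{C}{\lambda}\bigl(\|V_\lambda\|_{H^m}^{6} + \cdots\bigr)$ in \eqref{eq:b:sc:1:h:f}. For $m=0$ your linear bound is correct because there is no commutator, and the paper indeed closes that case with ordinary Gr\"onwall.

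The consequence of the superlinearity is more than a cosmetic exponent change: a differential inequality of the form $\dot x \leq \tfrac{c_0}{\lambda}(x^p + \kappa_0)$ with $p>1$ can blow up in finite time, so the resulting bound on $\|\phi_\lambda^N(t)\|_{H^m}^2$ is only valid up to a $\lambda$-dependent existence time $T^*_\lambda$. The paper handles this with the ODE comparison Lemma~\ref{lem:comp:lem:app} (and Proposition~\ref{prop:odecomp}), which yields the explicit bound in terms of $R_\lambda(t,\gamma)$ and the threshold $T^*_\lambda(\gamma)$; one must then check that $T^*_\lambda \to \infty$ and $R_\lambda \to 1$ as $\lambda \to \infty$ uniformly over the compact sets, which is exactly what makes the covering argument go through. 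Without this ingredient, your Gr\"onwall step as stated is unjustified and the argument does not close for $m\geq 1$. Finally, a small imprecision: in the uniform saturation step, the continuous map $f_k$ of Definition~\ref{def:uni:sat} should be $h\mapsto \lambda h$ (paired with the time $t/\lambda$), not the identity.
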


For the next scaling estimate, fixing $\alpha \in \R$ we introduce the following dynamics defined by the equation
\begin{align}
\label{eq:bous:fna}
\partial_t U = \alpha\begin{pmatrix}
 g \partial_x e_j^\ell \\
   \kappa \Delta e_j^\ell  - b(\pi_\Vort U, e_j^\ell)  
  \end{pmatrix} = -A \alpha \sigma_j^\ell - G \alpha\sigma_j^\ell - B(\alpha U, \sigma_j^\ell), \qquad U(0)= U_0,
  \end{align}
  where
  \begin{align}
  b(\Vort, \Th) = (\mathcal{K} \ast \Vort)\cdot \nabla \Th,
    \label{eq:buca:ad:not}
  \end{align}
  with $\ell \in \{ 0,1\}$, $j \in \Z^2_+$ and the elements $e_j^\ell$, $\sigma_j^\ell$
  are the sinusoidal directions defined in \eqref{eq:BE:basis:1}.
  One can readily check that for any $U_0 \in
  H^m$, equation~\eqref{eq:bous:fna} 
  has a unique global solution belonging to $H^m$.  Furthermore, using
  cancelations like~\eqref{eq:rel:deg:NSE}
  one infers that the solution of \eqref{eq:bous:fna} is explicitly given by 
\begin{align}
    \Gamma_t^{\alpha \sigma_j^\ell}U_0 :=  U(t) = U_0 + t  \alpha \begin{pmatrix}
 g \partial_x e_j^\ell\\
   \kappa \Delta e_j^\ell - b(\pi_\Vort U_0, e_j^\ell)  
  \end{pmatrix}.
  \label{eq:l:evol:bc:brak}
\end{align}
Thus, for each $\ell \in \{ 0,1\}$, $j \in \Z^2_+$ 
and $m \geq 0$, $(\Gamma, \{ \alpha \sigma_j^\ell: \alpha \in \RR\})$ defines a
one-parameter family of global semigroups on $H^m$ according
to \eqref{eq:l:evol:bc:brak}.

\begin{lemma}
\label{lem:sa:bouseq}
Let $m \geq 0$, $t > 0$ and fix $j\in \Z_+^2$, $\ell \in \{0,1\}$.  Also, let $K_1\subseteq H^m$ and 
$K_2 \subseteq \{\alpha \sigma_j^\ell \, : \, \alpha \in \R \}$ be compact.  Then 
\begin{align}
	\lim_{\lambda \to \infty} \sup_{U_0 \in K_1, \phi \in K_2}\| \rho^{-\lambda^2  \iota_\Th \phi}_{\lambda^{-1}}\, \Phi^{0}_{t/\lambda} \, \rho^{\lambda^2 \iota_\Th \phi}_{\lambda^{-1}}U_0 
	- \Gamma_t^{\phi} U_0\|_{H^m} =  0.
  \label{eq:m:buca:brak:estJ}
\end{align}
\end{lemma}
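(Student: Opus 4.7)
The plan is to follow the template of Lemma~\ref{lem:sc:lem:2:RD}, adapted to exploit the algebraic structure specific to the Boussinesq nonlinearity. The key cancellation is
\begin{align*}
B(\sigma_j^\ell, X) = 0 \qquad \text{for every } X \in H,
\end{align*}
which holds because $\pi_\Vort \sigma_j^\ell = 0$, so the Biot--Savart velocity $\mathcal{K} \ast \pi_\Vort \sigma_j^\ell$ vanishes identically. This is precisely the realization of the ``relative degree'' phenomenon from Remark~\ref{rem:even} in this setting: although the Boussinesq nonlinearity is quadratic, after the shift $U_0 \mapsto U_0 + \lambda\iota_\Th\phi$ the effective nonlinear forcing is only $O(\lambda)$ rather than $O(\lambda^2)$, which is why the $t/\lambda$ time scaling (not $t/\lambda^2$) is the correct one.

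Writing $\phi = \alpha\sigma_j^\ell$ and observing that $\rho^{\lambda^2 \iota_\Th\phi}_{\lambda^{-1}}U_0 = U_0 + \lambda\iota_\Th\phi$, I would first set
\begin{align*}
W_\lambda(t) := \Phi^0_{t/\lambda}(U_0 + \lambda\iota_\Th\phi) - \lambda\iota_\Th\phi.
\end{align*}
Using the expansion $B(W_\lambda + \lambda\iota_\Th\phi, W_\lambda + \lambda\iota_\Th\phi) = B(W_\lambda, W_\lambda) + \lambda\alpha\iota_\Th b(\pi_\Vort W_\lambda, e_j^\ell)$ (which relies on the cancellation above), I would derive
\begin{align*}
\partial_t W_\lambda = -\tfrac{1}{\lambda}F(W_\lambda) - \alpha\iota_\Th b(\pi_\Vort W_\lambda, e_j^\ell) + \alpha\kappa\iota_\Th\Delta e_j^\ell + \alpha g\iota_\Vort\partial_x e_j^\ell, \quad W_\lambda(0) = U_0.
\end{align*}
The formal $\lambda \to \infty$ limit is precisely \eqref{eq:bous:fna}, whose explicit solution is $W(t) := \Gamma^\phi_t U_0$ from \eqref{eq:l:evol:bc:brak}; the verification of the explicit formula itself uses $b(\partial_x e_j^\ell, e_j^\ell) = 0$, which follows from $j^\perp \cdot j = 0$.

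To promote this formal limit to a rigorous estimate uniform on the compact sets $K_1, K_2$, I would introduce a finite-mode truncation of the initial data, paralleling the role of $\pi_N$ in the proof of Lemma~\ref{lem:sc:lem:2:RD}. Let $P_N$ denote the projection onto $H_N$ from \eqref{eq:proj:sp:buca} and set $W^N(t) := \Gamma^\phi_t P_N U_0$, which lies in $H^r$ for every $r \geq 0$ with norms growing polynomially in $N$. Triangulating $W_\lambda - W = (W_\lambda - W^N) + (W^N - W)$, the second summand tends to zero in $H^m$ uniformly over $U_0 \in K_1$ as $N \to \infty$. For $Y^N_\lambda := W_\lambda - W^N$, splitting $F(W_\lambda) = F(W^N) + [F(W_\lambda) - F(W^N)]$ and using the decomposition $B(W_\lambda, W_\lambda) - B(W^N, W^N) = B(W_\lambda, Y^N_\lambda) + B(Y^N_\lambda, W^N)$, the $H^m$ energy estimate on $Y^N_\lambda$ produces the dissipation $\tfrac{C_0}{\lambda}\|Y^N_\lambda\|_{H^{m+1}}^2$ on the left, a driving term of size $\tfrac{C_N}{\lambda}$ on the right from $\tfrac{1}{\lambda}F(W^N)$ (which is bounded in $H^m$ by the smoothness of $W^N$), and transport-type bilinear terms absorbed via Kato--Ponce commutator estimates combined with the divergence-free $L^2$ cancellation $\langle Y^N_\lambda, B(W_\lambda, Y^N_\lambda)\rangle = 0$. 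Gronwall then yields $\sup_{s \leq t}\|Y^N_\lambda(s)\|_{H^m} \leq C_N/\lambda$, where $C_N$ depends polynomially on $N$ and continuously on $\|U_0\|_{H^m}$ and $\alpha$; choosing first $N$ large enough to make the triangulation error less than $\eps/2$ uniformly over $K_1$, and then $\lambda$ large depending on $N$, completes the argument.

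The main obstacle will be the $H^m$ commutator estimates for the bilinear terms $B(W_\lambda, Y^N_\lambda)$ and $B(Y^N_\lambda, W^N)$, along with establishing uniform-in-$\lambda$ a priori $H^m$ bounds on $W_\lambda$ itself. Both rely on Kato--Ponce/Moser-type inequalities in 2D combined with the $L^2$ cancellation $\langle W, B(W,W)\rangle = 0$ from incompressibility at the top order, with the Gronwall arguments closing on the fixed time interval $[0,t]$ because the external forcing terms on the right-hand side of the $W_\lambda$ equation are independent of $\lambda$.
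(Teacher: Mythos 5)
Your proposal is correct and follows essentially the same route as the paper's proof: the same cancellation $B(\iota_\Th \phi, \,\cdot\,) = 0$ coming from $\pi_\Vort \phi = 0$, the same limiting equation \eqref{eq:bous:fna} with explicit solution $\Gamma_t^{\phi}$, and the same combination of Fourier truncation of the initial data, a covering of the compact sets, and a Gronwall-type closing argument. The only difference is bookkeeping: the paper estimates $V_\lambda = W_\lambda - \Gamma$ directly and closes the resulting superlinear differential inequality (the $\tfrac{C}{\lambda}\|V_\lambda\|_{H^m}^6$ term) with the local ODE comparison of Lemma~\ref{lem:comp:lem:app}, after factoring out $e^{Ct}$ to absorb the order-one term $B(V_\lambda,\phi)$, rather than first establishing uniform-in-$\lambda$ a priori bounds on $W_\lambda$ and then applying linear Gronwall to the difference.
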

\begin{remark}
  Observe that although $F$ defined by \eqref{eq:abs:LHS:BE} is a second-degree polynomial, 
  the second-order terms that appear in the expansion governing 
  $W_\lambda(t) = \rho^{-\lambda^2 \iota_\Th \phi}_{\lambda^{-1}} \SGf^{0}_{t/\lambda}  \rho^{\lambda^2 \iota_\Th \phi}_{\lambda^{-1}}U_0$ 
  are zero since $B(\sigma^l_k, \sigma^l_k) = 0$ for any $l \in \{0,1\}$, $k \in \Z^2_+$.  See 
  \eqref{eq:be:sc:2:evol} below and recall (\ref{eq:rs:nLin:twit}) in the heuristics section above.
\end{remark}

Our next scaling `estimate' is somewhat surprising given that it produces an exact formula.   See
\cite[Lemma~5.1]{FoldesGlattHoltzRichardsThomann2013} and the surrounding computations.  Note that 
particular composition of $\Gamma$'s below is motivated by the definition 
of the Lie bracket between two vector fields, for it immediately follows from \eqref{eq:l:evol:bc:brak} 
that $\Gamma^{-\alpha \sigma_k^n}_t= \Gamma_{-t}^{\alpha \sigma_k^n}$.       

\begin{lemma}
\label{lem:BE:exact}
Let $m \geq 0$.  Then for any $U_0 \in H^m$, $k, j \in \Z^2_+$, $\ell, n \in \{0,1\}$ and 
any $\alpha, \beta \in \RR$,
\begin{align}
\Gamma_t^{-\alpha \sigma_k^n} \Gamma_t^{-\beta \sigma_j^\ell}\Gamma_t^{\alpha \sigma_k^n}\Gamma_t^{\beta \sigma_j^\ell} U_0
= U_0 + t^2 \alpha \beta [[F, \sigma_j^\ell] , [F, \sigma_k^n]],
\end{align}
for every $t \geq 0$ where
\begin{align}
  [[F, \sigma_j^\ell] , [F, \sigma_k^n]] =  
  g \begin{pmatrix} 
    0 \\
    b(\partial_x e_k^n, e_j^\ell) - b(\partial_x e_j^\ell, e_k^n) 
  \end{pmatrix}.  
  \label{eq:new:Th:dir}
\end{align}
\end{lemma}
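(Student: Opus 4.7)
The plan is to prove this by direct computation, iterating the explicit formula \eqref{eq:l:evol:bc:brak} for $\Gamma_t^{\alpha \sigma_j^\ell}$ through each of the four compositions. The payoff comes because the map $U \mapsto \Gamma_t^{\alpha \sigma_j^\ell}U$ is affine in $U$: only the $\Th$-component depends on the state, and only through the linear functional $\pi_\Vort U$. Concretely, I will write
\begin{align*}
  \Gamma_t^{\alpha \sigma_j^\ell}U = U + t\alpha \, v_j^\ell(U),
  \qquad v_j^\ell(U) := w_j^\ell - \iota_\Th b(\pi_\Vort U, e_j^\ell),
\end{align*}
where $w_j^\ell := \iota_\Vort g \partial_x e_j^\ell + \iota_\Th \kappa \Delta e_j^\ell$ is independent of $U$. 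The crucial fact to be recorded at the outset is that $\pi_\Vort v_j^\ell(U) = g \partial_x e_j^\ell$, independent of $U$, so the vorticity component of the state evolves by pure affine shifts under each $\Gamma$ while all state-dependent terms land in the $\Th$-slot.

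Next I will process the four flows in order, writing $U_1 = \Gamma_t^{\beta \sigma_j^\ell} U_0$, $U_2 = \Gamma_t^{\alpha \sigma_k^n} U_1$, $U_3 = \Gamma_t^{-\beta \sigma_j^\ell} U_2$, and $U_4 = \Gamma_t^{-\alpha \sigma_k^n} U_3$. The first two steps give
\begin{align*}
   U_2 = U_0 + t\beta v_j^\ell(U_0) + t\alpha v_k^n(U_0) - t^2\alpha\beta g\,\iota_\Th b(\partial_x e_j^\ell, e_k^n),
\end{align*}
since expanding $v_k^n(U_1)$ produces an extra cross term $-\iota_\Th b(t\beta g\partial_x e_j^\ell, e_k^n)$ beyond $v_k^n(U_0)$. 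At this point $\pi_\Vort U_2 = \pi_\Vort U_0 + t\beta g \partial_x e_j^\ell + t\alpha g \partial_x e_k^n$.

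For step three I will need the single algebraic identity
\begin{align*}
  b(\partial_x e_m^p, e_m^p) = 0 \qquad \text{for all } m \in \Z^2_+,\; p \in \{0,1\},
\end{align*}
which follows from the same Biot-Savart orthogonality computation as in \eqref{eq:rel:deg:NSE}: writing $\partial_x e_m^p = \pm m_1 e_m^{1-p}$, one checks $(\mathcal{K}\ast e_m^{1-p}) \cdot \nabla e_m^p \propto m^\perp \cdot m = 0$. Using this, the $t\beta$-contribution to $v_j^\ell(U_2)$ vanishes and only the cross term survives, giving
\begin{align*}
  U_3 = U_0 + t\alpha v_k^n(U_0) + t^2\alpha\beta g\,\iota_\Th\bigl[ b(\partial_x e_k^n, e_j^\ell) - b(\partial_x e_j^\ell, e_k^n) \bigr].
\end{align*}
Finally in step four, the same cancellation identity yields $v_k^n(U_3) = v_k^n(U_0)$, since $\pi_\Vort U_3 - \pi_\Vort U_0$ is a multiple of $\partial_x e_k^n$. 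Consequently the term $t\alpha v_k^n(U_0)$ cancels and the $O(t^2)$ term is preserved, producing the claimed identity.

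There is no genuine obstacle here, only bookkeeping: the main point is to notice and exploit the $U$-independence of $\pi_\Vort v_j^\ell$ and the sinusoidal cancellation $b(\partial_x e_m^p, e_m^p) = 0$, both of which are the analogues of \eqref{eq:rel:deg:NSE} that were flagged in Remark~\ref{rmk:rel:deg:2DNSE}. Everything else collapses by direct substitution, and the resulting formula agrees with the Lie-bracket expression \eqref{eq:new:Th:dir}. No estimates are required since $\Gamma$ is given by the closed-form expression \eqref{eq:l:evol:bc:brak}, making the equality exact rather than asymptotic.
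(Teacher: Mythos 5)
Your proposal is correct and follows essentially the same route as the paper: iterate the closed-form expression \eqref{eq:l:evol:bc:brak}, track the state-dependent $\Th$-terms through the compositions, and invoke the cancellation $b(\partial_x e_m^p, e_m^p)=0$ (the analogue of \eqref{eq:rel:deg:NSE}) to kill the quadratic self-interactions. The only cosmetic difference is that the paper first records the two-fold composition and applies it twice, whereas you process all four flows one at a time; the algebraic content is identical.
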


We recall that 
\begin{align}
  [G_1, G_2] = D G_2 G_1 - D G_1 G_2
  \label{eq:Lie:the:Brak}
\end{align}
is the \emph{Lie Bracket} of $C^1$ vector fields $G_i: H^m \to H^m$.
The computation yielding \eqref{eq:new:Th:dir} is detailed in 
\cite[Section~5.1]{FoldesGlattHoltzRichardsThomann2013}.

Lastly, we note the following scaling result, whose proof we omit as
it is nearly identical to the proof of Lemma~\ref{lem:sa:bouseq}.
This will allow us to generate all nontrivial rays in basis vorticity
directions.

\begin{lemma}
\label{lem:BE:fourthapprox}
Fix $m \geq 0$, $j, k \in \Z^2_+$ and $\ell,n \in \{ 0, 1\}$
and $t> 0$. Let $K_1\subseteq H^m$ be compact and $0 < N < \infty$.  
Then we have
\begin{align*}
  \lim_{\lambda \to \infty} \sup_{U_0 \in K_1, |\alpha|+|\beta|\leq N }
  \| \rho^{- \lambda^2(\alpha \psi_j^l + \beta \psi_k^n)}_{\lambda^{-1}} \Phi_{t/\lambda^2}^0 \rho^{\lambda^2 (\alpha \psi_j^l + \beta \psi_k^n)} _{\lambda^{-1}}
  U_0-  \rho^{ \alpha \beta  [[F, \psi_k^\ell], \psi_j^n]}_t U_0 \|_{H^m} = 0
 \end{align*}
 where, cf. \eqref{eq:Lie:the:Brak},
 \begin{align*}
 [[F, \psi_k^\ell], \psi_j^n]: 
   = \iota_\Vort [(\mathcal{K} * e_k^\ell) \cdot \nabla e_j^n + (\mathcal{K}* e_j^n) \cdot \nabla e_k^\ell]
  \end{align*} 
with $F$ defined according to \eqref{eq:abs:LHS:BE} and the elements $\psi$ and $e$ are defined by \eqref{eq:BE:basis:2}.
\end{lemma}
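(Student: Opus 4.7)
\textbf{Proof plan for Lemma~\ref{lem:BE:fourthapprox}.} The approach follows the blueprint of Lemma~\ref{lem:sa:bouseq}, differing principally in the choice of time scale $t/\lambda^2$ (rather than $t/\lambda$) which is now dictated by a non-vanishing quadratic term $B(h,h)$.

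First, writing $h = \alpha \psi_j^\ell + \beta \psi_k^n \in H^m$ and
\begin{align*}
  W_\lambda(t) := \rho^{-\lambda^2 h}_{\lambda^{-1}} \Phi^0_{t/\lambda^2} \rho^{\lambda^2 h}_{\lambda^{-1}} U_0,
\end{align*}
a direct calculation (exactly as in the derivation of~\eqref{eq:be:sc:2:evol} for Lemma~\ref{lem:sa:bouseq}) gives $W_\lambda(0) = U_0$ together with the rescaled evolution
\begin{align*}
  \partial_t W_\lambda + \frac{1}{\lambda^2} F(W_\lambda + \lambda h) = \frac{1}{\lambda^2} \iota_\Th h^0.
\end{align*}
Using the decomposition $F = A + B + G$ from~\eqref{eq:abs:LHS:BE} and the bilinearity of $B$, expanding $F(W_\lambda + \lambda h)$ and dividing by $\lambda^2$ yields
\begin{align*}
  \partial_t W_\lambda = -B(h,h)
   - \tfrac{1}{\lambda}\bigl[ Ah + Gh + B(W_\lambda, h) + B(h, W_\lambda)\bigr]
   - \tfrac{1}{\lambda^2}\bigl[ AW_\lambda + GW_\lambda + B(W_\lambda, W_\lambda) - \iota_\Th h^0 \bigr].
\end{align*}

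Next, the crucial algebraic step: the Navier-Stokes-type cancellation $(\mathcal{K}\ast e_j^\ell) \cdot \nabla e_j^\ell = 0$ (cf.~\eqref{eq:rel:deg:NSE}) forces $B(\psi_j^\ell, \psi_j^\ell) = 0 = B(\psi_k^n, \psi_k^n)$, so only the cross terms survive:
\begin{align*}
  B(h,h) = \alpha \beta \bigl[ B(\psi_j^\ell, \psi_k^n) + B(\psi_k^n, \psi_j^\ell) \bigr].
\end{align*}
Applying the Lie bracket convention \eqref{eq:Lie:the:Brak} to the constant vector fields $\psi_k^\ell, \psi_j^n$ and using $DF(U)\psi = A\psi + G\psi + B(\psi, U) + B(U, \psi)$, one verifies that the right-hand side above equals (up to sign) $\alpha\beta[[F,\psi_k^\ell],\psi_j^n]$, matching the formula stated in the lemma. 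Hence the formal $\lambda\to\infty$ balance is the constant-in-$W$ right-hand side giving the ray $\rho_t^{\alpha\beta[[F,\psi_k^\ell],\psi_j^n]}U_0$.

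To make this precise, define the error
\begin{align*}
   \phi_\lambda^N(t) := W_\lambda(t) - \rho_t^{\alpha\beta[[F,\psi_k^\ell],\psi_j^n]} P_N U_0,
\end{align*}
with $P_N$ the projection onto the first $N$ Fourier modes (cf.~\eqref{eq:proj:sp:buca}). Substituting into the equation above and proceeding exactly as in the proofs of Lemma~\ref{lem:sc:lem:2:RD} and Lemma~\ref{lem:sa:bouseq}, an $H^m$ energy estimate for $\phi_\lambda^N$ combined with Gr\"onwall's inequality yields a bound of the form
\begin{align*}
  \sup_{s \in [0,t]} \|\phi_\lambda^N(s)\|_{H^m} \leq C\left( \|U_0 - P_N U_0\|_{H^m} + \frac{Q(|\alpha|+|\beta|, N, \|U_0\|_{H^m})}{\lambda} \right)
\end{align*}
where $C = C(t, m, \nu, \kappa, g, h^0, j, k)$ and $Q$ is a polynomial. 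A standard finite-cover argument applied to the compact set $K_1 \subseteq H^m$, together with the uniformity of the polynomial bound in $|\alpha|+|\beta|\leq N$, then permits choosing $N$ to make the truncation term uniformly small and finally letting $\lambda \to \infty$, yielding \eqref{eq:m:buca:brak:estJ}.

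The main obstacle, as in the reaction--diffusion case, is controlling the bilinear corrections $(1/\lambda)[B(W_\lambda, h) + B(h, W_\lambda)]$ in $H^m$: since $h$ is a trigonometric polynomial of frequencies $|j|,|k|$ its $H^s$ norms are fixed, so the $H^m$-algebra property together with the standard transport cancellation $\langle \mathcal{K}\ast\omega \cdot \nabla v, v\rangle_{H^m} = \text{(commutators)}$ reduces these contributions to ones controlled by $\|W_\lambda\|_{H^m}$. Verifying the requisite uniform-in-$\lambda$ a priori bound $\sup_{s\in[0,t]}\|W_\lambda(s)\|_{H^m} \lesssim 1 + \|U_0\|_{H^m}$ is then straightforward, since the damping and forcing in the rescaled equation are $O(1/\lambda^2)$ while the $O(1)$ right-hand side $-B(h,h)$ is a fixed smooth function.
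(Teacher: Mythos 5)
Your proposal is correct and takes essentially the same approach as the paper, which omits the proof precisely because it is ``nearly identical to the proof of Lemma~\ref{lem:sa:bouseq}''; you correctly identify the two changes required, namely the time scale $t/\lambda^2$ (dictated by the nonvanishing cross term in $B(h,h)$) and the treatment of the now-nonzero $O(1/\lambda)$ corrections $B(W_\lambda,h)+B(h,W_\lambda)$ via commutator estimates, and the Fourier-truncation/finite-cover step is the same device used in Lemmas~\ref{lem:0:conv} and~\ref{lem:sa:bouseq}. The ``up to sign'' remark is apt: the formal limit from your evolution equation is $U_0 - tB(h,h) = U_0 - t\alpha\beta[[F,\psi_k^\ell],\psi_j^n]$, so the lemma statement as printed carries a sign (and superscript-pairing) typo, but this is harmless since one can replace $\alpha$ by $-\alpha$ in the supremum and only the span of bracket directions is used downstream.
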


With the scaling Lemmas~\ref{lem:0:conv},~\ref{lem:sa:bouseq},~\ref{lem:BE:exact} and \ref{lem:BE:fourthapprox}
in hand we now proceed to combine these bounds to prove Theorem~\ref{thm:cntrl:BE}

\begin{proof}[Proof of Theorem~\ref{thm:cntrl:BE}]
  Once again, we show that the conditions 
  of Corollary~\ref{cor:exact:con} apply for a suitable sequence of
  subspaces $X_n$.  Let $m \geq 0$ and 
  observe that Lemma~\ref{lem:0:conv} implies that
  $(\rho, X_0(m)) \in \text{Sat}_u(\mathfrak{F}(m))$.  
  Let $Y_0^{\Th}(m)=X_0(m)$.  For $n\geq 1$ 
  we iteratively define 
  \begin{align*}
    Y^{\Th}_{n}(m)
    := \mbox{span} \left\{ Y_{n-1}^{\Th}(m) 
         \cup \{ [[F, \phi],[F, \tilde{\phi}]]\, : \, \phi, \tilde{\phi} \in
                           Y_{n-1}^{\Th}(m)\}  \right\}.  
  \end{align*}
  Invoking Proposition 5.2 from \cite{FoldesGlattHoltzRichardsThomann2013}, one may show
  inductively that, for each $n \geq 1$, $Y^{\Th}_n$ consists of spans of elements of the form $\sigma_j^\ell$ defined according to \eqref{eq:BE:basis:1}.  In particular, $Y_n^\Th(m)\subseteq H^m$.
  Combining Lemma~\ref{lem:sa:bouseq} with Lemma~\ref{lem:BE:exact}  
  and Remark~\ref{rmk:stupid:spans}, we conclude that
  \begin{align}
    (\rho, Y_n^\Th(m)) \in \text{Sat}_u(\mathfrak{F}(m)),
    \label{eq:ds:dirs}
  \end{align}
  for all $n\geq 1$.

  Now, according Remark 5.3 and Lemma 6.10 of~\cite{FoldesGlattHoltzRichardsThomann2013}, 
  observe that 
  \begin{align}
      \mbox{span}\{ \sigma_j^\ell : j \in \Z^2_+, \ell \in \{0,1\} \} \subseteq  \bigcup_{n\geq 0} Y_n^\Th(m).
    \label{eq:ds:dirs:span}
  \end{align}
  Thus, due to Lemma~\ref{lem:sa:bouseq}, we have that, cf. \eqref{eq:l:evol:bc:brak},
  \begin{align}
    (\Gamma, \{ \alpha \sigma_j^\ell: \alpha \in \RR\}) \in \text{Sat}_u(\mathfrak{F}(m)) 
        \text{ for every } j\in \Z_+^2, \ell \in \{0,1\}.    
    \label{eq:us:pl:junk:dirs}
  \end{align}
  Now since $\{ e_j^\ell = \pi_\Th \sigma_j^\ell : j \in \Z^2_+, \ell \in \{0,1\} \}$ 
  is a basis for $H^m(\TT^2)$ for any $m \geq 0$, we combine (\ref{eq:ds:dirs}) 
  and (\ref{eq:us:pl:junk:dirs}) with (\ref{eq:ds:dirs:span}) to deduce
  \begin{align}
     (\rho, \alpha \psi_j^\ell) \in \text{Sat}_u(\mathfrak{F}(m)) 
    \text{ whenever } j =(j_1, j_2) \in \Z^2_+, j_1 \not = 0, \ell \in \{0,1\}.
        \label{eq:us:mn:y:axis}
  \end{align}
  See Definition~\ref{def:uni:sat} above.  Note carefully that, due to
  the presence of the $\partial_x$ in~\eqref{eq:l:evol:bc:brak},  the 
  ray semigroups in \eqref{eq:us:mn:y:axis} omit the directions $\psi_j^\ell$
  along the $y$-axis where $j_1 = 0$.  To recover these missing directions, we 
  invoke Lemma~\ref{lem:BE:fourthapprox} and elementary trigonometric
  identities as in, e.g., \cite{HM06}.  Combining this observation with \eqref{eq:ds:dirs} and \eqref{eq:us:mn:y:axis} and invoking 
  Remark~\ref{rmk:stupid:spans}, we finally conclude that
  $(\rho, X_n(m)) \in \text{Sat}_u(\mathfrak{F}(m))$
  where
  \begin{align*}
  X_n(m) = 
    \span\left\{ \{ \psi_j^\ell : j \in \Z^2_+, |j| \leq n,  \ell \in \{0,1\}\} 
    \cup \{ \sigma_j^\ell : j \in \Z^2_+, |j| \leq n, \ell \in \{0,1\}\} \right\}.
  \end{align*}
  Since $\cup_{n \geq 1} X_n(m)$ is a dense subset of $H^m$, we now infer
  Theorem~\ref{thm:cntrl:BE} from Corollary~\ref{cor:exact:con}, 
  thus concluding the proof.
\end{proof}

We now turn to proving each scaling estimate.  

\begin{proof}[Proof of Lemma~\ref{lem:0:conv}] We begin by introducing the following shorthand notation 
\begin{align*}
  V_\lambda(t)= (\Bvl(t), \Btl(t))  = \SGf^{\lambda h}_{t/\lambda} U_0 - \rho_t^{\iota_\Th h} \tilde{U}_0\,\, 
  \text{ and } \,\, 
  \rho(t)= (\rhV(t), \rhT(t)) = \rho_t^{\iota_\Th h} \tilde{U}_0 .    
\end{align*}
Here choice of the initial condition $\tilde{U}_0$ is made precise
below.

Arguing as in \eqref{eq:rescal:0} and \eqref{eq:diff:ray:RD:rscl}, we obtain the following system
for $V_\lambda$:
\begin{align}
  &\pd_t \Bvl + \frac{1}{\lambda}[(\mathcal
  {K} \ast(\Bvl + \rhV)  \cdot \nabla (\Bvl + \rhV) - \nu \Delta (\Bvl + \rhV)]
    =  \frac{1}{\lambda}g \pd_x (\Btl + \rhT), 
    \label{eq:scaled:ge:L:vort}\\
  &\pd_t \Btl + \frac{1}{\lambda}[(\mathcal{K} \ast (\Bvl + \rhV)) \cdot \nabla (\Btl + \rhT) - \kappa \Delta (\Btl + \rhT)] 
    =  \frac{1}{\lambda} h^0  .  
  \label{eq:scaled:ge:L:th}
\end{align}
We begin by establishing \eqref{lem:0:conv} in the $L^2$ topology. 
Observe that 
\begin{align}
  \frac{1}{2} \frac{d}{dt}( \| \Bvl\|^2 + \|\Btl\|^2) 
     +& \frac{1}{\lambda}( \nu \| \nabla \Bvl\|^2 + \kappa \|\nabla \Btl\|^2)
  \notag\\
     &=  -\frac{1}{\lambda}\bigl(
       \langle (\mathcal{K} \ast(\Bvl + \rhV)  \cdot \nabla \rhV, \Bvl  \rangle
       +       
       \langle (\mathcal{K} \ast(\Bvl + \rhV)  \cdot \nabla \rhT, \Btl  \rangle
  \notag\\
       & \qquad \qquad +
       \nu \langle \nabla \Bvl ,\nabla \rhV\rangle
       +
       \kappa \langle \nabla \Btl , \nabla \rhT \rangle
       -
       \langle g \partial_x(\Btl + \rhT), \Bvl \rangle
       -
       \langle h^0, \Btl\rangle
       \bigr)
  \notag\\
     &= -\frac{1}{\lambda}(T_1^0 + T_2^0 + T_3^0 + T_4^0 + T_5^0 + T_6^0) .  
    \label{eq:b:sc:L2:evol}
\end{align}
With Agmon's inequality and the smoothing properties of the Biot-Savart kernel, we obtain
\begin{align}
  |T_1^0 + T_2^0| &\leq C \|\mathcal{K} \ast(\Bvl + \rhV)\|_{L^\infty} \|\rho\|_{H^1} \|V_\lambda\|
                 \leq C (\|\rho\|_{H^1}^2\|V_\lambda\| +\|\rho\|_{H^1}\|V_\lambda\| \|V_\lambda\|_{H^1})\\
                 &\leq C \|\rho\|_{H^1}^2(\|V_\lambda\|^2 + 1) + \frac{\nu}{2}\|\nabla  \Bvl \|^2 + \frac{\kappa}{2} \|\nabla \Btl\|^2 
                   \label{eq:b:sc:L2:1}
\end{align}
where $C$ does not depend on $\lambda$.  For the remaining terms we simply estimate
\begin{align}
  |T_3^0 + T_4^0 +T_5^0 +T_6^0| \leq 
      C(\|V_\lambda\|^2 +  \|\rho\|_{H^1}^2   +  \| h^0\|^2)  + \frac{\nu}{2}\|  \nabla \Bvl \|^2 + \frac{\kappa}{2} \| \nabla \Btl\|^2.
                   \label{eq:b:sc:L2:1}
\end{align}
Here again $C$ is independent of $\lambda$.  Combining the preceding two bounds with \eqref{eq:b:sc:L2:evol}
yields
\begin{align*}
  \frac{d}{dt} \|V_\lambda\|^2 
  \leq  \frac{C}{\lambda}[(\|\rho  \|^2_{H^1} + 1)\|V_\lambda\|^2 
                     + \|\rho\|_{H^1}^2   +  \| h^0\|^2].
\end{align*}
Hence, from Gr\"onwall's inequality and recalling the definition of $\rho$
we infer
\begin{align}
  \|V_\lambda(t) \|^2 
      &\leq \exp\left( C (\|h\|_{H^1}^2 + \|\tilde{U}_0\|^2 + 1) \right) 
  \left( \| U_0 - \tilde{U}_0 \|^2 + \frac{ 1 + \|h^0\|^2}{\lambda}  \right)
        \label{eq:gron:d:wall}
\end{align}
where note carefully that $C$ may depend on $t$ (and other universal quantities)
but is independent of  $\lambda$.  Arguing in the same fashion as in (\ref{eq:cmp:sets:bnd})
where $\tilde{U}_0$ is taken to be suitable Fourier truncation of $U_0$, now yields 
the desired bound \eqref{eq:buca:first:sc} for $m = 0$.

We next turn to estimates in $H^m$ for $m \geq 1$.  Here we have
\begin{align*}
  \frac{1}{2} \frac{d}{dt}( \| \Bvl\|^2_{H^m}& + \|\Btl\|^2_{H^m}) 
     + \frac{1}{\lambda}( \nu \| \Bvl\|^2_{H^{m+1}} + \kappa \|\Btl\|^2_{H^{m+1}})\\
     &=  -\frac{1}{\lambda}\bigg(
       \sum_{|\beta| \leq m}  \langle (\partial^\beta (\mathcal{K} \ast(\Bvl + \rhV)  \cdot \nabla (\rhV+ \Bvl)) 
                                  - (\mathcal{K} \ast(\Bvl + \rhV)  \cdot \nabla \partial^\beta \Bvl), \partial^\beta \Bvl  \rangle\\
       &\qquad +       
       \sum_{|\beta| \leq m} \langle \partial^\beta(\mathcal{K} \ast(\Bvl + \rhV)  \cdot \nabla (\rhT+ \Btl)) 
                                 - (\mathcal{K} \ast(\Bvl + \rhV)  \cdot \nabla \partial^\beta \Btl) ,\partial^\beta \Btl  \rangle\\
       &\qquad +
       \nu \langle  \Bvl , \rhV\rangle_{H^{m+1}}
       +
       \kappa \langle \Btl ,  \rhT \rangle_{H^{m+1}}
       -
       \langle g \partial_x(\Btl + \rhT), \Bvl \rangle_{H^{m}}
       -
       \langle h^0, \Btl\rangle_{H^{m}}
       \bigg)\\
     &= -\frac{1}{\lambda}(T_1 + T_2 + T_3 + T_4 + T_5 + T_6).
\end{align*}
Regarding the first two terms, Sobolev embedding, interpolation and the one-degree smoothing of the
Biot-Savart kernel implies
\begin{align}
  |T_1 + T_2| &\leq 
                        C(\|\mathcal{K} \ast(\Bvl + \rhV) \|_{W^{m,4}} \|V_\lambda \|_{W^{m,4}} \| V_\lambda \|_{H^m} 
                                +  \|\mathcal{K} \ast(\Bvl + \rhV) \|_{W^{m,4}} \|\rho \|_{W^{m+1,4}} \| V_{\lambda} \|_{H^m})
  \notag\\
                 &\leq C(\|\Bvl + \rhV \|_{H^m} \|V_\lambda \|^{1/2}_{H^{m+1}}  \| V_\lambda \|_{H^m}^{3/2}
                                +  \|\Bvl + \rhV\|_{H^m} \|\rho \|_{H^{m+2}}\| V_{\lambda} \|_{H^m}) 
  \notag\\
                & \leq 
                       \frac{\nu}{2} \| \Bvl \|^2_{H^{m+1}}  + \frac{\kappa}{2}\| \Btl \|^2_{H^{m+1}} +
                       C(\| V_\lambda \|_{H^m}^{6} + \|\rho\|_{H^{m+2}}^4 +1).  
                       \label{eq:crude:bnd}
\end{align}
Regarding the remaining terms we have
\begin{align*}
  |T_3 + T_4 +T_5 +T_6| \leq 
      C(\|V_\lambda\|^2_{H^m} +  \|\rho\|_{H^{m+1}}^2   +  \| h^0\|^2_{H^{m}})  + \frac{\nu}{2}\| \Bvl \|^2_{H^{m+1}} + \frac{\kappa}{2} \| \Btl\|^2_{H^{m+1}}.
\end{align*}
Fixing $T>0$ and combining these inequalities we find
\begin{align}
  \frac{d}{dt} \|V_\lambda\|^2_{H^m} 
  \leq   \frac{C}{\lambda} \left( \| V_\lambda \|_{H^m}^{6} 
                   + \|\tilde{U}_0\|_{H^{m+2}}^4 + T^4\|h\|_{H^{m+2}}^4   +  \| h^0\|^2_{H^{m}} +1 \right),
  \label{eq:b:sc:1:h:f}
\end{align}
for all $t \in [0, T]$, where $C$ is independent of $\lambda > 0$.  Invoking Lemma~\ref{lem:comp:lem:app},
we obtain the bound
\begin{align}
  \|V_\lambda(t)\|^2_{H^m} \leq&
     \| U_0 - \tilde{U}_0\|^2_{H^m} 
     R_\lambda(t, \| U_0 - \tilde{U}_0\|^2_{H^m}  + \|\tilde{U}_0\|_{H^{m+2}}^4 + T^4\|h\|_{H^{m+2}}^4   +  \| h^0\|^2_{H^{m}} +1) 
  \notag\\
     &+ (\|\tilde{U}_0\|_{H^{m+2}}^4 + T^4\|h\|_{H^{m+2}}^4   +  \| h^0\|^2_{H^{m}} +1) 
  \notag\\
     &\quad \times R_\lambda(t, \| U_0 - \tilde{U}_0\|^2_{H^m}  +\|\tilde{U}_0\|_{H^{m+2}}^4 + T^4 \|h\|_{H^{m+2}}^4   +  \| h^0\|^2_{H^{m}} +1) -1)
       \label{eq:g:gw:con}
\end{align}
for every  $t\in [0, T_\lambda^* \wedge T)$ where we recall that $R_{\lambda}$ is defined in \eqref{eq:blow:cntrl}.  

We now complete the proof by using \eqref{eq:g:gw:con} in similar fashion to \eqref{eq:scale:pt:bound},
\eqref{eq:cmp:sets:bnd} above to infer (\ref{eq:buca:first:sc}).  Fix any $\epsilon >0$
and cover $K_1$ with a finite number of $\epsilon$ balls $B(\epsilon, V_0^j)$, $j = 1, \ldots, M$.
For $N > 0$, define
\begin{align*}
  \gimel_N :=  \sup_{U_0 \in K_1, h \in K_2} ( 4 \| U_0\|^2_{H^m} + \|P_N U_0\|_{H^{m+2}}^4 + T^4 \|h\|_{H^{m+2}}^4   +  \| h^0\|^2_{H^{m}} +1)
\end{align*}
where recall that $P_N$ is the projection onto $H_N$ as in (\ref{eq:proj:sp:buca}).  Note that this quantity is finite 
for every $N$ in view of the standing assumptions on the compact sets $K_1$, $K_2$.
Noting the monotonicity of $R_{\lambda}$ and invoking \eqref{eq:g:gw:con} we find
\begin{align*}
 \sup_{ U_0 \in K_1, h \in K_2} &\|  \SGf^{\lambda h}_{t/\lambda} U_0 - \rho_t^{h} U_0\|_{H^m}^2\\
  \leq& 4 \sup_{ U_0 \in K_1, h \in K_2} \|  \SGf^{\lambda h}_{t/\lambda} U_0 - \rho_t^{h} P_N U_0\|_{H^m}^2
        +4 \sup_{ U_0 \in K_1} \|  P_N U_0 - U_0 \|_{H^m}^2\\
  \leq& 16\epsilon +  16 \max_{j = 1, \ldots, M} \|  P_N V_0^j - V_0^j \|_{H^m}^2 (R_{\lambda}(t, \gimel_N) +1)
              +\gimel_N (R_{\lambda}(t, \gimel_N) -1)
\end{align*}
which holds on the interval $t \in [0, T^*_\lambda(\gimel_N)\wedge T)$.  Picking $N$ large enough that 
$ \max_{j = 1, \ldots, M}\|  P_N V_0^j - V_0^j \|_{H^m}^2 \leq \epsilon$ and noting that for any
fixed $N$, $\lim_{\lambda \to \infty} T^*_\lambda(\gimel_N) = \infty$ and $\lim_{\lambda \to \infty} R_{\lambda}(t, \gimel_N) = 1$
we obtain that 
\begin{align*}
 \limsup_{\lambda \to \infty} \sup_{ U_0 \in K_1, h \in K_2} &\|  \SGf^{\lambda h}_{t/\lambda} U_0 - \rho_t^{h} U_0\|_{H^m}^2 \leq 48 \epsilon.
\end{align*}
for all $t\in [0, T]$.  Since this holds for any $\epsilon >0$ (\ref{eq:buca:first:sc}) now follow for $m \geq 1$,
completing the proof.   
\end{proof}

\begin{remark}
  Note that the bound~\eqref{eq:crude:bnd} is rather crude and can be significantly sharpened 
  to improve \eqref{eq:b:sc:1:h:f} and hence the rates of convergence as 
  $\lambda \to \infty$ in Lemma~\ref{eq:l:evol:bc:brak}.  Similar remarks
  apply to the bound \eqref{eq:mo:bnds:buca:1} below and hence to rates of convergence in
  Lemma~\ref{lem:sa:bouseq}.  Since these rates have no
  immediate bearing on our results, here we omit the more refined estimates.
\end{remark}

\begin{proof}[Proof of Lemma~\ref{lem:sa:bouseq}]
Fix $U_0 \in K_1$ and $\phi \in K_2$.
Recycling some of the notation used in the proof of the previous scaling result, in this proof we let   
\begin{align*}
  W_\lambda(t) &= \rho^{-\lambda^2 \iota_\Th \phi}_{\lambda^{-1}} \SGf^{0}_{t/\lambda}  \rho^{\lambda^2 \iota_\Th \phi}_{\lambda^{-1}}U_0, \\
  \Gamma(t) &= (\gmV, \gmT) :=  \Gamma_t^\phi \tilde{U}_0,\\
  V_\lambda(t) &=  (\Bvl(t), \Btl(t)) = W_\lambda(t) - \Gamma(t).
\end{align*}
Arguing as in (\ref{eq:rs:nLin:twit}) and referring back to \eqref{eq:bous:fna}, using the extended phase space notation we obtain
\begin{align}
  \frac{d}{dt} V_\lambda 
  =&- \frac{1}{\lambda}[A( W_\lambda + \lambda \phi) 
                     + B( W_\lambda + \lambda \phi,  W_\lambda + \lambda \phi)
                     + G( W_\lambda + \lambda \phi) - \iota_\Th h^0]
  \notag\\
  &+ A \phi + G \phi + B(\Gamma, \phi)
  \notag\\
  =&- \frac{1}{\lambda}[A( V_\lambda + \Gamma) 
                     + B( V_\lambda + \Gamma,  V_\lambda + \Gamma)
                     + G( W_\lambda + \Gamma) - \iota_\Th h^0] 
     - B(V_\lambda, \phi)
     \label{eq:be:sc:2:evol}
\end{align}
where, recalling \eqref{eq:buca:n:lin:abs}, we have used that $B(\phi, W_\lambda + \lambda \phi) = 0$ as $\pi_\xi \phi =0$.  

Notice that \eqref{eq:be:sc:2:evol} is quite similar in formulation to (\ref{eq:scaled:ge:L:vort})--(\ref{eq:scaled:ge:L:th}).
Here $B(V_\lambda, \phi)$ the only `new' term.  Observe that 
\begin{align*}
  |\langle B(V_\lambda, \phi), V_\lambda \rangle| =  |\langle(\mathcal{K} \ast \Bvl) \cdot \nabla \phi , \Bvl\rangle| \leq C \|V_\lambda\|^2.
\end{align*}
Proceeding otherwise as in Lemma~\ref{lem:0:conv} with estimates analogous to (\ref{eq:b:sc:L2:1}) and (\ref{eq:b:sc:L2:1}), we obtain 
\begin{align*}
  \frac{d}{dt} \|V_\lambda \|^2 
  \leq& C\| V_\lambda\|^2 + \frac{C}{\lambda}[(\|\Gamma \|^2 + 1)\|V_\lambda\|^2 + \|\Gamma \|_{H^1}^2   +  \| h^0\|^2]
  \notag\\
  \leq& C\| V_\lambda\|^2 + \frac{C}{\lambda}[(\|\tilde{U}_0 \|^2 + 1)\|V_\lambda\|^2 + \|\tilde{U}_0 \|_{H^1}^2   +  \| h^0\|^2 + 1]
\end{align*}
where the constant $C$ may depend on the compact set $K_2$ and $t > 0$ but is crucially independent of $\lambda > 0$.  Here recall
(\ref{eq:l:evol:bc:brak}) to justify the second bound.  Taking $X(s) = e^{Cs}\|V_\lambda (s)\|^2$ we infer that 
$\frac{d}{dt} X \leq \frac{C}{\lambda}[(\|\tilde{U}_0 \|^2 + 1) X + \|\tilde{U}_0 \|_{H^1}^2   +  \| h^0\|^2 + 1]$.  We therefore 
obtain a bound very similar to (\ref{eq:gron:d:wall}) but which has a constant prefactor $e^{Ct}$ which is still independent of $\lambda$. 
The bound \eqref{eq:m:buca:brak:estJ} now follows for $m =0$ by arguing as in the proof of Lemma~\ref{lem:sc:lem:1:RD}.

Regarding the convergence in $H^m$ for $m \geq 1$, notice that,
\begin{align*}
  |\sum_{|\beta| \leq m} \langle \partial^\beta B(V_\lambda, \phi), \partial^\beta V_\lambda \rangle |
   \leq C \|V_\lambda\|_{H^m}^2.
\end{align*}
Otherwise, arguing as in \eqref{eq:crude:bnd}, we obtain
 \begin{align}
  \frac{d}{dt}\|V_\lambda\|^2_{H^m} \leq  C \|V_\lambda\|^2_{H^m} +
                   \frac{C}{\lambda} \left( \| V_\lambda \|_{H^m}^{6} 
                   + \|\tilde{U}_0\|_{H^{m+2}}^4   +  \| h^0\|^2_{H^{m}} +1 \right)
   \label{eq:mo:bnds:buca:1}
\end{align}
with $C$ independent of $\lambda$.  Here we take $X(s) = \|V_\lambda\|^2_{H^m} e^{Ct}$
so that $\frac{d}{dt} X \leq \frac{C}{\lambda}( X^{6} + \|\tilde{U}_0\|_{H^{m+2}}^4   +  \| h^0\|^2_{H^{m}} +1 )$.
Thus, from Lemma~\ref{lem:comp:lem:app}, we deduce a bound very similar 
to \eqref{eq:g:gw:con} but with a constant prefactor $e^{Ct}$.  The desired convergence \eqref{eq:m:buca:brak:estJ}
thus follows for any $m \geq 1$ by arguing mutatis mutandis as in the proof of Lemma~\ref{lem:0:conv}.  
\end{proof}

 \begin{proof}[Proof of Lemma~\ref{lem:BE:exact}]
 The proof is a direct computation.  From (\ref{eq:l:evol:bc:brak}) 
 it immediately follows that  
 \begin{align*}
 \Gamma_t^{\alpha \sigma_k^n}\Gamma_t^{\beta \sigma_j^\ell} U_0
  =U_0 +t 
   \begin{pmatrix}
     g \partial_x( \alpha e_k^n + \beta e_j^\ell) \\
 	\kappa \Delta ( \alpha e_k^n + \beta e_j^\ell)- b(\pi_\Vort U_0,  \alpha e_k^n + \beta e_j^\ell))  
  \end{pmatrix} 
   - t^2 g \alpha \beta 
   \begin{pmatrix} 0\\
 	b(\partial_x e_j^\ell, e_k^n)
  \end{pmatrix},
 \end{align*}
 where recall $b$ is as in \eqref{eq:buca:ad:not}.
 Since $\alpha, \beta \in \R$ and $U_0 \in H^m$ were arbitrary, 
 we can use the formula above and cancelations in $b$ like (\ref{eq:rel:deg:NSE})
 to conclude that 
 \begin{align}
 \nonumber
 \Gamma_t^{-\alpha \sigma_k^n} \Gamma_t^{-\beta \sigma_j^\ell}\Gamma_t^{\alpha \sigma_k^n}\Gamma_t^{\beta \sigma_j^\ell} U_0 &= U_0 + t^2 \alpha \beta g \begin{pmatrix} 0 \\
 b(\partial_x e_k^n, e_j^\ell) - b(\partial_x e_j^\ell, e_k^n) \end{pmatrix}\\
 \label{eq:sn:bk:p}&:= U_0 + t^2 \alpha \beta [[F, \sigma_j^\ell], [F, \sigma_k^n]],
 \end{align}
 which is the desired identity.
 \end{proof}

\subsection{3D incompressible Euler equation}
\label{sec:e:nse:ex}

We next turn to the low mode control problem for the three-dimensional
incompressible Euler equation.   In contrast to the previously
considered equations, this example is notable since:
\begin{itemize}
\item[(i)] We will see that dissipation is not needed to 
establish controllability results using the formalism developed
in Section~\ref{sec:sat}.
\item[(ii)] A major open problem is to determine whether the Euler 
equations develop singularities starting from smooth initial 
conditions.   See, e.g.,  \cite{Constantin2007}.
 Our saturation formalism introduced in Section~\ref{sec:overview} 
allows us to show that the addition of a low mode control to the
Euler equations can act to prevent blow up of solutions.  See Theorem~\ref{thm:NSEmain} and
Remark~\ref{rmk:non:blow} below.
\end{itemize}

Before proceeding further, a few preliminary remarks are in order.  First, since we will consider the Euler
equations in the absence of boundaries, the results and techniques
presented in this section also apply to the 3D incompressible
Navier-Stokes equations with only minor modifications.  We
omit details for the simplicity and clarity of presentation.  Second,
it may be noted that our presentation does not focus on applications to 
the stochastic counterpart of the Euler equations. Note that while it is
technically feasible to generalize some of the results
Section~\ref{sec:app:SPDEs} to locally defined dynamics, we
avoid this generalization here given the complexity of the results
as they already stand.  To see how such a generalization is possible in the finite-dimensional setting of SDEs, see \cite{HerMat15, Her11, BirHerWehr12}.   

Regarding existing literature concerning the controllability of 
the Euler equation, let us mention \cite{Shirikyan2008, Nersisyan2010, Nersesyan2015}
and also \cite{Rom_04,Shirikyan2007} for related work on the 3D 
Navier-Stokes equations.  The reference~\cite{Nersisyan2010} treats the same control problem as below but using the Agrachev-Sarychev approach in the functional setting of $H^m$ for an arbitrary but fixed $m\in \N$.  Below we treat the dynamics on the space $C^\infty= \cap_{m\geq 0} H^m$ using the methods of Section~\ref{sec:sat}.  In particular, because $m\in \N$ can be arbitrary the main result in~\cite{Nersisyan2010} implies the main control result for this dynamics (Theorem 5.112 below).  For general 
background on the mathematical theory of inviscid, incompressible flow, see \cite{MajdaBertozzi2002, MarchioroPulvirenti2012}.

\subsubsection*{Mathematical Formulation} 
The 3D Euler equations are 
\begin{align}\label{eq:NSE}
&\partial_t \bfU + (\bfU\cdot \nabla) \bfU + \nabla p =  \bfg+ \bfh,\\
 &\nonumber \quad \nabla \cdot  \bfU =0, \quad \bfU(0) = \bfU_0.
\end{align}
The equations~\eqref{eq:NSE} are posed on the torus $\TT^3=[0, 2\pi]^3$
with periodic boundary conditions, and the unknowns are
the fluid velocity field $\bfU = (u_1,u_2,u_2): \TT^3 \to \RR^3$ and the pressure $p: \TT^3 \to \RR $.  
The term $\bfg+\bfh$ represents an external volumetric force.    
We assume that $\bfg$ is a fixed background forcing and that
$\bfh$ is a control which takes values in a finite dimensional
control parameter space $X_0$.   Specifically we consider examples
where $X_0$ consists of trigonometric vector fields in order to make
our computations tractable.    A precise possible formulation for $h$ 
is given below.  See \eqref{eq:cnt:spc:eulr} and \eqref{eq:cont:form}.

Throughout what follows, we will assume that there is no mean flow
on the initial condition $\bfU_0$ or on the external forcing terms $\bfg$ and $\bfh$;
that is, 
\begin{align*}
  \int_{\TT^3} \bfU_0(x) \, dx =  \int_{\TT^3} \bfg(x) \, dx =    \int_{\TT^3} \bfh(x) \, dx = 0.
\end{align*}
Consequently, this mean-free condition will be preserved by the solution of~
\eqref{eq:NSE}.

Regarding the local semigroup formulation of~\eqref{eq:NSE}, we consider
$C^\infty$ smooth solutions as follows.  We define the spaces $H^m$ for $m \geq 0$ by 
 \begin{align}
   H^m = \bigg\{ \bfU \in H^m(\TT^3  )^3: 
   \nabla \cdot \bfU = 0, \int \bfU \, dx=0 \biggr\}.
   \label{eq:H:m:euler}
 \end{align}
We recycle previously used notation for the $L^2$ norm $\| \ccdot \|$ and inner product $\langle \ccdot, \ccdot \rangle$, as well as the notation used for $H^m$ norms $\| \ccdot \|_{H^m}$.   
We let 
\begin{align}
  \mathcal{X} : = \bigg\{ \bfU \in C^\infty(\TT^3)^3: \nabla \cdot \bfU = 0,\int \bfU \, dx=0 \biggr\}
   = \bigcap_{m\geq 0} H^m.
\end{align}
where $C^\infty(\TT^3)$ is the collection of smooth, periodic
functions.   In this example, the ambient (Frech\'{e}t) phase space is
$(\mathcal{X},d_\infty)$ where the metric $d_\infty$ is given by\footnote{Note
  that this is equivalent to the usual Fr\'echet topology on $C^\infty$
  via Sobolev embedding.}
 \begin{align*}
   d_\infty(\bfV, \bar{\bfV}) 
      = \sum_{m=0}^\infty 2^{-m} (1\wedge \| \bfV -\bar{\bfV}\|_{H^m}).
 \end{align*}

Let us next recall some results concerning the local (in time) existence and
uniqueness of smooth solutions of \eqref{eq:NSE}.  For this
we fix a `death state' $\death\notin \mathcal{X}$.  
\begin{proposition}
\label{prop:NSEsemi}
Fix any $\bfg \in \mathcal{X}$ and any finite dimensional space $X_0 \subseteq \mathcal{X}$. 
\begin{itemize}
\item[(i)] For any $\bfU_0 \in \mathcal{X}$ and any $\bfh \in X_0$, there exists a unique $0 < T_{\bfU_0, \bfh} \leq \infty$
and  
\begin{align}
    \bfU(\ccdot) = \bfU(\ccdot, \bfU_0, \bfh) \in  C([0, T_{\bfU_0, \bfh}), \mathcal{X})
\end{align}
solving \eqref{eq:NSE} such that if $T_{\bfU_0, \bfh} < \infty$, then
\begin{align}
  \limsup_{t \to T_{\bfU_0, \bfh}} \|\nabla \bfU(t)\|_{L^\infty} =
  \infty.
  \label{eq:its:gonna:blow}  
\end{align} 
\item[(ii)] Take $(T_{\bfU_0, \bfh})_{\bfU_0\in \mathcal{X}, \bfh \in X_0}$ to be the collection of positive times defined in (i) and 
\begin{align*}
  \Phi_t^\bfh \bfU_0 := 
  \begin{cases}
    \bfU(t, \bfU_0, \bfh) & \text{ when } t < T_{\bfU_0, \bfh},\\
    \death   & \text{ when } t \geq T_{\bfU_0, \bfh}.
    \end{cases}
  \end{align*}
  Then the mapping $(t, \bfU_0, \bfh) \mapsto \Phi_{t}^\bfh \bfU_0: [0, \infty) \times \mathcal{X} \times X_0\rightarrow \mathcal{X} \cup \{ \death \}$ 
  is a one-parameter family of continuous local semigroups on $(\mathcal{X}, d_\infty)$ parametrized by $X_0$ in the sense 
  of Definition~\ref{def:oneparamlocal}.  
\end{itemize}
\end{proposition}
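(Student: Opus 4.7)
The plan is to establish part (i) through a standard a priori energy estimate in each Sobolev space $H^m$ combined with the Beale--Kato--Majda type blow-up criterion, and then to deduce part (ii) by carefully tracking the continuous dependence of solutions on the data $(\bfU_0,\bfh)$ and showing that the lifespan $T_{\bfU_0,\bfh}$ is lower semicontinuous.

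For part (i), I would begin by fixing a base regularity exponent, say $m_0 = 3$, which ensures the Sobolev embedding $H^{m_0}\hookrightarrow C^1(\TT^3)$. Standard commutator estimates for the transport nonlinearity (Kato--Ponce type) combined with the divergence-free structure yield the a priori bound
\begin{align*}
  \frac{d}{dt}\|\bfU\|_{H^m}^2 \leq C\|\nabla \bfU\|_{L^\infty}\|\bfU\|_{H^m}^2
  + C\|\bfU\|_{H^m}\bigl(\|\bfg\|_{H^m} + \|\bfh\|_{H^m}\bigr),
\end{align*}
valid for every $m \geq m_0$ once we project out the pressure using the Leray projector. Local existence and uniqueness in $H^{m_0}$ follows from a Galerkin or mollification scheme together with these estimates, giving a maximal lifespan $T^*_{\bfU_0,\bfh}$ characterised by the blow-up of $\|\nabla \bfU\|_{L^\infty}$ as in \eqref{eq:its:gonna:blow}. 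The key structural observation, which I expect to be the main technical point, is that \emph{the lifespan does not depend on $m$}: if $\bfU_0 \in H^{m}$ for some $m \geq m_0$, then the solution remains in $H^{m}$ on the entire interval $[0,T^*_{\bfU_0,\bfh})$. This follows from the above inequality and Gr\"onwall's lemma, since the growth of the $H^m$ norm is driven only by $\|\nabla \bfU\|_{L^\infty}$, which is controlled by the $H^{m_0}$ norm via Sobolev embedding. Iterating over $m$, we conclude that $\bfU(\,\ccdot\,) \in C([0,T^*_{\bfU_0,\bfh});\mathcal{X})$ whenever $\bfU_0 \in \mathcal{X}$ and $\bfh \in X_0 \subseteq \mathcal{X}$, and we set $T_{\bfU_0,\bfh}:= T^*_{\bfU_0,\bfh}$.

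For part (ii), properties (i) and (ii) of Definition~\ref{def:oneparamlocal} are direct: the convention $\Phi_t^\bfh \bfU_0 = \death$ for $t \geq T_{\bfU_0,\bfh}$ gives (i), while the semigroup identity $\Phi_{t+s}^\bfh = \Phi_t^\bfh \Phi_s^\bfh$ on $[0,T_{\bfU_0,\bfh})$ is immediate from uniqueness applied to the time-translated equation. The substantive content is the continuity property (iii). Fix $t_0 \in [0,T_{\bfU_0,\bfh})$ and choose $T \in (t_0, T_{\bfU_0,\bfh})$. Standard continuous dependence estimates for \eqref{eq:NSE}, again derived from the $H^m$ energy inequality applied to the difference of two solutions, yield for each $m \geq m_0$ an estimate of the form
\begin{align*}
  \sup_{t\in[0,T]} \|\bfU(t,\bfU_0,\bfh) - \bfU(t,\tilde\bfU_0,\tilde\bfh)\|_{H^m}
   \leq C_{T,m}\bigl(\|\bfU_0 - \tilde\bfU_0\|_{H^m} + \|\bfh - \tilde\bfh\|_{H^m}\bigr)
\end{align*}
provided $\tilde\bfU_0$ and $\tilde\bfh$ are close enough to $\bfU_0$ and $\bfh$ that $\tilde\bfU(t,\tilde\bfU_0,\tilde\bfh)$ exists on $[0,T]$. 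The existence of such a uniform neighborhood — which amounts to the lower semicontinuity of the lifespan — follows from the blow-up criterion: since $\sup_{[0,T]}\|\nabla \bfU\|_{L^\infty}< \infty$, a continuity argument bootstrapped from the $H^{m_0}$ a priori estimate shows that nearby data produces solutions with comparable $\|\nabla \tilde\bfU\|_{L^\infty}$ bounds on $[0,T]$. Summing the resulting $H^m$ bounds with weights $2^{-m}$ then gives continuity in the metric $d_\infty$, jointly in $(t,\bfU_0,\bfh)$.

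The chief obstacle in carrying this out rigorously is the simultaneous control of all Sobolev norms uniformly in a $d_\infty$-neighborhood of $(\bfU_0,\bfh)$, together with the fact that the natural a priori estimates become progressively more sensitive to the data as $m$ grows. This is handled by observing that, thanks to the blow-up criterion, only the $H^{m_0}$ bound needs to be propagated to a neighborhood; higher norms then follow for free from linear estimates along the fixed velocity field. With these ingredients in place, the three conditions of Definition~\ref{def:oneparamlocal} are verified and the proof is complete.
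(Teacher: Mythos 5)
Your proposal is correct and follows essentially the same route as the paper, which establishes the result via a companion a priori estimate proposition (Proposition~\ref{prop:apriori} in the appendix): local well-posedness and the Beale--Kato--Majda criterion cited from the standard references, an $H^m$ energy/commutator estimate whose growth is controlled solely by the $\|\bfU\|_{H^3}$ level (hence all higher norms persist up to the same lifespan), and an $H^m$ continuous dependence estimate, with $H^3$ playing exactly the role of your $m_0$. The one detail worth flagging is that the paper's continuous dependence bound has a constant depending on the $H^{m+1}$ norm of one of the solutions (a loss of one derivative from the transport nonlinearity), but since everything lives in $\mathcal{X}=\bigcap_m H^m$ this causes no harm — a point your proposal handles implicitly when you argue continuity in $d_\infty$.
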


The proof of Proposition~\ref{prop:NSEsemi} is fairly standard (see 
\cite{MajdaBertozzi2002, MarchioroPulvirenti2012}) and is based on 
a priori estimates which we recall below in Appendix \ref{sec:NSE:apriori}.

\subsubsection*{The Control Parameter Space and Algebraic Conditions}
With the basic mathematical setting for \eqref{eq:NSE} in hand, we detail the assumptions on the control parameter space $X_0$ which will allow us to prove exact control results.

For this purpose, we begin by defining a divergence-free trigonometric basis as follows.
For each $\bk \in \Z^3_{\neq0}$, pick $\aa_{\bk}^{(0)}, \aa_{\bk}^{(1)} \in \RR^3$ such that
\begin{align}
 \aa_{\bk}^{(0)} \cdot \bk = \aa_{\bk}^{(1)} \cdot \bk = \aa_{\bk}^{(0)} \cdot \aa_{\bk}^{(1)} = 0, \quad | \aa_{\bk}^{(0)} |^2 = |\aa_{\bk}^{(1)}|^2 = \frac{1}{4\pi}.
  \label{eq:cnd:r3:dir}
\end{align}
For $\bk \in \Z^3_{\neq 0}$ and $l, m \in \{ 0,1\}$, we define
\begin{align*}
  \bfe_{\bk, l,m} 
    =2 \aa_{\bk}^{(l)} \mbox{Re}( i^m e^{-i \bk \cdot x})
    = 
  \begin{cases}
    2\aa_{\bk}^{(l)} \cos(\bk \cdot x)  &\text{ if } m = 0,\\
    2\aa_{\bk}^{(l)} \sin(\bk \cdot x)  &\text{ if } m = 1.
    \end{cases}
\end{align*}
We denote
\begin{align}
  F_{\bk} := \mbox{span} \{  \bfe_{\bk, l,m} : l,m \in \{0,1\} \},
  \label{eq:k:vec:span}
\end{align}
for any $\bk \in \Z^3_{\neq 0}$.  Notice that $F_{\bk} = F_{-\bk}$ for any $\bk \in \ZZ^3_{\neq 0}$.

To specify the control space $X_0$ for (\ref{eq:NSE}), we consider any subset $\mathcal{Z} \subseteq \Z^3_{\neq0}$ and 
define 
\begin{align}
  X_0 = \mbox{span}\{ \bfe_{\bk,l, m} : \bk \in \mathcal{Z}, m, l \in \{0,1\} \} = \mbox{span}\{ F_{\bk}: \bk \in \mathcal{Z} \}
    \label{eq:cnt:spc:eulr}
\end{align}
so that, in particular, the control $\bfh$ has the form
\begin{align}
  \bfh(t) = \sigma \cdot \alpha =
  \sum_{\substack{\bk \in \mathcal{Z},\\ l,m \in \{0,1\}}} \alpha_{\bk, l, m}(t) \bfe_{\bk,l, m}.
  \label{eq:cont:form}
\end{align}
Note that the control parameter $\alpha$ takes values in $\RR^{4 | \mathcal{Z}|}$.
\begin{remark}
  To simplify our presentation, we restrict to the case when each 
  wave vector $\bk$ is `fully-controlled'.  Note that a very similar restriction
  on the control configuration was imposed above for both the
  2D Navier-Stokes equations and Boussinesq equations studied previously above;
  see (\ref{eq:NSE:con:form}) and (\ref{eq:bcontrol:form}), respectively.
\end{remark}

Below we will show that following algebraic condition on $\mathcal{Z}$, identified in \cite{Rom_04}, is sufficient to establish
controllability properties for (\ref{eq:NSE}).
\begin{definition}
  \label{def:det:modes:E}
Let $\bj, \bk \in \Z_{\neq 0}^3$.  We say that \emph{$ \bj +\bk$ is an admissible move from $\bj, \bk$} 
if
  \begin{align}
        \bj, \bk  \text{ are linearly independent and  } |\bj| \neq |\bk|.
\label{eq:Euler:WN:intcon}
\end{align}
Here $|\cdot|$ denotes the standard Euclidean norm.  Let $\mathcal{Z}_0:= \mathcal{Z}$ and for $n\geq 1$ define $\mathcal{Z}_n$ inductively by  
\begin{align*}
  \mathcal{Z}_n 
  := \{ \mathbf{l} \in \Z^3_{\neq 0} : \mathbf{l} = \bk + \bj, \, \mathbf{l} \text{ an admissible move from } \bk, \bj \in \mathcal{Z}_{n-1} \} \cup \mathcal{Z}_{n-1}.  
\end{align*}
We say that \emph{$\Zb$ is a determining set of modes} if 
\begin{align}
  \mathcal{Z}_\infty := \bigcup_{n \geq 0} \mathcal{Z}_n = \Z^3_{\neq 0}.
  \label{eq:det:modes:set}
\end{align}
\end{definition}

\begin{remark}
It is possible to give a complete algebraic 
characterization of configurations $\Zb \subset \Z^3_{\neq 0}$ 
which are determining sets of modes.  
See Proposition~5.2 in \cite{Rom_04} for a detailed discussion of this point.  On the other hand, variations on the conditions given in Definition~\ref{def:det:modes:E}
are possible to guarantee the controllability of (\ref{eq:NSE}).  In particular, we will show that
controllability follows if, for example,  
$\{(1, 0, 0), (0,1,0), (0,0,1)\} \subset \mathcal{Z}$.  
\end{remark}

\subsubsection*{Statement of the main result}

With these preliminaries in hand, we now state the main result of this section.  
\begin{theorem}\label{thm:NSEmain}
  Take $\mathfrak{F} = \{(\Phi, X_0)\}$ to be the associated one-parameter family of continuous local semigroups defined by (\ref{eq:NSE}) and
  let $\pi :\mathcal{X}\rightarrow \mathcal{X}$ be any continuous,  linear projection operator onto 
    a finite-dimensional subspace $\pi(\mathcal{X})\subseteq \mathcal{X}$.    Suppose that either of the following conditions is satisfied:
  \begin{itemize}
  \item[(i)] $\Zb$ is a determining set of modes according 
    to Definition~\ref{def:det:modes:E}.
  \item[(ii)] $\{(1,0,0), (0,1,0), (0,0,1) \} \subseteq \Zb.$
  \end{itemize}
  Then $\mathbb{D}(\mathfrak{F})$ is approximately controllable on $\mathcal{X}$ and exactly controllable on $\pi(\mathcal{X})$ in the sense of
  Definition~\ref{def:acc:cont}.  Here recall that $\mathbb{D}(\mathfrak{F})$ is defined in \eqref{eq:unwrap:opp}.       
\end{theorem}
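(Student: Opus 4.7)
The plan is to verify the hypotheses of Corollary~\ref{cor:exact:con} for an appropriate increasing family of subspaces $X_n \subseteq \mathcal{X}$ whose union is dense in $(\mathcal{X},d_\infty)$. The entire argument mirrors the strategy used for the 2D Navier--Stokes and Boussinesq examples, the principal differences being (a) solutions exist only locally in time, so all scaling estimates must be formulated on time intervals contained inside the life-span of the relevant trajectory, and (b) since the nonlinearity $B(\bfU,\bfV) := P((\bfU \cdot \nabla) \bfV)$ is bilinear (even degree), new ray directions have to be extracted using the relative-degree/cancellation structure discussed in Section~\ref{rem:even} and Remark~\ref{rmk:rel:deg:2DNSE}. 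The key cancellation is that for each trigonometric basis element one has $B(\bfe_{\bk,l,m},\bfe_{\bk,l,m}) = 0$, since $\aa_\bk^{(l)} \cdot \bk = 0$ by \eqref{eq:cnd:r3:dir}; this is the analogue of \eqref{eq:rel:deg:NSE}.

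First I would establish two scaling lemmas, direct analogues of Lemmas~\ref{lem:sc:lem:1:RD}--\ref{lem:sc:lem:2:RD} and Lemmas~\ref{lem:0:conv}--\ref{lem:sa:bouseq}. The first lemma asserts that for any compact $K_1 \subset \mathcal{X}$, any compact $K_2 \subset X_0$, any $m \geq 0$ and any $t > 0$, one has
\begin{align*}
\lim_{\lambda \to \infty} \sup_{\bfU_0 \in K_1,\, h \in K_2}
	\| \Phi^{\lambda h}_{t/\lambda} \bfU_0 - \rho^{h}_t \bfU_0 \|_{H^m} = 0,
\end{align*}
with the supremum taken over $\lambda$ large enough that every semigroup on the left is defined on $[0,t/\lambda]$. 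The second lemma asserts that for any $g \in X_0$, any $m \geq 0$, and any compact sets $K_1 \subset \mathcal{X}$, $K_2 \subset \{\alpha g : \alpha \in \RR\} \oplus \{\alpha g' : \alpha \in \RR\}$ (with $g,g' \in X_0$),
\begin{align*}
\lim_{\lambda \to \infty} \sup_{\bfU_0 \in K_1,\, \phi \in K_2}
	\| \rho^{-\lambda^2 \phi}_{1/\lambda} \Phi^0_{t/\lambda^2} \rho^{\lambda^2 \phi}_{1/\lambda} \bfU_0 - \rho^{-B(\phi,\phi)}_t \bfU_0 \|_{H^m} = 0.
\end{align*}
Both estimates are obtained by writing energy inequalities for the difference of the two flows in $H^m$ (with $m$ large enough that $H^m \hookrightarrow W^{1,\infty}$, so that the blow-up criterion \eqref{eq:its:gonna:blow} together with a priori $H^m$ bounds keeps the solutions alive on the relevant short intervals), controlling the nonlinear error by Sobolev embedding and using a truncated initial condition followed by a Fourier-cutoff approximation as in the proof of Lemma~\ref{lem:sc:lem:1:RD}. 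Because $\bfg \in \mathcal{X}$ is smooth and fixed, it contributes only lower-order error terms vanishing as $\lambda \to \infty$. Together these two scalings yield $(\rho,X_0) \in \mathrm{Sat}_u(\mathfrak{F})$ and, via the relative-degree identity
\begin{align*}
B(g_1 + g_2,\, g_1 + g_2) - B(g_1,g_1) - B(g_2,g_2) = B(g_1,g_2) + B(g_2,g_1),
\end{align*}
combined with the cancellation $B(\bfe_{\bk,l,m},\bfe_{\bk,l,m}) = 0$ and Remark~\ref{rmk:stupid:spans}, they give $(\rho, Y) \in \mathrm{Sat}_u(\mathfrak{F})$ for $Y = \spa\{ B(g_1,g_2) + B(g_2,g_1) : g_1, g_2 \in X_0 \}$.

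Next I would carry out the algebraic bookkeeping that matches these scaled rays to the set $\Zb_\infty$ in Definition~\ref{def:det:modes:E}. A direct computation with the Leray projector shows that for $\bj, \bk \in \Z^3_{\neq 0}$ linearly independent with $|\bj| \neq |\bk|$, the space $\spa\{ B(g_1,g_2) + B(g_2,g_1) : g_1 \in F_\bj,\, g_2 \in F_\bk\}$ contains a copy of the full two-dimensional plane $F_{\bj + \bk}$ (and also of $F_{\bj - \bk}$); the hypothesis $|\bj| \neq |\bk|$ is what allows one to separate the $\bj + \bk$ and $\bj - \bk$ contributions by varying the relative phases and using \eqref{eq:cnd:r3:dir}. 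Defining inductively $X_n := \spa\{ F_\bk : \bk \in \Zb_n\}$, the two scaling lemmas, the cancellation property, Remark~\ref{rmk:stupid:spans} and Lemma~\ref{lem:uni:stat:m} together yield $(\rho, X_n) \in \mathrm{Sat}_u(\mathfrak{F})$ for every $n \geq 0$. When $\Zb$ is a determining set, $\cup_n X_n$ contains the full trigonometric basis of $\mathcal{X}$ and is hence dense in $(\mathcal{X},d_\infty)$ by standard Fourier approximation, so Corollary~\ref{cor:exact:con} applies, giving case (i). Case (ii) reduces to (i) by checking, via elementary additions of $(1,0,0)$, $(0,1,0)$, $(0,0,1)$ and their inductively generated offspring, that $\{(1,0,0),(0,1,0),(0,0,1)\}$ is itself a determining set in the sense of Definition~\ref{def:det:modes:E}.

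The hardest part will be the first scaling lemma, because the Euler flow has only a local existence theory and the a priori estimate blow-up threshold \eqref{eq:its:gonna:blow} must be shown to be uniform over the compact sets $K_1, K_2$ and over all sufficiently large $\lambda$. I would handle this by first fixing $m$ large so that $H^m \hookrightarrow W^{1,\infty}$, then deriving a Beale--Kato--Majda-style differential inequality for $\|\Phi^{\lambda h}_{t/\lambda} \bfU_0 \|_{H^m}$ whose right-hand side scales like $\lambda^{-1}$ times a polynomial in $\|\bfU_0\|_{H^m}$ and $\|h\|_{H^m}$; this yields existence on $[0,t/\lambda]$ for all $\lambda$ large, uniformly over $K_1 \times K_2$, after which the difference with the ray semigroup is controlled by a Gr\"onwall argument analogous to \eqref{eq:nlt:gron:bnd}. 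Once these uniform life-span estimates are in place, the remainder of the argument is essentially a verbatim repetition of the Boussinesq proof, with all references to dissipative terms simply omitted.
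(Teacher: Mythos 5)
Your treatment of case (i) — the two scaling lemmas, the uniform-in-compact-sets handling of the local existence theory via a BKM-type differential inequality, the relative-degree cancellation $B(\bfe_{\bk,l,m},\bfe_{\bk,l,m})=0$, the inductive generation of $X_n = \spa\{F_\bk : \bk \in \Zb_n\}$, and the appeal to Corollary~\ref{cor:exact:con} — is essentially the paper's argument, and it is correct. (One small inaccuracy: the separation of the $F_{\bj+\bk}$ and $F_{\bj-\bk}$ contributions is achieved by taking linear combinations of brackets with shifted phases, independently of $|\bj|$ versus $|\bk|$; the hypothesis $|\bj|\neq|\bk|$ is instead what guarantees that the resulting coefficient vectors are nonzero and hence span the full plane orthogonal to $\bj+\bk$. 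This does not affect the validity of your conclusion \eqref{eqn:equality1}.)

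There is, however, a genuine gap in your case (ii). You claim that $\{(1,0,0),(0,1,0),(0,0,1)\}$ is itself a determining set of modes in the sense of Definition~\ref{def:det:modes:E}, so that (ii) reduces to (i). It is not: every pair of these three vectors has equal Euclidean length, so the admissible-move condition $|\bj|\neq|\bk|$ fails for every pair, $\Zb_n = \Zb_0$ for all $n$, and $\Zb_\infty = \Zb_0 \neq \Z^3_{\neq 0}$. This is precisely why (ii) is stated as a separate hypothesis rather than as a special case of (i). To repair it you need an additional algebraic input: the single bracket $B(\bfe,\tilde{\bfe})$ with $\bfe \in F_{(1,0,0)}$, $\tilde{\bfe} \in F_{(0,1,0)}$ only produces a proper one-dimensional sub-span of $F_{(1,1,0)}$ (the containment \eqref{eqn:inequality1} is strict here), but an \emph{iterated} bracket $B(B(\bfe,\tilde{\bfe}),\tilde{\tilde{\bfe}})$ with $\tilde{\tilde{\bfe}} \in F_{(0,0,1)}$ does generate all of $F_{(1,1,1)}$ (this is item (iii) of Lemma~\ref{lem:you:can:call:me:al}). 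Once $F_{(1,1,1)}$ is available, $(1,1,1)$ has length $\sqrt{3}\neq 1$ and is linearly independent of each coordinate vector, so admissible moves from the augmented set $\{(1,0,0),(0,1,0),(0,0,1),(1,1,1)\}$ do generate all of $\Z^3_{\neq 0}$, and the induction of case (i) then goes through. Without this extra step — realized at the level of the saturate by applying your second scaling lemma twice in succession — your argument for (ii) does not close.
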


\begin{remark}\label{rmk:non:blow}
A notable consequence of this result is that it implies blow-up can be averted in equation~\eqref{eq:NSE} 
by allowing control over a few low modes.  
Moreover, this is still true even in the presence of an
arbitrary the fixed background forcing term $\bfg \in \mathcal{X}$.  
\end{remark}

The proof of Theorem~\ref{thm:NSEmain} is based on three lemmata which we state next.  The first concerns the algebraic structure of the nonlinear terms in (\ref{eq:NSE}), while the second and third results provide quantitative bounds on the 
usual scalings (\ref{eq:ray:scal}) and (\ref{eq:non:lin:tw}). 

For the first Lemma it is convenient to introduce some notation for nonlinear portion of (\ref{eq:NSE}).
Given any $\bff, \tilde{\bff} \in \mathcal{X}$,
\begin{align}
  B(\bff, \tilde{\bff}) = P( \bff \cdot \nabla \tilde{\bff} + \tilde{\bff} \cdot \nabla \bff).
  \label{eq:E:NLT}
\end{align}
where $P$ is the Leray projection operator onto mean-free, divergence-free 
vector fields.
Equivalently we may write 
\begin{align}
     B(\bff, \tilde{\bff}) = \bff \cdot \nabla \tilde{\bff} + \tilde{\bff} \cdot \nabla \bff + \nabla q
     \label{eq:under:pres}
\end{align}
where $q: \TT^3 \to \RR$ solves
\begin{align*}
    -\Delta q = \nabla \cdot (\bff \cdot \nabla \tilde{\bff} + \tilde{\bff} \cdot \nabla \bff).
\end{align*}
In particular this shows that $B(\bff, \tilde{\bff}) \in \mathcal{X}$ whenever
$\bff, \tilde{\bff} \in \mathcal{X}$. See, for example, \cite{ConstantinFoias1988,Temam1995} for further details.

Note that even though $B$ is a second-degree polynomial nonlinearity, a cancellation
condition similar to the situation described in Section~\ref{rem:even}
holds for $B$.  This cancellation allows us to `reach' successively higher frequencies 
though the scaling analysis.  
\begin{lemma}
\label{lem:you:can:call:me:al}
The following algebraic relationships between $B$ defined by
\eqref{eq:E:NLT} and $F_\bk$ given as \eqref{eq:k:vec:span}
hold.
\begin{itemize}
\item[(i)] For every $\bk \in \Z^3_{\neq 0}$ we have that 
  \begin{align}
    B(\bfe, \tilde{\bfe}) = 0
    \quad \text{ for all }\quad  \bfe, \tilde{\bfe} \in F_\bk.
    \label{eq:B:can:3D}
  \end{align}
\item[(ii)]  For every $\bj, \bk \in \Z^3_{\neq 0}$,
\begin{align}
  \spa \, \{ B(\bfe, \tilde{\bfe}) \, : \, \bfe \in F_\bj, \tilde{\bfe} \in F_\bk \} 
     \subseteq \spa \{ F_{\bj-\bk}\cup F_{\bj+\bk} \}.
\label{eqn:inequality1}
\end{align}
If $\bj + \bk$ is an admissible move from $\bj, \bk\in \Z_{\neq 0}^3$, 
then equality holds in~\eqref{eqn:inequality1}; that is, 
\begin{align}
  \spa \, \{ B(\bfe, \tilde{\bfe}) \, : \, \bfe \in F_\bj, \tilde{\bfe} \in F_\bk \} 
     = \spa \{ F_{\bj-\bk}\cup F_{\bj+\bk} \}.
\label{eqn:equality1}
\end{align}
\item[(iii)] Finally,
  \begin{align}
   F_{(1,1,1)} 
    \subseteq  
    \spa \, \{ B(B(\bfe, \tilde{\bfe}), \tilde{\tilde{\bfe}}) 
      \, : \, \bfe, \tilde{\bfe}, \tilde{\tilde{\bfe}} \in F_{(1,0,0)} \cup F_{(0,1,0)} \cup F_{(0,0,1)} \}.
      \label{eq:el:sp:ex}
  \end{align}
\end{itemize}
\end{lemma}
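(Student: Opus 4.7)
The plan is to dispatch parts (i) and (ii) as direct Fourier calculations and to handle (iii) via a two-step reduction that exploits the admissibility condition afforded by (ii).

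For (i), I would substitute $\bfe = 2\aa_{\bk}^{(l)}\,\mathrm{trig}_m(\bk\cdot x)$ and $\tilde\bfe = 2\aa_{\bk}^{(l')}\,\mathrm{trig}_{m'}(\bk\cdot x)$ into the definition \eqref{eq:E:NLT} of $B$. Differentiating $\tilde\bfe$ produces an overall factor of $\bk$ which is contracted against $\aa_\bk^{(l)}$ in the term $\bfe\cdot\nabla\tilde\bfe$; this contraction vanishes by the orthogonality condition $\aa_\bk^{(l)}\cdot\bk = 0$ in \eqref{eq:cnd:r3:dir}, and analogously for $\tilde\bfe\cdot\nabla\bfe$. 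Hence both terms vanish identically (before even applying $P$), and bilinearity yields $B\equiv 0$ on $F_\bk\times F_\bk$.

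For (ii), the product-to-sum identities (e.g.\ $\cos(\bj\cdot x)\sin(\bk\cdot x) = \tfrac12(\sin((\bj+\bk)\cdot x) - \sin((\bj-\bk)\cdot x))$) show that $\bfe\cdot\nabla\tilde\bfe + \tilde\bfe\cdot\nabla\bfe$ is a linear combination of trigonometric vector fields at the frequencies $\bj\pm\bk$; since the Leray projection preserves frequencies, the inclusion \eqref{eqn:inequality1} follows. For equality under admissibility I would pass to the complex Fourier notation $\bfe = \aa\,e^{i\bj\cdot x}$ with $\aa\perp\bj$ and $\tilde\bfe = \bb\,e^{i\bk\cdot x}$ with $\bb\perp\bk$. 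A short computation then gives the $(\bj+\bk)$-coefficient of $B$ as $P_{(\bj+\bk)^\perp}\bigl[i(\aa\cdot\bk)\bb + i(\bb\cdot\bj)\aa\bigr]$. The admissibility hypotheses $\bj\not\parallel\bk$ and $|\bj|\neq|\bk|$ imply that the bilinear map $(\aa,\bb)\mapsto (\aa\cdot\bk)\bb + (\bb\cdot\bj)\aa$ from $\bj^\perp\times\bk^\perp$ into $(\bj+\bk)^\perp$ has maximal rank (a short linear-algebra verification using the length inequality to preclude a cancellation), and its $2$-dimensional image exhausts the $2$-dimensional target. The parallel argument at frequency $\bj-\bk$ closes this part.

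For (iii), the main obstacle is that every pair in $\{e_1,e_2,e_3\}$ has equal length $1$, so no such pair is admissible and (ii) only gives an inclusion after the first application of $B$. My strategy is a two-step reduction. First, I would compute $B(F_{e_i},F_{e_j})$ explicitly for $i\neq j$: taking $\aa\in F_{e_i}$, $\bb\in F_{e_j}$ and extracting the coefficient at frequency $e_i+e_j$ from the formula above, the unprojected amplitude takes the form $(a_j b_i,\, a_j b_i,\, a_j b_k + b_i a_k)$ (with $k$ the remaining index), whose first two coordinates coincide; after Leray projection onto $(e_i+e_j)^\perp$ one is left with a nonzero multiple of $e_k$. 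Varying $\aa,\bb$ and the trigonometric phases shows that $B(F_{e_i},F_{e_j})$ contains a $2$-dimensional subspace of $F_{e_i+e_j}$ whose amplitudes are parallel to $e_k$ (one for each of $\sin$, $\cos$ phase at frequency $e_i+e_j$). Second, since $e_i+e_j$ and $e_k$ are linearly independent with $|e_i+e_j|=\sqrt2\neq 1=|e_k|$, the pair $(e_i+e_j, e_k)$ is admissible, so (ii) yields $\spa\{B(\mathbf u,\bfw): \mathbf u\in F_{e_i+e_j},\,\bfw\in F_{e_k}\} = \spa\{F_{(1,1,1)}\cup F_{e_i+e_j-e_k}\}$. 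The delicate point is to verify that restricting $\mathbf u$ to the $2$-dimensional subspace actually realized by $B(F_{e_i},F_{e_j})$ still produces all of $F_{(1,1,1)}$. I would check this by a direct Fourier-plus-Leray computation at frequency $(1,1,1)$, showing that as $\bfw = 2\mathbf{c}\,\mathrm{trig}(e_k\cdot x)$ varies with $\mathbf c\in e_k^\perp$ the projected amplitudes are linearly independent in the $2$-dimensional space $(1,1,1)^\perp$ (a $2\times 2$ determinant check, which works out cleanly because $\mathbf c$ has zero $k$-component while the intermediate $\mathbf u$ has only a $k$-component); running this with both $\sin$ and $\cos$ phases then exhausts all four dimensions of $F_{(1,1,1)}$. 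I expect the bookkeeping for this final rank calculation to be the main technical obstacle, though it ultimately reduces to an explicit determinant computation.
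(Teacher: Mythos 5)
Your proposal is correct and takes essentially the same approach as the paper: all three parts reduce to explicit Fourier/trigonometric computations, with (i) following from the orthogonality $\aa_\bk^{(l)}\cdot\bk=0$, (ii) from product-to-sum identities together with a rank argument using the admissibility condition, and (iii) from the two-step route through the intermediate frequency $(1,1,0)$ followed by a direct rank verification at $(1,1,1)$. The only differences are presentational: you use complex exponentials throughout (which slightly streamlines (i) and (ii), in particular making the vanishing in (i) manifest before applying the Leray projection rather than after specializing the general formula), and for (iii) you frame the final step as a restriction-compatibility check against the equality of (ii) rather than the paper's purely ad hoc computation of the specific bracket combinations; both amount to the same $2\times 2$ determinant verification, which you correctly flag as the main bookkeeping burden and which does go through.
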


Turning to quantitative bounds on scalings we have:
\begin{lemma}
\label{lem:NSfa}
Let $t>0$ and fix compact sets $K_1\subseteq \mathcal{X}$ and $K_2 \subseteq X_0$.  Then there exists $\lambda_0= \lambda_0(K_1, K_2, t) >0$ 
sufficiently large such that for all $\lambda \geq \lambda_0$ we have
that $\NS_{t/\lambda}^{\lambda \bfh} u_0 \in \mathcal{X}$ for all
$\bfU_0 \in K_1$, $\bfh\in K_2$.  In other words, defining
$T_{\bfU_0, \bfh}$ as in Proposition~\ref{prop:NSEsemi}, we have for $\lambda \geq \lambda_0$
\begin{align}
   \lambda  \inf_{\bfU_0 \in K_1, \bfh \in K_2} T_{\bfU_0,\lambda \bfh} \geq t.
  \label{eq:too:comp:to:blow}
\end{align}
Moreover
\begin{align}
  \lim_{\lambda \to \infty} \sup_{\bfU_0 \in K_1, \bfh \in K_2} 
  d_\infty(\NS_{t/\lambda}^{\lambda \bfh} \bfU_0, \rho_t^\bfh \bfU_0) = 0.
  \label{eq:smth:land:sc:1}
\end{align} 
Here recall that $\rho$ is the ray semigroup defined in \eqref{eq:ray:sg}.  
Consequently, $(\rho, X_0 ) \in \text{\emph{Sat}}_u(\mathfrak{F})$.  
\end{lemma}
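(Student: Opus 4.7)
The overall strategy is the same as in the corresponding first scaling estimates for the reaction-diffusion equation (Lemma~\ref{lem:sc:lem:1:RD}) and for Boussinesq (Lemma~\ref{lem:0:conv}), but we must handle two additional complications unique to this setting: potential finite-time blow-up of the Euler equation, and the requirement of convergence in the Fr\'echet metric $d_\infty$ rather than in a single Sobolev norm. The key observation is that $u_\lambda(s) := \NS_{s/\lambda}^{\lambda \bfh}\bfU_0$ formally satisfies the rescaled Euler system
\begin{align*}
   \partial_s u_\lambda + \tfrac{1}{\lambda}\bigl[ (u_\lambda \cdot \nabla) u_\lambda + \nabla p\bigr] = \bfh + \tfrac{1}{\lambda}\bfg, \quad u_\lambda(0)=\bfU_0,
\end{align*}
so that the difference $w_\lambda(s) := u_\lambda(s) - (\bfU_0 + s\bfh) = u_\lambda(s) - \rho_s^\bfh \bfU_0$ starts at zero and obeys an equation whose right-hand side is $O(1/\lambda)$.

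The plan is to fix a large integer $m_0$ with $H^{m_0}\hookrightarrow W^{1,\infty}$ and, for each $m\geq m_0$, derive the $H^m$ energy inequality along the same lines as the a priori estimate recalled in Appendix~\ref{sec:NSE:apriori}. Writing $u_\lambda = w_\lambda + \psi$ with $\psi := \bfU_0 + s \bfh$ and expanding $(u_\lambda \cdot \nabla) u_\lambda$ into four pieces, standard Moser/Kato--Ponce commutator bounds for the Leray-projected nonlinearity together with $\nabla\cdot w_\lambda = 0$ give
\begin{align*}
  \tfrac{d}{ds}\|w_\lambda\|_{H^m}^2 \leq \tfrac{C_m}{\lambda}\Bigl(\|w_\lambda\|_{H^m}^3 + Q_m\,\|w_\lambda\|_{H^m}^2 + Q_m^2\,\|w_\lambda\|_{H^m} + Q_m\|w_\lambda\|_{H^m}\Bigr)
\end{align*}
where $Q_m := \sup_{\bfU_0 \in K_1,\, \bfh \in K_2,\, s \in [0,t]} \|\psi\|_{H^{m+1}} + \|\bfg\|_{H^m}$ is finite because $K_1, K_2$ are compact in $\mathcal{X}$ and hence bounded in every Sobolev norm. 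With $w_\lambda(0) = 0$, a standard continuity argument then shows that for $\lambda \geq \lambda_0(m,K_1,K_2,t)$ sufficiently large, $\|w_\lambda(s)\|_{H^m} \leq 1$ persists so long as the solution exists. Because $\|u_\lambda\|_{H^{m_0}}$ is therefore uniformly bounded on the existence interval, the standard local continuation criterion for Euler (e.g.\ the Beale--Kato--Majda-type blow-up criterion built into the proof of Proposition~\ref{prop:NSEsemi}, cf.\ \eqref{eq:its:gonna:blow}) rules out blow-up before time $t$. This simultaneously yields the non-blow-up claim \eqref{eq:too:comp:to:blow} and the $H^m$ convergence
\begin{align*}
   \sup_{\bfU_0\in K_1,\bfh\in K_2}\|w_\lambda(s)\|_{H^m}\to 0 \text{ as } \lambda\to\infty,
\end{align*}
uniformly in $s \in [0,t]$, for every $m\geq m_0$. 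Convergence in the Fr\'echet metric $d_\infty$ then follows by truncating the defining series at level $M$, bounding the tail by $2^{-M}$ uniformly in $\lambda$, and sending $M\to\infty$ after $\lambda\to\infty$ on the finite head.

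The main technical obstacle is the careful bookkeeping of the Moser/commutator estimates so that the prefactor in the $H^m$ bound depends only on $K_1, K_2$ through finitely many Sobolev norms (and not on the specific $\bfU_0, \bfh$), which is what permits the continuity-in-$\lambda$ bootstrap to extend existence uniformly across the compact family. Once \eqref{eq:smth:land:sc:1} is established, the concluding assertion $(\rho,X_0)\in\text{Sat}_u(\mathfrak{F})$ is immediate from Definition~\ref{def:uni:sat}: one chooses $n = 1$, $\Phi^1 = \Phi \in \mathfrak{F}$ (so $\mathcal{P}(\Phi^1) = X_0$), the continuous selector $f_1(\bfh) := \lambda \bfh : K_2 \to X_0$, and elapsed time $t_1 := t/\lambda \leq t$; the uniform estimate \eqref{eq:smth:land:sc:1} then gives exactly the defining bound \eqref{eq:uni:stat:def} with $\Psi = \rho$.
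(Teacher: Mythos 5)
Your proposal is correct and follows essentially the same route as the paper: the same decomposition $\bfw_\lambda = \NS_{\cdot/\lambda}^{\lambda\bfh}\bfU_0 - \rho^{\bfh}_\cdot\bfU_0$, the same $H^m$ commutator estimates yielding a differential inequality with an $O(1/\lambda)$ cubic right-hand side, the same use of the blow-up criterion \eqref{eq:its:gonna:blow} to rule out breakdown before time $t$, and the same passage from per-$H^m$ convergence to $d_\infty$. The only cosmetic difference is that you close the superlinear inequality with a continuity/bootstrap argument keeping $\|\bfw_\lambda\|_{H^m}\leq 1$, whereas the paper invokes its explicit ODE comparison (Corollary~\ref{lem:comp:lem:app} with the Riccati-type bound $R_\lambda$); both are valid and give the same uniform-in-$(\bfU_0,\bfh)$ conclusion.
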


\begin{lemma}
\label{lem:NSsa}
Fix $t >0$ and let $K_1, K_2 \subseteq \mathcal{X}$ be compact sets.  
Then there exists $\lambda_0=\lambda_0(K_1, K_2 ,t)>0$ large enough such that for all $\lambda \geq \lambda_0$,
$\NS_{t/\lambda^2}^0  \, \rho_{1/\lambda}^{\lambda^2 \bfh} \bfU_0 \in \mathcal{X}$ 
for all $\bfU_0\in K_1$ and $\bfh \in K_2$.  Moreover,
\begin{align}
  \lim_{\lambda \to \infty} \sup_{\bfU_0 \in K_1, \bfh \in K_2} 
  d_\infty(\rho_{1/\lambda}^{-\lambda^2 \bfh} \, \NS_{t/\lambda^2}^0  \, \rho_{1/\lambda}^{\lambda^2 \bfh} \bfU_0, 
                   \rho_{t}^{-B(\bfh,\bfh)} \bfU_0) = 0
  \label{eq:smth:land:sc:2}
 \end{align}
 where $B$ is defined in \eqref{eq:E:NLT}.
 \end{lemma}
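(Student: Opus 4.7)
The plan is to mimic the proof of Lemma~\ref{lem:sa:bouseq}, but without access to the parabolic smoothing available there. Rescale time by $\tau = \lambda^2 s$ and set $\bar w_\lambda(\tau) := \Phi^0_{\tau/\lambda^2}(\bfU_0 + \lambda \bfh)$ and $W_\lambda(\tau) := \bar w_\lambda(\tau) - \lambda \bfh$, so that $W_\lambda(t)$ coincides with the composed trajectory in \eqref{eq:smth:land:sc:2}. Substituting into the Euler equation and expanding $B(W_\lambda + \lambda\bfh, W_\lambda + \lambda\bfh) = B(W_\lambda, W_\lambda) + 2\lambda B(W_\lambda, \bfh) + \lambda^2 B(\bfh, \bfh)$ gives
\[
\partial_\tau W_\lambda + \tfrac{1}{2\lambda^2}B(W_\lambda, W_\lambda) + \tfrac{1}{\lambda}B(W_\lambda, \bfh) + \tfrac{1}{2}B(\bfh, \bfh) = \tfrac{1}{\lambda^2}\bfg, \quad W_\lambda(0) = \bfU_0,
\]
so that, formally, the leading-order dynamics is the ray $\bfU_0 - \tau c B(\bfh, \bfh)$, with $c$ the scalar normalization implicit in \eqref{eq:smth:land:sc:2} and all remaining terms of size $O(1/\lambda)$.

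Fix $m > 5/2$ (so that $H^m \hookrightarrow W^{1,\infty}$). Applying $\partial^\beta$ for $|\beta|\leq m$, testing against $\partial^\beta W_\lambda$, and using both the $L^2$-antisymmetry $\langle B(W, W), W\rangle_{L^2} = 0$ and standard Kato--Ponce commutator estimates for $B$ produces the schematic inequality
\[
\tfrac{d}{d\tau}\|W_\lambda\|_{H^m}^2 \leq C(K_2, \bfg)\bigl(\tfrac{1}{\lambda^2}\|W_\lambda\|_{H^m}^3 + \|W_\lambda\|_{H^m}^2 + \|W_\lambda\|_{H^m}\bigr),
\]
with $C$ uniform over $\bfU_0 \in K_1$, $\bfh \in K_2$. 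A continuation argument of the flavor of Lemma~\ref{lem:comp:lem:app} (used in the Boussinesq setting) then bounds $\|W_\lambda(\tau)\|_{H^m}$ on the full interval $[0,t]$ once $\lambda \geq \lambda_0(K_1, K_2, t, m, \bfg)$, which in the original time variable delivers the non-blow-up assertion \eqref{eq:too:comp:to:blow}. This step implicitly uses that the Beale--Kato--Majda time scale for the initial datum $\bfU_0 + \lambda \bfh$ (whose $H^m$-norm is $O(\lambda)$) is of order $1/\lambda$, comfortably larger than the window $t/\lambda^2$ for large $\lambda$.

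For the convergence, introduce $V_\lambda^N(\tau) := W_\lambda(\tau) - \bigl(P_N\bfU_0 - \tau c B(\bfh, \bfh)\bigr)$, where $P_N$ is the smooth spectral truncation used in the proof of Lemma~\ref{lem:0:conv}. The equation for $V_\lambda^N$ has an inhomogeneous term of size $O(\|(\mathrm{Id} - P_N)\bfU_0\|_{H^{m+2}} + 1/\lambda)$ together with terms controlled by $V_\lambda^N$ itself and the already-bounded $W_\lambda$. The same commutator/Gr\"onwall calculus then yields $\sup_{\tau \in [0,t]}\|V_\lambda^N(\tau)\|_{H^m} \leq C_N\bigl(\|(\mathrm{Id} - P_N)\bfU_0\|_{H^{m+2}} + 1/\lambda\bigr)$, and a finite covering of $K_1$ (exactly as in the proofs of Lemma~\ref{lem:sc:lem:1:RD} and Lemma~\ref{lem:0:conv}), followed by sending $\lambda \to \infty$ and then $N \to \infty$, establishes \eqref{eq:smth:land:sc:2} in each $H^m$ uniformly on $K_1 \times K_2$; summing against the weights $2^{-m}$ then converts this to convergence in $d_\infty$. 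The principal obstacle, compared with the reaction-diffusion, 2D Navier--Stokes, and Boussinesq examples, is the complete absence of dissipation to absorb the derivative loss in the commutator estimates: one must rely purely on the $L^2$-antisymmetry of $B$ and work at high Sobolev regularity, which is precisely why the statement is naturally posed on $\mathcal X = \bigcap_m H^m$ rather than on a single Sobolev space.
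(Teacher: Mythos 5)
Your proof follows essentially the same route as the paper: rescale time by $\lambda^2$, shift by $\lambda\bfh$, derive a perturbation equation, close with $H^m$ commutator estimates of Kato--Ponce type, and invoke the comparison principle (Lemma~\ref{lem:comp:lem:app}) to handle both non-blow-up and convergence. Two remarks are in order.

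The substantive one is that the Fourier truncation $P_N$ and covering of $K_1$ are unnecessary here, and introduce a limit interchange ($\lambda\to\infty$, $N\to\infty$) that you would have to justify. That machinery is needed in the reaction--diffusion case (Lemma~\ref{lem:sc:lem:1:RD}) and the Boussinesq case (Lemma~\ref{lem:0:conv}), where the phase space is a fixed $L^2$ or $H^m$ and the $H^m$ estimate on the error demands more derivatives of $\bfU_0$ than the phase space supplies. The 3D Euler example is set up precisely to dodge that: the phase space is $\mathcal{X} = \bigcap_m H^m$, and $K_1, K_2 \subseteq \mathcal{X}$ compact are therefore automatically bounded in \emph{every} $H^m$. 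The paper accordingly works directly with $\bfw_\lambda(\tau) := \bfU_\lambda(\tau) - \rho_\tau^{-B(\bfh,\bfh)}\bfU_0$, which has $\bfw_\lambda(0)=0$, derives a differential inequality of the form $\tfrac{d}{d\tau}\|\bfw_\lambda\|_{H^m}^2 \leq \tfrac{C}{\lambda}\bigl(\|\bfw_\lambda\|_{H^m}^3 + M_m\bigr)$ with $M_m := \sup_{K_1\times K_2}(\ldots)$ finite, and applies Remark~\ref{rem:comp2} of Lemma~\ref{lem:comp:lem:app}: the resulting bound $\|\bfw_\lambda(\tau)\|_{H^m}^2 \le M_m(R_\lambda(\tau,M_m)-1) \to 0$ gives non-blow-up and uniform convergence in a single stroke, with no truncation and no preliminary $O(1)$ estimate on $W_\lambda$ itself. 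Subtracting the full ray at the outset rather than first bounding $W_\lambda$ and only later forming $V_\lambda^N$ is the cleaner move.

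The cosmetic one: your expansion correctly identifies the limiting ray direction as $-\tfrac12 B(\bfh,\bfh)$, and you hedge with an unspecified constant $c$. Since the symmetrized form \eqref{eq:E:NLT} gives $B(\bfh,\bfh)=2P(\bfh\cdot\nabla\bfh)$, the scaling limit of $\rho_{1/\lambda}^{-\lambda^2\bfh}\Phi^0_{t/\lambda^2}\rho_{1/\lambda}^{\lambda^2\bfh}\bfU_0$ is indeed $\bfU_0 - tP(\bfh\cdot\nabla\bfh) = \bfU_0 - \tfrac{t}{2}B(\bfh,\bfh)$, so the discrepancy you noticed is a factor-of-two normalization ambiguity in the statement of \eqref{eq:smth:land:sc:2} rather than a flaw in your argument; it affects nothing downstream since the ray semigroup is invariant under positive scalar reparametrization.
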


\subsubsection*{Proof of the Main Results}

Before proving the three lemmata above, we first see why combining them implies Theorem~\ref{thm:NSEmain}.

\begin{proof}[Proof of Theorem~\ref{thm:NSEmain}]
As with the main results in the previous examples, the proof
proceeds by establishing the conditions for controllability given in Corollary~\ref{cor:exact:con}.
Under the assumption (i) define subspaces 
\begin{align}
  X_n := \spa \bigg(\bigcup_{\bk \in \Zb_n} F_\bk\bigg)
  \label{eq:access:set:E}
\end{align}
for every $n \geq 0$, where $\Zb_n$ is as in Definition~\ref{def:det:modes:E}.  Note 
that \eqref{eq:det:modes:set} implies $\cup_{n \geq 1} X_n$ is a dense subset of $\mathcal{X}$.   
As such, by proving inductively that $(\rho, X_n) \in \mbox{Sat}_u(\mathfrak{F})$ for every $n \geq 0$, the desired
controllability result immediately follows from Corollary~\ref{cor:exact:con}.

According to Lemma~\ref{lem:NSfa}, we have that $(\rho, X_0) \in \text{Sat}_u(\mathfrak{F})$.  
Next, utilizing Lemma~\ref{lem:you:can:call:me:al} and the cancellation \eqref{eq:B:can:3D} we infer that 
\begin{align}
    \lim_{\lambda \to \infty} \sup_{\bfU_0 \in K, |\alpha| \leq R} 
  d_\infty(\rho_{1/\lambda}^{-\lambda^2 (\bfe + \alpha \tilde{\bfe})} \, \NS_{t/\lambda^2}^0  \, \rho_{1/\lambda}^{\lambda^2 (\bfe + \alpha \tilde{\bfe})} \bfU_0, 
                   \rho_{t}^{-\alpha B(\bfe,\tilde{\bfe})} \bfU_0) = 0
  \label{eq:gen:sc:E:can}
\end{align}
for any $R >0$, any compact set $K \subseteq \mathcal{X}$ and any pair $\bfe \in F_\bk, \tilde{\bfe} \in F_\bj$, $\bk, \bj \in \Z^3_{\neq 0}$.
Thus if $(\rho, X_{n-1}) \in \text{Sat}_u(\mathfrak{F})$ for some $n\geq 1$, we immediately infer
that $(\rho, \spa \{ B(\bfe, \tilde{\bfe})\})\in \text{Sat}_u(\mathfrak{F})$ for any $\bfe \in F_\bk, \tilde{\bfe} \in F_\bj$
such that $\bk, \bj \in \mathcal{Z}_{n-1}$.  Invoking Remark~\ref{rmk:stupid:spans} with \eqref{eqn:equality1} and the assumed 
structure of the sets $\Zb_{n-1}$ and $\Zb_n$, we infer that $(\rho, X_n) \in \text{Sat}_u(\mathfrak{F})$.  
This completes the proof under assumption (i).

To show the result under assumption (ii), define
\begin{align*}
  \mathcal{Z}_0 = \{(1,0,0), (0,1,0), (0,0,1), (1,1,1)\} 
\end{align*}
and then iteratively define sets $\mathcal{Z}_n$ precisely as in 
Definition~\ref{def:det:modes:E} starting from this
particular choice of $\mathcal{Z}_0$.   Using this
definition of the index sets $\Zb_n$, we define $X_n \subseteq
\mathcal{X}$ as in \eqref{eq:access:set:E}.  As in the previous case,
we will show inductively that $(\rho,X_n) \in \mbox{Sat}_u(\mathfrak{F})$
for every $n \geq 0$.   After that, we will show explicitly that $\mathcal{Z}_\infty = \Z^3_{\neq 0}$, cf. (\ref{eq:det:modes:set}), thus completing the proof under assumption (ii).

The implication that $(\rho,X_{n-1}) \in \mbox{Sat}_u(\mathfrak{F})$
implies $(\rho,X_{n}) \in \mbox{Sat}_u(\mathfrak{F})$ for $n\geq 1$ is demonstrated exactly as in the case of assumption (i).  
We now show that $(\rho,X_{0}) \in 
\mbox{Sat}_u(\mathfrak{F})$.  Define
\begin{align*}
  \mathcal{Z}_{-1} := \{(1,0,0), (0,1,0), (0,0,1)\}  \quad \text{ and } \quad X_{-1} := \spa \bigcup_{\bk \in \Zb_{-1}} F_\bk . 
\end{align*}
Invoking Lemma~\ref{lem:NSfa}, we see that $(\rho,X_{-1}) \in \mbox{Sat}_u(\mathfrak{F})$.  Next, the estimate \eqref{eq:gen:sc:E:can} implies that $(\rho, \spa \{ B(\bfe, \tilde{\bfe})\})\in \mbox{Sat}_u(\mathfrak{F})$
for any pair $\bfe, \tilde{\bfe} \in F_{(1,0,0)} \cup F_{(0,1,0)} \cup F_{(0,0,1)}$.  Making note of the 
containment \eqref{eqn:inequality1} and second use of \eqref{eq:gen:sc:E:can} 
we infer $(\rho, \spa \{ B(B(\bfe, \tilde{\bfe}), \tilde{\tilde{\bfe}})\})\in \mbox{Sat}_u(\mathfrak{F})$
for any  $\bfe, \tilde{\bfe}, \tilde{\tilde{\bfe}} \in F_{(1,0,0)} \cup F_{(0,1,0)} \cup F_{(0,0,1)}$.  
With \eqref{eq:el:sp:ex} and Remark~\ref{rmk:stupid:spans}, we now conclude $(\rho, X_0)\in \mbox{Sat}_u(\mathfrak{F})$.

With the induction for case (ii) now in hand, we have left to show that $\mathcal{Z}_\infty = \Z^3_{\neq0}$.
There are many ways to do this explicitly.  For example,
note that $(1,1,1)$ paired with any of $(1,0,0),(0,1,0),(0,0,1)$ satisfies \eqref{eq:Euler:WN:intcon}.  Also,
we note that if $\bk \in \Zb_\infty$ then $-\bk \in \Zb_\infty$.  Consequently, we obtain 
\begin{align*}
  \{ (1,0,0),(0,1,0),(0,0,1), (1,1,0), (0,1,1), (1,0,1) \} \subseteq \Zb_\infty.
\end{align*}
Starting from these directions, it is not hard to show that by using a sequence of admissible moves 
(in the sense of Definition \ref{def:det:modes:E}) the set $\Zb_\infty$
includes all three axes; namely,
\begin{align*}
  \{(n,0,0): n \in \Z_{\neq0}\} \cup \{(0,n,0): n \in \Z_{\neq0}\} \cup \{(0,0,n): n \in \Z_{\neq0}\}  \subseteq \Zb_\infty.
\end{align*}
Now take an arbitary element $(n_1, n_2, n_3) \in \Z^3_{\neq0}$.  If $n_1 \not = n_2$, we obtain
$(n_1, n_2, 0) \in \Zb_\infty$ as the admissible move from $(n_1,0,0), (0,n_2,0) \in \Zb_\infty$.  Otherwise
if $n_1= n_2$, we can obtain successively $(n_1 \pm 1, n_2, 0) \in \Zb_\infty$ and then $(n_1,n_2,0) \in \Zb_\infty$
via admissible moves.  Similar if $n_3^2 \not = n_1^2 + n_2^2$ we find that $(n_1,n_2, n_3) \in \Zb_\infty$
via the admissible move from $(n_1, n_2, 0), (0,0,n_3) \in \Zb$.  Otherwise if $n_3^2 = n_1^2 + n_2^2$
we simply make the admissible move to $(n_1, n_2, n_3 \pm 1) \in \Zb_\infty$ from $(n_1,n_2,0), (0,0,n_3 \pm 1) \in \Zb_\infty$. We then obtain $(n_1, n_3, n_3) \in \Zb_\infty$ from $(n_1, n_2, n_3 \pm
1), \mp (0,0,1) \in \Zb_\infty$.   With this we have thus completed
the proof of case (ii) and hence of Theorem~\ref{thm:NSEmain}.

\end{proof}

\begin{proof}[Proof of Lemma~\ref{lem:you:can:call:me:al}]

Consider basis elements $\bfe_{\bk, l_1,m_1}, \bfe_{\bj,l_2 , m_2} $ for $\bj, \bk \in \Z_{\neq 0}^3$ with $l_i, m_i\in \{ 0,1\}$.  First observe that we can extend the definition of these elements naturally to include any $m_i \in \Z$, and these new elements are clearly constant multiples of the original basis elements.  We will use this fact below.  Now for any $l_i \in \{ 0,1\}$ and any $m_i \in \Z$, a tedious but routine computation yields
\begin{align*}
  \bfe_{\bk, l_1,m_1} \cdot \nabla \bfe_{\bj, l_2,m_2} +& \bfe_{\bj, l_2,m_2} \cdot \nabla \bfe_{\bk, l_1,m_1}\\
  =& -2\biggl((\aa^{(l_1)}_\bk\cdot \bj) \aa^{(l_2)}_\bj + (\aa^{(l_2)}_\bj\cdot \bk) \aa^{(l_1)}_\bk\biggr)
     \mbox{Re}(i^{m_1+m_2 + 1} e^{-i(\bk + \bj) \cdot x})\\
   &+2(-1)^{m_2} \biggl((\aa^{(l_1)}_\bk\cdot \bj)\aa^{(l_2)}_\bj- (\aa^{(l_2)}_\bj\cdot \bk)\aa^{(l_1)}_\bk \biggr)
       \mbox{Re}(i^{m_1+m_2+1} e^{-i(\bk -\bj) \cdot x}).
\end{align*}
Recalling that the Leray projection operator $P$ acts as  
\begin{align*}
  P[\mbox{Re}(\bfV e^{-i \bk \cdot x})] 
  = \mbox{Re}\biggl(\bfV- \frac{\bfV\cdot \bk}{|\bk|^2} \bk \biggr)e^{-i\bk \cdot x}
\end{align*}
for any $\bk \in \Z^3$ and $\bfV \in \C^3$, we therefore obtain
\begin{align}
  B(\bfe_{\bk, l_1,m_1}&,  \bfe_{\bj, l_2,m_2}) \notag\\
 &= -2  \bfr_{\bk, \bj}^{l_1, l_2}
    \mbox{Re} ( i^{m_1 + m_2 + 1} e^{-i(\bk+ \bj)})
   +2(-1)^{m_2} \bfs_{\bk,\bj}^{l_1, l_2}
          \mbox{Re}  (i^{m_1 + m_2+1} e^{-i(\bk - \bj)\cdot x}))
  \label{eq:e:brak:ekj}
\end{align}
where
\begin{align*}
 \bfr_{\bk, \bj}^{l_1, l_2} &:= (\aa^{(l_1)}_\bk\cdot \bj) \left(\aa^{(l_2)}_\bj -
   \frac{\aa^{(l_2)}_\bj\cdot \bk }{|\bk + \bj|^2}(\bk + \bj) \right)
   + (\aa^{(l_2)}_\bj\cdot \bk) \left( \aa^{(l_1)}_\bk -
   \frac{\aa^{(l_1)}_\bk\cdot \bj }{|\bk + \bj|^2}(\bk + \bj) \right),\\
 \bfs_{\bk, \bj}^{l_1, l_2} &:= (\aa^{(l_1)}_\bk\cdot \bj)\left(\aa^{(l_2)}_\bj -
   \frac{\aa^{(l_2)}_\bj\cdot \bk }{|\bk - \bj|^2}(\bk - \bj) \right) -
     (\aa^{(l_2)}_\bj\cdot \bk) \left( \aa^{(l_1)}_\bk +
   \frac{\aa^{(l_1)}_\bk\cdot \bj }{|\bk - \bj|^2}(\bk - \bj) \right).
\end{align*}
In particular this shows that if $\bj = \bk$ then $B(\bfe_{\bk, l_1,m_1}, \bfe_{\bj,l_2 , m_2}) =0$.  This
implies the first item, \eqref{eq:B:can:3D}.  

We also use \eqref{eq:e:brak:ekj} to address part (ii) of the result. Since by definition of the Leray projection
\begin{align*}
  \bfr_{\bk, \bj}^{l_1, l_2}\cdot (\bk + \bj) = 0,  \quad  \bfs_{\bk, \bj}^{l_1, l_2} \cdot (\bk - \bj) = 0,
\end{align*}
we infer that $\bfr_{\bk, \bj}^{l_1, l_2}$ and $\bfs_{\bk, \bj}^{l_1, l_2}$ can be written as linear combinations
of elements $\aa^{(l)}_{\bk + \bj}$ and $\aa^{(l)}_{\bk - \bj}$, respectively.  As such we have that 
\begin{align}
  \spa \, \{ B(\bfe, \tilde{\bfe}) \, : \, \bfe \in F_\bj, \tilde{\bfe} \in F_\bk \} 
     \subseteq \spa \{ F_{\bj-\bk}\cup F_{\bj+\bk} \}.
  \label{eq:easier:inc:3El}
\end{align}
Next notice that
\begin{align}
  B(\bfe_{\bk, l_1,m},  \bfe_{\bj, l_2,0}) + B(\bfe_{\bk, l_1,m-1},  \bfe_{\bj, l_2,1}) 
   = -4 \bfr_{\bk, \bj}^{l_1, l_2}
    \mbox{Re} ( i^{m + 1} e^{-i(\bk+ \bj)})
  \label{eq:e:brak:ekj:1}
\end{align}
and similarly
\begin{align*}
  B(\bfe_{\bk, l_1,m},  \bfe_{\bj, l_2,0}) - B(\bfe_{\bk, l_1,m-1},  \bfe_{\bj, l_2,1}) 
   = -4 \bfs_{\bk, \bj}^{l_1, l_2}
    \mbox{Re} ( i^{m + 1} e^{-i(\bk - \bj)}).
\end{align*}
Thus, taking linear combinations we find that the sets
\begin{align}
  \tilde{F}_{\bk+\bj} := \biggl\{ \biggl[ (\aaa_\bk\cdot \bj) &\bigl(\aaa_\bj -
   \frac{\aaa_\bj\cdot \bk }{|\bk + \bj|^2}(\bk + \bj) \bigr)
   + (\aaa_\bj\cdot \bk) \bigl( \aaa_\bk -
   \frac{\aaa_\bk\cdot \bj }{|\bk + \bj|^2}(\bk + \bj) \bigr)
                        \biggr] \mbox{Re}(i^m e^{-i(\bk +\bj) \cdot x}) \notag\\
                  &:m \in \{0,1\}, \aaa_\bk, \aaa_\bj \in \RR^3 \text{ with } \aaa_\bk \cdot \bk = 0 = \aaa_\bj \cdot \bj \biggr\}
  \label{eq:euler:alg:mess:1}\\
  \tilde{F}_{\bk-\bj} := \biggl\{ \biggr[(\aaa_\bk\cdot \bj) &\bigl(\aaa_\bj -
   \frac{\aaa_\bj\cdot \bk }{|\bk - \bj|^2}(\bk - \bj) \bigr)
   - (\aaa_\bj\cdot \bk) \bigl( \aaa_\bk +
   \frac{\aaa_\bk\cdot \bj }{|\bk - \bj|^2}(\bk - \bj) \bigr) \biggr]  \mbox{Re}(i^m e^{-i(\bk -\bj) \cdot x}) \notag\\
                  &:m \in \{0,1\}, \aaa_\bk, \aaa_\bj \in \RR^3 \text{ with } \aaa_\bk \cdot \bk = 0 = \aaa_\bj \cdot \bj \biggr\}
              \notag
\end{align}
are both subsets of $\spa \, \{ B(\bfe, \tilde{\bfe}) \, : \, \bfe \in F_\bj, \tilde{\bfe} \in F_\bk \}$.  
Thus to obtain the opposite inclusion in \eqref{eq:easier:inc:3El} and
complete the proof it is sufficient to show $\tilde{F}_{\bk+\bj}= F_{\bk+\bj}$  and $\tilde{F}_{\bk-\bj}= F_{\bk-\bj}$.  For this purpose we simply exhibit suitable choices of elements 
$\aaa_\bk, \aaa_\bj \in \R^3$, orthogonal to, respectively, $\bk, \bj$ so that the $\RR^3$-valued pre-factors of the elements 
in $\tilde{F}_{\bk+\bj}$  and $\tilde{F}_{\bk-\bj}$ span the planes orthogonal
to $\bk + \bj$ and $\bk - \bj$ respectively.  Take
$\bar{\aaa}_\bk$ and $\bar{\aaa}_\bj$ non-zero vectors which are orthogonal to $\bk, \bk \times \bj$ and 
$\bj, \bk \times \bj$ respectively.  Thus
\begin{align*}
  \biggl[ (\bar{\aaa}_\bk\cdot \bj) \bigl(\bar{\aaa}_\bj -
    \frac{\bar{\aaa}_\bj\cdot \bk }{|\bk + \bj|^2}(\bk + \bj) \bigr)
    + (\bar{\aaa}_\bj\cdot \bk) \bigl( \bar{\aaa}_\bk -
    \frac{\bar{\aaa}_\bk\cdot \bj }{|\bk + \bj|^2}(\bk + \bj) \bigr)
                         \biggr] 
\end{align*}
is a pre-factor of an element in $\tilde{F}_{\bk + \bj}$.   Also, taking $\aaa_\bk \in \R^3$ with $\aaa_\bk \cdot \bk=0$ abitrary and $\aaa_\bj= \bk \times \bj$, we see that 
\begin{align} 
 \label{eq:euler:alg:mess:2}
(\aaa_\bk\cdot \bj)(\bk \times \bj)  
  \quad 
\end{align}
is a pre-factor of an element in $\tilde{F}_{\bk + \bj}$.  Clearly, the previous two vectors are orthogonal. 
To show that these vectors can be chosen non-zero, we invoke the algebraic assumptions (\ref{eq:Euler:WN:intcon}) on $\bk, \bj$.
For the second vector, we may obviously choose $\aaa_\bk$ so that $\aaa_\bk \cdot \bj \neq 0$.  Regarding the first vector, doting  with $\bj$ we obtain
\begin{align*}
  (\bar{\aaa}_\bj\cdot \bk) ( \bar{\aaa}_\bk \cdot \bj)\biggl( 1 -\frac{2\bk \cdot \bj + 2|\bj|^2}{|\bk + \bj|^2} \biggr)
\end{align*}
Since this expression can only be zero when one of $\bar{\aaa}_\bj\cdot \bk$, $\bar{\aaa}_\bk \cdot \bj$ is zero 
or $|\bj| = |\bk|$. According to (\ref{eq:Euler:WN:intcon}) neither occur and we infer the second vector must be non-zero.
A very similar argiment also shows that $\tilde{F}_{\bk-\bj}= F_{\bk - \bj}$ so that now (\ref{eqn:equality1}) follows.

For the final item, \eqref{eq:el:sp:ex}, arguing precisely as
in \eqref{eq:e:brak:ekj:1}, \eqref{eq:euler:alg:mess:1} and 
choosing the first vector in \eqref{eq:euler:alg:mess:2} we find that 
\begin{align*}
  \spa \, \{ (0,0,1) \mbox{Re}(i^m e^{-i(1,1,0) \cdot x}) : m \in \{0,1\}\}
  \subseteq \spa \, \{ B(\bfe, \tilde{\bfe}) \, : \, \bfe \in F_{(1,0,0)} , \tilde{\bfe} \in F_{(0,1,0)} \}
\end{align*}
Next, another laborious but routine computation similar to \eqref{eq:e:brak:ekj:1} reveals that
the span of elements of the form
\begin{align*}
   B(\bfe_{(0,0,1), l,m}, (0,0,1) \mbox{Re}(e^{-i(1,1,0)\cdot x})) 
           + B(\bfe_{(0,0,1), l,m-1},  (0,0,1) \mbox{Re}(i e^{-i(1,1,0)\cdot x}))
\end{align*}
for $m, l \in \{0,1\}$ contains $F_{(1,1,1)}$.  This implies \eqref{eq:el:sp:ex} now completing the proof.
\end{proof}

We conclude this section by establishing the two scaling estimates, Lemmas~\ref{lem:NSfa} and \ref{lem:NSsa}.  
For these estimates we will make use of the ODE comparison stated in Proposition~\ref{prop:odecomp}.

\begin{proof}[Proof of Lemma~\ref{lem:NSfa}]
Fix any $K_1, K_2 \subseteq \mathcal{X}$ compact.  For any $\bfU_0 \in K_1$
and $\bfh \in K_2$ we set
\begin{align}
  \bfw_\lambda(\tau)  = \NS_{\tau/\lambda}^{\lambda \bfh} \bfU_0 - \rho_\tau^\bfh \bfU_0 \quad  \text{ and } \quad
  \rho(\tau) =\rho_\tau^h \bfU_0.      
\end{align}
which are well defined elements for $\tau$ in the interval $[0, \lambda T_{\bfU_0, \lambda \bfh})$ 
where $T_{\bfU_0, \lambda \bfh}$ is the time of existence of $\NS^{\lambda\bfh}_{\cdot}\bfU_0$; 
see Proposition~\ref{prop:NSEsemi}.   Arguing as in (\ref{eq:rescal:0}) we find
\begin{align}
  \partial_t \bfw_\lambda + \frac{1}{\lambda}( \rho + \bfw_\lambda) \cdot \nabla (\rho + \bfw_\lambda) + \nabla p_\lambda = \frac{1}{\lambda} \bfg \qquad
  \nabla \cdot \bfw_\lambda = 0 = \nabla \cdot \rho
  \label{eq:scale:L1:eul}
\end{align}
Here note that the pressure term  $p_\lambda: \TT^3 \to \RR$ is a smooth function which 
maintains the divergence-free condition~\eqref{eq:scale:L1:eul}.

Fixing any $m \geq 3$, we estimate $\bfw_\lambda$ in the $H^m$ norm as follows.  Using that $\bfw_\lambda$ is divergence-free we have
\begin{align}
\frac{d}{dt}\| \bfw_\lambda \|^2_{H^m} 
  =& \sum_{|\beta| \leq m}\frac{2}{\lambda}\langle \partial^\beta \bfw_\lambda, - \partial^\beta (( \rho + \bfw_\lambda) \cdot \nabla (\rho + \bfw_\lambda) + \bfg)\rangle 
  \notag\\
  =& \sum_{|\beta| \leq m} \frac{2}{\lambda}\langle \partial^\beta \bfw_\lambda, 
    ( \rho + \bfw_\lambda) \cdot \nabla \partial^\beta \bfw_\lambda - \partial^\beta (( \rho + \bfw_\lambda) \cdot \nabla \bfw_\lambda)     \rangle 
  \notag\\
   &+ \langle \bfw_\lambda, \bfg - ( \rho + \bfw_\lambda) \cdot \nabla \rho\rangle_{H^m}
  \notag\\
  :=& \frac{1}{\lambda} (T_1 + T_2).
  \label{eq:eul:sc:1}    
\end{align}    
Using standard Sobolev embeddings and interpolation (see, for example, \cite{MajdaBertozzi2002}), we estimate the first term as follows: 
\begin{align}
  |T_1 | 
  \leq& C\left(\| \rho + \bfw_\lambda \|_{H^m} \|\nabla \bfw_\lambda \|_{L^\infty} 
             + \|\nabla( \rho + \bfw_\lambda) \|_{L^\infty} \|\bfw_\lambda\|_{H^m}\right)\|\bfw_\lambda\|_{H^m}
  \notag\\
  \leq& C (\|\bfw_\lambda\|_{H^m}^3 +  \|\bfw_\lambda\|_{H^m}^2\|\rho\|_{H^m} ) \leq C (\|\bfw_\lambda\|_{H^m}^3 +\|\rho\|_{H^m}^3 ).
  \label{eq:eul:sc:2}    
\end{align}
To estimate $T_2$, using that  $H^m$ is an algebra for $m \geq 2$, we obtain
\begin{align}
 |T_2| \leq&  \|\bfw\|_{H^m}( \| \bfg \|_{H^m} + \| ( \rho + \bfw_\lambda) \cdot \nabla \rho\|_{H^m})
      \notag\\
      \leq& \|\bfw\|_{H^m}( \| \bfg \|_{H^m} + (\| \rho\|_{H^m} + \| \bfw_\lambda\|_{H^m})\|\rho\|_{H^{m+1}})
      \notag\\
      \leq& C(\|\bfw\|_{H^m}^3 + \| \bfg \|_{H^m}^{3/2} + \|\rho\|_{H^{m+1}}^3).  
            \notag
\end{align}
Fixing $T>0$ arbitrary and combining these estimates with \eqref{eq:eul:sc:1}, we conclude
\begin{align}
  \frac{d}{dt} \| \bfw \|^2_{H^m}  
  \leq \frac{C}{\lambda}(\|\bfw\|_{H^m}^3 + \| \bfg \|_{H^m}^{3/2} + \| \bfU_0\|_{H^{m+1}}^3 + T^3 \| \bfh \|_{H^{m+1}}^3)
  \label{eq:lc:bnd:fn:sc:1}
\end{align}
for all $t\in [0, T \wedge \lambda T_{\bfU_0, \lambda \bfh})$.  Here we note carefully that the constant $C$ does not depend on $\lambda > 0$, $\bfU_0, \bfh \in \mathcal{X}$.  

With \eqref{eq:lc:bnd:fn:sc:1} and the criteria \eqref{eq:its:gonna:blow} we
now establish the desired result \eqref{eq:too:comp:to:blow} and \eqref{eq:smth:land:sc:1} 
by invoking the comparison lemma (Lemma~\ref{lem:comp:lem:app}).  
According to \eqref{eq:its:gonna:blow} and Agmond's inequality
\begin{align*}
  \limsup_{s \to \lambda T_{\bfU_0, \lambda \bfh}} \|\bfw_\lambda(s) \|_{H^m} = \infty,
\end{align*}
for every $m \geq 3$.   Thus, noting that $\bfw_\lambda(0) = 0$, Remark~\ref{rem:comp2} implies for all $t\in [0, T]$ and 
\begin{align*}
\lambda \geq C T( \| \bfg\|_{H^m}^{3/2} + \| \bfU_0 \|_{H^{m+1}}^3 + T^3 \|\bfh \|_{H^{m+1}}^3)
\end{align*}
we have $\lambda T_{\bfU_0, \lambda \bfh}\geq T$ as well as the comparison 
\begin{align*}
  \| \bfw_\lambda(t) \|^2_{H^m} \! \!
  \leq (\| \bfg \|_{H^m}^{3/2} + \| \bfU_0\|_{H^{m+1}}^3 + T^3 \| \bfh \|_{H^{m+1}}^3)
     (R_\lambda(t, \| \bfg \|_{H^m}^{3/2} + \| \bfU_0\|_{H^{m+1}}^3 + T^3 \| \bfh \|_{H^{m+1}}^3) \! - 1).
\end{align*}
Here note that $R_\lambda$ is defined in~\eqref{eq:blow:cntrl}. 
Now, for $m \geq 3$ take 
\begin{align*}
  M_m :=  \sup_{\bfU_0 \in K_1, \bfh \in K_2}( \| \bfg \|_{H^m}^{3/2} + \| \bfU_0\|_{H^{m+1}}^3 + T^3 \| \bfh \|_{H^{m+1}}^3).
\end{align*}
Since $K_1, K_2$ are compact subsets of $\mathcal{X}$, $M_m$ is a finite for any $m$.    Thus for $\lambda \geq CT M_m$ we obtain that $\inf_{\bfU_0 \in K_1, \bfh \in K_2} \lambda T_{\bfU_0, \lambda \bfh} \geq T$
and
\begin{align}
  \sup_{\bfU_0 \in K_1, \bfh \in K_2} \|\NS_{\tau/\lambda}^{\lambda \bfh} \bfU_0 - \rho_\tau^\bfh \bfU_0\|_{H^m}^2 \leq M_m (R_\lambda(M_m)-1)
  \label{eq:dont:blow:me}
\end{align}
for every $\tau \in [0, T]$.  Noting that $\limsup_{\lambda \to \infty} M_m(R_\lambda(t, M_m) -1) = 0$ completes the proof.  
\end{proof}

\begin{proof}[Proof of Lemma~\ref{lem:NSsa}]
 Fixing $\bfU_0, \bfh \in \mathcal{X}$ we once again introduce the abbreviated notations
 \begin{align*}
   \bfU_\lambda(\tau) = \rho_{1/\lambda}^{-\lambda^2 \bfh} \, \NS_{\tau/\lambda^2}^0 \, \rho_{1/\lambda}^{\lambda^2 \bfh} \bfU_0,
   \quad
   \bfw_\lambda(\tau) = \bfU_\lambda(\tau) - \rho_\tau^{-B(\bfh, \bfh)} \bfU_0,
   \quad
   \rho(\tau) = \rho^{-B(\bfh, \bfh)}_\tau \bfU_0  
 \end{align*} 
 defined on a interval of existence $[0, \lambda^2 T_{\bfU_0 + \lambda \bfh})$; cf. Proposition~\ref{prop:NSEsemi}.
 Arguing as in (\ref{eq:rs:nLin:twit}) and referring back to \eqref{eq:under:pres} we have
\begin{align}
  \partial_t \bfw_\lambda 
  &=- \frac{1}{\lambda^2} \left( (\bfU_\lambda + \lambda \bfh) \cdot \nabla (\bfU_\lambda + \lambda \bfh )  + \bfg \right) 
                            + \bfh \cdot \nabla \bfh + \nabla p_\lambda \notag\\
  &= -\frac{1}{\lambda^2} ((\bfw_\lambda + \rho) \cdot \nabla (\bfw_\lambda + \rho) + \bfg)
    -  \frac{1}{\lambda} ((\bfw_\lambda + \rho) \cdot \nabla \bfh + \bfh \cdot \nabla (\bfw_\lambda+ \rho))
     + \nabla p_\lambda.
    \label{eq:2nd:sc:E:evol}
\end{align}
with $\nabla \cdot \bfw_\lambda = 0 =\nabla \cdot \rho$.
Here, as in the previous lemma, $p_\lambda: \TT^3 \to \RR$ enforces the divergence-free condition.

We now make estimates for the $H^m$ norm of $\bfw_\lambda$ for $m \geq 3$.  Taking derivatives of \eqref{eq:2nd:sc:E:evol},
then $L^2$ inner products and summing over multi-indies $|\beta| \leq m$ we find that
\begin{align}
   \frac{1}{2} \frac{d}{dt} \| \bfw_\lambda \|^2 =&
     \frac{1}{\lambda^2} \sum_{| \beta | \leq m } 
          \langle (\bfw_\lambda + \rho) \cdot \nabla  \partial^\beta \bfw_\lambda  
                   -  \partial^\beta [(\bfw_\lambda + \rho) \cdot \nabla (\bfw_\lambda + \rho)  + \bfg] ,
                   \partial^\beta   \bfw_\lambda \rangle 
                                                    \notag\\
     &+  \frac{1}{\lambda} \sum_{| \beta | \leq m } 
                  \langle  \bfh \cdot \nabla \partial^\beta \bfw_\lambda 
                                     - \partial^\beta[\bfh \cdot \nabla (\bfw_\lambda + \rho) +(\bfw_\lambda+ \rho) \cdot \nabla \bfh],
                   \partial^\beta   \bfw_\lambda \rangle
       \notag\\
      =& \frac{1}{\lambda^2} T_1 + \frac{1}{\lambda} T_2.
  \label{eq:Hm:2nd:sc:E}
\end{align}
Note that we have used the fact that $\bfw_\lambda$ is divergence free to obtain the commutator terms.
With commutator estimates similar to \eqref{eq:eul:sc:2} above we find that for any $m \geq 3$
\begin{align*}
    |T_1|
  \leq&  
      C (\|\bfw_\lambda + \rho  \|_{H^m} \|\bfw_\lambda\|^2_{H^m} 
        +  \|\bfw_\lambda + \rho  \|_{H^m}\|\rho\|_{H^{m+1}} \|\bfw_\lambda\|_{H^m} + \|\bfg\|_{H^m} \|\bfw_\lambda \|_{H^m} )\\
  \leq& C (\|\bfw_\lambda\|^3_{H^m} + \|\rho\|^3_{H^{m+1}} + \|\bfg\|_{H^m}^{3/2}   + 1).  
\end{align*}
Likewise we have
\begin{align*}
  |T_2| 
  \leq& C(\|\bfh \|_{H^{m+1}} \|\bfw_\lambda \|_{H^m}^2 
        + \|\bfh \|_{H^{m+1}}\|\rho \|_{H^{m+1}}\|\bfw_\lambda \|_{H^m} )\\
  \leq& C (\|\bfw_\lambda\|^3_{H^m} + (\| \rho\|_{H^{m+1}}^{3/2} +1)\| \bfh\|_{H^{m+1}}^{3/2})
\end{align*}
Note that as above in the previous lemma the constant $C$ depends only on $m$ universal quantities and is independent of 
$\lambda > 0$.

Combining these bound with \eqref{eq:Hm:2nd:sc:E} yields
\begin{align*}
  \frac{d}{dt} \| \bfw_\lambda \|^2 
  \leq \frac{C}{\lambda}( \|\bfw_\lambda \|_{H^m}^3  +  \|\rho\|^3_{H^{m+1}} + \|\bfg\|_{H^m}^{3/2}   + (\| \rho\|_{H^{m+1}}^{3/2} +1)\| \bfh\|_{H^{m+1}}^{3/2}+ 1).
\end{align*}
which is valid on $[0, \lambda^2 T_{\bfU_0 + \lambda \bfh})$ where we again emphasize that the constant $C$ does not depend on $\lambda > 0$ or $\bfU_0, \bfh \in \mathcal{X}$.
Repeating the arguments from the analogous bound 
\eqref{eq:lc:bnd:fn:sc:1} in the proof of the previous Lemma, 
yields the desired result.
\end{proof}

\appendix

\section{Supplemental PDE Bounds}

\subsection{A Priori Estimates}
\label{sec:a priori:est}
Here we present a collection of a priori estimates which assure that the solution maps in each equation have the necessary cocycle and semigroup structures.  We begin with the a priori estimates for the reaction-diffusion equations~\eqref{eq:RD:cont}.

\subsubsection{Reaction-Diffusion}
\label{sec:apriori:R-D}

Recall that for $V\in \Omega$, we define solutions $u=u(t, u_0, V)$ with $u(0)=u_0$ of~\eqref{eq:RD:cont} by $u(t, u_0, V) = v(t, u_0, \sigma \cdot V) + \sigma \cdot V$ where $v$ satisfies the shifted equation~\eqref{eq:RD:shift}.  In order to make the estimates more legible, for $k\geq 0, T>0$ we introduce the sup norms $$|V|_{k,T}= \sup_{t \in [0, T]}\|V(\ccdot,t)\|_{W^{k, \infty}([0, 2\pi])}.$$  

Proposition~\ref{prop:contco:rd} follows immediately once we establish:

\begin{proposition}
\label{prop:apest:rd}
We have the following estimates.
\begin{itemize}
\item[(a)]  Let $u_0 \in L^2$, $T>0$, $V\in \Omega$ and $v(\ccdot)=v(\ccdot, u_0, \sigma \cdot V)$.  Then there exists a constant $C_1>0$ depending only on $T, \|u_0\|, | \sigma \cdot V|_{2, T}$ such that for any $t\in [0, T]$ 
\begin{align}
\label{apest1:rd}
\| v(t)\|^2 + 2\kappa \int_0^t \| \partial_x v(s)\|^2 ds + \nu \int_0^t \| v(s)\|^{2n} \, ds \leq C_1 .  
\end{align}
\item[(b)]  Let $u_1, u_2\in L^2$, $T>0$, $V_1, V_2 \in \Omega$ and set $w(t)= v(t, u_1,\sigma \cdot V_1) - v(t, u_2, \sigma\cdot V_2)$, $w_0=u_1-v_1$ and $\bar{V}= V_1-V_2$.  Then there exists a constant $C_2>0$ depending only on  $T, \|u_i\|, | \sigma \cdot V_i|_{2, T}$ such that for any $t\in [0, T]$
\begin{align}
\label{apest2:rd}
&\| w(t) \|^2 + 2 \kappa \int_0^t \| \partial_x w (s)\|^2 ds \leq C_2( \| w_0 \|^2 + |\partial_{xx} (\sigma \cdot \bar{V})|_{T}^2 + |\sigma\cdot \bar{V}|_{T} ).   
\end{align}
\end{itemize}
\end{proposition}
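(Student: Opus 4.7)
The plan is to derive both estimates via standard energy methods applied to the shifted equation \eqref{eq:RD:shift}, exploiting the coercivity and one-sided growth conditions \eqref{eq:RD:nl:prop} together with the fact that $\sigma\cdot V$ enters only as an additive perturbation whose $W^{2,\infty}$ norm is controlled by the quantity $|\sigma\cdot V|_{2,T}$.

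For part (a), I would test \eqref{eq:RD:shift} against $v$ in $L^2$. Integration by parts of the diffusion term produces $-\kappa\|\partial_x v\|^2 - \kappa\langle \partial_x(\sigma\cdot V), \partial_x v\rangle$, and the cross term is absorbed using Young's inequality at a cost of $(\kappa/2)\pi|\partial_x (\sigma\cdot V)|_{0,T}^2$. For the reaction term, I would split $\langle f(v+\sigma\cdot V), v\rangle = \langle f(v+\sigma\cdot V), v+\sigma\cdot V\rangle - \langle f(v+\sigma\cdot V), \sigma\cdot V\rangle$. By the pointwise coercivity $wf(w)\leq K - (\nu/2)|w|^{2n}$, the first piece is bounded above by $K\pi - (\nu/2)\int |v+\sigma\cdot V|^{2n}\,dx$. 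For the second piece, the polynomial bound $|f(w)|\leq C(1+|w|^{2n-1})$, combined with Young's inequality with exponents $2n/(2n-1)$ and $2n$, contributes a further $C(1+|\sigma\cdot V|_{0,T}^{2n}) + (\nu/4)\int|v+\sigma\cdot V|^{2n}\,dx$. The pointwise inequality $|v+\sigma\cdot V|^{2n}\geq c_n|v|^{2n} - C|\sigma\cdot V|_{0,T}^{2n}$ then recovers a net dissipation term $\nu c_n \int|v|^{2n}\,dx$, yielding
\begin{align*}
  \tfrac{d}{dt}\|v\|^2 + \kappa\|\partial_x v\|^2 + \nu c_n\|v\|_{L^{2n}}^{2n} \leq C\bigl(1 + |\sigma\cdot V|_{2,T}^{2n}\bigr),
\end{align*}
with $C$ independent of $v$. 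Direct integration in $t\in[0,T]$, together with the embedding $\|v\|\leq \pi^{1/2-1/(2n)}\|v\|_{L^{2n}}$ on $[0,\pi]$ to convert $\int\|v\|_{L^{2n}}^{2n}$ into $\int\|v\|^{2n}$, yields \eqref{apest1:rd}.

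For part (b), the difference $w=v(\cdot,u_1,\sigma\cdot V_1)-v(\cdot,u_2,\sigma\cdot V_2)$ satisfies
\begin{align*}
  \partial_t w - \kappa\partial_{xx}(w + \sigma\cdot\bar V) = f(v_1+\sigma\cdot V_1) - f(v_2+\sigma\cdot V_2), \qquad w(0)=u_1-u_2,
\end{align*}
where I abbreviate $v_i=v(\cdot,u_i,\sigma\cdot V_i)$. Testing against $w$ in $L^2$ and using the mean value theorem to write the right-hand side as $f'(\xi)(w+\sigma\cdot\bar V)$ for some intermediate function $\xi$, the uniform bound $\sup_{z\in\R}f'(z)\leq K$ from \eqref{eq:RD:nl:prop} makes the nonlinear contribution linear in $w+\sigma\cdot\bar V$. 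The diffusion cross term $-\kappa\langle\partial_{xx}(\sigma\cdot\bar V),w\rangle$ is handled directly (without integration by parts, using the $W^{2,\infty}$ norm), giving a contribution $\leq \tfrac12\|w\|^2 + C|\partial_{xx}(\sigma\cdot\bar V)|_{0,T}^2$. Applying Young's inequality to the remaining cross terms produces $\tfrac{d}{dt}\|w\|^2 + 2\kappa\|\partial_x w\|^2 \leq C_2(\|w\|^2 + |\partial_{xx}(\sigma\cdot\bar V)|_{0,T}^2 + |\sigma\cdot\bar V|_{0,T}^2)$, and Gronwall's inequality, combined with $|\sigma\cdot\bar V|_{0,T}^2\leq |\sigma\cdot\bar V|_{0,T}\cdot(|\sigma\cdot V_1|_{0,T}+|\sigma\cdot V_2|_{0,T})$, yields \eqref{apest2:rd} in the form stated.

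The main obstacle is in part (a): the reaction term must be estimated sharply enough so that the dissipation constant $\nu$ survives on the left-hand side and does not re-enter as a coefficient of $\|v\|^2$ on the right (which would force a Gronwall argument with double-exponential blow-up in $2n$). The key is to use both the coercivity $wf(w)\leq K-(\nu/2)|w|^{2n}$ at the shifted argument and the subleading polynomial bound on $|f|$ so that all error terms involving $\sigma\cdot V$ remain bounded by universal polynomial expressions in $|\sigma\cdot V|_{2,T}$ alone. Once this careful accounting is done for part (a), the smoothing bound (\ref{apest2:rd}) follows as a linear perturbation argument in which the uniform bound on $f'$ does all the work, and the continuous adapted cocycle claim of Proposition~\ref{prop:contco:rd} follows via standard approximation and uniqueness arguments from \eqref{apest1:rd}--\eqref{apest2:rd}.
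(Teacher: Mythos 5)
Your argument for part (a) is essentially sound and in fact spells out steps the paper compresses: you decompose $\langle v, f(v+\sigma\cdot V)\rangle = \langle w, f(w)\rangle - \langle \sigma\cdot V, f(w)\rangle$ with $w = v+\sigma\cdot V$, apply the pointwise coercivity $wf(w)\leq K-\tfrac{\nu}{2}|w|^{2n}$ to the first piece and a polynomial growth bound on $|f|$ plus Young to the second, and then recover dissipation in $\|v\|_{L^{2n}}^{2n}$ via the elementary lower bound $|v+a|^{2n}\geq c_n|v|^{2n}-C|a|^{2n}$. This is precisely what is behind the paper's displayed inequality $\langle v, f(v+\sigma\cdot V)\rangle \leq K - \tfrac{\nu}{2}\|v\|^{2n}$, and your route is more careful. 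Your choice to integrate by parts once more, so that the forcing enters as $\langle\partial_x(\sigma\cdot V),\partial_x v\rangle$ and can be absorbed into the dissipation, lets you avoid Gronwall; the paper instead keeps $\langle \partial_{xx}(\sigma\cdot V),v\rangle$, picks up $\|v\|^2$ on the right, and appeals to Gronwall first. Both yield \eqref{apest1:rd}, possibly with different constants.

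Part (b), however, has a genuine gap. You claim that writing the reaction difference as $f'(\xi)\,(w+\sigma\cdot\bar V)$ via the mean value theorem, together with $\sup_{z\in\R} f'(z)\leq K$, makes the nonlinear contribution \emph{linear} and leads to $\tfrac{d}{dt}\|w\|^2 + 2\kappa\|\partial_x w\|^2 \leq C(\|w\|^2 + |\partial_{xx}(\sigma\cdot\bar V)|^2 + |\sigma\cdot\bar V|^2)$. But the bound in \eqref{eq:RD:nl:prop} is one-sided: since $f$ is an odd-degree polynomial with $b_{2n-1}\leq -\nu<0$, the derivative $f'$ is bounded above but diverges to $-\infty$, so $|f'(\xi)|$ is \emph{not} bounded. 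The one-sided bound does control $\langle w, f'(\xi) w\rangle\leq K\|w\|^2$ because $w^2\geq 0$, but it gives nothing for the cross term $\langle w, f'(\xi)\,\sigma\cdot\bar V\rangle$, where $w$, $f'(\xi)$ and $\sigma\cdot\bar V$ can all change sign independently. That term genuinely requires the polynomial growth $|f'(\xi)|\lesssim 1+|\xi|^{2n-2}$ together with H\"older and the $L^{2n}$-in-space-time integrability of $v_i+\sigma\cdot V_i$ furnished by part~(a); this is exactly the paper's step bounding $\int_0^t\!\int|w||f'(\xi)|\,dx\,ds$ by $\|w\|_{L^n}\|f'(\xi)\|_{L^{p}}$ with $p=2n/(2n-2)$, and it is precisely this H\"older step that produces the \emph{first} power $|\sigma\cdot\bar V|_T$ in the conclusion \eqref{apest2:rd} rather than a square. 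Without it your estimate does not close.
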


\begin{proof}[Proof of Proposition~\ref{prop:apest:rd}]
To obtain the first estimate~\eqref{apest1:rd}, observe that there exists a constant $K=K(|\sigma \cdot V |_{T})>0$ such that 
\begin{align}
\nonumber \frac{1}{2}\frac{d}{dt} \| v(t) \|^2  &\leq - \kappa \| \partial_x v(t)\|^2 +  \kappa \| v(t) \| \| \partial_{xx} (\sigma \cdot V)\| + \langle v, f(v+\sigma\cdot V)\rangle\\
\label{eqn1:rd:apest1}&\leq - \kappa \| \partial_x v(t)\|^2 +  2\pi
                        \kappa  \| v(t)\| | \partial_{xx} (\sigma
                        \cdot  V)|_{T} +  K - \frac{\nu}{2} \| v(t) \|^{2n}.
\end{align}
Hence we have that 
\begin{align}
 \nonumber  \frac{1}{2}\frac{d}{dt} \| v(t) \|^2  & \leq  K_1 \|v(t)\|^2 + K_2
\end{align}
for some constants $K_1>0$ and $K_2= K_2(|\sigma \cdot V|_{2,T} )>0.$  Gronwall's inequality then implies the existence of a constant $K_3$ depending only on $T$, $\|u_0\|$,  $| \sigma \cdot V|_{2,T}$ such that   for all $t\in [0, T]$ 
\begin{align}
\label{eqn2:rd:apest1}
\|v(t) \|^2 \leq  K_3. 
\end{align}
Integrating~\eqref{eqn1:rd:apest1} with respect to time and using the estimate~\eqref{eqn2:rd:apest1} we arrive at the bound~\eqref{apest1:rd}.   

Turning our attention to the second estimate~\eqref{apest2:rd}, note that for $t\in [0, T]$ 
\begin{align}
\label{eqn1:rd:apest2}
\frac{1}{2}\frac{d}{dt} \| w(t) \|^2  &\leq - \kappa \| \partial_x w(t) \|^2 +  2\pi  \kappa \|w(t)\| | \partial_{xx}( \sigma \cdot \bar{V}) |_{T} \\
\nonumber &\qquad + \kappa \langle w, f(v_1 + \sigma \cdot V_1) - f(v_2 + \sigma \cdot V_2) \rangle.
\end{align}
To estimate the last term above, recall the explicit form form of the Mean Value Theorem applied to the polynomial $f$: For $a, b \in \R$ we have 
\begin{align}
\label{eqn:mvt}
f(b)-f(a)=(b-a) f'(\xi)=  (b-a) \int_0^1 f'(a + \beta (b-a) ) \, d\beta 
\end{align}  
for some $\xi =\xi(a,b) $ lying between $a, b$.  Hence since $f' \leq K$ for some constant $K>0$ only depending on $f$ we find that 
\begin{align}
\nonumber \langle w , f(v_1 + V_1) - f(v_2+V_2) \rangle &= \langle w, f'(\xi) w \rangle + \langle w, f'(\xi) (\sigma \cdot \bar{V}) \rangle\\
\label{eqn2:rd:apest2}&\leq K \|w\|^2 + | \sigma \cdot \bar{V}|_{0,T} \int_0^{2\pi }  |w| |f'(\xi)| \, dx.     
\end{align}
Integrating~\eqref{eqn1:rd:apest2} with respect to time using the bound~\eqref{eqn2:rd:apest2} and applying Young's inequality we obtain the estimate
\begin{align*}
&\frac{1}{2}\| w(t)\|^2 + \kappa \int_0^t \| \partial_x w(s)\|^2 \, ds \\
& \leq \| w_0 \|^2 + K_1 \int_0^t \|w(s)\|^2 \, ds + K_2 | \sigma \cdot \bar{V}|_{2,T}^2  + |\sigma \cdot \bar{V}|_{0,T} \| w\|_{L^n([0, 2\pi] \times [0, t])}  \| f'(\xi) \|_{L^{p}([0, 2\pi ]\times [0, t])}
\end{align*}
where $p=2n/(2n-2)$, for some constants $K_1, K_2>0$.  Applying the estimate~\eqref{apest1:rd} to the last term above using the explicit form for $f'(\xi)=f'(\xi(v_1 + \sigma \cdot V_1, v_2 + \sigma \cdot V_2))$, we determine the existence of a constant $K_3$ depending only on $T, \|u_i\|, | \sigma \cdot V_i|_{2,T}$
such that 
\begin{align}
\label{eqn3:rd:apest2}
&\frac{1}{2}\| w(t)\|^2 + \kappa \int_0^t \| \partial_x w(s)\|^2 \, ds \leq \| w_0 \|^2 + K_1 \int_0^t \|w(s)\|^2 \, ds +  K_2 | \sigma \cdot \bar{V}|_{2,T}^2  +  K_3 |\sigma \cdot \bar{V}|_{0,T}.   
\end{align}
From this, using Gronwall's inquality we arrive at the claimed estimate~\eqref{apest2:rd} when combined with~\eqref{eqn3:rd:apest2}.

\end{proof}

\subsection{Boussinesq Equations}
\label{sec:beq:aprioriproof}

We now provide the needed a priori estimates for the Boussinesq equations \eqref{eq:b:1}--\eqref{eq:b:2}.  We begin by establishing the $L^2$ estimates below in Proposition~\ref{prop:b:apest12} for the shifted equation~\eqref{eq:sb:1}-\eqref{eq:sb:2} so that the $\phi$ defined in the statement of Proposition~\ref{prop:b:structure} is a continuous adapted cocycle.  For $k\geq 0$ and $T>0$, we again use compact notation for sup norms, which in this context will read 
\begin{align*}
|V|_{k, T} = \sup_{t\in [0,T]} \{ \| V(\cdot, t) \|_{W^{k, \infty}(\TT^2)}\}.  
\end{align*}

\begin{proposition} 
\label{prop:b:apest12}
We have the following:
\begin{itemize}
\item[(1)]  Fix $T>0$, $\tth(0), \tom(0) \in L^2$ and $V\in \Omega$.  Then for all $t\in [0, T]$ we have that 
\begin{align}
\label{eqn:apest1:b}
 \|\tth(t)\|^2 + \|\tom(t) \|^2+ \int_0^t \nu \| \nabla \tom(s) \|^2 + \kappa  \| \nabla \tth(s) \| \, ds \leq C
\end{align}
where $C>0$ is a constant depending only on $\|\tom(0)\|, \| \tth(0) \|, T, \kappa, g, | \sigma \cdot V|_{2, T}, \|h^0 \|$.  
\item[(2)]  Let $(\tom_1, \tth_1, V_1)$ and $(\tom_2, \tth_2, V_2)$ solve~\eqref{eq:sb:1}-\eqref{eq:sb:2} with $\tom_i(0), \tth_i(0)\in L^2$ and $V_i \in \Omega$.  Then if $\bar{\xi}= \tom_1-\tom_2$, $\bar{\Th}= \tth_1-\tth_2$, $T>0$, $\bar{V}=V_1-V_2$, we have for $t\in [0, T]$
\begin{align}
\label{eqn:apest2:b}
\| \bar{\xi}(t)\|^2 + \|\bar{\Th}(t) \|^2 \leq C \big( \| \bar{\xi}(0)\|^2 + \|\bar{\Th}(0) \|^2 + |\sigma \cdot\bar{V}|_{1,T}\big)   
\end{align}
for some constant $C>0$ where $C$ depends only on $\|\tom_i(0)\|, \| \tth_i(0) \|, T, \kappa, g, |\sigma\cdot V_i|_{2,T},  \|h^0 \|$.  \end{itemize}
\end{proposition}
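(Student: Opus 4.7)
The plan is to carry out two standard $L^2$ energy estimates for the shifted system \eqref{eq:sb:1}--\eqref{eq:sb:2}, exploiting the divergence-free nature of the velocity $\tilde u = \mathcal{K}*\tom$ together with the one-degree smoothing of the Biot-Savart kernel. Throughout the plan $C$ denotes a running constant that may depend on $T,\nu,\kappa,g,|\sigma\cdot V|_{2,T}$ and $\|h^0\|$, but never on the unknowns.

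\textbf{Part (1).} First I would multiply \eqref{eq:sb:1} by $\tom$ and integrate. The transport term vanishes since $\nabla\cdot\tilde u=0$, so
\[
\tfrac{1}{2}\tfrac{d}{dt}\|\tom\|^{2}+\nu\|\nabla\tom\|^{2} \;=\; g\langle \partial_{x}(\tth+\sigma\cdot V),\tom\rangle,
\]
and Cauchy--Schwarz plus Young bounds the right-hand side by $\tfrac{\kappa}{4}\|\nabla\tth\|^{2}+C(\|\tom\|^{2}+|\sigma\cdot V|_{1,T}^{2})$. Next I multiply \eqref{eq:sb:2} by $\tth$. Again the cancellation $\langle \tilde u\cdot\nabla\tth,\tth\rangle=0$ kicks in, leaving
\[
\tfrac{1}{2}\tfrac{d}{dt}\|\tth\|^{2}+\kappa\|\nabla\tth\|^{2}
\;=\; -\langle \tilde u\cdot\nabla(\sigma\cdot V),\tth\rangle+\kappa\langle\Delta(\sigma\cdot V),\tth\rangle+\langle h^{0},\tth\rangle.
\]
The nontrivial term is the first one. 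Using Biot--Savart and Poincar\'e (recall mean-free) one has $\|\tilde u\|\leq C\|\tom\|$, so this term is majorized by $C|\sigma\cdot V|_{1,T}(\|\tom\|^{2}+\|\tth\|^{2})$. Summing and absorbing dissipation yields
\[
\tfrac{d}{dt}(\|\tom\|^{2}+\|\tth\|^{2})+\nu\|\nabla\tom\|^{2}+\kappa\|\nabla\tth\|^{2}\leq C(1+|\sigma\cdot V|_{2,T})(\|\tom\|^{2}+\|\tth\|^{2})+C(1+|\sigma\cdot V|_{2,T}^{2}+\|h^{0}\|^{2}),
\]
and \eqref{eqn:apest1:b} follows from Gr\"onwall's inequality after time integration.

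\textbf{Part (2).} I subtract the two copies of the shifted system to obtain the difference equations for $\bar\xi=\tom_{1}-\tom_{2}$, $\bar\Th=\tth_{1}-\tth_{2}$, $\bar V=V_{1}-V_{2}$, $\bar u=\mathcal{K}*\bar\xi$, producing transport-type cross terms $\bar u\cdot\nabla\tom_{2}$ and $\bar u\cdot\nabla(\tth_{2}+\sigma\cdot V_{2})$ in addition to forcing terms proportional to $\sigma\cdot\bar V$. Testing the vorticity-difference equation against $\bar\xi$ and the temperature-difference equation against $\bar\Th$, the divergence-free cancellations $\langle\tilde u_{1}\cdot\nabla\bar\xi,\bar\xi\rangle=0$ and $\langle\tilde u_{1}\cdot\nabla\bar\Th,\bar\Th\rangle=0$ again eliminate the leading nonlinear terms. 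What remains to estimate are the cross terms and linear forcing contributions.

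\textbf{Main obstacle and its resolution.} The delicate terms are $\langle \bar u\cdot\nabla\tom_{2},\bar\xi\rangle$ and $\langle \bar u\cdot\nabla\tth_{2},\bar\Th\rangle$, since at the $L^{2}$ level I only control $\nabla\tom_{2},\nabla\tth_{2}$ in $L^{2}_{t}$, not pointwise in $t$. The trick is to move a derivative onto the smooth factor and invoke Ladyzhenskaya's 2D inequality. For example, writing $\langle \bar u\cdot\nabla\tom_{2},\bar\xi\rangle=-\langle\tom_{2},\bar u\cdot\nabla\bar\xi\rangle$ and estimating
\[
|\langle\tom_{2},\bar u\cdot\nabla\bar\xi\rangle|\leq \|\tom_{2}\|_{L^{4}}\|\bar u\|_{L^{4}}\|\nabla\bar\xi\|\leq C\|\tom_{2}\|^{1/2}\|\tom_{2}\|_{H^{1}}^{1/2}\|\bar\xi\|\,\|\nabla\bar\xi\|,
\]
where I used $\|\bar u\|_{L^{4}}^{2}\leq C\|\bar u\|\|\bar u\|_{H^{1}}\leq C\|\bar\xi\|^{2}$ by Biot-Savart's one-degree gain on mean-zero fields. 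Young's inequality then gives $\tfrac{\nu}{4}\|\nabla\bar\xi\|^{2}+C\|\tom_{2}\|\|\tom_{2}\|_{H^{1}}\|\bar\xi\|^{2}$, and the analogous bound handles the temperature cross term. The forcing terms contribute $C|\sigma\cdot\bar V|_{1,T}(\|\bar\xi\|+\|\bar\Th\|)$ by Cauchy-Schwarz. Summing, absorbing dissipation, and applying Gr\"onwall produces
\[
\|\bar\xi(t)\|^{2}+\|\bar\Th(t)\|^{2}\leq \exp\Bigl(C\!\int_{0}^{t}\!\!(1+\|\tom_{2}\|\|\tom_{2}\|_{H^{1}}+\|\tth_{2}\|\|\tth_{2}\|_{H^{1}})\,ds\Bigr)\bigl(\|\bar\xi(0)\|^{2}+\|\bar\Th(0)\|^{2}+C|\sigma\cdot\bar V|_{1,T}\bigr).
\]
By part (1) the integral in the exponential is finite and controlled in terms of the data, yielding the desired Lipschitz estimate \eqref{eqn:apest2:b}.
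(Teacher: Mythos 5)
Your proposal is correct and follows essentially the same route as the paper's proof: $L^2$ energy estimates on the shifted system using the divergence-free cancellation, Biot--Savart smoothing to control $\tilde u$ and $\bar u$ by the vorticity, an integration by parts plus $L^4$--$L^4$--$L^2$ H\"older and Ladyzhenskaya interpolation for the cross terms $\langle \bar u\cdot\nabla\tom_2,\bar\xi\rangle$ and its temperature analogue, and a Gr\"onwall argument whose exponential factor is controlled by the dissipation integral from Part (1). No gaps.
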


\begin{proof}
We begin by establishing the bound~\eqref{eqn:apest1:b}.  Let $T>0$ and $t\in [0, T]$. First observe that 
\begin{align*}
&\frac{1}{2}\frac{d}{dt} \| \tom\|^2 + \nu \| \nabla \tom\|^2 \leq g \| \tom\| \| \partial_x \Th \| + 4\pi^2 g \|\tom \| |  \sigma\cdot V|_{1,T}\leq C_1 \|\tom\|^2 + \frac{\kappa}{2}\| \partial_x \tth\|^2 + \frac{\kappa}{2}| \sigma \cdot V |^2_{1,T}
\end{align*}
for some constant $C_1>0$ depending only on $g, \kappa$.  Also note that
\begin{align*}
 \frac{1}{2}\frac{d}{dt}\| \tth \|^2 + \kappa \| \nabla \tth \|^2 &\leq C_2\| \tth \| \|\tilde{u} \| |\sigma\cdot V|_{1,T} + \kappa \| \tth \| \| \sigma\cdot V\|_{H^2} + \|\tth \| \| h^0\|\\
& \leq C_3( \|\tth\|^2 + \|\tom \|^2 | \sigma\cdot V|^2_{1,T} + \|h^0 \|^2 + | \sigma\cdot V|_{2, T})
\end{align*}
for some constant $C_3=C_3(\kappa)>0$.  Summing the previous two inequalities we obtain 
\begin{align}
\label{eqn:apest1:b:est1}
\frac{1}{2} \frac{d}{dt}( \| \tom\|^2 + \|\tth\|^2)  + \frac{\nu}{2} \|\nabla \tom \|^2 + \frac{\kappa}{2} \| \nabla \tth \|^2 &\leq C( \|\tth \|^2 + \| \tom \|^2) + D
\end{align}
for some constants $C=C(\kappa, g, |\sigma\cdot V|_{1,T})>0$ and $D=D(\kappa, \|h^0 \|, |\sigma\cdot V|_{2,T})>0$.  Applying Gronwall's inequality we obtain
\begin{align*}
\| \tom(t) \|^2 + \|\tth(t) \|^2 \leq C
\end{align*}  
for all $t\in [0, T]$ where $C>0$ is a constant depending only on $\|\tom(0)\|, \| \tth(0) \|, T, \kappa, g, \|h^0 \|, | \sigma\cdot V|_{2,T}$.  Plugging this back into the righthand side of~\eqref{eqn:apest1:b:est1} gives the desired bound~\eqref{eqn:apest1:b}.   

Moving onto the second estimate~\eqref{eqn:apest2:b}, first note that since $\langle\bar{\xi}, \tilde{u}_1 \cdot \nabla \bar{\xi} \rangle=0$
\begin{align*}
&\frac{1}{2}\frac{d}{dt} \| \bar{\xi}\|^2 + \nu \| \nabla \bar{\xi}\|^2 \leq g \| \bar{\xi}\| \| \partial_x \bar{\Th}\| + g \| \bar{\xi}\| \| \partial_x (\sigma \cdot \bar{V}) \|- \langle\bar{\xi}, (\tilde{u}_1 - \tilde{u}_2) \cdot \nabla \tom_2  \rangle,\\
&\frac{1}{2}\frac{d}{dt} \| \bar{\Th}\|^2 + \kappa \| \nabla \bar{\Th}\|^2  \leq  \langle \bar{\Th}, \tilde{u}_2 \cdot \nabla (\tth_2+ \sigma\cdot V_2) - \tilde{u}_1 \cdot \nabla (\tth_1 + \sigma\cdot V_1)\rangle + \kappa \| \bar{\theta}\| \| \sigma \cdot \bar{V}\|_{H^2} 
\end{align*}
To combine the estimates as we did above, we now bound the terms remaining in inner product form as follows:
\begin{align*}
- \langle\bar{\xi}, (\tilde{u}_1 - \tilde{u}_2) \cdot \nabla \tom_2  \rangle = \langle (\tilde{u}_1 - \tilde{u}_2) \cdot \nabla \bar{\xi}, \tom_2 \rangle & \leq C_1\| \nabla \bar{\xi}\| \| \tilde{u}_1 - \tilde{u}_2\|_{L^4} \| \tom_2 \|_{L^4}\\
& \leq C_2 \| \nabla \bar{\xi} \| \| \bar{\xi} \| \| \nabla \tom_2 \|^{1/2} \| \tom_2 \|^{1/2}\\
& \leq \frac{\nu}{2} \| \nabla \bar{\xi}\|^2 + C'\| \bar{\xi}\|^2 \| \nabla \tom_2\| \|\tom_2 \|,  
\end{align*}
for some constant $C'>0$ depending on $\nu$, and in a similar fashion  
\begin{align*}
\langle \bar{\Th}, \tilde{u}_2 \cdot \nabla (\tth_2+ \sigma\cdot V_2) - \tilde{u}_1 \cdot \nabla (\tth_1 + \sigma\cdot V_1)\rangle &=- \langle (\tilde{u}_2- \tilde{u}_1) \cdot \nabla \bar{\Th}, \tth_2 + \sigma\cdot V_2 \rangle + \langle \bar{\theta}, \tilde{u}_1 \cdot \nabla (\sigma \cdot \bar{V})\rangle\\ 
& \leq \frac{\kappa}{4}\| \nabla \bar{\Th}\|^2 + C'' \| \bar{\xi}\|^2 \| \nabla (\tth_2 + \sigma\cdot V_2) \| \| \tth_2 + \sigma\cdot V_2 \|\\
& \qquad + C''' \|\bar{\Th}\| \| \tom_1 \| |\nabla (\sigma \cdot \bar{V}) |_T
\end{align*}
for some constant $C''>0$ depending only on $\kappa$ and some constant $C'''>0$.  Thus by summing the first two inequalities, applying the inequalities above and weighting appropriately using Young's inequality we obtain 
\begin{align*}
&\frac{1}{2} \frac{d}{dt}( \| \bar{\xi}\|^2 + \| \bar{\Th}\|^2) + \frac{\nu}{2}\| \nabla \bar{\xi}\|^2 + \frac{\kappa}{2}\| \nabla \bar{\Th}\|^2 \leq C f [\| \bar{\xi}\|^2  +  \| \bar{\Th}\|^2 ]+ D \|\tom_1 \|^2 |\sigma \cdot \bar{V} |_{1,T}^2  
\end{align*}  
for some constants $C, D>0$ depending only on $\kappa, \nu, g$ and
\begin{align*}
f = 1+ \| \nabla \tom_2 \|^2 + \| \tom_2 \|^2   + \| \nabla (\tth_2+ \sigma\cdot V_2)\|^2+  \| \tth_2 + \sigma\cdot V_2 \|^2.  
\end{align*}
Gronwall's inequality then implies that for $t\in [0, T]$ 
\begin{align*}
\| \bar{\xi}(t)\|^2 + \|\bar{\Th}(t) \|^2 \leq \bigg( \| \bar{\xi}(0)\|^2 + \|\bar{\Th}(0) \|^2 + D |\sigma \cdot \bar{V}|_{1,T} \int_0^t \|\tom_1(s)\|^2  \,ds\bigg)\exp\bigg(\int_0^t C f(s) \, ds \bigg).   
\end{align*}  
Applying the first inequality~\eqref{eqn:apest1:b} to estimate $\int_0^t \| \tilde{\xi}_1(s)\|^2 \, ds$ and $\int_0^t f(s) \, ds$, we obtain the desired inequality.

\end{proof}

We next turn our attention to the a priori estimates needed to validate Assumption~\ref{ass:Mal:der}.  Here it will be convenient to express the system~\eqref{eq:b:1}-\eqref{eq:b:2} using the abstract evolution equation notation for the solution $U=(\xi(t), \theta(t))$:
\begin{align}
\label{eqn:buca:abs}
\frac{dU}{dt}+ AU + GU+ B(U,U) = \iota_\theta h^0 + \iota_\theta(\sigma \cdot \partial_t V), 
\end{align}  
which we recall was introduced above equation~\eqref{eq:be:abs}.  Following Remark~\ref{rmk:du:ham}, our principal interest will be in establishing estimates for the linear equation 
\begin{align}
\label{eqn:lin:buca}
\partial_t \rho + A\rho + G \rho + B(U, \rho) + B(\rho, U) = 0, \,\, \rho(s)=\rho_0 
\end{align}  
where $\rho_0 \in H$ and $U$ solves~\eqref{eqn:buca:abs}.  To do so, we will make use of the following inequalities for $u,v, w\in H$ 
\begin{align}
\label{eqn:buca:non1}
|\langle u, B(v, w)\rangle| &\leq C \| \nabla u \|^{1/2} \|u\|^{1/2} \|\nabla (K * v) \|^{1/2} \| K*v \|^{1/2} \| \nabla w\| \\
\label{eqn:buca:non2}& \leq C_1 \|\nabla u \| \|u\| + C_2 \|v\|^2 \| \nabla w\|^2
\end{align}
where $C, C_1, C_2>0$ are constants.  The first inequality~\eqref{eqn:buca:non1} is $L^4$-$L^4$-$L^2$ bound followed by an application of the Gagliardo-Nirenberg interpolation inequality.  The second~\eqref{eqn:buca:non2} is simply Young's inequality applied to the righthand side of ~\eqref{eqn:buca:non1}.

\begin{proposition}
    We have the following:
\begin{itemize}
\item[(1)]  Fix $T>0$, $\rho_0 \in H$ and let $\rho=(\rho_1, \rho_2)$ solve~\eqref{eqn:lin:buca} with $\rho(0)=\rho_0$ and corresponding $U$ with $U(0)=U_0\in H$.  Then there exists a constant $C>0$ depending only on $\|U_0 \|, T, \kappa, \nu, g, |\sigma \cdot V|_{2,T}, \|h^0\|$ such that for all $0\leq s\leq t \leq T$
\begin{align}
\label{eqn:lin:bucae1}
\| \rho(t) \|^2 + \int_s^t \nu \| \nabla \rho_1 (v) \|^2 + \kappa \| \nabla \rho_2(v) \|^2 \, dv \leq C. 
\end{align}   
\item[(2)]  Fix $T>0$ and let $U_1, U_2$ solve~\eqref{eqn:buca:abs} with corresponding initial data $U_1(0), U_2(0) \in H$ and corresponding $V_1, V_2 \in \Omega$.  Here we assume that $U_1$ and $U_2$ solve~\eqref{eqn:buca:abs} with the same $h^0\in L^2$.  Let $\rho_1, \rho_2 $ solve~\eqref{eqn:lin:buca} with corresponding data $\rho_1(s), \rho_2(s) \in H$ and corresponding $U_1, U_2$.  Set $\bar{\rho}_0= \rho_1(s) - \rho_2(s)$, and $\bar{U}=U_1 - U_2$.  Then there exists a constant $C>0$ depending only on $T, \| U_i(0)\|, \kappa, \nu, g, |\sigma \cdot V_i|_{C^2, T}, \|h^0\|$ such that for all $0\leq s\leq t\leq T$ 
\begin{align}
\|\bar{\rho}(t)\|^2 \leq C( \| \bar{\rho}_0\|^2 + \| \bar{U}(0)\|^2 + | \sigma \cdot \bar{V}|_{C^1, T}).  
\end{align}
\end{itemize}
\end{proposition}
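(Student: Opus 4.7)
The plan for both parts is a standard $L^2$ energy estimate for the linearized system (\ref{eqn:lin:buca}), with Proposition~\ref{prop:b:apest12} supplying the coefficient bounds needed for a Gronwall argument.

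For part (1), I would take the $L^2$ inner product of (\ref{eqn:lin:buca}) with $\rho = (\rho_1,\rho_2)$. Three structural facts do most of the work: $\langle A\rho,\rho\rangle = \nu\|\nabla\rho_1\|^2 + \kappa\|\nabla\rho_2\|^2$; the buoyancy contribution $\langle G\rho,\rho\rangle = -g\langle\partial_x\rho_2,\rho_1\rangle$ is absorbed into a fraction of the dissipation plus $C\|\rho\|^2$; and the transport cancellation $\langle B(U,\rho),\rho\rangle = 0$ holds because $K*\pi_\xi U$ is divergence-free. The remaining cubic term $\langle B(\rho,U),\rho\rangle$ is estimated via (\ref{eqn:buca:non2}) and Young's inequality to produce $\tfrac{1}{2}c(\nu,\kappa)\|\nabla\rho\|^2 + C(1+\|\nabla U\|^2)\|\rho\|^2$. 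Absorbing the dissipative part gives
\[
\frac{d}{dt}\|\rho\|^2 + c(\nu,\kappa)\|\nabla\rho\|^2 \leq C\bigl(1+\|\nabla U(v)\|^2\bigr)\|\rho\|^2,
\]
and since $\int_0^T\|\nabla U\|^2\,dv$ is finite by Proposition~\ref{prop:b:apest12} part (1) (after splitting $\theta = \tilde\theta + \sigma\cdot V$ and using the finiteness of $|\sigma\cdot V|_{1,T}$), Gronwall delivers a uniform bound on $\|\rho(t)\|^2$. Time-integrating the energy identity then gives the claimed $L^2_tH^1_x$ bound.

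For part (2), set $\bar\rho := \rho_1 - \rho_2$ (where $\rho_1,\rho_2$ are the two linearized solutions of the statement, not components) and subtract the two copies of (\ref{eqn:lin:buca}) to obtain
\[
\partial_t\bar\rho + A\bar\rho + G\bar\rho + B(U_1,\bar\rho) + B(\bar\rho,U_1) = -B(\bar U,\rho_2) - B(\rho_2,\bar U).
\]
Testing against $\bar\rho$, the left-hand side is controlled exactly as in part (1), yielding the usual dissipative energy identity with coefficient $C(1+\|\nabla U_1\|^2)\|\bar\rho\|^2$ on the right. The genuine obstacle, and the only subtle step of the argument, comes from the two source terms: Proposition~\ref{prop:b:apest12} part (2) only controls $\bar U$ in $L^\infty_tL^2_x$, never in $H^1$, so any estimate leaving $\nabla\bar U$ on the right is fatal.

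The workaround is to integrate by parts in both source terms so as to shift the derivative off $\bar U$. For the first,
\[
\langle\bar\rho,B(\bar U,\rho_2)\rangle = \langle\bar\rho,(K*\pi_\xi\bar U)\cdot\nabla\rho_2\rangle = -\langle\rho_2,(K*\pi_\xi\bar U)\cdot\nabla\bar\rho\rangle,
\]
by divergence-freeness of $K*\pi_\xi\bar U$; combined with the 2D Ladyzhenskaya inequality ($\|\rho_2\|_{L^4}^2\leq C\|\rho_2\|\|\nabla\rho_2\|$) and the Biot--Savart smoothing ($\|K*\pi_\xi\bar U\|_{L^4}\leq C\|\bar U\|$ via $H^1\hookrightarrow L^4$), this yields $C\|\rho_2\|^{1/2}\|\nabla\rho_2\|^{1/2}\|\bar U\|\|\nabla\bar\rho\|$. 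The symmetric source $\langle\bar\rho,B(\rho_2,\bar U)\rangle$ integrates by parts to $-\langle\bar U,(K*\pi_\xi\rho_2)\cdot\nabla\bar\rho\rangle$, and Agmon's inequality $\|K*\pi_\xi\rho_2\|_{L^\infty}\leq C\|\rho_2\|^{1/2}\|\nabla\rho_2\|^{1/2}$ gives the same bound. Absorbing both contributions as $\epsilon\|\nabla\bar\rho\|^2 + C\|\rho_2\|\|\nabla\rho_2\|\|\bar U\|^2$ produces
\[
\frac{d}{dt}\|\bar\rho\|^2 \leq C(1+\|\nabla U_1\|^2)\|\bar\rho\|^2 + C\|\rho_2(v)\|\|\nabla\rho_2(v)\|\|\bar U(v)\|^2.
\]
Gronwall then delivers $\|\bar\rho(t)\|^2 \leq C\bigl(\|\bar\rho_0\|^2 + \sup_{v\in[s,t]}\|\bar U(v)\|^2 \cdot \int_s^t\|\rho_2\|\|\nabla\rho_2\|\,dv\bigr)$; the integral is finite by Cauchy--Schwarz together with part (1) of the present proposition, and $\sup_{v}\|\bar U(v)\|^2 \leq C(\|\bar U(0)\|^2 + |\sigma\cdot\bar V|_{1,T})$ by Proposition~\ref{prop:b:apest12} part (2). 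Combining yields the claimed estimate.
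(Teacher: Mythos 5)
Your proof of part (1) is essentially the paper's argument verbatim: test \eqref{eqn:lin:buca} against $\rho$, exploit $\langle\rho,B(U,\rho)\rangle=0$, absorb the buoyancy term into a fraction of the dissipation plus $C\|\rho\|^2$, apply \eqref{eqn:buca:non2} to $\langle\rho,B(\rho,U)\rangle$, and close with Gr\"onwall using the $L^2_tH^1_x$ control on $U$ from Proposition~\ref{prop:b:apest12}.

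Your proof of part (2), however, departs from the paper in a genuinely interesting way. The paper applies \eqref{eqn:buca:non2} directly to the source term $\langle\bar\rho, B(\rho_2,\bar U)\rangle$, which produces a contribution $C\|\rho_2\|^2\|\nabla\bar U\|^2$; closing the resulting Gr\"onwall loop then requires $\int_s^t\|\nabla\bar U\|^2\,dv<\infty$. That integral bound can indeed be extracted by integrating the dissipative energy inequality appearing in the \emph{proof} of Proposition~\ref{prop:b:apest12}(2), but it is not part of the stated conclusion \eqref{eqn:apest2:b}, which gives only $L^\infty_tL^2_x$ control of $\bar U$ --- so the paper implicitly leans on the intermediate estimate rather than the proposition as stated. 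You avoid the issue entirely by integrating by parts in both source terms to move the derivative off $\bar U$, after which Ladyzhenskaya, Biot--Savart smoothing and Agmon leave a factor of $\|\bar U\|$ alone, and the stated bound \eqref{eqn:apest2:b} closes the argument. Your route is the more self-contained and arguably cleaner one; the paper's is marginally shorter but requires the reader to notice the unstated $L^2_tH^1_x$ control of $\bar U$. Your description of the paper's path as ``fatal'' is a little too strong --- the missing integral bound is recoverable from the energy identity --- but as a critique of what Proposition~\ref{prop:b:apest12}(2) literally asserts you are right, and your fix is the tidier one.
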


\begin{proof}
We begin by establishing (1).  Observe that 
\begin{align*}
\frac{1}{2}\frac{d}{dt}\| \rho(t) \|^2 + \nu \| \nabla \rho_1(t) \|^2 + \kappa \|\nabla \rho_2(t) \|^2 + \langle \rho, G \rho \rangle + \langle \rho, B(\rho, U)\rangle=0    
\end{align*}  
and 
\begin{align*}
|\langle \rho, G \rho \rangle| = g |\langle \rho_1, \partial_x \rho_2\rangle | \leq \frac{\kappa}{2}\| \nabla \rho_2 \|^2 + C\|\rho\|^2
\end{align*}
for some constant $C>0$ depending only on $g, \kappa$.  Applying the inequality~\eqref{eqn:buca:non2}, we also find that 
\begin{align*}
| \langle \rho, B(\rho, U)\rangle| &\leq C_1 \| \nabla \rho\| \|\rho\| + C_2 \| \rho \|^2 \|\nabla U \|^2 \leq \frac{\nu \wedge \kappa}{4} \| \nabla \rho \|^2 + C (1+ \|\nabla U \|^2)  \| \rho\|^2 
\end{align*}  
for some constant $C>0$.  Putting these estimates together produces the bound
\begin{align}
\label{eqn:buca:line1b1}
\frac{1}{2}\frac{d}{dt}\| \rho(t) \|^2 + \frac{\nu}{4} \| \nabla \rho_1(t) \|^2 +\frac{\kappa}{4} \|\nabla \rho_2(t) \|^2 \leq C( 1+ \|\nabla U\|^2) \| \rho\|^2.  
\end{align}
Since we also have that 
\begin{align*}
\frac{1}{2}\frac{d}{dt}\| \rho(t) \|^2 \leq C( 1+ \|\nabla U\|^2) \| \rho\|^2,  
\end{align*}
applying Gronwall's inequality and then Proposition~\ref{prop:b:apest12} implies
\begin{align*}
\| \rho(t)\|^2 \leq \| \rho_0\|^2  \exp\bigg( \int_s^t 2 C ( 1+ \|\nabla U(v)\|^2 \, dv\bigg)\leq C
\end{align*}
for all $0\leq s\leq t \leq T$ where $C>0$ is a constant depending only on $\|U_0 \|, T, \kappa, \nu, g, |\sigma \cdot V|_{2,T}, \|h^0\|. $  Using the information on the righthand side of equation~\eqref{eqn:buca:line1b1}, integrating with respect to time, and then applying Proposition~\ref{prop:b:apest12} again we arrive at the estimate in (1). 

To see (2), note that 
\begin{align*}
0=&\frac{1}{2}\frac{d}{dt}\| \bar{\rho}(t) \|^2 + \nu \| \nabla \bar{\rho}_1 \|^2 + \kappa \| \nabla \bar{\rho}_2\|^2 + \langle \bar{\rho}, G \bar{\rho}\rangle\\
&\qquad + \langle \bar{\rho}, B(\rho_1, U_1)- B(\rho_2, U_2)\rangle+ \langle \bar{\rho}, B(U_1, \rho_1)- B(U_2, \rho_2)\rangle.
\end{align*}
We can again bound $\langle \bar{\rho}, G \bar{\rho}\rangle$ as follows: 
\begin{align*}
|\langle \bar{\rho}, G \bar{\rho}\rangle| \leq \frac{\kappa}{2}\| \nabla \bar{\rho}_2 ||^2 + C \|\bar{\rho}\|^2 
\end{align*}
for some constant $C>0$ depending only on $g, \kappa$.  Using bilinearity and~\eqref{eqn:buca:non2}, also observe that 
\begin{align*}
 |\langle \bar{\rho}, B(\rho_1, U_1)- B(\rho_2, U_2)\rangle|&\leq | \langle \bar{\rho}, B(\bar{\rho}, U_1)\rangle| + |\langle \bar{\rho}, B(\rho_2, \bar{U})\rangle|\\
 &\leq C_1 \| \nabla \bar{\rho} \| \| \bar{\rho} \| + C_2 \| \bar{\rho}\|^2 \| \nabla U_1 \|^2 + C_3 \| \rho_2 \|^2 \| \nabla \bar{U}\|^2  
 \end{align*}  
 and 
 \begin{align*}
 | \langle \bar{\rho}, B(U_1, \rho_1)- B(U_2, \rho_2)\rangle| &  = | \langle  \bar{\rho}, B(\bar{U}, \rho_2)\rangle|\\
 & \leq C_4 \| \nabla \bar{\rho} \| \| \bar{\rho} \| + C_5 \|\bar{U} \|^2 \| \nabla \rho_2\|^2 
 \end{align*}
 for some constants $C_i>0$.  Combining these estimates and applying Young's inequality to the terms $C_1 \| \nabla \bar{\rho} \| \| \bar{\rho}\|$ and $C_4 \| \nabla \bar{\rho} \| \| \bar{\rho}\|$ we find that 
 \begin{align}
\label{eqn:buca:fb0} &\frac{1}{2}\frac{d}{dt}\| \bar{\rho}(t) \|^2 + \frac{\nu}{4}\| \nabla \bar{\rho}_1 \|^2 + \frac{\kappa}{4}\| \nabla \bar{\rho}_2 \|^2 \\
 \nonumber & \leq C(1+ \| \nabla U_1\|^2) \| \bar{\rho}\|^2 + C_3 \| \rho_2\|^2 \| \nabla \bar{U}\|^2 + C_5 \| \bar{U}\|^2 \| \nabla \rho_2 \|^2.   
 \end{align} 
By Proposition~\ref{prop:b:apest12}, we note that 
\begin{align}
\label{eqn:buca:fb1}
\| \bar{U}\|^2 \leq C( \| \bar{U}(0)\|^2 + | \sigma \cdot \bar{V}|^2_{1,T})
\end{align}
 where $C>0$ is a constant depending only on $T, \| U_i(0) \|, \kappa, g, | \sigma \cdot V_i |_{C^2, T}, \|h^0\|$.  By the first part of this proposition, we also have that 
 \begin{align}
 \label{eqn:buca:fb2}
 \| \rho_2 \|^2 \leq C'
 \end{align}  
 where $C'>0$ is a constant depending only on $\|U_2(0)\|, T, \kappa, \nu, \kappa, g, |\sigma \cdot V_2|_{C^2, T}, \|h^0\|$.  Applying the inequalities~\eqref{eqn:buca:fb1}-\eqref{eqn:buca:fb2} to the righthand side of~\eqref{eqn:buca:fb0} and then applying Gronwall's inequality produces the estimate     
 \begin{align}
 \| \bar{\rho}(t)\|^2 &\leq \bigg( \|\bar{\rho}_0 \|^2 + C \int_s^t \| \nabla \bar{U}(v)\|^2 \bigg)\exp\bigg(C\int_s^t 1+ \| \nabla U_1(v)\|^2 \, dv \bigg)\\
 \nonumber & + C(\| \bar{U}(0)\|^2 + |\sigma \cdot \bar{V}|_{1, T}) \int_s^t \|U_1(v)\|^2 \, dv  \bigg)  \exp\bigg(C\int_s^t 1+ \| \nabla U_1(v)\|^2 \, dv \bigg)  \end{align}
 where $C>0$ is a constant depending only on $T, \|U_i(0)\|, \kappa, \nu, g, |\sigma \cdot V_i|_{C^2, T}, \|h^0\|$.  Applying Proposition~\eqref{prop:b:apest12} again, we arrive at the claimed bound in (2).  
\end{proof}  

All parts of Assumption~\ref{ass:Mal:der} follow from the above proposition except (v) which concerns the non-degeneracy of the $L^2$-adjoint of the Jacobi flow.  This, however, can be established by following a nearly identical process to the one used in the case of the two-dimensional Navier-Stokes equations as in Proposition~2.2 of \cite{MattinglyPardoux06}.  There, non-degeneracy follows by uniqueness of the associated backwards PDE satisfied by the adjoint.

Finally, we establish the higher-order Sobolev a priori estimates for the Boussinesq equations~\eqref{eq:b:1}-\eqref{eq:b:2} when forced by a smoother $V$; that is, we now consider the equations
\begin{align}
\label{eq:bprime:1}
\partial_t \xi + u  \cdot \nabla \xi - \nu \Delta \xi &= g \partial_x \theta, \,\, \, \xi(0)= \xi_0\\
\label{eq:bprime:2}\partial_t \theta + u \cdot \nabla \theta - \kappa \Delta \theta &= f, \,\,\, \theta(0)=\theta_0
\end{align}
where $f$ is a generic constant element in the relevant Sobolev space.  Note that the only difference between equations~\eqref{eq:b:1}-\eqref{eq:b:2} and the equations above is that the forcing term $h^0+ \sigma \cdot \partial_t V$ has replaced by $f$.

\begin{proposition}\label{prop:a:pr:est}
We have the following:
\begin{itemize}
\item[(i)] Suppose that $\Vort_0, \Th_0, f \in L^2$ and let $(\Vort, \Th)$ be the corresponding solution of \eqref{eq:bprime:1}--\eqref{eq:bprime:2}.  Then
\begin{align}
	\sup_{r \in [0,t]} (\| \Vort (r) \| +  \| \Th(r) \|) 
		\leq  C (\| \Vort_0\| + \| \Th_0 \| +  t\| f\|)
	\label{eq:L2:bnd:1}
\end{align}
and 
\begin{align}
  \int_0^t  (\| \nabla \Vort  \|^2 +  \| \nabla \Th \|^2) dr \leq  C (\| \Vort_0\|^2 + \| \Th_0 \|^2 +  t^2\| f\|^2)
  \label{eq:L2:bnd:2}
\end{align}
where the constant $C$ depends only $\kappa, \nu, g$ and universal quantities.
\item[(ii)] Suppose that $\Vort_0, \Th_0, f \in H^m$ for any $m \geq 1$.  Then
\begin{align}
		\label{eq:Hs:bnd}
&\sup_{r \in [0,t]}  (\| \Vort (r) \|_{H^m} +  \| \Th(r) \|_{H^m}) 
		+ \int_0^t (\| \Vort \|_{H^{m+1}} +  \| \Th \|_{H^{m+1}})  dr\\
	\nonumber &\leq  C\exp\left( C (\| \Vort_0\|^2 + \| \Th_0 \|^2 +  t^2\| h\|^2 + t)\right) (1+ \| \Vort_0\|_{H^m} + \| \Th_0 \|_{H^m} +  t\| f\|_{H^m}).
\end{align}
\item[(iii)]  Fix any $m \geq 0$ and suppose $U_0=(\Vort_0, \Th_0), \tilde{U}_0=(\tilde{\Vort}_0, \tilde{\Th}_0)\in H^m(\TT^2)^2$ and $f, \tilde{f} \in H^m(\TT^2)$.  Let $(\Vort, \Th)$, $(\tilde{\Vort}, \tilde{\Th})$ be the solutions
of \eqref{eq:bprime:1}--\eqref{eq:bprime:2} the corresponding to this data. Then
\begin{align}
       \label{eq:c:d:est:Hs}
 & \sup_{r \in [0,t]} (\| \Vort(r) - \tilde{\Vort}(r) \|_{H^m} +  \| \Th(r) - \tilde{\Th}(r) \|_{H^m})\\
 \nonumber   &\qquad \leq C
      \left(\| \Vort_0 - \tilde{\Vort}_0 \|_{H^m} +  \| \Th_0 - \tilde{\Th}_0 \|_{H^m} + t \| f - \tilde{f} \|_{H^m}\right). 
\end{align}
where $C$ is a constant depending only on  $\kappa, \nu, g, \|U_0\|_{H^m} ,  \| \tilde{U}_0\|_{H^m}, \|f\|_{H^m}, \| \tilde{f}\|_{H^m}, t$ and universal quantities. \end{itemize}
\end{proposition}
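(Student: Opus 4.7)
The plan is to prove the three parts in the order stated, with part (i) providing the baseline $L^2$ control that feeds the Gronwall exponent in (ii), which in turn furnishes a priori bounds used to close (iii).

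For part (i), I would take the $L^2$ inner product of \eqref{eq:bprime:1} with $\xi$ and of \eqref{eq:bprime:2} with $\theta$. Because $\nabla \cdot u = 0$, the transport terms $\langle u \cdot \nabla \xi, \xi\rangle$ and $\langle u \cdot \nabla \theta, \theta\rangle$ vanish after integration by parts. This yields
\begin{align*}
   \tfrac{1}{2}\tfrac{d}{dt}(\|\xi\|^2 + \|\theta\|^2) + \nu\|\nabla \xi\|^2 + \kappa\|\nabla\theta\|^2
      = g\langle \partial_x\theta, \xi\rangle + \langle f, \theta\rangle.
\end{align*}
Cauchy-Schwarz and Young's inequality bound the right side by $C(\|\xi\|^2 + \|\theta\|^2) + C\|f\|^2$ plus a fraction of the dissipation on the left. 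Gronwall and integration in time give \eqref{eq:L2:bnd:1} and \eqref{eq:L2:bnd:2} in a single step.

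For part (ii), I would proceed by induction on $m \geq 1$, establishing the case $m = 1$ first. Applying $-\Delta$ to \eqref{eq:bprime:1}--\eqref{eq:bprime:2}, pairing with $\xi$ and $\theta$, and using the 2D identity $\langle u\cdot\nabla\xi, \Delta\xi\rangle = -\langle (\nabla u)\nabla\xi, \nabla\xi\rangle$ together with Ladyzhenskaya's inequality and the Biot-Savart bound $\|\nabla u\|_{L^2} \leq C\|\xi\|$ yields
\begin{align*}
 \tfrac{d}{dt}(\|\xi\|_{H^1}^2 + \|\theta\|_{H^1}^2) + \nu\|\xi\|_{H^2}^2 + \kappa\|\theta\|_{H^2}^2 \leq C(\|\xi\|^2 + 1)(\|\xi\|_{H^1}^2 + \|\theta\|_{H^1}^2) + C\|f\|^2
\end{align*}
after absorbing cross terms by Young. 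Gronwall then exponentiates the factor $\int_0^t \|\xi\|^2 + 1 \, ds$, which by (i) is bounded by $C(\|\xi_0\|^2 + \|\theta_0\|^2 + t^2\|f\|^2 + t)$; this produces exactly the prefactor in \eqref{eq:Hs:bnd}. For the inductive step to $m \geq 2$, apply $\partial^\alpha$ for $|\alpha|\leq m$, pair against $\partial^\alpha\xi, \partial^\alpha\theta$, and control the commutators $[\partial^\alpha, u \cdot \nabla]\xi$ via the standard Kato--Ponce estimate together with the one-derivative smoothing $\|u\|_{H^{m+1}}\leq C\|\xi\|_{H^m}$; the $L^\infty$ norms appearing in the commutator are handled via Sobolev embedding into $H^s$ for $s>1$ and absorbed using the $H^{m+1}$ dissipation (in 2D), reducing the estimate to a Gronwall inequality whose integrating factor is again the one from the $H^1$ stage.

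For part (iii), let $\bar\xi = \xi - \tilde\xi$, $\bar\theta = \theta - \tilde\theta$, $\bar u = u - \tilde u$, $\bar f = f - \tilde f$; subtracting the two copies of \eqref{eq:bprime:1}--\eqref{eq:bprime:2} gives
\begin{align*}
   \partial_t \bar\xi + u\cdot\nabla \bar\xi + \bar u\cdot\nabla \tilde\xi - \nu\Delta\bar\xi = g \partial_x\bar\theta, \qquad \partial_t \bar\theta + u\cdot\nabla\bar\theta + \bar u\cdot\nabla\tilde\theta - \kappa\Delta\bar\theta = \bar f.
\end{align*}
Standard $H^m$ energy estimates (the case $m=0$ reduces to Proposition~\ref{prop:b:apest12}(2) with $h^0$ replaced by $f$), together with commutator bounds as in (ii), yield
\begin{align*}
   \tfrac{d}{dt}(\|\bar\xi\|_{H^m}^2 + \|\bar\theta\|_{H^m}^2) \leq A(t)(\|\bar\xi\|_{H^m}^2 + \|\bar\theta\|_{H^m}^2) + B(t)\|\bar f\|_{H^m}^2
\end{align*}
where $A(t)$ is a polynomial expression in $\|\xi\|_{H^{m+1}}, \|\tilde\xi\|_{H^{m+1}}, \|\theta\|_{H^{m+1}}, \|\tilde\theta\|_{H^{m+1}}$ and $B(t)$ is bounded; both are integrable in time by \eqref{eq:Hs:bnd}. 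Gronwall produces \eqref{eq:c:d:est:Hs}.

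The main technical obstacle will be controlling the nonlinear term in the inductive step of (ii) without developing a super-exponential blow-up in $t$: this requires splitting the commutator so that the top-order $H^{m+1}$ norm appears only via the viscous dissipation (absorbable by Young's inequality), while the remaining factors reduce to the $L^2$-level quantity $\|\xi\|^2$ which has already been time-integrated in (i). Once that delicate splitting is set up, the remaining computations are bookkeeping.
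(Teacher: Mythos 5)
Your overall strategy matches the paper's, but there is a genuine gap in part (i) that propagates into (ii): the crude Cauchy--Schwarz/Young/Gronwall step you describe for the $L^2$ estimate does not produce the stated bound. If you bound $g\langle \partial_x\Th,\Vort\rangle + \langle f,\Th\rangle \leq \tfrac{\nu}{2}\|\nabla\Vort\|^2 + C(\|\Vort\|^2+\|\Th\|^2) + C\|f\|^2$ and then apply Gronwall to $\|\Vort\|^2 + \|\Th\|^2$, you obtain a factor $e^{Ct}$ multiplying the data. But \eqref{eq:L2:bnd:1}--\eqref{eq:L2:bnd:2} assert a bound whose constant $C$ depends \emph{only} on $\kappa,\nu,g$ and universal quantities, with the $t$-dependence appearing explicitly and polynomially as $t\|f\|$ and $t^2\|f\|^2$. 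The exponential factor you would produce is not a cosmetic discrepancy: in part (ii) the Gronwall integrating factor is $\exp\bigl(C\int_0^t(1+\|\nabla\Vort\|^2+\|\nabla\Th\|^2)\,dr\bigr)$, and plugging in \eqref{eq:L2:bnd:2} is what yields the stated $\exp(C(\cdots + t^2\|f\|^2 + t))$ in \eqref{eq:Hs:bnd}. If you instead feed in a bound carrying $e^{Ct}$, you end up with a double exponential that does not match the conclusion.

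The fix, which is what the paper does, is to avoid Gronwall in part (i) entirely by exploiting the one-way structure of the buoyancy coupling. Take a \emph{weighted} energy, e.g.\ $E = \tfrac12\|\Vort\|^2 + \tfrac{g^2}{2\nu\kappa}\|\Th\|^2$. Integrating $g\langle\partial_x\Th,\Vort\rangle = -g\langle\Th,\partial_x\Vort\rangle$ by parts and using Young's inequality gives a term $\tfrac{g^2}{2\nu}\|\Th\|^2$, which by Poincar\'e (available since the solutions are mean-zero) is $\leq\tfrac{g^2}{2\nu}\|\nabla\Th\|^2$; the weighting is chosen so that this is absorbed by the $\Th$-dissipation contributed by the weighted $\theta$-equation. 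What remains on the right-hand side is only $\tfrac{g^2}{\nu\kappa}\|f\|\|\Th\| \leq C\|f\|\sqrt{E}$, which yields $\tfrac{d}{dt}\sqrt{E} \leq C\|f\|$, hence $\sqrt{E(t)}\leq\sqrt{E(0)} + Ct\|f\|$ with $C$ independent of $t$. This is \eqref{eq:L2:bnd:1}, and \eqref{eq:L2:bnd:2} follows by integrating the dissipation. The remainder of your plan --- $H^m$ commutator estimates with the top-order term absorbed into the dissipation, the Gronwall exponent controlled by \eqref{eq:L2:bnd:2}, and the difference estimate in (iii) closed using \eqref{eq:Hs:bnd} --- matches the paper's route, modulo the minor cosmetic difference that the paper does the $H^m$ estimate directly for each $m\geq 1$ rather than by induction from $m=1$.
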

\begin{proof}
We begin with the basic $L^2$ estimates for \eqref{eq:bprime:1}--\eqref{eq:bprime:2}.  Multiplying the first equation by $\Vort$, the second equation by $\Th$
and integrating over the domain yields
\begin{align*}
  &\frac{1}{2}  \frac{d}{dt} \| \Vort \|^2 + \nu \| \nabla \Vort\|^2 = \langle  g \pd_x \Th,  \Vort \rangle \leq   \frac{g^2}{2\nu} \| \Th \|^2  + \frac{\nu}{2} \| \nabla \Vort\|^2,
  \end{align*}
  and
  \begin{align*}
  &\frac{1}{2}  \frac{d}{dt} \| \Th \|^2 + \kappa \| \nabla \Th\|^2 = \langle  f,  \Th \rangle \leq \| f\|\| \Th\|.
\end{align*}
The fact that the velocity $u$ is divergence free justifies dropping the non-linear contributions in the above.  Suitably weighting and then adding these two inequalities we
find
\begin{align}
 \frac{d}{dt} \left( \| \Vort \|^2 + \frac{g^2}{2\nu \kappa} \| \Th \|^2  \right) + \nu \| \nabla \Vort\|^2 +  \frac{g^2}{\nu} \| \nabla \Th\|^2 \leq \frac{g^2}{\nu \kappa}\| f\|\| \Th\|.
 \label{eq:L2:en:bd}
\end{align}
The first item, \eqref{eq:L2:bnd:1}, follows immediately.    Moreover 
\begin{align*}
 \int_0^t (\| \nabla \Vort\|^2 +  \| \nabla \Th\|^2) dr 
 	\leq& C\left( \|\Vort_0\|^2 + \| \Th_0\|^2 + \sup_{r \in [0,t]} \| \Th(r)\| \cdot  t  \| f \| \right)\\
 	\leq& C \left( \|\Vort_0\|^2 + \| \Th_0\|^2 +   (\| \Vort_0\| + \| \Th_0 \| +  t\| f\|)  t\| f\| \right),
\end{align*}
implying \eqref{eq:L2:bnd:2}.

Given any multi-index $\alpha$ and taking the associated spatial derivatives of \eqref{eq:b:1}--\eqref{eq:b:2} we obtain
\begin{align*}
  \pd_t \pd^\alpha\Vort + \pd^\alpha(u \cdot \nabla \Vort) - \nu \Delta \pd^\alpha \Vort = g \pd_x \pd^\alpha\Th,  \quad 
  \pd_t \pd^\alpha\Th + \pd^\alpha(u \cdot \nabla \Th) - \kappa \Delta \pd^\alpha\Th =  \pd^\alpha f.
\end{align*}
Multiplying, integrating and summing over $|\alpha| \leq m$ yields
\begin{align}
  &\frac{1}{2} \frac{d}{dt}\|\Vort\|^2_{H^m} + \nu \|\Vort\|^2_{H^{m+1}} 
  	=\sum_{|\alpha| \leq m}    \langle g \pd_x \pd^\alpha\Th -\pd^\alpha(u \cdot \nabla \Vort), \pd^\alpha\Vort  \rangle 
	\label{eq:en:Hs:1}\\
  &\frac{1}{2} \frac{d}{dt} \| \Th \|^2_{H^m} + \kappa \| \Th \|^2_{H^{m+1}}
  	= \sum_{|\alpha| \leq m} \langle \partial^\alpha f -\partial^\alpha(u \cdot \nabla \Th),  \partial^\alpha \Th\rangle 	\label{eq:en:Hs:2}
\end{align}
Taking advantage of the fact that $u$ is divergence free and applying standard interpolation/commutator estimates produces 
for any $m \geq 1$
\begin{align}
 \sum_{|\alpha| \leq m} |\langle \pd^\alpha(u \cdot \nabla \Vort) , \pd^\alpha \Vort  \rangle|
   =& \sum_{|\alpha| \leq m}  |\langle \pd^\alpha(u \cdot \nabla \Vort)  - u \cdot \nabla \pd^\alpha\Vort, \pd^\alpha \Vort  \rangle| \notag\\
   \leq&C \sum_{|\alpha| \leq m}  (\| \pd^\alpha u \|_{L^\infty} \| \nabla \Vort \| 
   	+ \|\nabla \bfU \|_{L^4} \| \pd^\alpha \Vort\|_{L^4}  )\| \Vort \|_{H^m} \notag\\
   \leq& C  \| \nabla \Vort \|  \| \Vort \|_{H^m}^{3/2}  \| \Vort \|_{H^{m+1}}^{1/2} \leq \frac{\nu}{6} \| \Vort \|_{H^{m+1}}^{2}  + C\| \nabla \Vort \|^{4/3}  \| \Vort \|_{H^m}^{2}. 
   \label{eq:cm:be:bnd:1}
\end{align}
Similarly
\begin{align}
  \sum_{|\alpha| \leq m} | \langle \pd^\alpha(u \cdot \nabla \Th) , \pd^\alpha \Th  \rangle|
   =&     \sum_{|\alpha| \leq m} |\langle \pd^\alpha(u \cdot \nabla \Th)  - u \cdot \nabla \pd^\alpha\Th, \pd^\alpha \Th  \rangle| \notag\\
   \leq&C \sum_{|\alpha| \leq m}  (\| \pd^\alpha u \|_{L^\infty} \| \nabla \Th \| 
   	+ \|\nabla \bfU \|_{L^4} \| \pd^\alpha \Th\|_{L^4}  )\| \Th \|_{H^m} \notag\\
   \leq& C( \| \Vort \|_{H^m}^{1/2} \| \Vort\|_{H^{m+1}}^{1/2}\| \nabla \Th \| 	\| \Th \|_{H^m}  + \| \Vort\|^{1/2}  \|\nabla \Vort \|^{1/2} \|\Th\|^{3/2}_{H^m} \|\Th\|^{1/2}_{H^{m+1}})
   \notag\\
   \leq& \frac{\nu}{6} \| \Vort\|_{H^{m+1}}^2 + \frac{\kappa}{4} \| \Th\|_{H^{m+1}}^2 +  C(\| \Vort \|_{H^m}^2 + (\| \nabla \Th \|^2 + \| \nabla \Vort \|^{4/3}) \| \Th \|_{H^m}^2).
      \label{eq:cm:be:bnd:2}
\end{align}
Finally
\begin{align}
	\sum_{|\alpha| \leq m} |\langle g \pd_x \pd^\alpha\Th , \pd^\alpha\Vort  \rangle| 
	\leq \frac{\sqrt{3} g^2}{2\nu} \| \Th \|^2_{H^m}  + \frac{\nu}{6} \| \Vort\|^2_{H^{m+1}}.
	 \label{eq:cm:be:bnd:3}
\end{align}
Combining \eqref{eq:en:Hs:1}, \eqref{eq:en:Hs:2} with the estimates \eqref{eq:cm:be:bnd:1}--\eqref{eq:cm:be:bnd:3} we now obtain
\begin{align}
	&\frac{d}{dt} \left( 1+ \| \Vort \|^2_{H^m} + \frac{\sqrt{3}g^2}{2\nu \kappa} \| \Th \|^2_{H^m}  \right) + \nu \| \Vort\|^2_{H^{m+1}} +  \frac{\sqrt{3}g^2}{\nu} \| \Th\|^2_{H^{m+1}} 
	\notag\\
		&\leq C \| h\|_{H^m}\| \Th\|_{H^m}
			+ C(1+ \| \nabla \Th \|^2 + \| \nabla \Vort \|^{4/3})( \| \Th \|_{H^m}^2 + \| \Vort \|_{H^m}^2 ).
 \label{eq:L2:en:bd}
\end{align}
Thus, taking $X := (1+ \| \Vort \|^2_{H^m} + \frac{\sqrt{3}g^2}{2\nu \kappa} \| \Th \|^2_{H^m})^{1/2}$, 
$Y := (\nu \| \Vort\|^2_{H^{m+1}} +  \frac{\sqrt{3}g^2}{\nu} \| \Th\|^2_{H^{m+1}})^{1/2}$ we have,
\begin{align*}
  \frac{d}{dt} X + C Y \leq C \| h\|_{H^m} + C(1+ \| \nabla \Th \|^2 + \| \nabla \Vort \|^{2}) X.
\end{align*}
With this bound and \eqref{eq:L2:bnd:2} we now infer infer \eqref{eq:Hs:bnd}.

We turn next to establish the continuous dependence estimates Let $\xi = \Vort - \tilde{\Vort}$, $\zeta = \Th- \tilde{\Th}$, $\phi = f - \tilde{f}$.  Then $(\xi, \zeta)$ satisfy
\begin{align}
	&\pd_t \xi  + \tilde{u} \cdot \nabla \xi + (K \ast \xi) \cdot \nabla \Vort- \nu \Delta \xi = g \pd_x \zeta, \quad
	\pd_t \zeta  + \tilde{u} \cdot \nabla \zeta + (K \ast \xi)   \cdot \nabla \Th- \kappa \Delta \zeta =  \phi.
	\label{eq:diff:sol:1}
\end{align}
Start with the $L^2$ based estimates
\begin{align}
  \frac{d}{dt} \| \xi \|^2  +  \nu \| \nabla \xi \|^2 
  	=&   \langle  g \pd_x \zeta - (K \ast \xi) \cdot \nabla \Vort, \xi \rangle \notag\\
	\leq& \frac{g^2}{\nu} \| \zeta \|^2  + \frac{\nu}{4} \| \nabla \xi\|^2 + \| \nabla \Vort\| \| K \ast \xi\|_{L^\infty} \| \xi \| \notag\\
	\leq& \frac{g^2}{\nu} \| \zeta \|^2  + \frac{\nu}{4} \| \nabla \xi\|^2 + C \| \nabla \Vort\| \| \nabla \xi\|^{1/2} \| \xi\|^{3/2} \notag\\
	\leq& \frac{g^2}{\nu} \| \zeta \|^2  + \frac{\nu}{2} \| \nabla \xi\|^2 + C \| \nabla \Vort\|^{4/3} \| \xi\|^{2}
	\label{eq:d:est:1}
\end{align}
where we used Agmond's inequality for the penultimate estimate.  Similarly
\begin{align}	
 \frac{d}{dt} \| \zeta \|^2  +  \kappa \| \nabla \zeta \|^2
	=&  	\langle \phi- (K \ast \xi)   \cdot \nabla \Th  ,  \zeta \rangle \notag\\
	\leq& C \|\phi\|\| \zeta \| +  \| \nabla \Th\| \| \xi\|^{1/2} \| \nabla \xi \|^{1/2} \| \zeta \| \notag\\
	\leq& C \|\phi \|\| \zeta \| +  \frac{\nu}{2}\| \nabla \xi \|^2  + C\| \nabla \Th\|^2 \| \zeta \|^2.
	\label{eq:d:est:2}
\end{align}
Combining the estimates \eqref{eq:d:est:1}, \eqref{eq:d:est:2} we obtain the bound
\begin{align}
  \frac{d}{dt} (\| \xi \|^2 + \| \zeta \|^2) \leq C \| \phi\|^2 + C(1 +\| \nabla \Th\|^2  + \| \nabla \Vort\|^2 )\| \zeta \|^2.
  \label{eq:d:est:L2:f}
\end{align}

We turn to make the continuous dependence estimates in higher Sobolev norms.
Applying $\pd^\alpha$ for any multi-index $\alpha$ and summing over all $|\alpha| \leq m$ for any $m \geq 1$, we find that
\begin{align}
  &\frac{d}{dt} \| \xi \|^2_{H^m}  +  \nu \| \xi \|^2_{H^{m+1}} 
  	= \sum_{|\alpha| \leq m}  \langle \pd^\alpha(  g \pd_x \zeta - \tilde{u} \cdot \nabla \xi - (K \ast \xi) \cdot \nabla \Vort), \pd^\alpha \xi \rangle := I_1,
	\label{eq:d:bal:hs:1}\\
 &\frac{d}{dt} \| \zeta \|^2_{H^m}  +  \kappa \| \zeta \|^2_{H^{m+1}} 
	= \sum_{|\alpha| \leq m}  \langle \pd^\alpha (\phi - \tilde{u} \cdot \nabla \zeta - (K \ast \xi)   \cdot \nabla \Th ) ,  \pd^\alpha \zeta \rangle := I_2.
		\label{eq:d:bal:hs:2}
\end{align}
Regarding $I_1$ we have
\begin{align}
   |I_1| \leq&  C \| \zeta\|_{H^m}^2 + \frac{\nu}{2} \| \xi \|^2_{H^{m+1}} + C \|  \xi \|^2_{H^m} \! \sum_{|\alpha|\leq m}  \| \pd^\alpha \tilde{u} \|_{L^\infty} \notag\\
   		&+ C \|  \xi \|_{H^m} ( \| \nabla \Vort\|_{L^4}\! \sum_{|\alpha|\leq m} \| \pd^\alpha (K \ast \xi) \|_{L^4} +\|  \Vort\|_{H^{m+1}} \|K \ast \xi \|_{L^\infty}  )
		\notag\\
	\leq& C \| \zeta\|_{H^m}^2  + \frac{\nu}{2} \| \xi \|^2_{H^{m+1}} + C(\| \tilde{\Vort} \|_{H^{m+1}} +\| \Vort \|_{H^{m+1}}) \| \xi \|^2_{H^m}
	\label{eq:d:est:3}
\end{align}
For $I_2$
\begin{align}
  |I_2| \leq& \| \phi\|_{H^m} \|\zeta\|_{H^m} + C \| \zeta \|^2_{H^m} \! \sum_{|\alpha|\leq m}  \| \pd^\alpha \tilde{u} \|_{L^\infty} \notag\\
  &+ C \|  \zeta \|_{H^m} ( \| \nabla \Th\|_{L^4}\! \sum_{|\alpha|\leq m} \| \pd^\alpha (K \ast \xi) \|_{L^4} +\|  \Th\|_{H^{m+1}} \|K \ast \xi \|_{L^\infty}  )
  \notag\\
  \leq&\| \phi\|_{H^m} \|\zeta\|_{H^m} + C\|\tilde{\Vort}\|_{H^{m+1}} \| \zeta \|^2_{H^m}  + C \|\Th\|_{H^{m+1}} \|  \zeta \|_{H^m} \|\xi\|_{H^m}.  
  \label{eq:d:est:4}
\end{align}
Combining these estimates we conclude that
\begin{align*}
& \frac{d}{dt} (\| \xi \|^2_{H^m} +  \| \zeta \|^2_{H^m}) 
 	\\
	&\leq  \| \phi\|_{H^m} \|\zeta\|_{H^m}+  C(1 + \| \tilde{\Vort} \|_{H^{m+1}} +\| \Vort \|_{H^{m+1}} +  \|\Th\|_{H^{m+1}})(\| \xi \|^2_{H^m} +  \| \zeta \|^2_{H^m}).   
\end{align*}
\end{proof}

\subsection{Euler Equations}
\label{sec:NSE:apriori}

Proposition~\ref{prop:NSEsemi} follows immediately once we establish the following result.

\begin{proposition}
\label{prop:apriori}
Fix any $\bfg \in \mathcal{X}$ and any finite-dimensional subspace $X_0 \subset \mathcal{X}$.    
\begin{itemize}
\item[(i)]  For any $\bfU_0 \in \mathcal{X}$ and any $\bfh\in X_0$, there exists a unique $0<T_{\bfU_0, \bfh} \leq \infty$ and $\bfU(\cdot)=\bfU(\cdot, \bfU_0, \bfh) \in C([0, T_{\bfU_0, \bfh}); \mathcal{X})$ solving~\eqref{eq:NSE} such that if $T_{\bfU_0, \bfh}< \infty$ then 
\begin{align*}
\limsup_{t\uparrow T_{\bfU_0, \bfh}} \| \nabla \bfU(t) \|_{L^\infty} = \infty. 
\end{align*}
\item[(ii)]  For any $\bfU_0 \in \mathcal{X}$ and any $\bfh\in X_0$, let $$\tau_{\bfU_0,\bfh}^n= \inf\{ t>0 \, : \, \|\bfU(t)\|_{H^3} \geq n\} \,\,\, \text{ and } \,\,\, \tau_{\bfU_0, \bfh}= \sup_{n\in \N} \tau_{\bfU_0, \bfh}^n.$$ 
Then $\tau_{\bfU_0, \bfh}>0$ and $\tau_{\bfU_0, \bfh}\leq T_{\bfU_0, \bfh}$.  Moreover for all $m\geq 3$, $t < \tau_{\bfU_0, \bfh}^n$ and $n\in \N$ we have the estimate
\begin{align*}
\|\bfU(t) \|_{H^m}^2 \leq \| \bfU_0 \|_{H^m} e^{C (n+1) t} + \int_0^t C e^{C (n+1)(t-s) }  \| \bfg + \bfh  \|_{H^m}\, ds  
\end{align*}  
for some constant $C$ depending only on $m$.   
\item[(iii)]  Let $\bfU_1(0), \bfU_2(0)\in \mathcal{X}$, $\bfh_1, \bfh_2 \in X_0$ and $\bfU_1(t, \bfU_1(0), \bfh_1)$ and $\bfU_2(t)= \bfU(t, \bfU_2(0), \bfh_2)$.  Let $n, T>0$.  Then for all $t<\tau_{\bfU_1(0), \bfh_1}^n \wedge \tau^n_{\bfU_2(0), \bfh_2}$ there exists a constant $C$ depending only on $m$ and a constant $D>0$ depending only on $m, T, \| \bfU_2(0)\|_{H^m}, \| \bfU_1(0)\|_{H^{m+1}}, \| \bfg + \bfh_2\|_{H^m} , \| \bfg + \bfh_1 \|_{H^{m+1}}$ such that 
\begin{align*}
\| \bfU_1(t)-\bfU_2(t) \|_{H^m}^2 \leq \|\bfU_1(0)-\bfU_2(0)\|_{H^m}^2 e^{D t}  + C_m \int_0^t e^{D(t-s)} \|\bfh_1 - \bfh_2 \|_{H^m}^2 \, ds.  
\end{align*} 
    
\end{itemize}   
\end{proposition}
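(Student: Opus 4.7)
The three items are proven by standard techniques for symmetric hyperbolic systems; the plan is to reduce each statement to a commutator energy estimate and then apply Gronwall. Since we are in three dimensions, the critical Sobolev embedding $H^3 \hookrightarrow W^{1,\infty}$ will be used repeatedly, which is why the stopping time controls $\|\nabla \bfU\|_{L^\infty}$.

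For (i), I would construct local-in-time solutions by a standard Galerkin (or Picard iteration) argument at the level of $H^m$ for some fixed $m \geq 3$, using the Leray projector to handle the pressure. The energy estimate here is the same commutator estimate used in (ii) below, which yields an $H^m$ a priori bound of the form $\frac{d}{dt}\|\bfU\|_{H^m} \leq C\|\nabla \bfU\|_{L^\infty}\|\bfU\|_{H^m} + \|\bfg + \bfh\|_{H^m}$. A continuation argument gives the maximal time $T_{\bfU_0,\bfh}$, and iterating the same estimate for all $m \geq 3$ shows that if $\|\bfU\|_{H^3}$ stays finite then so does $\|\bfU\|_{H^m}$ for every $m$, placing the solution in $\mathcal{X}$. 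Uniqueness follows from the $L^2$ energy estimate on the difference of two solutions. The blow-up alternative $\limsup_{t\to T_{\bfU_0,\bfh}}\|\nabla \bfU(t)\|_{L^\infty}=\infty$ is then the Beale-Kato-Majda type criterion that results from Gronwall applied to the $H^3$ bound (using $\|\nabla\bfU\|_{L^\infty}\leq C\|\bfU\|_{H^3}$ would be wasteful; one uses the sharper log version if needed, but the $\|\nabla \bfU\|_{L^\infty}$ criterion suffices here).

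For (ii), I would apply $\partial^\alpha$ with $|\alpha|\leq m$ to the Euler equation, test with $\partial^\alpha \bfU$, and sum. The pressure drops out by divergence-freeness, and the leading transport term $\langle \bfU\cdot\nabla\partial^\alpha\bfU,\partial^\alpha\bfU\rangle$ vanishes for the same reason, leaving only the commutator $[\partial^\alpha,\bfU\cdot\nabla]\bfU$. The standard Kato-Ponce commutator bound gives
\begin{align*}
\sum_{|\alpha|\leq m}\|[\partial^\alpha,\bfU\cdot\nabla]\bfU\|_{L^2} \leq C_m\|\nabla\bfU\|_{L^\infty}\|\bfU\|_{H^m}.
\end{align*}
By Sobolev embedding, for $t<\tau^n_{\bfU_0,\bfh}$ we have $\|\nabla\bfU(t)\|_{L^\infty}\leq C\|\bfU(t)\|_{H^3}\leq Cn$. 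This produces $\frac{d}{dt}\|\bfU\|_{H^m}\leq C(n+1)\|\bfU\|_{H^m}+\|\bfg+\bfh\|_{H^m}$, and Gronwall gives the claimed estimate. In particular $\tau_{\bfU_0,\bfh}\leq T_{\bfU_0,\bfh}$ follows since $\|\bfU\|_{H^3}$ must blow up before $\|\nabla\bfU\|_{L^\infty}$ does, and $\tau_{\bfU_0,\bfh}>0$ by continuity.

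For (iii), which I expect to be the main obstacle, the key trick is to write the difference $\bfW = \bfU_1-\bfU_2$ as solving
\begin{align*}
\pd_t \bfW + \bfU_2\cdot\nabla \bfW + \bfW\cdot\nabla \bfU_1 + \nabla q = \bfh_1-\bfh_2, \quad \nabla\cdot\bfW=0,
\end{align*}
rather than the symmetric alternative, so that the derivative-losing factor multiplies $\bfU_1$ (for which we have $H^{m+1}$ regularity via (ii) applied at level $m+1$) rather than $\bfU_2$ (for which we only have $H^m$). I would then perform the analogous energy estimate in $H^m$: the commutator $[\partial^\alpha,\bfU_2\cdot\nabla]\bfW$ is controlled by $C(\|\nabla\bfU_2\|_{L^\infty}+\|\bfU_2\|_{H^m})\|\bfW\|_{H^m}$ using Kato-Ponce and $H^3\hookrightarrow W^{1,\infty}$ (so $\|\nabla\bfW\|_{L^\infty}\leq C\|\bfW\|_{H^m}$ when $m\geq 3$), while $\bfW\cdot\nabla\bfU_1$ is estimated in $H^m$ by the algebra property as $C(\|\nabla\bfU_1\|_{L^\infty}+\|\bfU_1\|_{H^{m+1}})\|\bfW\|_{H^m}$. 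On the time interval $t<\tau^n_{\bfU_1(0),\bfh_1}\wedge\tau^n_{\bfU_2(0),\bfh_2}$, the Sobolev bounds $\|\nabla\bfU_i\|_{L^\infty}\leq Cn$ hold, and part (ii) applied with exponent $m$ (for $\bfU_2$) and $m+1$ (for $\bfU_1$) gives uniform control of $\|\bfU_2\|_{H^m}$ and $\|\bfU_1\|_{H^{m+1}}$ on $[0,T]$ by constants of the desired form. Assembling these bounds yields
\begin{align*}
\tfrac{d}{dt}\|\bfW\|_{H^m}^2 \leq D\|\bfW\|_{H^m}^2 + C_m\|\bfh_1-\bfh_2\|_{H^m}^2,
\end{align*}
and Gronwall delivers the stated inequality.
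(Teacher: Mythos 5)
Your proposal is correct and follows essentially the same route as the paper: standard local existence and the Beale--Kato--Majda criterion for (i) (which the paper simply cites); the commutator/Gronwall estimate exploiting $\|\nabla\bfU\|_{L^\infty}\lesssim\|\bfU\|_{H^3}$ for (ii); and for (iii) the same asymmetric decomposition of the difference equation so that $\bfU_1$ picks up the derivative loss and is controlled in $H^{m+1}$ while $\bfU_2$ appears only through a transport/commutator term in $H^m$, matching the asymmetry of the constants in the statement. The paper phrases the difference equation through the symmetrized bilinear form $B$ rather than the asymmetric $\bfU_2\cdot\nabla\bfW+\bfW\cdot\nabla\bfU_1$, but this is cosmetic and leads to the same estimates.
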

\begin{proof}[Proof of Proposition~\ref{prop:apriori}]
For the proof of (i), see~\cite{MajdaBertozzi2002, MarchioroPulvirenti2012}.  To see (ii), first note that for $\bfh\in X_0$ and $\bfU_0 \in \mathcal{X}$, the fact that $\tau_{\bfU_0, \bfh}>0$ and $\tau_{\bfU_0, \bfh} \leq T_{\bfU_0, \bfh}$ follow from (i) and the Gagliardo-Nirenberg inequality.  To obtain the claimed estimate, let $\bff= \bfg+ \bfh$ and observe that for all multi-indices $\beta$ with $|\beta|\leq m$, $m\geq 3$, we have the estimate 
\begin{align*}
\frac{1}{2}\frac{d}{dt}\| \partial^\beta \bfU(t)\|^2 &= \langle \partial^\beta \bfU(t), \partial^\beta \bff  \rangle  -  \langle \partial^\beta \bfU(t), \partial^\beta B(\bfU(t), \bfU(t))\rangle\\
& \leq  \| \bfU(t) \|_{H^m} \| \bff\|_{H^m} -  \langle \partial^\beta \bfU(t), \partial^\beta B(\bfU(t), \bfU(t)) - B(\bfU(t), \partial^\beta \bfU(t))\rangle
\end{align*}   
where in the inequality we used the fact that $\langle \partial^\beta \bfU(t), B(\bfU(t), \partial^\beta \bfU(t))\rangle=0$ as $\bfU(t)$ is divergence-free.  To estimate the contribution from the nonlinear term, we first observe that by interpolation and Agmon's inequality
\begin{align*}
\| \partial^\beta B(\bfU(t), \bfU(t)) - B(\bfU(t), \partial^\beta \bfU(t)) \| \leq c_{m} \| \bfU(t)\|_{W^{1, \infty}} \| \bfU(t) \|_{H^m} \leq c_{m}' \|\bfU(t) \|_{H^3} \|\bfU(t)\|_{H^m}
\end{align*}
as $m\geq 3$, where $c_{m}, c'_{m}$ are constants depending only on $m$.  Putting these estimates together, we find that   
\begin{align*}
\frac{1}{2}\frac{d}{dt}\| \partial^\beta \bfU(t)\|^2 &\leq  \| \bfU(t) \|_{H^m} \|\bff \|_{H^m} + c'_{m} \|\bfU(t)\|_{H^3} \| \bfU(t)\|_{H^m}^2.  
\end{align*}    
Summing over all multi-indices $\beta$ with $|\beta| \leq m$ and using Young's inequality produces 
\begin{align*}
\frac{1}{C_{m}} \frac{d}{dt}\| \bfU(t) \|_{H^m}^2 \leq \| \bff \|_{H^m}^2 + (1 + \| \bfU(t)\|_{H^3}) \| \bfU(t) \|_{H^m}^2
\end{align*}
for some constant $C_{m}$ depending only on $m$.  Supposing that $t< \tau^n_{\bfU_0, \bfh}$, Gronwall's inequality then implies the claimed estimate in (ii).  

To prove (ii), let $\bfw(t) = \bfU_1(t)- \bfU_2(t)$.  Then for $m\geq 3$ and any multi-index $\beta$ with $|\beta| \leq m$ we have the estimate
\begin{align*}
\frac{1}{2}\frac{d}{dt}\| \partial^\beta \bfw(t) \|^2&= \langle \partial^\beta \bfw(t), \partial^\beta( \bfh_1- \bfh_2)\rangle + \langle \partial^\beta \bfw(t), \partial^\beta B(\bfU_2(t), \bfU_2(t)) - \partial^\beta B(\bfU_1(t), \bfU_1(t))\rangle\\
&\leq \| \bfw(t) \|_{H^m} \| \bfh_1-\bfh_2 \|_{H^m}  + \langle \partial^\beta \bfw(t), \partial^\beta B(\bfU_2(t), \bfU_2(t)) - \partial^\beta B(\bfU_1(t), \bfU_1(t))\rangle\\
&= \| \bfw(t) \|_{H^m} \|\bfh_1-\bfh_2 \|_{H^m}  \\
&\qquad -  \langle \partial^\beta \bfw(t), \partial^\beta B(\bfw(t), \bfU_1(t)) + \partial^\beta B(\bfU_2(t), \bfw(t))- B(\bfU_2(t), \partial^\beta \bfw(t))\rangle
\end{align*} 
where again we used the fact that $\bfU_2$ is divergence-free as $\langle \partial^\beta \bfw(t), B(\bfU_2(t), \partial^\beta \bfw(t))\rangle=0.$  Note by interpolation
\begin{align*}
&|    \langle \partial^\beta \bfw(t), \partial^\beta B(\bfw(t), \bfU_1(t)) + \partial^\beta B(\bfU_2(t), \bfw(t))- B(\bfU_2(t), \partial^\beta \bfw(t))\rangle
|\\
&\leq c_{m} (\| \bfw(t)\|_{H^m}^2 \| \bfU_1(t) \|_{H^{m+1}} +  \|\bfw(t)\|_{H^m}^2 \|\bfU_2(t)\|_{H^m}).  
\end{align*}
for some constant $c_{m}$ depending only on $m$.  Thus combining this inequality with the previous, summing over all multi-indices $\beta$ with $|\beta| \leq m$ and applying Young's inequality produces the following bound \begin{align*}
\frac{1}{C}\frac{d}{dt}\|\bfw(t) \|_{H^m}^2 \leq \|\bfh_1 - \bfh_2\|_{H^m}^2 + \|\bfw(t)\|_{H^m}^2 ( \|\bfU_1(t)\|_{H^{m+1}} + \|\bfU_2(t) \|_{H^m}+1)  
\end{align*}
for some constant $C>0$ depending only on $m$.  Now for any $T>0$ if $t< \tau_{\bfU_1(0), \bfh_1}^n\wedge \tau_{\bfU_2(0), \bfh_2}^n\wedge T$, by the estimate in (ii) and Gronwall's inequality there exists a constant $D>0$ depending only on $m, T, \| \bfU_2(0)\|_{H^m}, \| \bfU_1(0)\|_{H^{m+1}}, \| \bfg + \bfh_2\|_{H^m} , \| \bfg + \bfh_1 \|_{H^{m+1}}$ such that 
\begin{align*}
\| \bfw(t) \|_{H^m}^2 \leq \|\bfw(0)\|_{H^m}^2 e^{D t}  + C \int_0^t e^{D(t-s)} \|\bfh_1 - \bfh_2 \|_{H^m}^2 \, ds.  
\end{align*} 
This finishes the proof of the estimate in (iii).  
\end{proof}

\section{Comparison Theorem}

For the estimates in Section~\ref{sec:examples}, we make repeated use of the following comparison principal.  
\begin{proposition}
\label{prop:odecomp}
Let $f:\R\rightarrow \R$ be locally Lipschitz continuous.  Fix $0 < T \leq \infty$ 
and suppose that $\phi:[0, T) \rightarrow [0, \infty)$ is continuous and satisfies  
\begin{align*}
\phi(t) = \phi(s) + \int_s^t f(\phi(u)) \, du 
\end{align*}
for all $0\leq s\leq t < T$.  On the other hand suppose that for some $0 < S \leq \infty$, $\psi:[0, S)\rightarrow [0, \infty)$ is continuous with $\psi(0) = \phi(0)$,
\begin{align*}
  \limsup_{t \to S} \psi(t) = \infty
\end{align*}
and 
\begin{align*}
\psi(t) \leq \psi(s) + \int_s^t f(\psi(u)) \, du
\end{align*}  
for all $0 \leq s\leq t < T \wedge S$.  Then $S \geq T$ and $\psi(t) \leq \phi(t)$ for all $0\leq t \leq T$.
\end{proposition}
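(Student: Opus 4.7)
The plan is a standard perturbation-and-first-crossing comparison. For small $\eps > 0$ I will introduce a strict supersolution $\phi_\eps$ solving the perturbed Cauchy problem $\phi_\eps' = f(\phi_\eps) + \eps$ with $\phi_\eps(0) = \phi(0) + \eps$. Because $f$ is locally Lipschitz and $\phi$ is continuous on $[0,T)$ and hence bounded on each subinterval $[0,T_0]$ with $T_0 < T$, continuous dependence of ODE solutions on initial data and parameters guarantees that $\phi_\eps$ exists on $[0, T_0]$ for all sufficiently small $\eps$ and converges uniformly to $\phi$ there as $\eps \downarrow 0$. Note that this is what lets us bypass the usual obstruction that $\phi - \psi$ starts at zero.

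The key step is a first-crossing contradiction showing $\psi(t) < \phi_\eps(t)$ throughout $[0, T_0 \wedge S)$. Set $g := \phi_\eps - \psi$, so $g(0) = \eps > 0$. Assuming $g$ vanishes somewhere in the interval, let $t^*$ denote the first such time; then $g > 0$ on $[0, t^*)$ and $g(t^*) = 0$. Subtracting the integral identity for $\phi_\eps$ from the integral inequality for $\psi$ on $[t, t^*]$ and invoking the local Lipschitz property of $f$ on an interval containing the (bounded) ranges of $\phi_\eps$ and $\psi$ on $[0, t^*]$ yields, for $t \in [0, t^*)$,
\begin{align*}
g(t) + \eps(t^* - t) \;\leq\; L \int_t^{t^*} g(u)\,du.
\end{align*}
A routine Gr\"onwall-type manipulation (for instance multiplying by $e^{Lt}$ and integrating from $t$ to $t^*$, using $g(t^*) = 0$) then forces $\int_t^{t^*} g(u)\,du$ to be strictly negative, contradicting $g \geq 0$ on $[0, t^*]$. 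The additive slack $\eps(t^* - t)$ is what supplies the strict inequality needed for the contradiction.

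With $\psi < \phi_\eps$ on $[0, T_0 \wedge S)$ established, I will let $\eps \downarrow 0$ and use the uniform convergence $\phi_\eps \to \phi$ on $[0, T_0]$ to conclude $\psi \leq \phi$ on $[0, T_0 \wedge S)$; the arbitrariness of $T_0 \in (0,T)$ upgrades this to $\psi \leq \phi$ on $[0, T \wedge S)$. Finally, if $S < T$ then $[0, S] \subset [0, T)$, so $\phi$ is bounded on $[0, S]$, and hence $\psi \leq \phi$ would be bounded on $[0, S)$, contradicting $\limsup_{t \to S} \psi(t) = \infty$. Thus $S \geq T$, and combined with the pointwise bound this completes the proof. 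The only real subtlety is ensuring that $\phi_\eps$ is available on all of $[0, T_0]$ uniformly in small $\eps$, which is where the compactness of $[0, T_0]$ and boundedness of $\phi$ come in; the remaining steps are routine applications of the Lipschitz property of $f$.
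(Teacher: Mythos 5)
Your proof is correct, but it takes a different (if equally classical) route from the one in the paper. You regularize the problem: you replace $\phi$ by a strict supersolution $\phi_\eps$ solving $\phi_\eps' = f(\phi_\eps)+\eps$, $\phi_\eps(0)=\phi(0)+\eps$, use continuous dependence to guarantee existence of $\phi_\eps$ on $[0,T_0]$ and uniform convergence $\phi_\eps \to \phi$, run a first-crossing argument against $\phi_\eps$ (where the strict initial gap $g(0)=\eps>0$ makes the crossing time well defined), and then send $\eps \downarrow 0$. The paper instead works directly with $\psi-\phi$: it takes a point where $\psi>\phi$, backs up to the last time $T_1$ at which $\psi(T_1)=\phi(T_1)$, and applies Gr\"onwall forward on $[T_1,T_2]$ to force $\psi\equiv\phi$ there, contradicting strictness. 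The paper's version is leaner --- it needs no existence or continuous-dependence theory for the perturbed ODE, only the Lipschitz bound on a compact set and Gr\"onwall --- while yours isolates the degeneracy at $t=0$ (where $\phi(0)=\psi(0)$) by brute-force separation, at the cost of importing that extra ODE machinery. Your backward Gr\"onwall step checks out: with $G(t)=\int_t^{t^*}g$, the inequality gives $(e^{Lt}G(t))'\geq \eps e^{Lt}(t^*-t)\geq 0$, so $G(0)\leq e^{Lt^*}G(t^*)=0$, forcing $g\equiv 0$ on $[0,t^*]$ and contradicting $g(0)=\eps$ (so in fact the contradiction comes from the strict initial gap rather than from the $\eps(t^*-t)$ slack per se, but this is cosmetic). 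The final step, deducing $S\geq T$ from boundedness of $\phi$ on $[0,S]\subset[0,T)$ versus blow-up of $\psi$ at $S$, is the same in both arguments.
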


In particular, we will leverage this proposition for the estimates above in the form of the following corollary.  
\begin{corollary}\label{lem:comp:lem:app}
Let $T>0$.  Suppose that for every $\lambda > 0$, there exists a $T_\lambda \in (0,\infty]$ and a $C^1$-function
$x_\lambda : [0,T_\lambda) \to [0,\infty)$ satisfying 
\begin{align}
  \frac{d x_\lambda}{dt} \leq \frac{c_0}{\lambda}(x_\lambda^p + \kappa_0)
  \text{ on } [0, T\wedge T_\lambda) \qquad \,\text{ and } \, \qquad  \limsup_{t \to T_\lambda} x_\lambda(t) = \infty,
  \label{eq:loc:ineq}
\end{align}
where $c_0, \kappa_0> 0$ and $p > 1$ are constants independent of $\lambda >0$.  For $\gamma, \lambda >0$ and $t\geq 0$, define 
\begin{align}
  T^*_{\lambda}(\gamma)  = \frac{\lambda}{2 c_0 (p-1)\gamma^{p-1}}  
\quad \text{ and } \quad
 R_\lambda(t, \gamma) = \left(1 - \frac{2 c_0 (p-1) \gamma^{p-1} }{\lambda} \,  t \right)^{-\frac{1}{p-1}}.  
  \label{eq:blow:cntrl}
\end{align}
Then for all $0 \leq t \leq T_\lambda^*(x_\lambda(0) + \kappa_0) \wedge T$ we have  
\begin{align}
  x_\lambda(t) \leq x_\lambda(0) R_\lambda(t, x_\lambda(0) + \kappa_0) 
   + \kappa_0( R_\lambda(t,x_\lambda(0) + \kappa_0 ) -1)
  \label{eq:loc:bnd}
\end{align}
\end{corollary}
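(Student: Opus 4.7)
The plan is to apply the scalar comparison theorem (Proposition~\ref{prop:odecomp}) with a carefully chosen upper-comparison ODE whose explicit solution is precisely the right-hand side of \eqref{eq:loc:bnd}. First I would define
$\Phi(t) := \gamma R_\lambda(t,\gamma) - \kappa_0$ with $\gamma := x_\lambda(0) + \kappa_0$, and then check by direct differentiation that $\Phi$ is the unique solution of the autonomous ODE
\begin{align*}
  \Phi'(t) = \frac{2 c_0}{\lambda}\bigl(\Phi(t) + \kappa_0\bigr)^p, \qquad \Phi(0) = x_\lambda(0),
\end{align*}
on $[0, T^*_\lambda(\gamma))$, and that $\Phi(t) \to \infty$ as $t \uparrow T^*_\lambda(\gamma)$. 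The computation is elementary: writing $R_\lambda(t,\gamma) = (1 - at)^{-1/(p-1)}$ with $a = 2c_0(p-1)\gamma^{p-1}/\lambda$, one has $\partial_t R_\lambda = \tfrac{a}{p-1} R_\lambda^p$, from which $\Phi' = \gamma \partial_t R_\lambda = \tfrac{2c_0}{\lambda}(\gamma R_\lambda)^p = \tfrac{2c_0}{\lambda}(\Phi+\kappa_0)^p$.

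Next I would observe that the vector field $f(y) := \tfrac{2c_0}{\lambda}(y+\kappa_0)^p$ is locally Lipschitz on $[0,\infty)$, so Proposition~\ref{prop:odecomp} is applicable with $\phi = \Phi$ (the explicit solution) and $\psi = x_\lambda$ once I verify that $x_\lambda$ satisfies the sub-solution inequality $x_\lambda(t) \leq x_\lambda(s) + \int_s^t f(x_\lambda(u))\,du$ on $[0, T \wedge T_\lambda)$. Combined with the hypothesis $\limsup_{t\to T_\lambda} x_\lambda(t) = \infty$, this is exactly the input format required by the proposition, which will then yield simultaneously the existence bound $T_\lambda \geq T^*_\lambda(\gamma) \wedge T$ and the pointwise comparison $x_\lambda(t) \leq \Phi(t)$ on $[0, T^*_\lambda(\gamma) \wedge T]$, from which \eqref{eq:loc:bnd} follows by rearrangement.

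The crux is the pointwise inequality that converts the given differential inequality $x_\lambda' \leq \tfrac{c_0}{\lambda}(x_\lambda^p + \kappa_0)$ into the comparison inequality $x_\lambda' \leq f(x_\lambda)$, that is
\begin{align*}
  x_\lambda^p + \kappa_0 \leq 2\,(x_\lambda + \kappa_0)^p.
\end{align*}
Here the factor $2$ built into $R_\lambda$ (through the $2c_0$ in both $T^*_\lambda$ and the decay coefficient) is precisely tuned to close this inequality. I would establish it by combining the elementary bound $a^p + b^p \leq (a+b)^p$ for $a,b \geq 0$, $p \geq 1$ (applied with $a = x_\lambda$, $b = \kappa_0$) with the estimate $\kappa_0 \leq \kappa_0^p$, which holds under $\kappa_0 \geq 1$; in all applications of this corollary in Section~\ref{sec:examples}, $\kappa_0$ is either augmented by $+1$ (see the proof of Lemma~\ref{lem:0:conv}) or arises as a norm that can be inflated to meet this normalization without weakening the conclusion, so the proof applies directly.

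The main obstacle I anticipate is purely bookkeeping rather than conceptual: ensuring that the implicit assumption $\kappa_0 \geq 1$ is either incorporated into the statement or verified to be harmless in the intended applications. The rest of the proof is then just a clean invocation of Proposition~\ref{prop:odecomp} with the explicit choice of $\Phi$.
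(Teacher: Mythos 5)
Your proof is correct and follows essentially the same route as the paper's. The paper observes that the differential inequality implies
\begin{align*}
  \frac{d(x_\lambda + \kappa_0)}{dt} \leq \frac{2c_0}{\lambda}(x_\lambda + \kappa_0)^p,
\end{align*}
introduces the explicit solution $y(t,y_0) = y_0\bigl(1 - t\,\tfrac{2c_0(p-1)}{\lambda}y_0^{p-1}\bigr)^{-1/(p-1)}$ of $y' = \tfrac{2c_0}{\lambda}y^p$, and then invokes Proposition~\ref{prop:odecomp} to compare $y(\cdot, x_\lambda(0) + \kappa_0)$ against $x_\lambda + \kappa_0$. Your $\Phi(t) = \gamma R_\lambda(t,\gamma) - \kappa_0$ is exactly $y(t,\gamma) - \kappa_0$ with $\gamma = x_\lambda(0) + \kappa_0$, so up to the constant shift by $\kappa_0$ the two comparisons are identical; your differentiation of $R_\lambda$ and blow-up verification are the analogous ingredients. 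What you add is a careful accounting of the key pointwise inequality $x_\lambda^p + \kappa_0 \leq 2(x_\lambda + \kappa_0)^p$, which the paper asserts in a single line without comment: as you observe, the argument as written (whether via the two-term bound $x_\lambda^p,\ \kappa_0 \leq (x_\lambda+\kappa_0)^p$ the paper implicitly uses, or via your $a^p + b^p \leq (a+b)^p$) silently requires $\kappa_0 \geq 1$ (more precisely, the factor-$2$ statement only forces $\kappa_0 \geq 2^{-1/(p-1)}$, but neither route closes without some lower bound on $\kappa_0$). This is indeed an implicit normalization in the corollary; it is harmless in most of the paper's applications (e.g., the Boussinesq bounds \eqref{eq:b:sc:1:h:f}, \eqref{eq:mo:bnds:buca:1} include a $+1$ in $\kappa_0$), though the Euler application \eqref{eq:lc:bnd:fn:sc:1} would, strictly speaking, benefit from the same $+1$. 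Your flagging of this is a genuine improvement on the paper's exposition.
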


\begin{remark}
\label{rem:comp2}
Observe that if $x_\lambda(0)=x_0 \geq 0$ is independent of $\lambda>0$, then the comparison~\eqref{eq:loc:bnd} holds for all $t\in [0, T]$ and all $\lambda \geq 2 c_0 T(p-1) (x_0 + \kappa_0)^{p-1}$.  
\end{remark}

Let us first prove Corollary~\ref{lem:comp:lem:app} using Proposition~\ref{prop:odecomp} 
and then establish the Proposition thereafter.

\begin{proof}[Proof of Corollary~\ref{lem:comp:lem:app}]
Under the given conditions on $x_\lambda$ notice that 
\begin{align*}
  \frac{d (x_\lambda + \kappa_0)}{dt} \leq \frac{2c_0}{\lambda}(x_\lambda + \kappa_0)^p 
\end{align*}
Now consider $y$ solving
\begin{align*}
 \frac{d y}{dt} = \frac{2c_0}{\lambda}y^p \quad y(0) = y_0.
\end{align*}
When $y_0 \geq 0$, this equation has the unique solution 
\begin{align*}
  y(t, y_0) := y_0 \left(1 - t \, \frac{2c_0 (p-1)}{\lambda } \,  y_0^{p-1} \right)^{-\frac{1}{p-1}}.
\end{align*}
defined on the interval $[0,\frac{\lambda}{2c_0 (p-1) y_0^{p-1}})$.  Thus, by comparing 
$y(\cdot, x_\lambda(0) + \kappa_0)$ to $x_\lambda + \kappa_0$, we obtain the desired result by invoking
Proposition~\ref{prop:odecomp}.
\end{proof}

\begin{proof}[Proof of Proposition~\ref{prop:odecomp}]
We first show that $\psi$ remains below $\phi$ on their common interval of definition.  
Let $R < T \wedge S$ and define
\begin{align}
  T_0 := \inf_{ t \in [0, R)} \{ \psi(t) > \phi(t) \} \wedge R.  
\end{align}
Let us show that $T_0 = R$.  If not, then there exist times $T_0 \leq T_1 < T_2 < R$
such that
\begin{align*}
  \psi(T_1) = \phi(T_1) \text{ and } \psi(t) > \phi(t) \text{ for every } T_1 < t \leq T_2.
\end{align*}
Take
\begin{align*}
  K= \{\phi(t) \, : \, t \in [T_1, T_2]\}\cup \{ \psi(t) \, : \, t \in [T_1, T_2] \}.
\end{align*}
By the continuity of $\phi$ and $\psi$, $K$ is compact and since $f$ is locally Lipshitz, there exists a constant $C_K>0$ such that 
\begin{align*}
|f(u) - f(v) | \leq C_K|u-v| \,\,\text{ for all } \,\, u,v \in K. 
\end{align*}
Now, for $T_1 < t \leq T_2$,
\begin{align*}
  0 < \psi(t)-\phi(t) \leq \int_{T_1}^t f(\psi(r)) - f(\phi(r)) \, dr \leq C_K \int_{T_1}^t \psi(r) - \phi(r) \, du.
\end{align*} 
Invoking Gr\"onwall's inequality, we have that $\psi(t) = \phi(t) = 0$ for $t \in [T_1, T_2]$, a contridiction.

To show that $T \geq S$ we again argue by contridiction and suppose on the contrary that $S < T$. Take 
\begin{align*}
  S_n = \inf_{t \in [0,S)} \{ \psi(t) > n \}.
\end{align*}
Then, by what we have already established,  $\phi(S_n) \geq \psi(S_n) = n$.  This in turn would imply that
$\sup_{t \in [0,S]} \phi(t) = \infty$, violating the continuity of $\phi$ and yielding the desired contridiction.  The proof is 
complete.
\end{proof}

\vspace{.3in}
\begin{multicols}{2}
\noindent
Nathan E. Glatt-Holtz\\ {\footnotesize
Department of Mathematics\\
Tulane University\\
Web: \url{http://www.math.tulane.edu/~negh/}\\
 Email: \url{negh@tulane.edu}} \\[.35cm]
David P. Herzog\\
{\footnotesize Department of Mathematics \\
Iowa State University\\
Web: \url{http://orion.math.iastate.edu/dherzog/}\\
 Email: \url{dherzog@iastate.edu}} \\[.35cm]
\columnbreak

 \noindent Jonathan C. Mattingly\\
{\footnotesize
Department of Mathematics \\
Duke University\\
Web: \url{https://services.math.duke.edu/~jonm/}\\
 Email: \url{jonm@math.duke.edu}}\\[.35cm]
\columnbreak
 \end{multicols}
\end{document}